\documentclass{amsart}

\pagestyle{plain} \textwidth = 15.0cm \textheight = 22.0cm
\hoffset = -2.0cm

\usepackage{amssymb}
\usepackage{bm}
\usepackage{graphicx}
\usepackage[centertags]{amsmath}
\usepackage{amsfonts}
\usepackage{amsthm}


\newtheorem{thm}{Theorem}
\newtheorem{cor}{Corollary}
\newtheorem{lem}{Lemma}

\newtheorem{prop}{Proposition}
\newtheorem{defn}{Definition}
\theoremstyle{definition}
\newtheorem{rem}{Remark}

\newtheorem{notation}{Notation}


\newcommand{\rr}{\mathbb{R}}
\newcommand{\nn}{\mathbb{N}}
\newcommand{\qq}{\mathbb{Q}}

\newcommand{\ee}{\varepsilon}


\newcommand{\sxi}{\mathcal{S}_{\xi}}
\newcommand{\sg}{\sigma}

\newcommand{\lesseq}{\sqsubseteq}

\newcommand{\I}{\mathrm{I}}

\newcommand{\spp}{\mathrm{supp}}
\newcommand{\rg}{\mathrm{ran}}

\newcommand{\xfr}{\mathfrak{X}}

\newcommand{\ttt}{\mathcal{T}}
\newcommand{\aaa}{\mathcal{A}}

\newcommand{\lblo}{\prec^l}
\newcommand{\auxa}{\xfr_0}
\newcommand{\auxb}{\xfr_1}

\newcommand{\auxx}{\xfr_{\xi}}
\newcommand{\go}{\mathrm{G}_0}
\newcommand{\goo}{\mathrm{G}_1}
\newcommand{\gox}{\mathrm{G}_\xi}
\newcommand{\kkk}{\mathrm{K}}
\newcommand{\fr}{\frac{1}{2^n}}
\newcommand{\ttk}{\ttt_{\kkk}}
\newcommand{\couo}{\xi<\omega_1}
\newcommand{\con}{$c_{00}(\nn)\text{ }$}
\newcommand{\gft}{$G_{\theta, \mathcal{F}}\text{ }$}

\newcommand{\noe}{\neq\emptyset}
\newcommand{\gxn}{G_n}

\newcommand{\lesss}{\prec}

\newcommand{\sgn}{\mathrm{sgn}}
\newcommand{\segm}{\mathrm{seg}}
\newcommand{\tr}{\mathrm{T}}
\newcommand{\scu}{\mathcal{SC}}
\newcommand{\hiint}{\xfr}
\newcommand{\co}{\mathrm{conv}}
\begin{document}

\title{The cofinal property of the reflexive indecomposable  Banach spaces}
\author{Spiros A. Argyros and Theocharis Raikoftsalis}
\address{National Technical University of Athens, Faculty of Applied
Sciences, Department of Mathematics, Zografou Campus, 157 80,
Athens, Greece} \email{sargyros@math.ntua.gr, th-raik@hotmail.com}
\footnotetext[1]{2010 \textit{Mathematics Subject Classification}:
46B03,46B06,46B70} \keywords{Banach space theory, $\ell_p$
saturated, Indecomposable spaces, Hereditarily Indecomposable
spaces, Interpolation methods, Saturated norms.}


\begin{abstract} It is shown that every separable reflexive Banach
space is a quotient of a reflexive Hereditarily Indecomposable
space, which yields that every separable reflexive Banach is
isomorphic to a subspace of a reflexive Indecomposable space.
Furthermore, every separable reflexive Banach space is a quotient
of a reflexive complementably $\ell_p$ saturated space with
$1<p<\infty$ and of a $c_0$ saturated space.
\end{abstract}

\maketitle

\section{Introduction}
\label{intr} An infinite dimensional Banach space $X$ is said to
be indecomposable if it is not the topological direct sum of two
infinite dimensional subspaces. In the $70^s$ J. Lindenstrauss
\cite{L2} had asked if every infinite dimensional space is
decomposable. Note that it was already known that the
aforementioned problem has a positive answer for the members of a
variety of classes of Banach spaces. For example, Banach spaces
with an unconditional basis, nonseparable reflexive spaces
\cite{L1} (or more generally nonseparable WCG spaces \cite{AL}),
separable Banach spaces containing $c_0$ \cite{S} are all
decomposable spaces.

On the other hand since 1991 it is known that
Lindenstrauss' problem has an emphatically negative answer. Indeed
W.T. Gowers and B. Maurey's  discovery of Hereditarily
Indecomposable  (HI) spaces (\cite{GM}) has provided examples of
Banach spaces with no decomposable infinite dimensional subspace.
Since the seminal work of Gowers and Maurey the classes of HI and
Indecomposable spaces have been extensively studied leading to
some remarkable results. In particular, new techniques have been
developed concerning the existence of HI spaces having as a
quotient a desired Banach space. These techniques follow two
distinct directions.

The first one, which appeared in \cite{AF}, is closely related to
the DFJP interpolation method (\cite{DFJP}) and makes heavy use of
the geometric aspect of thin sets, which can be traced back to A.
Grothendieck's work (\cite{Gr}) and was explicitly defined in R.
Neidinger's PhD thesis (\cite{N}). This method yielded that every
reflexive space with an unconditional basis has a subspace which
is a quotient of a reflexive HI space. In particular, separable
Hilbert spaces and more generally any $\ell_p$ for $1<p<\infty$
are quotients of reflexive HI spaces. Using duality arguments, one
may also conclude that reflexive $\ell_p$ spaces can be embedded
into a reflexive indecomposable.

The second method is based on saturated and HI extensions of a
ground norming set and led to the most general result concerning
quotients of HI spaces. Namely, as is shown in \cite{ATO}, every
separable Banach space not containing $\ell_1$ is a quotient of a
separable HI space. Comparing the aforementioned techniques, one
should point out that the second leads to more general results but
by its own nature the dual of the resulting HI space is
decomposable. Thus, for a given separable reflexive space $X$ the
corresponding HI space $Y$ which has $X$ as a quotient is never
reflexive. On the contrary, whenever the first method is
applicable it leads to HI spaces with structure similar to the
starting one (i.e. starting with a reflexive space the obtained HI
space remains reflexive).

The aim of the present work is to prove the following:\\

\noindent\textbf{Theorem I.} \textit{Let $X$ be a separable
reflexive space then,
\begin{enumerate}
\item $X$ is a quotient of a reflexive HI space.
\item $X$ is isomorphic to a subspace of a reflexive indecomposable space.
\end{enumerate}} .\\

Since the dual of a reflexive HI is indecomposable $(2)$ is a
direct consequence of $(1)$. The proof of the theorem is based on
a combination of the aforementioned methods and uses certain
auxiliary spaces which are constructed either by interpolation or
extension. More precisely, starting with a reflexive space $X$
with a Schauder basis first we define a space $\auxa$ with a
Schauder tree basis $(e_t)_{t\in\ttt}$, a weakly compact symmetric
subset $W$ of $\auxa$ and a map $\Phi:\auxa\to X$ such that
$\Phi(W)$ is $\frac{1}{8}$ dense in the unit ball of $X$, which
implies that $X$ is a quotient of $\auxa$. The definition of
$\auxa$ shares common features with the corresponding one in
\cite{AF}, however it requires certain modifications as in the
present case the basis of $X$ is not necessarily unconditional. It
is worth mentioning that if we were able to show that $W$ is a
thin or $(a_n)_n$-thin set (c.f. \cite{AF}) of $\auxa$ then
applying a HI interpolation on the pair $(\auxa,W)$ we would
arrive to the reflexive HI space $\xfr$ which has $X$ as a
quotient. This remains unclear and we proceed as follows.

In the second step using DFJP $\ell_2$ interpolation on the pair
$(\auxa, W)$ we obtain a reflexive space $\auxb$ with a Schauder
tree basis $(\tilde{e}_t)_{t\in \ttt_K}$, where $\ttt_{K}$ is a
complete subtree of $\ttt$ and a bounded closed convex set
$\tilde{W}$ such that $\tilde{W}=J_1^{-1}(W)$. Here,
$J_1:\auxb\to\auxa$ is the usual operator mapping the diagonal
space to the original one. Note that the composition operator
$\Phi \circ J_1 $ maps $\tilde{W}$ onto a $\frac{1}{8}$ dense
subset of $B_{X}$ and thus $X$ is a quotient of $\auxb$. As in the
case of $\auxa$ it remains unclear whether the set $\tilde{W}$ is
a thin subset of $\auxb$. Since $\auxb$ is a separable reflexive
space there exists a countable ordinal $\xi$ such that every
weakly null sequence in $\auxb$ does not admit a $\ell^1_{\xi}$
spreading model (c.f. \cite{AGR}).

The next step is the most critical. Here, using a $\xi$-saturated
extension method (\cite{ATO}) we pass to a new space denoted as
$\xfr_{\xi}$ and $I_{\xi}: \xfr_{\xi}\to \auxb$ a bounded linear
injection such that the set $W_{\xi}=I^{-1}_{\xi}(\tilde{W})$ is a
weakly compact and also thin set. Let us note that the structure
of $\xfr_{\xi}$ resembles the generalized Tsirelson space
$T_{\xi}$ (c.f. \cite{ATO}). In that sense $\xfr_{\xi}$ has a much
richer local $\ell_1$ structure than $\auxb$. Thus the thinness is
established in a space with a strong presence of local $\ell_1$
structure which a-priori seems contradictory or at least peculiar.
The final step is the expected one. Namely, we apply a HI
interpolation on the pair $(\xfr_{\xi},W_{\xi})$ to obtain a
diagonal reflexive space $\xfr$ and a bounded convex set
$\tilde{W}_{\xi}$ such that the natural operator $J_{\xi}:\xfr\to
\xfr_{\xi}$ satisfies $J_{\xi}(\tilde{W}_{\xi})=W_{\xi}$. As the
set $W_{\xi}$ is thin the space $\xfr$ is HI and is the desired
one. Indeed the operator $Q=\Phi\circ J_1 \circ I_{\xi}\circ
J_{\xi}$ maps the set $\tilde{W}_{\xi}$ to a $\frac{1}{8}$ dense
subset of $X$ which yields that $Q:\xfr\to X$ is a quotient map.

The paper is organized as follows. Section 2 concerns
preliminaries. Section 3 is devoted to the definition of the space
$\auxa$ which as was mentioned has a Schauder tree basis
$(e_t)_{\ttt}$ equipped with a partial form of unconditionality
defined as "segment-complete unconditional" tree basis. (Def.
\ref{scuncond}). The main result of this section is that when $X$
is reflexive although the space $\auxa$ is not necessarily
reflexive the set $W$ is weakly compact. In section 4 we prove the
following:\\

\noindent\textbf{Theorem II.} \textit{Let $(X,\|\cdot\|)$ be a
reflexive space with a segment complete unconditional tree basis
$(e_t)_{t\in T}$and $\kkk$ be a bounded subset of $X$ such that
for $x\in\kkk$, $\spp x$ is a segment of $T$. Let also
$\xi<\omega_1$ such that $X$ does not admit a $\ell_{\xi}^1$
spreading model. Then, there exists a norm $\|\cdot\|_{\xi}$ on
$c_{00}(T)$ such that setting $\xfr_{\xi}$ to be the completion of
$(c_{00}(T),\|\cdot\|_{\xi})$the following hold:
\begin{enumerate}
\item For every $x\in c_{00}(T)$ $\|x\|\leq \|x\|_{\xi}$. \item
For every $x\in\kkk$, $\|x\|=\|x\|_{\xi}$. \item Denoting by
$W_{\xi}$ the set $\overline{\mathrm{co}}(\kkk\cup - \kkk)$ in
$\xfr_{\xi}$ we obtain that it is weakly compact and thin.
\end{enumerate}}
This theorem provides a tool for constructing thin sets in spaces
with a Schauder tree basis. The norm of the space $\xfr_{\xi}$ is
defined via a norming set $G_{\xi}$ which contains as ground set a
norming subset of the dual of the space $X$ and which is $\sxi$
saturated for finite sequences of functionals with pairwise
incomparable, segment complete supports. In section 5 we show that
the diagonal space in the DFJP $\ell_2$ interpolation applied on
the pair $(\auxa,W)$ has a segment complete unconditional tree
basis. In section 7 combining the results of sections 4,5 and 6 we
prove the following:\\

\noindent\textbf{Theorem III.} \textit{Let $Y$ be a reflexive
space. Then for every $p\in (1,\infty)$ there exists a  reflexive
 space $X_p$ such that every subspace contains an isomorphic copy of $\ell_p$, complemented in the whole space
 and $X_p$ has $Y$ as a quotient. Additionally, $Y$ is a quotient of a separable $c_0$ saturated space.}\\

 It is worth mentioning that the first result in this direction was done by D.H. Leung in
\cite{Le} proving that every separable Hilbert space is a quotient
of a $c_0$ saturated space and it was followed by the results in
\cite{AF} mentioned earlier. More recently, following different
techniques, I. Gasparis in \cite{G1} and \cite{G2} has shown that
certain members of the class of separable reflexive spaces are
quotients of $c_0$ saturated spaces. Let us also point out that
Theorem III provides examples of reflexive Banach spaces with
divergent structure between the spaces and their duals. For
example, there exist spaces $X_p$ as above such that their dual
contains HI subspaces.

In Section 7 we present a variant of the HI interpolation method
appearing in \cite{AF} which is traced to \cite{AD}. The necessity
for modifying the initial HI interpolation method is the
requirement that the diagonal space admits a Schauder basis.
Similarly to \cite{AF} applying the new HI interpolation to a pair
$(X,W)$ with $W$ being convex, symmetric, weakly compact and thin
subset of $X$ we obtain that the diagonal is reflexive HI and this
proves Theorem I in the case where $X$ has a Schauder basis. The
general case of a separable reflexive space mentioned in Theorem I
follows by the classical result of M. Zippin that every separable
reflexive space embeds into a reflexive space with a Schauder
basis (\cite{Z}).

The research included in the present paper was carried out in
2006. In April 2007 Richard Haydon visited us in Athens and with
his collaboration we were able to prove that there exists an
indecomposable space $X$ containing $\ell_1$. After the solution
of the "scalar plus compact" problem (\cite{AH}) the
aforementioned result was adapted to the $\mathcal{L}_{\infty}$
frame as follows:\\

\noindent\textbf{Theorem} \textit{There exists a
$\mathcal{L}_{\infty}$ space $X$ with the scalar plus compact
property containing $\ell_1$.}\\

As it is also mentioned in \cite{AH} the ultimate problem
concerning the cofinal properties of Indecomposable Banach spaces
is the following:\\

\noindent\textbf{Problem} \textit{Does every separable Banach space not containing $c_0$ embed into a separable indecomposable space?}\\

We made an effort to make the present paper as self contained as
possible. Thus, except for a few technical or well known results
all the other proofs are included.

\section{Preliminaries}
\label{prel}
Let us recall some standard notation and definitions for trees.
\begin{notation}
\label{gd2}
\begin{enumerate} \item[ ]
 \item[1.] Let $\Lambda$ be a countable set. By $[\Lambda]^{<\omega}$ and $[\Lambda]$ we denote
 its finite and infinite subsets respectively. We consider $[\Lambda]^{<\omega}$ to be equipped with the partial
 ordering of the initial segment denoted by $\lesseq$.
\item[2.] By a tree on $\Lambda$ we mean a subset $\tr$ of
$[\Lambda]^{<\omega}$ which is backwards closed under $\lesseq$.
\item[3.] Let $\tr$ be a tree on $\Lambda$. A segment of $\tr$ is
a subset of  $\tr$ of the form \{$t\in\tr: t_1\lesseq t \lesseq
t_2$\} with $t_1,t_2\in\tr$. We will usually denote segments of
this form by $[t_1,t_2]$ or more generally by $s$. For $t_1\in
\tr$ we denote by $\hat{t}_1$ the set $\{t\in \tr: t\lesseq
t_1\}$. For a segment $s\subset \tr$, $\hat{s}$ has a similar
meaning, namely $\hat{s}=\{t\in\tr: \exists~ t'\in s\text{ such
that }t\lesseq t'\}$. For $t\in\tr$ we set $|t|$ the cardinality
of the set $\{t'\in\tr:t'\lesseq t\}$ to be the height of $t$. For every
$n\in\nn$ the $n^{th}$-level of $\tr$ is the set \{$t\in\tr:
|t|=n$\}. \item[4.] We identify the branches of a tree $\tr$ with
the elements of the set \{$(a_i)_{i=1}^{\infty}: (a_i)_{i=1}^k\in
\tr$,~$\forall~ k\in\nn$\} and we denote this set by $[\tr]$. For
every $b\in[\tr]$ with $b=(a_i)_{i=1}^{\infty}$ we set
$b|n=(a_1,...,a_n)$. \item[5.] Two nodes $t_1,t_2\in\tr$ are
called comparable if either $t_1\lesseq t_2$ or $t_2\lesseq t_1$.
More generally, if $A_1,A_2\subseteq\tr$ then $A_1,A_2$ are called
comparable if there exist $t_1\in A_1$ and $t_2\in A_2$ which are
comparable. Otherwise $A_1, A_2$ are called incomparable. We will
write $A_1\perp A_2$ to denote the fact that $A_1, A_2$ are incomparable.
\item[6.] For $t\in\tr$ by $\mathcal{O}_t$
we denote the set $\mathcal{O}_t=$ \{$b\in [\tr]: \exists n\in\nn$
such that $b|n=t$\}. The sets \{$\mathcal{O}_t: t\in\tr$\} form
the usual basis of the topology of $[\tr]$. \item[6.] For every
$t\in\tr$ and $b\in[\tr]$ we will write $t\in b$ if $\exists$
$n\in\nn$ such that $b|n=t$. For every segment $s$ of $\tr$ and
$b\in\tr$ we will write $s\subseteq b$ if $\forall t\in s$ it
holds $t\in b$.
\end{enumerate}
\end{notation}
In the following sections all trees are considered countable, finitely splitting and with nonempty bodies.

\begin{defn}
\label{bijection}
For every such tree $\tr$ we fix a bijection $h_{\tr}: \tr\mapsto\nn$ such
that the following hold:

\begin{enumerate} \item[i.] $h_{\tr}(t_1)<h_{\tr}(t_2)$, whenever $|t_1|<|t_2|$
\item[ii.] If $t_1,t_2\in\tr$ and $|t_1|=|t_2|$ i.e.
$t_1=(a_1,...,a_n)$ and $t_2=(b_1,...,b_n)$ then $h_{\tr}(t_1)<h_{\tr}(t_2)$,
whenever $a_n<b_n$.
\end{enumerate}
\end{defn}
When the tree $\tr$ is understood we will refer to $h_{\tr}$ simply as $h$.
We denote by $c_{00}(\tr)$ the linear space of all functions
$f:\tr\mapsto\rr$ such that $\spp(f)= \{ t\in\tr: f(t)\neq 0\}$ is a finite set. We also
denote by $(e_t)_{t\in\tr}$ the standard Hamel basis of  $c_{00}(\tr)$
consisting of the characteristic functions of all singletons
$\{t\}\subseteq\tr$.

\begin{defn}
\label{gd3} Let $(A_i)_{i=1}^{\infty}$ be a sequence of finite
subsets of $\tr$. We will say that $(A_i)_{i=1}^{\infty}$ is
\begin{enumerate}
\item[i.] a block sequence if $\max$\{$h(t):t\in A_i$\}$<
\min$\{$h(t):t\in A_{i+1}$\} and we will write
$A_1<A_2<...<A_n<...$ \item[ii.] a level-block sequence if
$\max$\{$|t|:t\in A_i$\}$< \min$\{$|(t|:t\in A_{i+1}$\} and we
will write $A_1\lblo A_2\lblo ...\lblo A_n\lblo ...$ \item[iii.]
For a sequence $(f_i)_{i=1}^{\infty}$ of elements of $c_{00}(\tr)$ we will
say that $(f_i)_{i=1}^{\infty}$ is block (level-block)  if $(\spp
f_i)_{i=1}^{\infty}$ is a block (level-block, respectively)
sequence of subsets of $\tr$.
\end{enumerate}
\end{defn}

For the sake of simplicity of notation if $A$ is a subset of
$\tr$ we will write $\min A$ for $\min$\{$h(t):t\in A$\} and if
$A$ is finite $\max A$ for $\max$\{$h(t):t\in A$\}. We will also
write $\min^l A$ for $=\min$\{$|t|:t\in A$\} and $\max^l A$ for
$\max\{t:\text{ }t\in A\}$ .
\begin{defn}
\label{gd4} Let $A,I$ be  subsets of $\tr$.
\begin{enumerate}
\item[i.]  We will call $A$ segment complete if for every
$t_1,t_2$ in A, with $t_1\lesseq t_2$ \\
$[t_1,t_2]\subseteq A$. \item[ii.] $I$ will be called an interval
of  $ \tr$  if $h(I)=$\{$h(t):t\in\tr$\} is an interval of
$\nn$.
\end{enumerate}
\end{defn}
We note that every interval $I$ of $\text{}\tr$ is segment
complete.
\begin{defn}
\label{gd5} For every $f\in$ $c_{00}(\tr)$ we define $\rg f$ to be the
minimal interval $I$ of $\tr$ such that $\spp f \subseteq I$.
Similarly, we set $\rg^l f$ to be the minimal interval $\I^l$ of
$\tr$ of the form $I^l=$\{$t\in\tr: m\leq |t|\leq M$\} such that
$\spp f \subseteq I^l$.
\end{defn}
\begin{rem}
\label{gr2}
 It is clear that a sequence $(f_i)_{i=1}^{\infty}$ in
$c_{00}(\tr)$ is
\begin{enumerate}
\item[i.] block if $\rg f_i<\rg f_{i+1}$ $\forall i\in\nn$
\item[ii.]level-block if $\rg^l f_i<\rg^l f_{i+1}$ $\forall
i\in\nn$.
\end{enumerate}
\end{rem}

\section{Tree representation of the ball of a Banach space}
\label{tree}
Let $X$ be an arbitrary Banach space with a bimonotone, normalized
Schauder basis $(x_i)_{i\in\nn}$ and $(x_i^*)_{i\in\nn}$ the
biorthogonal functionals of $(x_i)_{i\in\nn}$ in $X^*$. In this
section we define a tree $\ttt$ and a norm on $c_{00}(\ttt)$ that will help us "spread" along
the branches of $\ttt$ a set $\mathrm{K}$ which is isometric (via
a map $\Phi$ to be defined later) to a $\frac{1}{8}$-net in the unit ball of
$X$. We will denote by $\auxa$ the completion of $c_{00}(\ttt)$ with respect to this
norm. This technique gives $X$ as a quotient of $\auxa$. In
addition we show that if $X$ is reflexive then the set
$\mathrm{K}$ is weakly compact in $\auxa$. We start with the
following  general definition.

\begin{defn}
\label{scuncond} Let $T$ be a tree and a norm $\|\cdot\|$ defined
on $c_{00}(T)$ such that the sequence $(e_t)_{t\in T}$ is a Schauder basis for the completion of $c_{00}(T)$ denoted by $X_T$. Then
\begin{enumerate}
\item[1.] The norm $\|\cdot\|$ and the basis $(e_t)_{t\in T}$ will
be called $\mathcal{SC}$-unconditional if for every $A\subseteq T$
segment complete and $x=\sum_{t\in T}\lambda_t e_t\in X_T$ we
have:
\[\|\sum_{t\in A}\lambda_t e_t\|\leq \|\sum_{t\in T} \lambda_t
e_t\|\] \item[2.] Let $\psi: T\to [-1,1]$ be a function assigning
to each node of $T$ a scalar $\psi(t)\in [-1,1]$. Let also $C>0$.
We consider for each $t\in T$ the vector $y_t=\sum_{t'\lesseq
t}\psi(t')e_{t'}$ and set $\kkk_{\psi}^1=\{y_t\in X_T: \|y_t\|\leq
C\}$, $\kkk_{\psi}^2=\{y_t|I: \|y_t\|\leq C\text{ and }I \text{ is
an interval of }T\}$ and
$\kkk_{\psi}=\overline{\kkk_{\psi}^2}^{\|\cdot\|}$
\end{enumerate}
\end{defn}

For the rest of this section we assume that $X$ is a fixed Banach
space with a Schauder basis $(x_i)_i$ and $(x^*_i)_i$ the
biorthogonal functionals in $X^*$. We pass on to define a
$\scu$-unconditional norm on $c_{00}(\ttt)$ where $\ttt$ is an
appropriately defined tree such that the completion of this space
with respect to this norm has a quotient isomorphic to $X$. We start with the
definition of $\ttt$.

\begin{defn}
\label{gd1} Let $(F_n)_{n=1}^\infty$ be the following sequence in
$([-1,1]\cap\qq)^{<\omega}$
\begin{eqnarray*}
F_n&=&\{\pm\frac{i}{8^n}:1\leq i\leq 8^n\} \text{ }\forall
n\in\nn.
\end{eqnarray*}
We set
\begin{eqnarray*}
\ttt &=&\{(a_1,a_2,...,a_k) : a_i\in F_i ,i\leq k, k\in\nn\}
\end{eqnarray*}
\end{defn}

It can be readily seen that $\ttt$ is a countable, finitely splitting tree such that every $t\in\ttt$ with $|t|=n$, has $2\cdot 8^{n+1}$ immediate successors.\\

The norming set $\go(X)$ of $\auxa$ is defined as follows:
\begin{defn}
Let $\mathrm{G}_0^1(X)$ be the following subset of $c_{00}(\ttt)$
\begin{eqnarray*}
\mathrm{G}_0^1(X)& = & \big{\{}\sum_{i=1}^n a_i \sum_{|t|=i}
e_t^*: \|\sum_{i=1}^n a_i x_i^* \|_{X^*} \leq 1 \big{\}}
\end{eqnarray*}
Set
 \begin{center}$\go(X)=$ \{$f|_A: f\in
\mathrm{G}_0^1(X)\text{ and }A \text{  is a segment complete
subset of  } \ttt$ \}\end{center} where $f|_A$ denotes the
restriction of $f$ on $A$.
\end{defn}

\bigskip

We consider the norm on $c_{00}(\ttt)$ induced by the set $\go(X)$. Namely,
\begin{center}$\forall x\in$ $c_{00}(\ttt)$ we set $\|x\|=\sup$
\{$f(x):f\in\go$\}\end{center} The space $\auxa$ is the
completion of $c_{00}(\ttt)$ under the norm defined above. It can be readily
verified that the sequence $(e_t)_{t\in\ttt}$ (enumerated through
$h$) becomes a bimonotone, normalized Schauder basis of $\auxa$. We
also have the following easy observation:
\begin{rem}
\label{segcom1} For every $A\subseteq\ttt$ segment complete the
natural projection $P_A:\auxa\mapsto\auxa$ defined by
$P_A(\sum_{t\in\ttt} \lambda_t e_t)= \sum_{t\in A} \lambda_t e_t$
has norm one.
\end{rem}
We will need the following Lemma that gives a description of the
pointwise closure of the norming set $\go(X)$.
\begin{lem}
\label{gopointiwise} Let
\begin{equation*}
G=\big{\{}f|A: f=\sum_{i=1}^{\infty} a_i \sum_{|t|=i} e^*_t ,
~\|\sum_{i=1}^{\infty}a_i x^*_i\|_{X^*}\leq 1 \text{ and }A \text{
is a segment complete subset of } \ttt \big{\}}
\end{equation*}
where all limits are taken with respect to the $w^*$-topology. Then $\overline{\go (X)}^p=G$.
\end{lem}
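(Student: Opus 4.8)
The plan is to prove the two inclusions $G\subseteq\overline{\go(X)}^p$ and $\overline{\go(X)}^p\subseteq G$ separately. (I read the defining condition on a coefficient sequence $(a_i)$ as $\|\sum_{i=1}^n a_ix_i^*\|_{X^*}\le1$ for every $n$; since $(x_i)$ is bimonotone, the partial sums are the images of $\sum_{i=1}^\infty a_ix_i^*$ under the norm-one adjoint partial-sum projections, so this is equivalent to the $w^*$ formulation in the statement.) The inclusion $G\subseteq\overline{\go(X)}^p$ is the easy one: given $g=f|_A\in G$ with $f=\sum_{i=1}^\infty a_i\sum_{|t|=i}e_t^*$, the truncations $g_n:=(\sum_{i=1}^n a_i\sum_{|t|=i}e_t^*)|_A$ all lie in $\go(X)$ --- one needs only that $A$ is segment complete and that $\|\sum_{i=1}^n a_ix_i^*\|_{X^*}\le1$ --- and $g_n$ agrees with $g$ on any fixed $x\in c_{00}(\ttt)$ once $n>\max\{|t|:t\in\spp x\}$, so $g_n\to g$ pointwise.

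For the reverse inclusion I would first use that $\ttt$ is countable, so that $[-1,1]^{\ttt}$ is compact metrizable and the pointwise topology on $\go(X)\subseteq[-1,1]^{\ttt}$ is metrizable; hence it suffices to treat a \emph{sequence} $g_k=f_k|_{A_k}\to g$ pointwise, where $f_k=\sum_{i=1}^{m_k}a_i^{(k)}\sum_{|t|=i}e_t^*$, each $A_k$ is segment complete, and $\|\sum_{i=1}^{m_k}a_i^{(k)}x_i^*\|_{X^*}\le1$. Evaluating the last inequality at $x_i$ gives $|a_i^{(k)}|\le1$ (using $\|x_i\|=1$), so a diagonal extraction lets me assume $a_i^{(k)}\to b_i$ as $k\to\infty$ for every $i$, under the convention $a_i^{(k)}=0$ for $i>m_k$. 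Bimonotonicity shows $\|\sum_{i=1}^n a_i^{(k)}x_i^*\|_{X^*}\le1$ for all $n$ and $k$, and passing to the limit yields $\|\sum_{i=1}^n b_ix_i^*\|_{X^*}\le1$ for every $n$; thus $f:=\sum_{i=1}^\infty b_i\sum_{|t|=i}e_t^*$ is of the form required in $G$, and it remains to produce a segment-complete $A$ with $g=f|_A$.

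This last step is the only delicate point. For each $t$ one has $g(e_t)=\lim_k a^{(k)}_{|t|}\mathbf{1}_{A_k}(t)$, and a pointwise limit cannot detect whether a node $t$ with $b_{|t|}=0$ lay in the $A_k$. When $b_{|t|}=0$ this forces $g(e_t)=0$; when $b_{|t|}\neq0$, uniqueness of the limit of a convergent real sequence forces $\mathbf{1}_{A_k}(t)$ to be eventually constant, so $g(e_t)\in\{0,b_{|t|}\}$, with $g(e_t)=b_{|t|}$ exactly when $t\in A_k$ eventually. I would therefore set $A_{\mathrm{ess}}:=\{t:\ b_{|t|}\neq0,\ g(e_t)=b_{|t|}\}$ and let $A$ be its segment completion, i.e.\ the union of the segments $[t_1,t_2]$ over all $t_1,t_2\in A_{\mathrm{ess}}$ with $t_1\lesseq t_2$ (a short check gives that $A$ is segment complete). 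The decisive observation is that $A\setminus A_{\mathrm{ess}}$ meets only nodes with vanishing limiting coefficient: if $t_1\lesseq t\lesseq t_2$ with $t_1,t_2\in A_{\mathrm{ess}}$ then $t_1,t_2\in A_k$ eventually, hence $t\in A_k$ eventually by segment completeness of the $A_k$, so $g(e_t)=b_{|t|}$, and $b_{|t|}\neq0$ would place $t$ in $A_{\mathrm{ess}}$. Granting this, one checks that $f|_A(e_t)=g(e_t)$ for every $t$: it is $b_{|t|}=g(e_t)$ on $A_{\mathrm{ess}}$, it is $0=g(e_t)$ on $A\setminus A_{\mathrm{ess}}$ (where $b_{|t|}=0$), and it is $0=g(e_t)$ off $A$ (where $g(e_t)=b_{|t|}$ is impossible, as it would put $t$ in $A_{\mathrm{ess}}\subseteq A$). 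Hence $g=f|_A\in G$. The main obstacle is precisely this reconstruction of $A$; the remedy is that the nodes forgotten by the limit all carry zero coefficient, so one is free to add back exactly those needed to close $A_{\mathrm{ess}}$ up under segments.
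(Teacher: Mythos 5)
Your proof is correct and follows the same outline as the paper's: reduce to sequences by metrizability of the pointwise topology on $[-1,1]^{\ttt}$, pass to a subsequence so that the coefficients converge to some $(b_i)$ with $\|\sum_{i=1}^n b_i x_i^*\|\le 1$ for all $n$, produce a segment-complete $A$, and verify $g=f|_A$ pointwise. The one place you diverge is the construction of $A$: you take the segment completion of $A_{\mathrm{ess}}=\{t:\ b_{|t|}\neq0,\ g(e_t)=b_{|t|}\}$ and then must argue separately that the nodes added by the completion all carry $b_{|t|}=0$ so that $f|_A$ is unaffected. The paper simply sets $A=\liminf_{k}A_k$ along the chosen subsequence; this is segment complete automatically (each $A_k$ is, and $t_1,t_2$ eventually in $A_k$ forces every intermediate $t$ eventually in $A_k$), and the pointwise verification is then immediate: if $t\in A$ then $f_k(e_t)=a^{(k)}_{|t|}\to b_{|t|}$, while if $t\notin A$ then $f_k(e_t)=0$ for infinitely many $k$, so the (existing) limit $g(e_t)$ must be $0$. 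Your argument in fact establishes $A\subseteq\liminf_k A_k$ along the way, so you already have all the ingredients for the paper's choice; using $\liminf$ directly dissolves the step you flag as the only delicate point.
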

\begin{proof}
It is easy to see that $G\subseteq \overline{\go (X)}^p$. Let $f\in\overline{\go(X)}^p$. Then there exists a sequence $(f_n)_n$ in $\go(X)$ such that $f_n\stackrel{p}\to f$. Each
$f_n$ is of the form $f_n=\mathcal{X}_{A_n}\cdot(\sum_{i=1}^{k_n}
a^n_i\sum_{|t|=i} e^*_t)$, where $A_n$ are finite segment complete
subsets of $\ttt$, $\mathcal{X}_{A_n}$ is the characteristic
function of $A_n$ and $\|\sum_{i=1}^{k_n} a^n_i x^*_i\|\leq 1$. Let $g_n=\sum_{i=1}^{k_n} a^n_i x^*_i$.
Then there exists a $M\in [\nn]$ such that the sequence $(g_n)_{n\in M}$ converges $w*$ to a $g\in B_{X^*}$. Let $i\in \nn$. Denote by $a_i$ the limit $\lim_{n\in M}a_i^n$ and set $A=\liminf_{n\in M} A_n$ which can be readily seen to be a segment complete subset of $\ttt$. Considering the functional $\hat{f}=\mathcal{X}_{A}\sum_{i=1}^{\infty} a_i\sum_{|t|=i}e^*_t$, we claim that the sequence $(f_n)_{n\in M}$ converges $w^*$ to $\hat{f}$. Indeed, let $t\in\ttt$ with $|t|=i$. If $t\in A$, then $\lim_{n\in M} f_n(e_t)=a_i=\hat{f}(e_t)$. Assuming that $t\notin A$ then $\lim_{n\in M}f_n(e_t)=\hat{f}(e_t)=0$. Hence, $\hat{f}=f\in G$ and the proof is complete.
\end{proof}

We pass on to define the map $\psi:\ttt\to\nn$ that will give us the corresponding set $\kkk_\psi$
\subsection {The set $\mathbf{K}$.\\}

 The set $\kkk$ and the map $\Phi$ are defined as follows:

\begin{defn}
\label{thesetkkk}
Let $\psi:\ttt:\to\nn$ be the following assignment. For every $t=(a_1,a_2,...,a_n)\in\ttt$ we set $\psi(t)=a_t=a_n$ and
$y_t=\sum_{t'\lesseq t} a_{t'}e_{t'}$.
We set $\kkk=\kkk_{\psi}$ as in Definition \ref{scuncond}.
\end{defn}

\begin{defn} \label{themapf}
We consider a map $\Phi:\auxa\mapsto X$ defined as
$\Phi(\sum_{t\in\ttt}\lambda_te_t)=\sum_{i=1}^{\infty}(\sum_{|t|=i}\lambda_t)x_i$.
\end{defn}

\begin{rem}
\label{prp1} We can see that $\Phi$ is a bounded linear operator
with $\|\Phi \|\leq 1$. In addition, for  $b\in[\ttt]$ if we denote
by $X_b$ the subspace $X_b=\overline{<e_t:t\in b>}^{\|\cdot\|}$
then we have that $\Phi$ restricted to $X_b$ is an isometry.
\end{rem}
We also have the following
\begin{prop}
\label{dense} The set $\Phi(\kkk)$ is a $\frac{1}{8}$-net in
the unit ball $B_X$ of $X$. Moreover, the map $\Phi$ is onto.
\end{prop}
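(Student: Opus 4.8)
The plan is to show both assertions by exhibiting, for each $x\in B_X$, a branch-vector in $\kkk$ whose image under $\Phi$ is within $\frac18$ of $x$, and then to deduce surjectivity. First I would observe that, since $(x_i)_i$ is a bimonotone normalized Schauder basis of $X$, any $x\in B_X$ can be approximated by a finitely supported vector: there is $n$ and scalars $b_1,\dots,b_n$ with $\|x-\sum_{i=1}^n b_i x_i\|$ small; moreover, after a further small perturbation I may assume each $b_i$ lies in the rational grid, say $b_i\in\{\pm j/8^i : 0\le j\le 8^i\}$, i.e.\ $b_i\in F_i\cup\{0\}$. The point of the tree $\ttt$ of Definition \ref{gd1} is exactly that this grid is available at level $i$. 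The tricky bookkeeping is that $0\notin F_i$, so if some $b_i=0$ I cannot literally form the node $(b_1,\dots,b_n)\in\ttt$; I would handle this by replacing a zero coordinate $b_i$ with a nonzero grid value of absolute value $\le 1/8^i$ — this changes $\sum b_i x_i$ by at most $\sum 1/8^i<1/7$ in norm (using bimonotonicity to control the partial sums), and by choosing the initial approximation fine enough the total error stays below $\frac18$. Actually the cleanest route is: first choose $y=\sum_{i=1}^n b_i x_i$ with $b_i\in F_i$ (nonzero forced) and $\|x-y\|<\frac18$; the set of such $y$ is $\frac18$-dense in $B_X$ because the admissible errors from rounding each coordinate into $F_i$ and from truncating the tail form a geometric series summing to less than $\frac18$ when combined with bimonotonicity. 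I expect the verification of this density estimate — juggling the tail truncation, the coordinatewise rounding into $F_i$, and the no-zero constraint simultaneously — to be the main obstacle, though it is entirely elementary.

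Given such a $y=\sum_{i=1}^n b_i x_i$, set $t=(b_1,\dots,b_n)\in\ttt$, which is a legitimate node since each $b_i\in F_i$. By Definition \ref{thesetkkk}, $\psi(t_i)=b_i$ for the initial segment $t_i=(b_1,\dots,b_i)$, so $y_t=\sum_{t'\lesseq t}\psi(t')e_{t'}=\sum_{i=1}^n b_i e_{t_i}$. By Remark \ref{prp1}, $\Phi$ restricted to the branch subspace $X_b$ (for any branch $b$ through $t$) is an isometry onto the corresponding subspace of $X$, and $\Phi(y_t)=\sum_{i=1}^n(\sum_{|t'|=i, t'\lesseq t} b_i)x_i=\sum_{i=1}^n b_i x_i=y$; in particular $\|y_t\|=\|y\|\le\|x\|+\frac18\le 2\le C$, so (choosing the constant $C$ in Definition \ref{scuncond} to be at least $2$, which we are free to do) $y_t\in\kkk^1_\psi\subseteq\kkk^2_\psi\subseteq\kkk_\psi=\kkk$. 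Thus $\Phi(y_t)=y$ is within $\frac18$ of $x$, proving $\Phi(\kkk)$ is a $\frac18$-net in $B_X$.

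Finally, for surjectivity of $\Phi:\auxa\to X$: since $\Phi$ is a bounded linear operator (Remark \ref{prp1}) whose range meets every point of $B_X$ to within $\frac18$, the range is $\frac18$-dense in $B_X$, hence (by scaling) $\frac18\|z\|$-dense in any $z\in X$, so $\overline{\rg\Phi}=X$; and since $\Phi(B_{\auxa})\supseteq\Phi(\kkk)$ is a $\frac18$-net in $B_X$, a standard successive-approximation argument (write $x=\Phi(w_1)+r_1$ with $\|r_1\|<\frac18\|x\|$, iterate on $r_1$, and sum the resulting series of preimages, which converges because $\|w_k\|$ decays geometrically) shows $\Phi(B_{\auxa}(0,R))\supseteq B_X$ for a suitable finite $R$, so $\Phi$ is onto. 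This also re-proves that $X$ is a quotient of $\auxa$. I would write out the iteration only briefly, as it is the routine open-mapping-style argument.
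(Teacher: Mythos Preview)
Your proof is correct and follows essentially the same approach as the paper: round the coordinates of $x$ into the grids $F_i$ and invoke the branch isometry of Remark \ref{prp1}. The paper streamlines your argument by working directly with an infinite branch $\sigma=(a_n)_n\in[\ttt]$ rather than truncating to a finite node --- this sidesteps both the truncation error and your worry about $0\notin F_i$, since a zero coordinate is simply rounded to $\pm 1/8^n$ and absorbed into the same geometric error sum $\sum_n 1/8^n$.
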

\begin{proof}
Let $x=\sum_n b_n x_n \in B_X$. Since the basis of $X$ is
bimonotone we have that $|b_n| \leq 1$, for all $n\in\nn$. For
each $n\in\nn$ we can choose $a_n$ in the set
$F_n=\{\pm\frac{i}{8^n}:1\leq i\leq  8^n\}$ such that $|b_n - a_n|
\leq\frac{1}{8^n}$. If we set $\sigma=(a_n)_{n=1}^{\infty}\in
[\ttt]$ then $y_{\sigma}=\sum_n a_n e_{\sigma|n}\in \kkk$ and
$\|\Phi(y_{\sigma})-x\|_{X}\leq \frac{1}{8}$.  Indeed, if we let
$X_{\sigma}=\overline{<e_t:t\in \sigma>}$, then by Remark
\ref{prp1} $\Phi: X_{\sigma} \mapsto X$ is an isometry. Thus if we
denote the restriction of $\Phi$ on $X_{\sigma}$ by
$\Phi_{\sigma}$ we have that $\Phi_{\sigma}^{-1}(\sum_n b_n x_n)=
\sum_n b_n e_{\sigma|n}$ and

$\|\sum_n b_n e_{\sigma|n}\|_{\auxa} = \|\Phi(\sum_n b_n e_{\sigma|n})\|_X =
\|\sum_n b_n x_n\|_X \leq 1$. Thus,
\begin{equation*}
\|\sum_n a_n e_{\sigma|n}\|_{\auxa}  \leq  \|\sum b_n e_{\sigma|n}\|_{\auxa}
  +  \sum_n |a_n - b_n|\leq 1+ \frac{1}{8}
 \end{equation*}
 This gives us that $y_{\sigma}$ is an element of $\kkk$. Finally,
 \begin{equation*}
 \|\sum_n b_n e_{\sigma|n} - \sum_n a_n
 e_{\sigma|n}\|_{\auxa} = \|\Phi(\sum_n b_n e_{\sigma|n}) - \Phi(\sum_n a_n
 e_{\sigma|n})\|_X = \|\Phi(y_{\sigma}) - x\|_X \leq \frac{1}{8}
\end{equation*}
\end{proof}
The following Lemma shows the behavior of incomparably supported
sequences of vectors in $\kkk$ if we assume that $X$ has a
shrinking basis.
 \begin{lem}
\label{incomparablek} Suppose $(x_i)_i$ is a shrinking basis for
$X$, then for every sequence $ (y_n)_n$ in $\kkk$ such that $(\spp
y_n)_n$ are finite and mutually incomparable subsets of $\ttt$ we
have that $y_n\stackrel{w}\to 0$.
\end{lem}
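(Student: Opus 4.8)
The plan is to argue by contradiction: suppose $(y_n)_n$ does not converge weakly to $0$. Since $(x_i)_i$ is shrinking, $(x_i^*)_i$ is a basis for $X^*$, and from Lemma \ref{gopointiwise} the pointwise closure of $\go(X)$ consists of functionals $f|A$ built from $w^*$-limits $\sum_i a_i\sum_{|t|=i}e_t^*$ with $\|\sum_i a_i x_i^*\|_{X^*}\le 1$. The key point is that each $y_n$ has finite support, so by a diagonal/compactness argument it suffices to test weak convergence against a countable total family of functionals, and in fact against the functionals in $\go(X)$ together with the biorthogonal functionals $(e_t^*)_{t\in\ttt}$. Passing to a subsequence, I would assume there is $\delta>0$ and functionals $f_n\in\go(X)$ with $f_n(y_n)\ge\delta$ for all $n$.

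Next I would exploit the structure of $\go(X)$. Write $f_n=g_n|A_n$ where $g_n=\sum_{i=1}^{k_n}a_i^n\sum_{|t|=i}e_t^*$ with $\|\sum_i a_i^n x_i^*\|_{X^*}\le 1$ and $A_n$ is segment complete. Because $\spp y_n\subseteq\hat t_n$ is (a subset of a branch-initial-segment, restricted to an interval) a segment of $\ttt$, and the $\spp y_n$ are mutually incomparable, the sets $\spp y_n$ can be assumed, after passing to a subsequence, to be level-separated — more precisely, one can arrange $\max^l(\spp y_{n})<\min^l(\spp y_{n+1})$ or handle the bounded-height case separately (if infinitely many $\spp y_n$ live below a fixed level, incomparability forces infinitely many to be pairwise disjoint singleton-type segments at a bounded level, which is impossible since a finitely splitting tree has finitely many nodes below a fixed level). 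So WLOG the supports are level-block. Then $f_n(y_n)=\sum_{t\in\spp y_n\cap A_n}a_{h'(t)}a_t$ where $h'(t)=|t|$, and this only involves coordinates $a_i^n$ with $i$ in the level-interval of $\spp y_n$; since these intervals are disjoint, I can glue: define $g=\sum_n (\text{the relevant block of }g_n)$ acting on the disjoint level-bands. The corresponding functional $\sum_i a_i x_i^*$ in $X^*$ is then a sum of successive blocks each of norm $\le 1$ with disjoint supports; since $(x_i)_i$ is shrinking (hence $(x_i^*)_i$ is a basis and in particular the coefficient functionals are bounded and the partial-sum projections are uniformly bounded), the tails $\sum_{i>N}a_i x_i^*$ converge to $0$ in norm, so in particular the blocks $b_n:=$ ($n$-th block of $\sum a_i x_i^*$) satisfy $b_n\to 0$ in $X^*$.

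Now push this through $\Phi$: by Remark \ref{prp1}, $\Phi$ restricted to any branch subspace $X_b$ is an isometry, and $y_n\in\kkk$ lies essentially in such a branch subspace, so $\Phi(y_n)$ has norm $\le C$ and, crucially, $f_n(y_n)$ can be rewritten as $\langle b_n,\Phi(y_n)\rangle$ up to the segment-complete restriction — here I would use that $f_n(y_n)=g_n(y_n)$ since $y_n$ is supported on a segment $A_n'\supseteq\spp y_n$ and $P_{A_n}$ has norm one (Remark \ref{segcom1}), or more directly that replacing $A_n$ by $\rg y_n$ only helps. Thus $\delta\le f_n(y_n)\le\|b_n\|_{X^*}\cdot C\to 0$, a contradiction. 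I would also need to dispatch the case where we must test against the $e_t^*$'s: but for fixed $t$, $e_t^*(y_n)=0$ for all but finitely many $n$ since the $\spp y_n$ are incomparable hence eventually miss $t$, so this case gives weak-null trivially and does not obstruct anything.

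The main obstacle is the \emph{gluing step}: one must be careful that restricting each $g_n$ to the level-band of $\spp y_n$ and summing produces a genuine element of $B_{X^*}$ (or at least of a bounded multiple), and this is exactly where shrinkingness enters — without it the blocks $b_n$ need not tend to $0$ and the argument collapses. A secondary technical point is arranging the supports to be level-block; incomparability alone gives block-ness with respect to $h$ only after a subsequence, and one must separately rule out the possibility of infinitely many supports accumulating at a bounded level, which follows from the tree being finitely splitting. Everything else (uniform bound $C$ on $\|y_n\|$ from $y_n\in\kkk$, norm-one segment-complete projections, the isometry $\Phi|_{X_b}$) is already available from the results stated above.
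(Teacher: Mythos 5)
Your proposal contains a genuine gap, and it lies precisely in the step you yourself flag as the ``main obstacle'': the gluing of the blocks of the varying $g_n$'s into a single element of $X^*$. After arranging the supports to be level-block, you restrict each $g_n$ to the level-band of $\spp y_n$ and add the restrictions, obtaining successive blocks $b_n$ each of norm at most $2K$ ($K$ the basis constant of $(x_i^*)_i$). But for a general \emph{non-unconditional} basis there is no reason for $\sum_n b_n$ to converge in $X^*$, so the glued object is not a functional at all and one cannot invoke ``tails go to zero.'' Shrinkingness does not repair this: shrinkingness says $(x_i^*)_i$ is a Schauder basis of $X^*$, which gives $b_n\to 0$ \emph{once you know} $\sum_n b_n$ is an element of $X^*$, but gives no a priori bound on $\|\sum_{n\le N}b_n\|$. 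Already in $X=c_0$ (unit vector basis, shrinking), disjointly supported blocks $b_n=e_n^*\in\ell_1$ of norm $1$ have $\|\sum_{n\le N}b_n\|=N$; the glued sum diverges and $b_n\not\to 0$ along a fixed element of $X^*$ simply because there is no such element. There is also a logical problem at the outset: negating weak nullity produces a \emph{single} $x^*\in(\auxa)^*$ (and then, via Lemma \ref{gopointiwise}, a single $f\in\overline{\go}^p$) along a subsequence, not a sequence $f_n\in\go(X)$ that varies with $n$. Having $f_n(y_n)\ge\delta$ with varying $f_n$ is compatible with $y_n\stackrel{w}\to 0$ (it only says the $y_n$ are bounded below in norm), so it cannot feed a contradiction.

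The fix is to drop the contradiction and the gluing entirely and argue directly, which is what the paper does and which your own computation already contains in embryo. Fix a single $f\in\overline{\go}^p$; write $f=\mathcal{X}_A\cdot\sum_i b_i\sum_{|t|=i}e_t^*$ with $g=\sum_i b_i x_i^*\in B_{X^*}$ and $A$ segment complete. Set $z_n=y_n|_A$. Then $f(y_n)=g(\Phi(z_n))$, the sequence $(\Phi(z_n))_n$ is bounded in $X$, and for each fixed $i$ only finitely many $n$ have $\spp y_n\cap A$ meeting level $i$ (incomparability of the $\spp y_n$ plus finite branching of $\ttt$), so $x_i^*(\Phi(z_n))\to 0$. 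Since $(x_i)_i$ is shrinking, $(x_i^*)_i$ is a basis of $X^*$, and bounded plus coordinatewise null gives $g(\Phi(z_n))\to 0$. No gluing, no level-block rearrangement of the $y_n$, no varying $f_n$. The ingredients you identified (the coordinatewise-nullity from incomparability and finite branching, the role of $\Phi$ and of segment-complete projections, the use of shrinkingness to pass from coordinatewise to weak) are all correct and are exactly what the paper uses; they just need to be applied to one fixed $f$ rather than to a glued family.
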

\begin{proof}
Let $(y_n)_n$ be as above. In order to prove that
$(y_n)_n$ is weakly null it is enough to show that $f(y_n)\to 0,
\forall f\in\overline{\go}^p$. Choose $f\in\overline{\go}^p$. By Lemma \ref{gopointiwise} there exist a $g=\sum_{i=1}^{\infty} b_i x_i^*\in B_{X^*}$ and a segment complete $A\subset\ttt$ so that $f=\mathcal{X}_A\sum_{i=1}^{\infty}b_i\sum_{|t|=i}e^*_t$. Let $s^A_n = \spp y_n\cap A$, $z_n=y_n|_A=\sum_{t\in s_n^A}a_te_t$ and observe the following,

\[f(y_n)=\sum_{i=1}^{\infty}b_i\sum_{|t|=i}e^*_t(z_n)=g(\Phi(z_n))\].

Hence, it is enough to show that $g(\Phi(z_n))\to 0$. As $\Phi(z_n)=\sum_{t\in s_n^A}a_t x_{|t|}$ it can be seen that for $i\in\nn$, $x^*_i(\Phi(z_n))=a_t$, if $s_n^A\cap L_i\neq\emptyset$ and zero otherwise. Since $\ttt$ is finitely branching and $s_n^A\perp s_m^A$ for all $n\neq m\in\nn$, we deduce that for a fixed $i\in\nn$ the set $\{n\in\nn: s_n^A\cap L_i\neq\emptyset\}$ is finite. Thus, $x^*_i(z_n)\to 0$. Finally, as $(x_i)_i$ is shrinking we obtain that $g(z_n)\to 0$ and this completes the proof.
\end{proof}

\begin{prop} \label{prp2} For every
reflexive Banach space $X$ with a bimonotone normalized Schauder
basis $(x_i)_i$ the set $\kkk$ is weakly compact.
\end{prop}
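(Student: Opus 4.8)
The plan is to show that $\kkk$ is \emph{weakly sequentially compact} --- every sequence in $\kkk$ admits a subsequence converging weakly to a point of $\kkk$ --- and then to quote the Eberlein--\v{S}mulian theorem, which guarantees that a subset of a Banach space with this property is weakly compact. Since $X$ is reflexive its basis $(x_i)_i$ is shrinking, so Lemma~\ref{incomparablek} is available. Let $(\zeta_n)_n$ be a sequence in $\kkk=\overline{\kkk_\psi^2}^{\|\cdot\|}$, and choose $z_n\in\kkk_\psi^2$ with $\|\zeta_n-z_n\|\to 0$; it then suffices to find a subsequence of $(z_n)_n$ that converges weakly to a point of $\kkk$. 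Write $z_n=y_{t_n}|I_n$ with $\|y_{t_n}\|\le C$ and $I_n$ an interval of $\ttt$; since $h$ is increasing in height along each chain, $\spp z_n$ is a subsegment of $\hat t_n$. If $\limsup_n|t_n|<\infty$, the $z_n$ all lie in the finite-dimensional span of $\{e_t:|t|\le Q\}$ for some fixed $Q$ (here $\ttt$ being finitely splitting is used), so a norm-convergent subsequence settles this case; hence from now on I assume $|t_n|\to\infty$.

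Extending each $t_n$ to a branch of $\ttt$ and using the compactness of $[\ttt]$, I obtain a branch $b^*$ and, along a subsequence, a strictly increasing sequence $r_n\to\infty$, where $r_n$ is the length of the longest common initial segment of $t_n$ and $b^*$. Splitting the interval $I_n$ at level $r_n$, write $z_n=u_n+w_n$ with $u_n=y_{t_n}|(I_n\cap\{t:|t|\le r_n\})$ and $w_n=y_{t_n}|(I_n\cap\{t:|t|>r_n\})$. Because $t_n$ agrees with $b^*$ on its first $r_n$ levels, $u_n$ is supported on $\{b^*|j:j\le r_n\}$; and $\spp w_n$, when nonempty, is contained in $\{t\in\ttt:t_n|(r_n+1)\lesseq t\}$. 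The sets $\{t:|t|\le r_n\}$ and $\{t:|t|>r_n\}$, hence their intersections with $I_n$, are intervals of $\ttt$ in the sense of Definition~\ref{gd4} (this is where the enumeration $h$ is used), so $u_n,w_n\in\kkk_\psi^2\subseteq\kkk$, and $\scu$-unconditionality gives $\|u_n\|,\|w_n\|\le\|y_{t_n}\|\le C$. Now $u_n$ lies in $X_{b^*}=\overline{<e_t:t\in b^*>}^{\|\cdot\|}$, which by Remark~\ref{prp1} is isometric, via $\Phi$, to a closed subspace of the reflexive space $X$ and is therefore reflexive; so a subsequence $u_{n_k}$ converges weakly in $X_{b^*}$, and hence in $\auxa$, to some $u\in X_{b^*}$. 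Since $(r_n)_n$ is strictly increasing, the supports of the $w_n$ are pairwise incomparable, so $w_n\to 0$ weakly by Lemma~\ref{incomparablek}. Therefore $z_{n_k}\to u$ weakly in $\auxa$.

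It remains to check $u\in\kkk$. Each $u_{n_k}$ has the form $\sum_{j=\alpha_k}^{\beta_k}\psi(b^*|j)\,e_{b^*|j}$ with $\alpha_k\le\beta_k$. Moreover $\Phi$ maps $X_{b^*}$ isometrically onto a subspace of $X$ and sends $y_{b^*|m}$ to $\sum_{j\le m}\psi(b^*|j)x_j$, whose norm equals $\|y_{b^*|m}\|$; and $\|y_{b^*|m}\|\le C$ for every $m$, since $y_{b^*|m}$ is a segment-complete restriction of some $y_{t_n}$ with $r_n\ge m$. As the basis of $X$ is boundedly complete, the series $\sum_j\psi(b^*|j)e_{b^*|j}$ converges in $X_{b^*}$. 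Pass to a further subsequence so that $\alpha_k\to\alpha$ and $\beta_k\to\beta$ in $\nn\cup\{\infty\}$. If $\alpha=\infty$, the $u_{n_k}$ are tails of a convergent series, so $u=0$. If $\alpha,\beta<\infty$, the sequence $(u_{n_k})$ is eventually constant and $u=y_{b^*|\beta}|I$, where $I=\{t:h(b^*|\alpha)\le h(t)\le h(b^*|\beta)\}$ is an interval of $\ttt$, so $u\in\kkk_\psi^2$. If $\alpha<\infty=\beta$, then $u_{n_k}\to\sum_{j\ge\alpha}\psi(b^*|j)e_{b^*|j}$ in norm, which is a norm limit of elements of $\kkk_\psi^2$. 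In every case $u\in\kkk$; and since $\|\zeta_{n_k}-z_{n_k}\|\to 0$, the corresponding subsequence of $(\zeta_n)_n$ also converges weakly to $u\in\kkk$. By the Eberlein--\v{S}mulian theorem, $\kkk$ is weakly compact.

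The step I expect to be the main obstacle is this last one: forcing the weak limit to lie in $\kkk$ and not merely in its (automatically weakly compact) weak closure. The decomposition $z_n=u_n+w_n$ is chosen precisely so that all the ``weak mass'' is concentrated on the single branch $b^*$ --- the transverse parts $w_n$ vanishing weakly by incomparability --- and there $\kkk\cap X_{b^*}$ is exactly the norm-closed family of interval-restrictions of the convergent branch vector $\sum_j\psi(b^*|j)e_{b^*|j}$, which is what makes the three-case analysis possible. A recurring subsidiary nuisance is checking that the level-layer restrictions of $y_{t_n}$ are genuine members of $\kkk_\psi^2$, i.e.\ that the relevant layers are intervals of $\ttt$ in the sense of Definition~\ref{gd4}; this rests on $h$ being increasing in height with each level a block of consecutive integers.
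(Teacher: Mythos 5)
Your proof is correct and follows essentially the same strategy as the paper's: split each term of the sequence into a piece supported on a single chain (handled via reflexivity/bounded completeness of the basis of $X$) plus a piece whose supports are pairwise incomparable (handled by Lemma~\ref{incomparablek}), so that the weak limit is carried entirely by the chain. The only cosmetic differences are that you locate the limiting chain $b^*$ via compactness of $[\ttt]$ rather than via the paper's diagonal pointwise-limit argument and its segment $S$, and you spell out the case analysis needed to see that the limit actually lies in $\kkk$, a step the paper leaves implicit.
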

\begin{proof}
Let $(w_n)_n$ be a sequence in $\mathrm{K}$. Up to an arbitrarily small perturbation we may assume
that $\spp w_n$ is finite for all $n\in\nn$. We set $s_n= \spp
w_n$ and we observe that each $s_n$ is a finite segment of $\ttt$.
So, $w_n=\sum_{t\in s_n}a_t e_t$. We may also assume (by passing
to a subsequence if needed) that for each $t\in\ttt$,
$e_t^*(w_n)\to w(t)\in\rr$. We set $S=$\{$t\in\ttt: w(t)\neq 0$\}.
We know that $S$ is a segment of $\ttt$ (finite or infinite).
Thus, we may assume that each $y_n$ has a decomposition $y_n= u_n
+ y_n$ where
\begin{enumerate}
\item[i.] $(\spp u_n)_n$ is an increasing (with respect to
$\subseteq$) sequence of segments of $S$ \item[ii.] $(\spp y_n)_n$
is a sequence of incomparable segments of $\ttt$.\end{enumerate}
We set $w_s=\sum_{t\in S}a_te_t$ if $\mathcal{S}\neq\emptyset$ and
$0$ otherwise. We claim that $w_n\stackrel{w}{\to} w_s$. Lemma
\ref{incomparablek} yields that $(y_n)_n$ is weakly null. To
finish the proof we shall show that $u_n \stackrel{\|\cdot\|}\to
w_s$. Indeed, for every $n\in\nn$ we have that $\spp u_n\lesseq S$
and since $w_n\in\kkk$ we have that $u_n\in\kkk$ for all
$n\in\nn$. Thus, as the basis of $X$ is boundedly complete, $u_n\stackrel{p}\to w_s$ and $\|\Phi(u_n)\|_{X}= \|u_n\|_{\auxa}$
we have that $w_s\in\kkk$ and $u_n\stackrel{\|\cdot\|}\to w_s$.
\end{proof}
\begin{rem}
The connection between the set $\kkk$ and $B_X$ when $X$ is an
arbitrary Banach space with a basis is not completely clear to us.
For example, if one considers $X$ to be $c_0$ with the summing
basis then it turns out that $\kkk$ contains a sequence equivalent
to the standard $\ell^1$ basis. Indeed, we notice that the norming
set in this case becomes $\overline{\go}^p=\{\pm\mathcal{X}_A: A\subseteq\ttt
\text{ segment complete}\}$ where by $\mathcal{X}_A$ we denote the
characteristic function of $A$. We can construct a sequence
$(w_n)_n$ in $\kkk$ which has no weakly Cauchy subsequence. To see this choose a sequence $(t_n)_n\subset \ttt$ with the following properties,
\begin{enumerate}
\item[i.] Each $t_n$ is of the form $t_n=(a_1,...,a_{k_n})$ and $a_{k_n}=a_{t_n}=\frac{1}{2}$;
\item[ii.] $t_n\perp t_m$ for all $n\neq m\in\nn$.
\end{enumerate}
For each $n\in\nn$ set $t'_n=t_n\smallfrown\frac{-1}{2}=(a_1,...,a_{k_n},\frac{-1}{2})$ and
 $w_n=a_{t_n}e_{t_n}+a_{t'_n}e_{t'_n}$. To see that $(w_n)_n$ satisfies the desired property, choose a subsequence$(w_{m_i})_{i\in\nn}$. Let $t(i)=t_{m_i}$ when $i=2k$ and
$t(i)=t'_{m_i}$ when $i=2k-1$. It is
clear that $(t(i))_i$ are mutually incomparable. Therefore, the set
$A=\cup_{i=1}^{\infty} \{t(i)\}$ is segment complete. So the functional
$f=\mathcal{X}_A\in\overline{\go}^p$ estimates
$|f(w_{m_i})-f(w_{m_{i+1}})|=1$ for all $i$. By Rosenthal's
$\ell^1$ theorem \cite{Ro1} we obtain that $(w_n)_n$ is equivalent to
the $\ell^1$ basis.
\end{rem}

\section{Thin subsets of Banach spaces}
\label{thin} Let $T$ be a tree and $\|\cdot\|_{X_T}$ be a
$\mathcal{SC}$-unconditional norm defined on $c_{00}(T)$. Denote
by $X_T$ the completion of $c_{00}(T)$ with respect to
$\|\cdot\|_{X_T}$.  Fix also a function $\psi: T\to
[-1,1]$ and $\kkk_{\psi}$ (referred to as $\kkk$ for simplicity) as in Definition \ref{scuncond}. \\
In this section we present a general method for extending the norm
of $X_T$ to a new norm defined on $c_{00}(T)$ such that the
completion of this space contains $\kkk$ as a \emph{thin} subset.
Namely, the entire section is devoted to the proof of the
following theorem:
\begin{thm}
\label{bthinthm} Suppose that $X_T$ is reflexive and $\kkk$ is a weakly compact subset of $X_T$.
Then there exists a space $\auxx$ such that the following hold:
\begin{enumerate}
\item[1.] The identity operator $I:\auxx\to X_T$ is continuous.
\item[2.] $\kkk\subset I(\auxx)$ and the closed convex hull of $(I^{-1}(\kkk)\cup I^{-1}(- \kkk))$ is s weakly compact and thin subset of $\auxx$.
\end{enumerate}
\end{thm}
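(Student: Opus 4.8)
The plan is to realise $\auxx$ as an $\sxi$-saturated extension of $X_T$, where $\xi<\omega_1$ is chosen so that $X_T$ carries no $\ell^1_\xi$ spreading model; such a $\xi$ exists because $X_T$ is separable and reflexive (\cite{AGR}). I would fix a norming set $G\subseteq B_{X_T^*}$ consisting of finitely supported functionals, symmetric, and closed under restrictions to segment complete subsets of $T$ (possible since, by $\scu$-unconditionality, the projection onto any segment complete set has norm one). Let $G_\xi$ be the minimal subset of $c_{00}(T)$ that contains $G$, is symmetric, is closed under restrictions to segment complete subsets of $T$, and satisfies: whenever $f_1,\dots,f_d\in G_\xi$ have pairwise incomparable segment complete supports ($\spp f_i\perp\spp f_j$) and $(f_1,\dots,f_d)$ is $\sxi$-admissible, then $\tfrac12(f_1+\dots+f_d)\in G_\xi$. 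Put $\|x\|_\xi=\sup\{f(x):f\in G_\xi\}$ and let $\auxx$ be the completion of $(c_{00}(T),\|\cdot\|_\xi)$. Since every interval of $T$ is segment complete, $G_\xi$ is closed under restrictions to intervals, so $(e_t)_{t\in T}$ (enumerated through $h$) is a bimonotone normalised Schauder basis of $\auxx$; and since $G\subseteq G_\xi$ we get $\|x\|\le\|x\|_\xi$ on $c_{00}(T)$, so the identity map $I\colon\auxx\to X_T$ is well defined, injective and of norm $\le1$. This is (1).

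The first genuine step is that $\|\cdot\|$ and $\|\cdot\|_\xi$ agree on every vector whose support is a chain of $T$ (a subset of a segment), hence on $\kkk$. Writing $G_\xi=\bigcup_nG_\xi^n$ for the natural inductive construction, one proves by induction on $n$ that $|f(x)|\le\|x\|$ for $f\in G_\xi^n$ and $x\in c_{00}(T)$ supported on a chain. The base case is the definition of $\|\cdot\|$, together with $\scu$-unconditionality ($\|x|_A\|\le\|x\|$ for $A$ segment complete) for restrictions. The inductive step rests on the elementary fact that a chain meets at most one member of a finite family of pairwise incomparable segment complete sets: if $s_i\in\spp f_i\cap\spp x$ for $i=1,2$, then $s_1,s_2$, lying on a chain, are comparable, forcing $\spp f_1$ and $\spp f_2$ comparable. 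Thus if $f=\tfrac12\sum_jf_j$ then $f(x)=\tfrac12f_{j_0}(x)$ for at most one index $j_0$, so $|f(x)|\le\tfrac12\|x\|$; the restriction case is again closed by $\scu$-unconditionality. Consequently $\|x\|=\|x\|_\xi$ for every chain-supported $x$, and since $\kkk_\psi^2$ (Definition \ref{scuncond}) consists of such vectors, $I$ restricts to an isometry of $\overline{\kkk_\psi^2}^{\,\auxx}$ onto $\kkk$. In particular $\kkk\subseteq I(\auxx)$, and one identifies $I^{-1}(\kkk)$ with this isometric copy; write $W_\xi:=\overline{\co}(I^{-1}(\kkk)\cup -I^{-1}(\kkk))$, computed in $\auxx$ (so $W_\xi$ is bounded in $\auxx$).

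Next I would prove $W_\xi$ is weakly compact in $\auxx$. As it is bounded, by the Eberlein--\v{S}mulian theorem it suffices that every sequence in $\kkk\subseteq\auxx$ has a weakly convergent subsequence, and one repeats the scheme of Proposition \ref{prp2}: after a small perturbation and passing to the pointwise limit one decomposes $w_n=u_n+v_n$, where the $\spp u_n$ form an increasing sequence of initial segments of a single branch and the $\spp v_n$ are pairwise incomparable segments. The vectors $u_n$ lie on one chain, where $\|\cdot\|_\xi=\|\cdot\|$, so the argument of Proposition \ref{prp2} (boundedly complete basis of $X_T$, $\|\Phi(u_n)\|=\|u_n\|$) shows $(u_n)$ is $\|\cdot\|_\xi$-Cauchy with limit (the copy of) a point of $\kkk$. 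For $(v_n)$ one must show $v_n\to0$ weakly in $\auxx$; since $\auxx$ need not be reflexive, this is where $\xi$ enters. By Rosenthal's $\ell^1$ theorem one passes to a subsequence that is weakly Cauchy (in which case it is seen to be weakly null, using that $X_T$ is reflexive and that $G_\xi$-functionals reduce along a single branch on chain vectors) or equivalent to the $\ell^1$ basis; the latter is impossible, because an $\ell^1$ lower estimate for incomparably supported chain vectors of $\kkk$ would, after forming $\sxi$-averages, yield an $\ell^1_\xi$ spreading model whose norms are governed by $\|\cdot\|=\|\cdot\|_\xi$ on chains, hence an $\ell^1_\xi$ spreading model already inside $X_T$ — contrary to the choice of $\xi$. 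Thus every sequence in $\kkk$ converges weakly in $\auxx$, and $W_\xi$ is weakly compact by Krein's theorem.

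Finally, and this is the main obstacle, one shows $W_\xi$ is thin in the sense of \cite{AF}: with $\|\cdot\|_{(m)}$ the Minkowski gauge of $2^mW_\xi+2^{-m}B_{\auxx}$, the DFJP inclusion of the diagonal $\Delta_{W_\xi}$ into $\auxx$ (cf.~\cite{AF}) is strictly singular — equivalently, for every infinite dimensional block subspace $Y$ of $\auxx$ and every $K>0$ there is a normalised block vector $y\in Y$ with $\bigl(\sum_m\|y\|_{(m)}^2\bigr)^{1/2}\ge K$, i.e.\ $y$ far in the interpolation scale from $W_\xi$. The strategy is: given $Y$, pass by a gliding hump to a normalised level-block sequence $(y_k)$ in $Y$ with pairwise incomparable segment complete supports, and take $y$ to be a renormalised $\sxi$-average $\|z\|_\xi^{-1}z$ with $z=\sum_{k\in F}\pm y_k$, $F\in\sxi$; the $\sxi$-saturation forces $\|z\|_\xi\gtrsim|F|$, while any $w\in W_\xi$ with $2^mw$ close to $y$ would be a convex combination of chain vectors of $\kkk$ whose $\auxx$-norms coincide with their $X_T$-norms, so that such a $w$ could exist only if $X_T$ admitted an $\ell^1_\xi$ spreading model; as it does not, $y$ stays far from $2^mW_\xi$ over a range of $m$, making $\|y\|_\Delta$ large. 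Making this precise requires: (i) the standard lower and upper $\sxi$-estimates for averages of block sequences in $\auxx$; (ii) a combinatorial lemma quantifying how a functional of $G_\xi$, and how a convex combination of chain vectors of $\kkk$, can evaluate on such an average — exploiting that on chains every $G_\xi$-functional reduces along a single branch and that $W_\xi$ lives on chains; and (iii) the Schreier-hierarchy input of \cite{AGR}. I expect the passage in (ii)--(iii), from ``$W_\xi$ fails to absorb $\sxi$-averages'' to ``$X_T$ would carry an $\ell^1_\xi$ spreading model'', to be the delicate point; by contrast the preceding steps — continuity of $I$, the norm identity on chains, and weak compactness modulo the $\ell^1_\xi$-avoidance lemma — are routine adaptations of the arguments already developed above.
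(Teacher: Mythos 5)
Your choice of $\xi$, your definition of $G_\xi$ and $\auxx$ (saturating with the $(\tfrac12,\sxi)$-operation on block families with pairwise incomparable segment-complete supports), and your proof of the norm identity on chain-supported vectors all coincide with the paper's construction (Definition~\ref{dxi2}, Remark~\ref{intact}), so part (1) and the identification $I^{-1}(\kkk)\cong\kkk$ are fine. However there are two genuine gaps in part (2).

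\textbf{Weak compactness of $W_\xi$.} Your treatment of the incomparably supported piece $(v_n)$ does not close. Rosenthal's dichotomy gives a weakly Cauchy subsequence or an $\ell^1$-subsequence, but weakly Cauchy does not imply weakly convergent in $\auxx$ (which is not known to be reflexive or weakly sequentially complete at this stage), so the ``weakly Cauchy $\Rightarrow$ weakly null'' branch needs an additional argument. More seriously, the $\ell^1$-branch is refuted by an incorrect transfer: you argue that an $\ell^1$ lower bound for $(v_n)$ in $\auxx$, combined with $\|\cdot\|=\|\cdot\|_\xi$ on chains, yields an $\ell^1_\xi$ spreading model inside $X_T$. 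But the lower estimate $\|\sum_{i\in F}a_iv_i\|_\xi\gtrsim\sum|a_i|$ comes from the $(\tfrac12,\sxi)$-operation functionals, which are not functionals on $X_T$; the combination $\sum_{i\in F}a_iv_i$ is not chain-supported, so the norm identity does not apply to it, and $\|\sum a_iv_i\|_{X_T}$ can be much smaller than $\|\sum a_iv_i\|_\xi$. Indeed, by Remark~\ref{l1x} every seminormalized, level-block, incomparably supported sequence in $\auxx$ is automatically an $\ell^1_\xi$ spreading model in $\auxx$, so if your transfer were valid it would always produce an $\ell^1_\xi$ spreading model in $X_T$ and the whole method would be vacuous. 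The paper instead runs the argument in the opposite direction: since $X_T$ has no $\ell^1_\xi$ spreading model, one finds block convex combinations $z_n$ with $\|z_n\|_{X_T}\to 0$, and then invokes Proposition~\ref{bprop} (the upper $T_\xi$-estimate for level-block sequences of small $X_T$-norm) to deduce weak nullity in $\auxx$. Proposition~\ref{bprop} is the key quantitative tool here and is absent from your weak-compactness argument.

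\textbf{Thinness of $W_\xi$.} Your strategy begins ``given $Y$, pass by a gliding hump to a normalized level-block sequence $(y_k)$ in $Y$ with pairwise incomparable segment complete supports.'' This is not available: a block subspace $Y$ of $\auxx$ consists of arbitrary finitely supported vectors, whose supports need not be segments, let alone pairwise incomparable segment-complete sets. The paper's resolution is not to make the $y_k$'s incomparably supported, but to approximate the elements $w_k\in W^0_\xi$ (which are convex combinations of chain-supported vectors from $\kkk$) by incomparably supported parts $F_kw_k$; this is Proposition~\ref{pthin1}, whose proof is a measure-theoretic argument on $[T]$ in the spirit of \cite{AF}. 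From $\|y_k-\lambda w_k\|<\tfrac18$ and $\|w_k-F_kw_k\|$ small one obtains, via Remark~\ref{l1x} again, that the $y_k$ themselves carry an $\ell^1_\xi$-spreading model, hence $\|y_k\|_{X_T}\to0$ and Proposition~\ref{bprop} gives an upper $T_\xi$-estimate (Corollary~\ref{cthin1}); the contradiction in Theorem~\ref{Tthin} is then produced with a bounded projection built from a $(T,\xi)$-admissible family of functionals. The combinatorial ``delicate point (ii)'' you flag is precisely Proposition~\ref{pthin1}, and it cannot be finessed by choosing the block sequence well; without it (or a replacement) the thinness argument does not go through.
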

The notion of thinness was introduced in \cite{N2} and was
extensively used in \cite{AF} where several methods for proving
that a set satisfies this property were developed. We give the corresponding definition in
subsection \ref{thinness} where we also prove the aforementioned Theorem. Before doing so though we need some preparatory work which is done in the following subsection.

\subsection{Tsirelson type spaces and norms.\\}

We start with some preliminary results concerning
families of finite subsets of $\nn$ and Tsirelson type norms. Most
of these results are well known and have been extensively used in
the relevant literature, with the exception of Lemmas \ref{ltsi1},
\ref{lth1}, \ref{lth2} and Remark \ref{rtsi1} which can be found
in \cite{LM} and were brought to our attention by the authors. We
include this subsection in order to make the text as
self-contained as possible. We start by recalling the following notions concerning families of finite
subsets of $\nn$.
\begin{defn}
\label{dtsi1} Let $\mathcal{M}$ be a family of finite subsets of
$\nn$. $\mathcal{M}$ is called
\begin{enumerate}
\item[i.] Compact if the set of characteristic functions
$\{\mathcal{X}_A:A\in\mathcal{M}\}$ is a compact subset of
$\{0,1\}^{\nn}$ \item[ii.] Hereditary if for every
$A\in\mathcal{M}$ and $B\subseteq A$ we have $B\in\mathcal{M}$
\item[iii.] Spreading if for every
$A=\{t_1<t_2<...<t_r\}\in\mathcal{M}$ and
$B=\{t_1'<t_2'<...<t_r'\}$ with $t_i\leq t_i' \text{ } \forall
i=1,...,r$ we have $B\in\mathcal{M}$.
\end{enumerate}
\end{defn}
\begin{defn}
\label{dtsi2} Let $\mathcal{M}\subseteq [\nn]^{<\omega}$.
\begin{enumerate}
\item[i.] A finite sequence $(E_1,...,E_n)$ of successive and
finite subsets of $\nn$ is called $\mathcal{M}-\text{admissible}$
if there exists $F\in\mathcal{M}$ with $F=\{m_1<m_2<...<m_n\}$
such that $m_1\leq E_1<m_2\leq E_3<...<m_n\leq E_n$ \item[ii.]A
finite sequence $(f_1,...,f_n)$ of vectors in $c_{00}(\nn)$ is
called $\mathcal{M}-\text{admissible}$ if $(\spp f_i)_{i=1}^n$ is
$\mathcal{M}-\text{admissible}$.
\end{enumerate}
\end{defn}
\begin{defn}
\label{dtsi3} Let $\mathcal{F},\mathcal{G}$ be two families of
finite subsets of $\nn$ we define:
\begin{enumerate}
\item[i.] The block sum $\mathcal{F}\oplus\mathcal{G}=\{M\cup N:
M<N, M\in\mathcal{G},N\in\mathcal{F}\}$ \item[ii.] The convolution
$\mathcal{F}\otimes\mathcal{G}=\{\cup_{i=1}^n F_i:F_1<...<F_n, F_i
\in \mathcal{F}, i=1,...,n \text{and } \{\min
F_i\}_{i=1}^n\in\mathcal{G}\}$
\end{enumerate}
\end{defn}
\begin{defn}
\label{dtsi4} The Schreier hierarchy was first defined in
\cite{AA}. It is a set of families $(\sxi)_{\xi<\omega_1}$ of
finite subsets of $\nn$ which can be defined recursively as
follows:
\begin{center}
$\mathcal{S}_0=\{\{t\}:t\in\nn\}\cup\{\emptyset\}$
\end{center}
Let $\couo$ and suppose that $\sxi$ have been defined for all
$\zeta<\xi$ Then
\begin{enumerate}
\item[i.] If $\xi=\zeta+1$ we set
$\sxi=\mathcal{S}_{\zeta}\otimes\mathcal{S}_1$ \item[ii.] If $\xi$
is a limit ordinal then we fix a strictly increasing sequence of
non-limit ordinals $(\xi_n)_n$ with $\sup\xi_n=\xi$ and set
$\sxi=\bigcup_{n=1}^{\infty}\{F\in\mathcal{S}_{\xi_n}:F\geq n\}$
\end{enumerate}
\end{defn}
It can be verified by transfinite induction that each $\sxi$ for
$\couo$ is compact hereditary and spreading. We will need the
following two results found in \cite{LM} concerning the families
$(\sxi)_{\couo}$ which can be proved by transfinite induction.
\begin{lem}
\label{ltsi1} For every ordinal $\couo$ and $M\in[\nn]$ we have
\begin{enumerate}
\item[i.] $[M]^{\leq 3}\otimes\sxi\subseteq\sxi\otimes [M]^{\leq
2}$ \item[ii.] If $\min M\geq3$, then $[M]^{\leq 3}\otimes
(\sxi\oplus [M]^{\leq 1})\subseteq\sxi\otimes [M]^{\leq 3}$
\end{enumerate}
\end{lem}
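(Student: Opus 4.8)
The plan is to prove both inclusions simultaneously by transfinite induction on $\xi<\omega_1$, since the successor and limit steps of (i) and (ii) naturally feed into one another. For the base case $\xi=0$ one has $\sss_0$ consisting of singletons (and $\emptyset$), so $[M]^{\leq 3}\otimes\sss_0 = [M]^{\leq 3}$ while $\sss_0\otimes[M]^{\leq 2}\supseteq [M]^{\leq 2}$; a small check using that any $F\in[M]^{\leq 3}$ can be written as a union of at most $2$ successive singletons indexed by a set in $\sss_0$ — actually one rewrites $F=\{m_1<m_2<m_3\}$ as $\{m_1\}\cup\{m_2,m_3\}$, but since the blocks in a $\sss_0\otimes[M]^{\leq 2}$-convolution must themselves lie in $\sss_0$, i.e.\ be singletons, one instead observes $[M]^{\leq 3}\subseteq [M]^{\leq 2}\otimes[M]^{\leq 2}$ and handles the degenerate cases directly; for (ii) the hypothesis $\min M\geq 3$ is what lets one absorb the extra $[M]^{\leq 1}$ summand. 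The precise bookkeeping here is routine but must be done carefully because the definitions of $\oplus$ and $\otimes$ in Definition \ref{dtsi3} order the families in a specific way.

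For the successor step, assume (i) and (ii) hold for $\zeta$ and set $\xi=\zeta+1$, so $\sxi = \mathcal{S}_\zeta\otimes\sss_1$. The key identity to exploit is associativity-type behaviour of $\otimes$: a set in $[M]^{\leq 3}\otimes(\mathcal{S}_\zeta\otimes\sss_1)$ is a union $\bigcup_{i=1}^n F_i$ of successive sets $F_i\in[M]^{\leq 3}$ whose minima form a set in $\mathcal{S}_\zeta\otimes\sss_1$; I would regroup the $F_i$ according to the $\sss_1$-partition of those minima, apply the inductive hypothesis (i) for $\zeta$ to each group of at most $3$ sets — here using $[M]^{\leq 3}\otimes\mathcal{S}_\zeta\subseteq\mathcal{S}_\zeta\otimes[M]^{\leq 2}$ — and then reassemble, noting that a union of sets from $[M]^{\leq 2}$ whose block-minima lie in $\sss_1$ lands in $\mathcal{S}_\zeta\otimes[M]^{\leq 2}$ after one more convolution step. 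For (ii) at $\xi=\zeta+1$ the extra $[M]^{\leq 1}$ tail attached via $\oplus$ gets absorbed by (ii) for $\zeta$, with the hypothesis $\min M\geq 3$ propagating because $\min M$ only gets larger under the relevant truncations.

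For the limit step, fix the defining sequence $(\xi_n)_n\nearrow\xi$ so that $\sxi=\bigcup_n\{F\in\mathcal{S}_{\xi_n}:F\geq n\}$. Given $F\in[M]^{\leq 3}\otimes\sxi$, it is $\bigcup_{i=1}^k F_i$ with $\{\min F_i\}\in\sxi$, hence $\{\min F_i\}\in\mathcal{S}_{\xi_n}$ for some $n$ with $n\leq\min F_1$; applying the inductive hypothesis (i) for $\xi_n$ gives membership in $\mathcal{S}_{\xi_n}\otimes[M]^{\leq 2}$, and the threshold condition $F\geq n$ together with $n\leq\min F$ lets us conclude the resulting set lies in $\sxi\otimes[M]^{\leq 2}$; part (ii) is analogous, again using $\min M\geq 3$ to control the tail. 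The main obstacle I anticipate is the limit step's interaction between the thresholds "$F\geq n$'' in the definition of $\sxi$ and the block structure forced by $\otimes$ — one must verify that regrouping does not destroy the threshold inequalities — but since each $\sss_\zeta$ is spreading (as noted after Definition \ref{dtsi4}), the inequalities are preserved under the passage to larger minima, which is exactly what the spreading property buys.
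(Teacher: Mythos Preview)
Your overall strategy --- transfinite induction on $\xi$ --- is exactly what the paper indicates (it gives no proof, only the remark that the result ``can be proved by transfinite induction'' and a citation to \cite{LM}), so on the level of approach you are aligned with the paper.

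There is, however, a concrete gap in your base case, and it is not just a matter of sloppy writing: part (i) is \emph{false} for $\xi=0$ as stated. Indeed, by Definition~\ref{dtsi3} one has $[M]^{\leq 3}\otimes\sss_0=[M]^{\leq 3}$ (a single block of size $\leq 3$) while $\sss_0\otimes[M]^{\leq 2}=[M]^{\leq 2}$ (at most two singletons), so the inclusion fails. Your own paragraph detects the difficulty --- you notice that the blocks on the right must be singletons --- but the attempted fix ``$[M]^{\leq 3}\subseteq [M]^{\leq 2}\otimes[M]^{\leq 2}$'' does not land in $\sss_0\otimes[M]^{\leq 2}$ and so does not rescue the argument. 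The same issue arises for (ii) at $\xi=0$. The induction must therefore be anchored at $\xi=1$, with a direct verification there: if $F\in[M]^{\leq 3}\otimes\sss_1$ then $|F|\leq 3\min F$, and taking $G_1$ to be the first $\min F$ elements of $F$ one gets $G_1\in\sss_1$ and, since the elements of $F$ are distinct integers $\geq\min F$, the remainder $G_2$ satisfies $\min G_2\geq 2\min F\geq |G_2|$, so $G_2\in\sss_1$ as well; part (ii) at $\xi=1$ is analogous using $\min M\geq 3$.

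A smaller point: in your successor step, after applying the inductive hypothesis to each $\sss_1$-group $H_j$ you obtain $H_j\in\sss_\zeta\otimes[M]^{\leq 2}$, i.e.\ $H_j=G_{j,1}\cup G_{j,2}$ with $G_{j,l}\in\sss_\zeta$; the reassembly must then split the (at most $2k\leq 2\min F$) successive sets $G_{j,l}$ into two consecutive groups whose minima each form an $\sss_1$-set, which is the same pigeonhole count as in the $\xi=1$ base. Your sentence ``a union of sets from $[M]^{\leq 2}$ whose block-minima lie in $\sss_1$ lands in $\sss_\zeta\otimes[M]^{\leq 2}$'' has the families reversed and should be rewritten to target $(\sss_\zeta\otimes\sss_1)\otimes[M]^{\leq 2}$.
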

\begin{rem}
\label{rtsi1} By Lemma \ref{ltsi1} we have that $\forall
M\in[\nn]$ and $\couo$ \begin{center}$[M]^{\leq 8}$
$\otimes(\sxi\otimes [M]^{\leq 2})\subseteq ([M]^{\leq 3}\otimes
([M]^{\leq 3}\otimes\sxi))\otimes [M]^{\leq 2}\subseteq
(\sxi\otimes [M]^{\leq 4})\otimes [M]^{\leq 2}\subseteq\sxi\otimes
[M]^{\leq 8}$.\end{center}
\end{rem}
\noindent{\textbf{The Spaces} $T(\theta,\mathcal{F})$\\
Let $0<\theta<1$ and $\mathcal{F}$ be a compact hereditary family
of finite subsets of $\nn$.
\begin{defn}
\label{dth1} Let $G_{\theta,\mathcal{F}}$ be the minimal subset of
$c_{00}(\nn)$ such that
\begin{enumerate}
\item[i.] $\pm e_n\in$\gft$\forall n\in\nn$ \item[ii.]\gft is
closed under the $(\theta,\mathcal{F})$-operation. That is if
$(f_i)_{i=1}^d$ is an $\mathcal{F}$-admissible family in \con then
$\theta\sum_{i=1}^d f_i\in$ \gft.
\end{enumerate}
The space $T(\theta, \mathcal{F})$ is the completion of \con under
the following norm \begin{center}$\forall x\in$ \con we set
$\|x\|_{(\theta, \mathcal{F})}=\sup\{f(x): f\in
G_{\theta,\mathcal{F}}\}$.\end{center} \end{defn} Detailed
expositions of the $T(\theta,\mathcal{F})$ type spaces can be
found in \cite{AT}. In the sequel we shall denote
$T_{\xi}=T(\frac{1}{2}, \sxi)$, $T_{\xi}^1=T(\frac{1}{2},
\sxi\otimes [\nn]^{\leq 2})$ and $T_{\xi}^2=T(\frac{1}{2},
\sxi\oplus [\nn]^{\leq 1})$. The following two Lemmas are results
in \cite{LM} but for the sake of completion we include their
proofs here.
\begin{lem}
\label{lth1} Let $\couo$. Then for every finite sequence
$(b_i)_{i=1}^k$ of scalars we have \begin{enumerate} \item[I.]
$\|\sum_{i=1}^k b_i e_i\|_{T_{\xi}^1}\leq 8\|\sum_{i=1}^k b_i
e_i\|_{T_{\xi}}$ \item[II.]$\|\sum_{i=1}^k b_i
e_i\|_{T_{\xi}^2}\leq 3\|\sum_{i=1}^k b_i e_i\|_{T_{\xi}}$
\end{enumerate}
Where by $(e_i)_{i\in\nn}$ we denote the standard Hamel basis of
\con.
\end{lem}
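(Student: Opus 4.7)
The plan is to establish both inequalities by induction on the tree-complexity of a functional in the appropriate norming set. Recall that each $f\in G_{\theta,\mathcal{F}}$ admits a tree analysis whose leaves are $\pm e_n^*$ and whose internal nodes apply the $(\theta,\mathcal{F})$-operation. I would induct on the depth of this tree for $f\in G_{T_{\xi}^1}$, and correspondingly for $f\in G_{T_{\xi}^2}$.

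The workhorse behind the estimate is the standard Tsirelson-type inequality: if $(E_i)_{i=1}^n$ is an $\sxi$-admissible sequence of successive finite subsets of $\nn$, then
\[\sum_{i=1}^n \|E_i x\|_{T_\xi} \leq 2\,\|x\|_{T_\xi}.\]
This is obtained by choosing norming functionals $y_i^*\in G_{T_\xi}$ with $\spp y_i^*\subseteq E_i$ attaining $\|E_i x\|_{T_\xi}$, and observing that $\tfrac{1}{2}\sum_i y_i^*$ lies in $G_{T_\xi}$ by $\sxi$-admissibility of $(y_i^*)$.

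For part (II), given $f=\tfrac{1}{2}\sum_{i=1}^d f_i\in G_{T_\xi^2}$ with $(f_i)$ being $\sxi\oplus[\nn]^{\leq 1}$-admissible, the definition of $\sxi\oplus[\nn]^{\leq 1}$ forces the indexing set to be either already $\sxi$-admissible or else of the form $\{m_1\}\cup F'$ with $m_1<F'\in\sxi$. In the second case, $(f_2,\ldots,f_d)$ is $\sxi$-admissible and $f$ splits as $\tfrac{1}{2}f_1+\tfrac{1}{2}\sum_{i=2}^d f_i$. Applying the inductive hypothesis to each $f_i$ restricted to its support $E_i$, and combining via the displayed Tsirelson inequality for the $\sxi$-admissible tail, produces the bound once the leading term $\tfrac{1}{2}f_1$ is absorbed.

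For part (I), given $f=\tfrac{1}{2}\sum_{i=1}^d f_i\in G_{T_\xi^1}$ with $(f_i)$ being $\sxi\otimes[\nn]^{\leq 2}$-admissible, Definition \ref{dtsi3} furnishes a decomposition of the index set as a disjoint union of blocks $F_1<\cdots<F_k$ with $|F_j|\leq 2$ and $\{\min F_j\}_{j=1}^k\in\sxi$. Grouping $\tilde{f}_j=\sum_{m_i\in F_j}f_i$ produces an $\sxi$-admissible family $(\tilde{f}_j)_{j=1}^k$ of sums of at most two of the $f_i$'s. Lemma \ref{ltsi1} (and Remark \ref{rtsi1} when this reorganization must be iterated across levels of the tree) ensures this regrouping is possible without exiting the $\sxi$-class. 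The inductive hypothesis applied to each $f_i$ on $E_i$, combined with the $\sxi$-admissibility of $(\tilde{f}_j)$ and the displayed inequality, yields the estimate; the doubling inherent in $[\nn]^{\leq 2}$ accounts for a factor of $2$ beyond that of part (II).

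The main obstacle is arranging the induction so the constants $3$ and $8$ do not compound with tree depth. The delicate point is that when one inductively bounds $f_i$ by the same absolute constant times $\|E_i x\|_{T_\xi}$, the subsequent sum over $i$ must cost only an absolute factor via the Tsirelson identity above, together with a bounded combinatorial factor ($1$ for the pure $\sxi$-part in II, $2$ for the pairing in I). Careful bookkeeping of these absolute factors, together with the inclusions provided by Lemma \ref{ltsi1} and Remark \ref{rtsi1}, is what delivers a depth-independent estimate and hence the desired inequalities.
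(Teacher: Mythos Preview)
Your inductive scheme, as written, does not close: the constants blow up with depth. Take Part II and suppose the inductive hypothesis gives $|f_i(x)|\leq 3\|E_i x\|_{T_\xi}$ for each $f_i$. In the case where the index set has the form $\{m_1\}\cup F'$ with $F'\in\sxi$, your computation yields
\[
|f(x)|\;\leq\;\tfrac{3}{2}\|E_1 x\|_{T_\xi}+\tfrac{3}{2}\sum_{i\geq 2}\|E_i x\|_{T_\xi}\;\leq\;\tfrac{3}{2}\|x\|_{T_\xi}+\tfrac{3}{2}\cdot 2\|x\|_{T_\xi}=\tfrac{9}{2}\|x\|_{T_\xi},
\]
which does not recover $3\|x\|_{T_\xi}$; the phrase ``once the leading term $\tfrac12 f_1$ is absorbed'' conceals exactly this unrecoverable loss. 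Part I is worse: grouping the $f_i$ into pairs $\tilde f_j$ and applying the Tsirelson inequality to the $\sxi$-admissible family $(\tilde E_j)$ gives $|f(x)|\leq \tfrac{1}{2}\cdot 2\cdot C\cdot 2\|x\|_{T_\xi}=2C\|x\|_{T_\xi}$, so no finite $C$ closes the induction. Inducting on the norm estimate $|f(x)|\leq C\|x\|_{T_\xi}$ cannot work here.

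The paper's argument avoids this by inducting on a \emph{structural} statement rather than a norm bound: one shows that every $f\in G_{T_\xi^1}$ (resp.\ $G_{T_\xi^2}$) decomposes as $f=g_1+\cdots+g_l$ with $l\leq 8$ (resp.\ $l\leq 3$), the $g_j$ successive, and each $g_j\in G_{T_\xi}$. At the inductive step, if $f=\tfrac12\sum_i f_i$ with $(f_i)$ an $\sxi\otimes[\nn]^{\leq 2}$-admissible family and each $f_i=\sum_{j\leq l_i}g_j^i$ with $l_i\leq 8$ and $g_j^i\in G_{T_\xi}$, then the set $\bigcup_i\{\min g_j^i:j\leq l_i\}$ lies in $[\nn]^{\leq 8}\otimes(\sxi\otimes[\nn]^{\leq 2})$, and Remark \ref{rtsi1} gives exactly the inclusion $[\nn]^{\leq 8}\otimes(\sxi\otimes[\nn]^{\leq 2})\subseteq\sxi\otimes[\nn]^{\leq 8}$. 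This lets one regroup all of the $g_j^i$ into at most eight successive blocks $B_1<\cdots<B_k$, each $B_m\in\sxi$; setting $g^{(m)}=\tfrac12\sum_{\min g_j^i\in B_m}g_j^i$ produces $g^{(m)}\in G_{T_\xi}$ with $f=\sum_m g^{(m)}$, and the induction closes with the same bound $l\leq 8$. The norm inequality then follows trivially. You cite Lemma \ref{ltsi1} and Remark \ref{rtsi1} in the right place, but their role is this structural regrouping at each level, not a bookkeeping device inside a norm-estimate induction.
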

\begin{proof}
I. By Remark \ref{rtsi1} we have that $[\nn]^{\leq 8}\otimes
(\sxi\otimes [\nn]^{\leq 2})\subseteq \sxi\otimes [\nn]^{\leq 8}$.
Let  $G_{T_{\xi}^1}$ be the norming set of ${T_{\xi}^1}$ and let
$f\in\ G_{T_{\xi}^1}$. We will define $g_1<...<g_l$ with $l\leq 8$
and $g_i\in\ G_{T_{\xi}}, i=1,...,l$ such that $f=\sum_{i=1}^l
g_i$. We use induction on the complexity of $f$. Let $f=\pm e_n$
for some $n\in\nn$,then there is nothing to prove. Let
$f=\frac{1}{2}\sum_{i=1}^d$ such that
\begin{enumerate} \item[i.] $f_1<...<f_d$ \item[ii.]
$(f_i)_{i=1}^d$ is an $\sxi\otimes [\nn]^{\leq 2}$ admissible
sequence \item[iii.] For every $f_i$ there exists a sequence
$g_1^i<...<g_{l_i}^i$ such that $g_j^i\in G_{T_{\xi}}$ for
$j=1,...,l_i$ and $f_i=\sum_{j=1}^{l_i} g_j^i$
\end{enumerate}
Now since $\{\min f_i\}_{i=1}^d\in\mathcal{S}_{\xi}\otimes
[\nn]^{\leq 2}$ by Remark \ref{rtsi1} that $\bigcup_{i=1}^d\{\min
g_j^i:j\leq l_i\}\in\mathcal{S}_{\xi}\otimes [\nn]^{\leq 8}$. Thus
there exist $B_1,...,B_k$ with $k\leq 8$ such that
\begin{enumerate}\item[1.] $B_m\in\sxi$ for all $m=1,...,k$ and
\item[2.]$B_1<...<B_k$ \end{enumerate}such that
$\bigcup_{i=1}^d\{\min g_j^i:j\leq l_i\}=\bigcup_{m=1}^k B_m$. By
setting $g^{(m)}=\frac{1}{2}\big{(}\sum_{\min g_j^i\in
B_m}g_i^j\big{)}$ we get \begin{enumerate} \item[a.]
$f=\sum_{m=1}^k g^{(m)}$ and \item[b.] $g^{(m)}\in
G_{T_{\xi}}$\end{enumerate} as desired.\\
II. By Lemma \ref{ltsi1} we have $[\nn]^{\leq 3}\otimes
(\sxi\oplus [\nn]^{\leq 1})\subseteq\sxi\otimes [\nn]^{\leq 3}$.
Using the same arguments as in the proof of I. we conclude that
for every $f\in G_{T_{\xi}^2}$ there exist $g_1<g_2<g_3$ with
$g_i\in G_{T_{\xi}}$ for i=1,2,3 such that $f=\sum_{i=1}^3 g_i$.
\end{proof}
\begin{defn}
\label{dth2} Let $M\in[\nn]$ with
$M=\{m_1<m_2<...\}$.\begin{enumerate}\item[i.] For every $m\in M$
we set $m^{+}$ to be the immediate successor of $m$ in $M$, that
is $m_i^{+}=m_{i+1}$ for all $i\in\nn$\item[ii.] If $A\in
[M]^{\leq\omega}$ then we set $A^{+}=\{m^{+}:m\in A\}$
\end{enumerate}
\end{defn}
\begin{lem}
\label{lth2} Let $\couo$ and $M\in[\nn]$ with $M=\{m_1<m_2<...\}$.
Then for every finite sequence $(b_i)_{i=1}^k$ of scalars we have
$\|\sum_{i=1}^k b_i e_{m_i}\|\leq\|\sum_{i=1}^k b_i
e_{m_i^{+}}\|\leq 3\|\sum_{i=1}^k b_i e_{m_i}\|$ where all norms
are considered in the space $T_{\xi}$.
\end{lem}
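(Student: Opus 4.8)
The plan is to prove the two inequalities separately. The left-hand one is routine: since $\sxi$ is spreading, shifting the support of a vector to the right within the basis cannot decrease its $T_\xi$-norm. Concretely, given $f\in G_{T_\xi}$ with a tree analysis $(f_s)_s$, I would keep only the leaves $\pm e^*_n$ with $n\in\{m_1,\dots,m_k\}$, prune away every other leaf (and every node all of whose leaves are discarded), and replace each surviving leaf $\pm e^*_{m_i}$ by $\pm e^*_{m_i^+}$. At an internal surviving node, if $q_1<\dots<q_r$ are the minima of the supports of the surviving children, then pruning together with the heredity of $\sxi$ gives $\{q_1,\dots,q_r\}\in\sxi$, and $\{q_1^+,\dots,q_r^+\}$ is a spread of it (note $q_j^+<q_{j+1}^+$ because $q_j<q_{j+1}$ lie in $M$), hence again in $\sxi$. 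So the shifted functional $\tilde f$ lies in $G_{T_\xi}$ and satisfies $\tilde f\big(\sum_i b_i e_{m_i^+}\big)=f\big(\sum_i b_i e_{m_i}\big)$; taking suprema over $f$ yields $\|\sum_i b_i e_{m_i}\|\le\|\sum_i b_i e_{m_i^+}\|$.

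For the right inequality I would prove the sharper statement
\[\Big\|\sum_{i=1}^k b_i e_{m_i^+}\Big\|_{T_\xi}\le\Big\|\sum_{i=1}^k b_i e_{m_i}\Big\|_{T_\xi^2},\]
after which Lemma \ref{lth1}(II) bounds the right-hand side by $3\|\sum_i b_i e_{m_i}\|_{T_\xi}$. Fixing $f\in G_{T_\xi}$ with a tree analysis, I would this time prune it down to the leaves supported on $\{m_2,\dots,m_{k+1}\}=\{m_i^+:1\le i\le k\}$ and push each surviving leaf $\pm e^*_m$ down to $\pm e^*_{m^-}$, where $m^-$ denotes the immediate predecessor of $m$ in $M$ (well-defined since $m\ge m_2$). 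The resulting functional $\tilde f$ satisfies $\tilde f\big(\sum_i b_i e_{m_i}\big)=f\big(\sum_i b_i e_{m_i^+}\big)$, so everything reduces to showing $\tilde f\in G_{T_\xi^2}$.

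The crux is the admissibility check at an internal node of $\tilde f$. With $q_1<\dots<q_r$ (all in $M$) the minima of the supports of the surviving children, pruning and heredity again give $\{q_1,\dots,q_r\}\in\sxi$. The key elementary observation is that $q_j\le q_{j+1}^-$ for $1\le j\le r-1$, since $q_j$ is an element of $M$ lying strictly below $q_{j+1}$ and hence does not exceed the predecessor of $q_{j+1}$ in $M$; consequently $\{q_2^-,\dots,q_r^-\}$ is a spread of $\{q_1,\dots,q_{r-1}\}\in\sxi$ and therefore belongs to $\sxi$, while $q_1^-<q_1\le q_2^-$. Thus $\{q_1^-,\dots,q_r^-\}$ is the union of a singleton with a member of $\sxi$ strictly above it, i.e. it lies in $\sxi\oplus[\nn]^{\le1}$. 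Hence every internal node of $\tilde f$ is produced by a legitimate $(\frac{1}{2},\sxi\oplus[\nn]^{\le1})$-operation and $\tilde f\in G_{T_\xi^2}$, which gives the displayed inequality and finishes the proof.

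The one genuine obstacle is precisely this right inequality, and within it the realization that the down-shift $m\mapsto m^-$ may force the shifted minimum $q_1^-$ to coincide with the old minimum $q_1$: it is exactly this single possible collision that $\sxi\oplus[\nn]^{\le1}$ (rather than $\sxi$ itself) is designed to accommodate, and it is what costs the constant, quantified as $3$ through Lemma \ref{lth1}(II). Everything else — keeping track of the pruning of the tree analysis, and the degenerate cases (an internal node with a single surviving child, or $f$ itself a basis functional) — is routine.
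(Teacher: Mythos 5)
Your proof is correct and follows essentially the same route as the paper's. The paper's proof also rests on the single combinatorial observation that if $A^{+}\in\sxi$ then $A\setminus\min A\in\sxi$ (by heredity and spreading), hence $A\in\sxi\oplus[\nn]^{\leq 1}$, deduces from this that $\|\sum_i b_i e_{m_i^{+}}\|_{T_{\xi}}\leq\|\sum_i b_i e_{m_i}\|_{T_{\xi}^2}$, establishes the left inequality directly from spreading, and then closes with Lemma \ref{lth1}(II). What you have added is the explicit tree-analysis bookkeeping (pruning to the relevant leaves, shifting, and re-checking admissibility node by node) that the paper compresses into the one-line assertion "there is an $f'\in G_{T_{\xi}^2}$ such that $f(\sum b_i e_{m_i^{+}})\leq f'(\sum b_i e_{m_i})$"; this is a faithful unpacking, not a different argument.
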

\begin{proof}
Let $A\in [M]^{\leq\omega}$. Suppose that $A^{+}=\{m^{+}:m\in
A\}\in\sxi$. Since $\sxi$ is a spreading family it follows that
$A\setminus \min A\in\sxi$. Thus, $A\in\sxi\oplus [M]^{\leq 1}$.
So, if we consider $f\in G_{T_{\xi}}$ with the property $\spp
f\subseteq \{m_i^{+}:i=1,...,k\}$ there is an $f'\in
G_{T_{\xi}^2}$ such that $f(\sum_{i=1}^k b_i e_{m_i^{+}})\leq
f'(\sum_{i=1}^k b_i e_{m_i})$ and this gives
\begin{enumerate}\item[i.]$\|\sum_{i=1}^k b_i
e_{m_i^{+}}\|_{T_{\xi}}\leq\|\sum_{i=1}^k b_i
e_{m_i}\|_{T_{\xi}^2}$.\end{enumerate} On the other hand since
$\sxi$ is spreading it is easily verified that
\begin {enumerate}\item[ii.]$\|\sum_{i=1}^k b_i e_{m_i}\|_{T_{\xi}}\leq\|\sum_{i=1}^k b_i
e_{m_i^{+}}\|_{T_{\xi}}$\end{enumerate} combining i. and ii. we
have \begin{center}$\|\sum_{i=1}^k b_i
e_{m_i}\|_{T_{\xi}}\leq\|\sum_{i=1}^k b_i
e_{m_i^{+}}\|_{T_{\xi}}\leq\|\sum_{i=1}^k b_i
e_{m_i}\|_{T_{\xi}^2}$\end{center} and by Lemma \ref{lth1} we get
the desired.
\end{proof}
\subsection{The norming set $G_{\xi}$.\\}

In this subsection starting with $X_T$, as in the introductory
paragraph of the section, assuming that $X_T$ does not contain an
isomorphic copy of $\ell^1$ we define $\auxx$ and prove that it
satisfies the first two properties of $Y$ mentioned in Theorem
\ref{bthinthm}. Namely, we show that the identity map $I:\auxx\to
X_T$ is continuous and that the set $I^{-1}(\kkk)$ is a weakly
compact subset of $\auxx$. We start with some well known results
concerning $\ell^1$-spreading models.
\begin{defn}
 \label{dxi1} A bounded sequence $(x_n)_n$ in a Banach space $Y$
is an $\ell_{\xi}^1$-spreading model, for $\couo$, if there exists
a constant $C>0$ such that \begin{center} $\|\sum_{i\in F}a_i
x_i\|\geq C\sum_{i\in F}|a_i|$ \end{center} for every $F\in\sxi$
and all choices of scalars $(a_i)_{i\in F}$.
\end{defn}
The following is a well known result and for its proof we refer
the interested reader to \cite{AMT}.

\begin{lem}
\label{lxi1} If a separable Banach space $Y$ contains
$\ell_{\xi}^1$-spreading model, for every $\couo$, then $Y$
contains an isomorphic copy of $\ell^1$.
\end{lem}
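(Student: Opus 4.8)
The plan is to argue by contradiction, exploiting the countable ordinal index machinery that governs $\ell^1$-spreading models. Suppose $Y$ is a separable Banach space which, for every $\couo$, contains some sequence that is an $\ell^1_\xi$-spreading model (with its own constant $C_\xi > 0$), yet $Y$ does not contain an isomorphic copy of $\ell^1$. By James' distortion theorem, the failure of $Y$ to contain $\ell^1$ is equivalent to the following: for every bounded sequence $(z_n)_n$ in $Y$ and every $\delta > 0$, there is a normalized block (or convex) combination $w$ of a subsequence with $\|w\| < \delta$; more precisely, no subsequence of any bounded sequence is uniformly equivalent to the $\ell^1$ basis, and in fact by a standard argument one gets arbitrarily small normalized convex combinations of tails. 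The strategy is to convert the hypothesis into a \emph{single} sequence in $Y$ that admits $\ell^1_\xi$-spreading models \emph{for all $\xi$ simultaneously with a uniform constant}, which by a known index/stabilization result forces an $\ell^1_{\omega_1}$-type behaviour — i.e. an actual $\ell^1$ inside $Y$ — contradicting the assumption.

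Concretely, the first step is a normalization and diagonalization: by passing to seminormalized sequences and rescaling, one may assume each witnessing sequence $(x^\xi_n)_n$ lies in the unit sphere and the spreading-model constant is, say, bounded below by a fixed $c>0$ after a further reduction (here one uses that a seminormalized weakly-null-type sequence either has an $\ell^1$ subsequence — done — or, by Rosenthal's $\ell^1$ theorem and passing to a weakly Cauchy difference sequence, one may assume the $x^\xi_n$ are weakly null; then the relevant constant can be normalized). The second step is the key combinatorial input: the Schreier families $(\sxi)_{\couo}$ are cofinal in the hierarchy of "spreading model complexities'', and there is an ordinal-index $\beta((x_n)_n)$ attached to any seminormalized sequence measuring the largest $\xi$ for which a subsequence is an $\ell^1_\xi$-spreading model. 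This index takes values in $\omega_1$; if it is $< \omega_1$ for every seminormalized weakly null sequence in $Y$, then since $Y$ is separable one can bound it uniformly — this is the Bourgain-type boundedness principle, applied to the analytic set of sequences together with their indices — by some fixed countable ordinal $\eta$. But our hypothesis produces, for $\xi = \eta+1$, a sequence in $Y$ that \emph{is} an $\ell^1_{\eta+1}$-spreading model, whose index is therefore $\geq \eta+1 > \eta$, a contradiction. Hence some seminormalized (weakly null) sequence in $Y$ has index $\omega_1$, and a final absorption argument (using that $\sxi \otimes \mathcal{S}_1$-type operations can be iterated along such a sequence) extracts from it an actual isomorphic copy of $\ell^1$.

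The main obstacle is the second step: making precise and citing correctly the \emph{uniform boundedness of the $\ell^1$-spreading-model index} over all seminormalized sequences in a fixed separable space that does not contain $\ell^1$. This is exactly the content of the descriptive-set-theoretic analysis in \cite{AMT} (and related work of Argyros--Mercourakis--Tsarpalias), where one shows the relevant map "sequence $\mapsto$ its Schreier spreading-model index'' is a $\PB$-rank on an analytic set, hence bounded on any analytic subset, in particular on the set of all normalized weakly null sequences in $Y$. Granting that tool — which is legitimate since we are told this is a "well known result'' with proof referred to \cite{AMT} — the rest is the bookkeeping above: reducing to weakly null sequences via Rosenthal's theorem, invoking the boundedness to get the uniform ordinal $\eta$, and deriving the contradiction from the case $\xi = \eta + 1$. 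I would present the argument precisely in that order: (i) reduce to weakly null witnesses; (ii) state the index and its boundedness over $Y$; (iii) contradict at level $\eta+1$.
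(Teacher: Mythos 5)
The paper does not actually prove this lemma; it labels it as well known and refers to \cite{AMT}, so there is no ``paper's proof'' to match against. Judged on its own terms, your sketch identifies the right engine — an ordinal-index boundedness argument — but the particular machinery you set up is heavier than needed and contains one genuine gap.

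The gap is in your preprocessing. You propose to reduce to weakly null witnesses by invoking Rosenthal's $\ell^1$ theorem and passing to a difference sequence of a weakly Cauchy subsequence. But it is not at all automatic that if $(x_n)$ is an $\ell_\xi^1$-spreading model and $(x_{n_{2k}}-x_{n_{2k-1}})_k$ is its weakly null ``difference'' sequence, then the latter is still an $\ell_\xi^1$-spreading model; the lower $\ell^1$-estimate over $\sxi$ does not survive the subtraction in any obvious way (one would have to argue through a spread/convolution comparison such as Lemma \ref{ltsi1}, and you do not). Likewise, your claim that ``the relevant constant can be normalized'' so that all witnesses have a common lower constant $c>0$ is asserted but not justified — and, as it turns out, it is not needed either.

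A cleaner and more direct route, which is what the cited sources effectively do, is to put the ordinal rank on $\ell^1$-\emph{trees in} $Y$ rather than on sequences. Let $T_{\ell^1,K}(Y)$ be the tree of finite sequences in $B_Y$ that are $K$-equivalent to the unit vector basis of $\ell^1_n$, and $I_{\ell^1}(Y)=\sup_{K}\ o\big(T_{\ell^1,K}(Y)\big)$. Bourgain's boundedness theorem says that if $Y$ is separable and $\ell^1\not\hookrightarrow Y$ then each $o\big(T_{\ell^1,K}(Y)\big)$ is a countable ordinal, hence so is the (countable) supremum $I_{\ell^1}(Y)$. On the other hand, an $\ell^1_\xi$-spreading model $(x_n)_n$ in $B_Y$ with constant $c_\xi$ immediately yields, by restricting along $\sxi$, a $(1/c_\xi)$-$\ell^1$-tree of order at least $o(\sxi)=\omega^\xi$. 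So $I_{\ell^1}(Y)\geq\omega^\xi$ for every $\xi<\omega_1$, forcing $I_{\ell^1}(Y)=\omega_1$ and hence $\ell^1\hookrightarrow Y$. This sidesteps both the weakly-null reduction and any uniformity of the constants $c_\xi$ (the countable supremum over $K$ absorbs the possibly shrinking constants). If you want to keep your ``rank on sequences'' formulation you would need to spell out precisely what coanalytic rank you are using and why the witnesses form an analytic set on which it is everywhere countable; as written, that step is the other soft spot in the argument.
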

As we have supposed, the space $X_T$ does not contain $\ell^1$
therefore it follows that there is $\couo$ such that $X_T$ contains
no $\ell_{\xi}^1$-spreading model. We fix this countable ordinal
$\couo$ and we use the following norming set for $X_T$:
\begin{center}$G_1=\{\sum_{t\in F}b_te_t^*:
\|\sum_{t\in F}b_te_t^*\|_{X_T^*}\leq 1\text{ and } F\subseteq T
\text{ finite and segment complete}\}$
\end{center}
We also consider a bijection $h: T\to \nn$ as in Definition \ref{bijection} and make use of the following piece of notation:
\begin{notation} For every sequence $(f_i)_{i=1}^d$ in $c_{00}(T)$ such that
\begin{enumerate} \item[i.]$(f_i)_{i=1}^d$ is block \item[ii.]$\big{\{} \min \{h(t): t\in\spp f_i\}\big{\}}_{i=1}^d\in\sxi$ \item[iii.]
$\{\spp f_i\}_{i=1}^d$ are incomparable subsets of $T$
\end{enumerate} We will call $(f_i)_{i=1}^d$ a
$(T,\xi)$-admissible sequence
\end{notation}
The definition of the norming set is the following
\begin{defn}
\label{dxi2} Let $\gox$ be the minimal subset of
$c_{00}(\tr)$ such that
\begin{enumerate}
\item[1.] $\goo\subseteq\gox$ \item[2.] $\gox$ is closed under the
$(\frac{1}{2}, \mathcal{S}_{\xi})$-operation on
$(T,\xi)$-admissible sequences. That is, for every
$(T,\xi)$-admissible sequence $f_1,...,f_d$ in $\gox$ we have that
$\frac{1}{2}\sum_{i=1}^d f_i$ is an element of $\gox$.
\end{enumerate}
\end{defn}
We define a norm on $c_{00}(T)$ as follows:
\begin{center} For every  $x\in c_{00}(T)$ we let $\|x\|_{\auxx}=\sup\{f(x):
f\in\gox\}$\end{center} and set \begin{center}
$\auxx=\overline{<e_t:t\in T>}^{\|.\|_{\auxx}}$\end{center}
\begin{rem}
\label{P1} It can be readily verified that $(e_t)_{t\in T}$
(enumerated via $h$) becomes a bimonotone
Schauder basis for $\auxx$. In addition as $\goo\subset\gox$ it is
evident that the identity operator $I: \auxx\to X_T$ is continuous.
\end{rem}

We also have the following,
\begin{lem}
\label{lxi2} The set $\gox$ is closed under restrictions of its
elements on segment complete subsets of $T$ and thus for every
segment complete $A\subseteq T$ the natural projection $P_A:
\auxx\mapsto\auxx$ defined by $P_A(\sum_{t\in  T} \lambda_t
e_t)=\sum_{t\in A} \lambda_t e_t$ has norm 1.
\end{lem}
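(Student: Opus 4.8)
The plan is to prove both assertions of Lemma \ref{lxi2} simultaneously by an induction on the complexity of elements of $\gox$, establishing the first (closure under restriction to segment complete subsets) and then deducing the second (norm-one projections) as a formal consequence.

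First I would fix a segment complete subset $A\subseteq T$ and show by induction on the minimal number of applications of the $(\frac12,\sxi)$-operation needed to build $f\in\gox$ that $f|A\in\gox$. The base case is $f\in\goo$: here $f=\sum_{t\in F}b_te_t^*$ with $F$ segment complete and $\|\sum_{t\in F}b_te_t^*\|_{X_T^*}\le 1$, so $f|A=\sum_{t\in F\cap A}b_te_t^*$; since $A$ is segment complete and $F$ is segment complete, $F\cap A$ is segment complete, and $\mathcal{SC}$-unconditionality of $\|\cdot\|_{X_T}$ gives (by duality, i.e.\ the fact that $P_{F\cap A}$ has norm one on $X_T$, equivalently on $X_T^*$ via its adjoint) that $\|\sum_{t\in F\cap A}b_te_t^*\|_{X_T^*}\le 1$, so $f|A\in\goo\subseteq\gox$. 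For the inductive step, write $f=\frac12\sum_{i=1}^d f_i$ where $(f_i)_{i=1}^d$ is $(T,\xi)$-admissible and each $f_i\in\gox$ has smaller complexity. Then $f|A=\frac12\sum_{i=1}^d (f_i|A)$; by the inductive hypothesis each $f_i|A\in\gox$. It remains to check that the nonzero terms among $(f_i|A)_{i=1}^d$ still form a $(T,\xi)$-admissible sequence: block-ness is inherited since $\spp(f_i|A)\subseteq\spp f_i$; incomparability of the supports is inherited for the same reason; and for the Schreier condition, the sequence of values $\min\{h(t):t\in\spp(f_i|A)\}$ over the surviving indices $i$ is obtained from $\{\min\{h(t):t\in\spp f_i\}\}_{i=1}^d\in\sxi$ by passing to a subset and spreading each coordinate upward (the minimum can only increase when we pass to $\spp f_i\cap A$), so it again lies in $\sxi$ because $\sxi$ is both hereditary and spreading. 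Hence $f|A\in\gox$.

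Having established closure under restriction, the projection statement follows: for $x=\sum_{t\in T}\lambda_te_t\in\auxx$ and $A$ segment complete, $\|P_Ax\|_{\auxx}=\sup\{f(P_Ax):f\in\gox\}=\sup\{(f|A)(x):f\in\gox\}\le\sup\{g(x):g\in\gox\}=\|x\|_{\auxx}$, using that $f|A\in\gox$ whenever $f\in\gox$. Thus $\|P_A\|\le 1$, and since $P_A e_t=e_t$ for $t\in A$ while the basis is normalized, $\|P_A\|=1$. Also $P_A$ is well-defined and bounded on all of $\auxx$ by this estimate.

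The main obstacle is the inductive step for the Schreier condition — specifically, making sure that after restriction the surviving supports still satisfy the $\sxi$-admissibility requirement. This is where the precise properties of the Schreier families (hereditarity and spreading, both verified by transfinite induction as noted after Definition \ref{dtsi4}) are essential: discarding indices $i$ for which $f_i|A=0$ corresponds to passing to a subset of an element of $\sxi$ (handled by hereditarity), while replacing $\min\{h(t):t\in\spp f_i\}$ by the possibly larger value $\min\{h(t):t\in\spp f_i\cap A\}$ corresponds to a spreading (handled by the spreading property). One minor point to be careful about: the block-ness in the $(T,\xi)$-admissibility refers to the ordering induced by $h$, and restricting supports preserves the strict inequalities $\max\{h(t):t\in\spp f_i\}<\min\{h(t):t\in\spp f_{i+1}\}$ since we only remove nodes. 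Everything else is routine.
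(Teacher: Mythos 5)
Your proof is correct and takes essentially the same route as the paper: induction on the complexity of $f\in\gox$, with the base case handled by $\mathcal{SC}$-unconditionality (giving that $\goo$ is itself closed under restriction to segment complete sets), and the inductive step verified by checking that block-ness and incomparability are inherited under restriction and that the Schreier condition survives because $\sxi$ is hereditary and spreading. You spell out a few details the paper leaves implicit (the duality argument for the base case, the fact that the intersection of two segment complete sets is segment complete, and the treatment of indices $i$ with $f_i|A=0$), but the underlying argument is the same.
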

\begin{proof}
Let $f\in\gox$ and $A\subset T$ segment complete. We will show
that $f|A$ by using induction on the complexity of $f$. Suppose
that $f\in\goo$. By our assumptions we have
$f|A\in\goo\subset\gox$. Now let $f=\frac{1}{2}\sum_{i=1}^d
f_i\in\gox$ and assume that $f_i|A\in\gox$ for all $i=1,...,d$.
Then $f|A=\frac{1}{2}\sum_{i=1}^d f_i|A$ and the following
properties of $(f_i|A)_{i=1}^d$ can be readily verified
\begin{enumerate} \item[i.] $\{f_i|A\}_{i=1}^d$ is a block sequence
\item[ii.] $\{\spp f_i|A\}_{i=1}^d$ are pairwise incomparable
subsets of $T$ \item[iii.] $\{\min\{h(t):t\in \spp
f_i|A\}\}_{i=1}^d\in\sxi$, since $\sxi$ is hereditary and
spreading \end{enumerate} Thus $f|A\in\gox$.
\end{proof}
\begin{defn}
\label{dxi3} Let $f\in\gox$. By a tree analysis of $f$ we mean a
finite family $(f_a)_{a\in\mathcal{A}}$ indexed by a finite tree
$\mathcal{A}$ with a unique root $0\in\mathcal{A}$ such that
\begin{enumerate} \item[1.] $f_0=f$ and  $f_a\in\gox$ for every $a\in\mathcal{A}$
\item[2.]An $a\in\mathcal{A}$ is maximal if and only if
$f_a\in\goo$ \item[3.] For every $a\in\mathcal{A}$ not maximal we
denote by $S_a$ the set of immediate successors if $a$ in $A$ and
define an ordering denoted by $<$ on $S_{a}$ with $b_1<b_2$ if and
only if $f_{b_1}<f_{b_2}$ for all $b_1,b_2\in S_{a}$. Then we have
that $(f_b)_{b\in S_{a}}$ ordered by $<$ is a $(T,\xi)$-admissible
sequence and $f_a=\frac{1}{2}\sum_{b\in S_{a}}f_{b}$
\end{enumerate}
\end{defn}
It is straightforward that by the minimality of $\gox$ that every
$f\in\gox$ admits a tree analysis.

 \begin{rem}
 \label{intact} We note that the definition of the norming set
 $\gox$ uses a Tsirelson type extension technique but only on
 functionals with incomparable supports. Therefore, if we consider any branch $b\in [T]$ and a vector
 $x\in\auxx$ such that $\spp x\subset b$ then we can observe that for every
 $f\in\gox$ with a tree analysis $(f_a)_{a\in A}$ there exists at
 most one maximal $a\in A$ such that $\spp f_a\cap \spp
 x\neq\emptyset$. Hence, for every such vector it follows that
 $\|x\|_{X_T}=\|x\|_{\auxx}$. This fact allows us to identify the sets $\kkk\subset X_T$ and $I^{-1}(\kkk)\subset\auxx$. We will use this for what follows.
 \end{rem}

\begin{defn}
\label{dxi4}  Let $(y_n)_{n\in\nn}$ be a block sequence in $\auxx$
and $f\in\gox$. \begin{enumerate} \item[1.]We set $M_f=\{n\in\nn:
\spp f\cap \rg y_n\neq\emptyset\}$ and if
$(f_a)_{a\in\mathcal{A}}$ is a tree analysis of $f$ we define a
correspondence $\lambda_f:M_f\mapsto\mathcal{A}$ with
$\lambda_f(n)$ to be the $\lesseq_{\mathcal{A}}$-maximal element
of $\mathcal{A}$ such that $\spp f_{\lambda_f(n)}\cap \rg y_n=\spp
f\cap \rg y_n$.
 \item[2.] For all $a\in\mathcal{A}$
we define $D_a=\bigcup_{b\in S_{a}}\{n\in\nn: b=\lambda_f(n)\}$,
equivalently, $D_a=\{n\in\nn: \emptyset\neq \spp f_a\cap \rg
y_n=\spp f\cap \rg y_n\}$ and $E_a=D_a\setminus\bigcup_{b\in
S_{a}}D_b$, or equivalently, $E_a=\{n\in\nn:
a=\lambda_f(n)\}$.\item[3.] For $a\in\mathcal{A}$ not
$\lesseq_{\mathcal{A}}$-maximal we set $b_L(n)=\min\{b\in S_{a}:
\spp f_b\cap \rg y_n\neq\emptyset\}$ and $b_R(n)=\max\{b\in S_{a}:
\spp f_b\cap \rg y_n\neq\emptyset\}$ where the maximum and minimum
are taken with respect to the ordering on $S_{a}$ defined above.
\item[4.] For a block sequence $(y_n)_n$ in $\auxx$ we set
$p_n=\min\spp y_n$ and $q_n=\max\spp y_n$, for all $n\in\nn$.
\end{enumerate}
\end{defn}

We start with an easy but crucial observation and will be used extensively in what follows.

\begin{rem}
\label{l1x}
Let $(y_n)_n$ be a seminormalized level block sequence such that $\spp y_n\perp\spp y_m$ for all $n\neq m \in \nn$. Then $(y_n)_n$ is a $\ell^1_{\xi}$ spreading model.
\end{rem}
\begin{proof} Let $r>0$ be such that $\|y_n\|>r$ for all $n\in\nn$. Choose a sequence of functionals $(f_n)_n$ in $\gox$ such that for each $n\in\nn$ the following hold:
\begin{enumerate}
\item[i.] $\rg f_n\subset \rg y_n$;
\item[ii.] $f_n(y_n)>\frac{r}{2}$.
\end{enumerate}
Let $F\in \sxi$ and $(b_i)_{i\in F}\in c_{00}(\nn)$. It is easy to see that the functional $f=\frac{1}{2}\sum_{i\in F}\sgn (b_i)f_i$ belongs to $\gox$. In addition,
\[\|\sum_{i\in F}b_i y_i\|\geq f(\sum_{i\in F}b_i y_i)\geq \frac{r}{2}\sum_{i\in F}|b_i|,\]
which proves that $(y_n)_n$ is a $\ell^1_{\xi}$ spreading model.
\end{proof}
The next proposition is the basic tool for proving that $\kkk$ is
weakly compact in $\auxx$. It is also used in the next
section where we show that the closed convex hull of $\kkk$ is
thin in $\auxx$.
\begin{prop}
\label{bprop} Let $(y_n)_n$ be a bounded level-block sequence in
$\auxx$ such that \[\lim_{n\to\infty}\|y_n\|_{X_T}=0\] Then there
exists subsequence of $(y_n)_n$ which satisfies an upper
$T_{\xi}$-estimate, that is, there exist a constant $C>0$ and
$M\in [\nn]$ such that for every choice of scalars
$(\lambda_i)_{i=1}^k\in$ \con we have
\begin{center} $\|\sum_{i=1}^k \lambda_i y_{m_i}\|_{\auxx}\leq
C\|\sum_{i=1}^k \lambda_i e_{p_{m_i}}\|_{T_{\xi}}$ \end{center}
where $M=\{m_1<m_2<...\}$.
\end{prop}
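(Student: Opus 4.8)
The plan is to exploit the fact that a level-block sequence tending to $0$ in the $X_T$-norm is, after passing to a subsequence, "almost orthogonally supported" along branches, so that the only mechanism by which a functional $f\in\gox$ can be large on a linear combination $\sum\lambda_i y_{m_i}$ is the Tsirelson-type $(\tfrac12,\sxi)$-operation on incomparably supported pieces. Since that operation is precisely the one defining $T_\xi$, one expects an upper $T_\xi$-estimate. First I would fix a tree analysis $(f_a)_{a\in\aaa}$ of an arbitrary $f\in\gox$ and, for a block sequence $(y_n)_n$, import the combinatorial apparatus of Definition \ref{dxi4}: the sets $M_f$, $D_a$, $E_a$, the correspondence $\lambda_f$, and the "left/right" successor indices $b_L(n),b_R(n)$. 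The goal is to bound $f(\sum_i\lambda_i y_{m_i})$ by a functional in $G_{T_\xi}$ acting on $\sum_i\lambda_i e_{p_{m_i}}$, up to a universal constant.

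The key steps, in order, are: (1) Choose the subsequence $M=\{m_1<m_2<\dots\}$ growing fast enough (a standard diagonalization) that $\|y_{m_i}\|_{X_T}$ decays geometrically, say $\|y_{m_i}\|_{X_T}\le 2^{-i}$, and that the level-intervals $\rg^l y_{m_i}$ are very spread out in $\nn$; this uses reflexivity of $X_T$ only through the hypothesis $\|y_n\|_{X_T}\to 0$ and will let the "ground part" of any functional (the contribution of maximal nodes $a$ with $f_a\in\goo=G_1$) be absorbed into the constant. (2) Decompose the action of $f$ into the ground contributions and the contributions passing through genuine $(\tfrac12,\sxi)$-splittings. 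For the ground part, note $f_a\in G_1$ means $f_a$ is a restriction of $\sum_t b_t e_t^*$ with the corresponding $X_T^*$-functional of norm $\le1$ on a segment complete set; since $\kkk$-type vectors supported on a branch have equal $X_T$- and $\auxx$-norm (Remark \ref{intact}), each $f_a$ "sees" at most one $y_{m_i}$ per branch, and summing over branches one recovers something controlled by $\|y_{m_i}\|_{X_T}\le 2^{-i}$, which sums to a constant. (3) For the remaining part, track how the index set $\{m_i\}$ gets distributed among the nodes of $\aaa$: the crucial point is that whenever a node $a$ splits, the indices in $D_a$ are partitioned among the successors $b\in S_a$, at most two of which ($b_L,b_R$) can be "shared" with the $y_n$ straddling a boundary, and since $(\spp f_b)_{b\in S_a}$ is $(T,\xi)$-admissible its minima-set lies in $\sxi$. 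One then builds, by induction on the height of $\aaa$, a functional $g\in G_{T_\xi}$ (or a bounded sum of such, using Lemma \ref{lth1} and Remark \ref{rtsi1} to control the extra $[\nn]^{\le k}$ factors coming from the at-most-two shared indices at each split) with $g\big(\sum_i\lambda_i e_{p_{m_i}}\big)\ge \tfrac{1}{C} f\big(\sum_i\lambda_i y_{m_i}\big)$; here one also invokes Lemma \ref{lth2} to pass freely between $e_{p_{m_i}}$ and shifted basis vectors when matching up the "$\le m_i\le$" admissibility conditions of $T_\xi$ with the actual positions $p_{m_i}=\min\spp y_{m_i}$.

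The main obstacle I expect is step (3): the bookkeeping that converts a $(T,\xi)$-admissible tree analysis in $\auxx$ into a genuine $\sxi$-admissible tree in $T_\xi$, while accounting for the boundary vectors $y_n$ whose range meets several successors $f_b$. The standard device — used throughout the Tsirelson-space literature and evidently the reason Remark \ref{rtsi1} and Lemma \ref{lth1} were proved above — is to absorb those at-most-two-per-split "bad" indices into auxiliary families $[\nn]^{\le 2}$ and $[\nn]^{\le 1}$ convolved with $\sxi$, then undo the convolution at the cost of the constants $8$ and $3$ from Lemma \ref{lth1}. One must also be careful that the admissibility minima computed with $h$ on $\auxx$ correspond, under the passage to a sufficiently spread subsequence, to admissibility in the $\nn$-indexing of $T_\xi$ via the points $p_{m_i}$; choosing $M$ with $m_1$ large and gaps large handles the $\min M\ge 3$ hypothesis of Lemma \ref{ltsi1}(ii) and the spreading of $\sxi$. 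Once the inductive estimate closes, the constant $C$ depends only on $\sup_n\|y_n\|_{\auxx}$ and the absolute constants above, completing the proof.
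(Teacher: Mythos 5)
Your plan matches the paper's proof: the paper establishes the estimate via Lemma \ref{lxi3}, inducting on a tree analysis of $f\in\gox$ and tracking boundary indices in $E_a$ to show that the estimating functional lies in $B_{T_\xi^1}$ (the boundary points $q_n$ with $n\in E_a$ together with the $\sxi$-admissible minima of the successor supports form a $[\nn]^{\leq 2}\otimes\sxi$-admissible family), then invokes Lemmas \ref{lth1} and \ref{lth2} to trade $T_\xi^1$ for $T_\xi$ and $e_{q_i}$ for $e_{p_i}$ at exactly the costs $8$ and $3$ you cite. One small simplification: the ground-node bound needs neither branches nor Remark \ref{intact} --- since a maximal $f_a\in\goo$ has $X_T^*$-norm at most one, $|f_a(y_i)|\leq\|y_i\|_{X_T}$ directly, and choosing the subsequence with $\sum_i\|y_{m_i}\|_{X_T}$ small absorbs this into the constant, which is how the paper handles it.
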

In order to prove Proposition \ref{bprop} we need the following
Lemma.
\begin{lem}
\label{lxi3} Let $(y_n)_n$ be a bounded level-block sequence in
$\auxx$ with $\|y_n\|_{\auxx}\leq r\text{ }\forall n\in\nn$.
Suppose also that $\sum_{n\in\nn}\|y_n\|_{X_T}<2r$. Then for every
$f\in\gox$ there exists a $g\in B_{T_{\xi}^1}$ such that for every
$k\in\nn$ and every choice of scalars $(\lambda_i)_{i=1}^k\in$
\con we have that $|f(\sum_{i=1}^k \lambda_i y_i)|\leq
2rg(\sum_{i=1}^k \lambda_i e_{q_i})$
\end{lem}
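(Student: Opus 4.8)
The strategy is to take the tree analysis $(f_a)_{a\in\aaa}$ of $f$ and, proceeding by induction on the height of the tree $\aaa$ (equivalently, downward from the maximal nodes to the root), build for each node $a\in\aaa$ a functional $g_a$ in the norming set of $T_\xi^1$ supported on the indices $n$ for which $\spp f_a$ meets $\rg y_n$, satisfying the estimate $|f_a(\sum_i\lambda_i y_i)|\le 2r\, g_a(\sum_i\lambda_i e_{q_i})$; the final $g=g_0$ is the desired functional. The point of passing from $T_\xi$ to $T_\xi^1=T(\tfrac12,\sxi\otimes[\nn]^{\le 2})$ is precisely to absorb the two ``boundary'' indices $b_L(n),b_R(n)$ that arise at each splitting node and which are counted twice when one tries to glue the pieces together — the extra factor $[\nn]^{\le 2}$ in the family is what makes this bookkeeping legal, and Lemma \ref{lth1} will then be invoked (elsewhere) to return to $T_\xi$.

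First I would set up the base case: if $a\in\aaa$ is maximal then $f_a\in\goo$, so $f_a=\sum_{t\in F}b_t e_t^*$ with $\|\sum b_t e_t^*\|_{X_T^*}\le1$ and $F$ a finite segment-complete subset of $T$. For such a node, at most the indices $n$ with $\rg f_a\cap\rg y_n\ne\emptyset$ contribute, and the crucial observation is Remark \ref{intact}: since $f_a$ has segment-complete (in fact, along a single branch when restricted to any $\spp y_n$) support, $|f_a(y_n)|\le\|y_n\|_{X_T}$. Combined with $\sum_n\|y_n\|_{X_T}<2r$, this lets me dominate $|f_a(\sum_i\lambda_i y_i)|$ by $\sum_{n\in M_{f_a}}|\lambda_n|\,\|y_n\|_{X_T}\le 2r\max_n|\lambda_n|$, and taking $g_a$ to be $\tfrac{1}{2r}\|y_{n_0}\|_{X_T}\,\mathrm{sgn}(\lambda_{n_0})e_{q_{n_0}}$ for the index $n_0$ of maximal $|\lambda_n|$ (or a suitable convex combination of such $e_{q_n}$, weighted by $\|y_n\|_{X_T}/2r$, which lies in $B_{c_0}\subset B_{T_\xi^1}$) gives the required bound with $g_a$ of norm at most $1$ in $T_\xi^1$.

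For the inductive step, suppose $a$ is not maximal, $f_a=\tfrac12\sum_{b\in S_a}f_b$, and each $f_b$ already carries a functional $g_b\in B_{T_\xi^1}$ with the estimate. Because $(f_b)_{b\in S_a}$ is $(T,\xi)$-admissible, the minima $\{\min f_b\}$ lie in $\sxi$, and correspondingly the sets $(D_b)_{b\in S_a}$ of indices are successive except for the overlap at the boundary indices $b_L(n),b_R(n)$ shared between consecutive $f_b$'s. The plan is to form $g_a=\tfrac12\sum_{b\in S_a}g_b'$ where $g_b'$ is $g_b$ with its values at the (at most two) overlap indices redistributed, so that the supports become genuinely successive; the admissibility $\{\min f_b\}\in\sxi$ then translates, after allowing up to two extra indices per block, into $\{\min g_b'\}\in\sxi\otimes[\nn]^{\le2}$, which is exactly what the $(\tfrac12,\sxi\otimes[\nn]^{\le2})$-operation of $T_\xi^1$ permits. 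Summing the inductive estimates and using $p_{m_i}\le q_{m_i}$ together with the spreading property of the Schreier families to pass between $e_{p_n}$ and $e_{q_n}$ completes the step.

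\textbf{Main obstacle.} The delicate point is the overlap of supports at $b_L(n)$ and $b_R(n)$: a single index $n$ can have $\spp y_n$ meeting $\spp f_b$ for two distinct children $b$, so the naive sum $\tfrac12\sum_b g_b$ double-counts $e_{q_n}$ and, worse, the supports $\spp g_b$ are not successive, so one cannot directly apply the Tsirelson operation. Handling this cleanly — deciding how to split each $y_n$ across the two children and verifying that after the split the index sets are successive and the family of minima still lands in $\sxi\otimes[\nn]^{\le2}$ rather than something larger — is where the real work lies, and is the reason the statement uses $T_\xi^1$ rather than $T_\xi$. Everything else is routine once this combinatorial accounting is done carefully.
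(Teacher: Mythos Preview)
Your overall strategy---induction on the tree analysis, constructing $g_a$ at each node, and using the extra $[\nn]^{\le2}$ factor to absorb boundary indices---matches the paper's, and your base case is essentially right (though you do not need Remark \ref{intact}: simply $f_a\in\goo\subset B_{X_T^*}$ gives $|f_a(y_n)|\le\|y_n\|_{X_T}$, and then $\sum_n\|y_n\|_{X_T}<2r$ does the rest).

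However, you have misdiagnosed the obstacle in the inductive step. The sets $D_b$ for $b\in S_a$ are pairwise \emph{disjoint}: if $n\in D_b$ then $\spp f\cap\rg y_n=\spp f_b\cap\rg y_n\subseteq\spp f_b$, which can hold for at most one child since the $f_b$ are block. Moreover the supports $\spp g_b\subseteq\{q_n:n\in D_b\}$ are already successive: one checks directly that $\min\spp f_{b_i}\le q_n<\min\spp f_{b_{i+1}}$ for every $n\in D_{b_i}$. So there is no double-counting and nothing to ``redistribute''. The real issue is the indices in $E_a=D_a\setminus\bigcup_{b\in S_a}D_b$---those $n$ whose $\rg y_n$ straddles several children---which are not captured by any child at all. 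The paper handles these by bounding $|f_a(y_n)|\le\|y_n\|_{\auxx}\le r$ directly (note: the $\auxx$-norm here, not the $X_T$-norm) and setting $g_a=\tfrac12\big(\sum_{n\in E_a}\sgn(\lambda_n)e^*_{q_n}+\sum_{b\in S_a}g_b\big)$. Each child $b$ has at most one $n\in E_a$ with $b_R(n)=b$, and this extra $e^*_{q_n}$ slots in adjacent to $g_b$, so the combined family is $\sxi\otimes[\nn]^{\le2}$-admissible, which is exactly what $T_\xi^1$ permits. Finally, the passage between $e_{p_n}$ and $e_{q_n}$ via spreading is not part of this lemma; it occurs afterwards in the proof of Proposition \ref{bprop} through Lemma \ref{lth2}.
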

\begin{proof} Let $f\in\gox$, $(f_a)_{a\in\mathcal{A}}$ a tree
analysis of $f$ and $(\lambda_i)_{i=1}^d\in c_{00}(\nn)$. For each
$a\in\mathcal{A}$ with $D_a\neq\emptyset$ we will recursively
define $g_a$ such that the following hold

\begin{enumerate}
 \item[i.] $g_a\in B_{T_{\xi}^1}$
 \item[ii.] $\spp g_a\subseteq \{q_n: n\in D_a\}$
\item[iii.] $|f_a(\sum_{i\in D_a}\lambda_i y_i)|\leq
2rg_a(\sum_{i\in D_a}\lambda_i e_{q_i})$
\end{enumerate} Let
$a\in\mathcal{A}$ be a maximal element of
$\mathcal{A}$ such that $D_a\neq\emptyset$. Let also $n_0\in\nn$
such that $\lambda_{n_0}=\max_{i\in D_a}|\lambda_i|$. We set
$g_a=\sgn\lambda_{n_0}e_{q_{n_0}}^*$. Clearly $g_a\in
B_{T_{\xi}^1*}$ and
 \begin{eqnarray*} |f(\sum_{i\in D_a}
\lambda_i y_i)| \leq  \max_{i\in D_a
}|\lambda_i|\cdot\sum_{i\in D_a}|f(y_i)| &\leq&
|\lambda_{n_0}|\cdot\sum_{n\in\nn}\|y_n\|_{X_T}  \leq
2rg_a(\sum_{i\in D_a}\lambda_i e_{q_i})
\end{eqnarray*}
Let now $a\in\mathcal{A}$ not maximal. Suppose also that for every
$b\in S_{a}$ the functionals $(g_b)_{b\in S_{a}}$ have been
defined satisfying conditions i. ii and iii above. Let
$\{b_1<...<b_l\}$ be the enumeration of $S_{a}$ as it was given in Definition \ref{dxi4}. Pick $b_i\in
S_{a}$ and suppose that $D_{b_i}\neq\emptyset$. Then,
\begin{center} $\min\spp f_{b_i}\leq \spp g_{b_i}<
\min\spp f_{b_{i+1}}$.\end{center} The left inequality holds
because if we pick $k\in D_{b_i}$ then $\max\spp x_k\geq\min\spp
f_{b_i}$ and $\spp g_{b_i}\subseteq \{q_n: n\in D_{b_i}\}$. On the other hand assume that there exists $k\in D_{b_i}$
such that $q_k>\min\spp f_{b_{i+1}}$ then $\rg x_k\cap \spp
f_{b_{i+1}}\neq\emptyset$. This contradicts the definition of
$D_{b_i}$ and proves the right hand inequality. Similarly, we can see that for every $n\in E_a$ such
that $b_R(n)\neq b_l$ we have $\min\spp f_{b_R(n)}\leq q_n<
\min\spp f_{b_R(n)+1}$. For every $i$ with $1\leq i\leq l$ we set,
\begin{center}
$M_i=\spp g_{b_i}\cup\{q_n: n\in E_a \text{ and } b_i=b_R(n)\}$.
\end{center}
We can readily observe the following,

 \begin{enumerate} \item[i.]
For every $b\in S_{a}$ it holds $|\{q_n: n\in E_a \text{ and }
b=b_R(n)\}|\leq 1$;
 \item[ii.] For  $i<l$ we get $\{q_n: n\in
E_a\text{ and } b_i=b_R(n)\}<\spp g_{b_i}$, while for $i=l$ we
have the converse;
 \item[iii.] $\bigcup_{b\in
S_{a}}M_b=\big{(}\bigcup_{b\in S_{a}}\spp g_b\big{)}\cup \{q_n:
n\in E_a\}\subseteq \{q_n: n\in D_a\}$;
 \item[iv.] $\min\spp
f_{b_i}\leq M_{b_i}<\min\spp f_{b_{i+1}}$ for all $i=1,...,l-1$.
\end{enumerate}
Combining these four facts we conclude that the functionals
$(e_{q_n})_{n\in E_a}$ and $(g_{b_i})_{1\leq i\leq l}$ together
form a $[\nn]^{\leq 2}\otimes\sxi$-admissible family. Consequently
the functional $g_a=\frac{1}{2}(\sum_{n\in E_a} e^*_{q_n} +
\sum_{b\in S_{a}}g_b)$ is an element of $B_{T^{1^*}_{\xi}}$. Finally,
\begin{eqnarray*} |f_a(\sum_{i\in D_a} \lambda_i y_i)| & \leq &
|f_a(\sum_{i\in E_a}\lambda_i y_i)| +
|f_a(\sum_{i\in D_a\setminus E_a}\lambda_i y_i)|  \leq\\
\leq r\cdot\sum_{i\in E_a}|\lambda_i|&+&|\frac{1}{2}\sum_{b\in
S_{a}}f_b(\sum_{i\in D_b}\lambda_i y_i)|\leq  r\sum_{i\in
E_a}|\lambda_i|+ r\sum_{b\in S_{a}}g_b(\sum_{i\in D_b}\lambda_i e_{q_i})=\\
&=& 2r g_a(\sum_{i\in D_a}\lambda_i e_{q_i}) \end{eqnarray*}
\end{proof}
We are now ready to prove Proposition \ref{bprop}.\\

\noindent\textit{Proof of Proposition \ref{bprop} }. Since
$(y_n)_n$ is a bounded block sequence in $\auxx$ such that
\[\lim_{n\to\infty}\|y_n\|_{X_T}=0\] we may choose $M\in [\nn]$ and a subsequence
$(y_n)_{n\in M}$ such that \begin{enumerate} \item[i.] $\sum_{n\in
M}\|y_n\|_{X_T}<2r$ \item[ii.] $(y_n)_{n\in M}$ is level-block
\end{enumerate} For  simplicity we denote the subsequence by
$(y_n)_n$ again. Lemma \ref{lxi3} yields,
\[\|\sum_{i=1}^k \lambda_i y_i\|_{\auxx}\leq 2r\|\sum_{i=1}^k
\lambda_i e_{q_i}\|_{T_{\xi}^1}.\]  By Lemma 8 we have,
\[\|\sum_{i=1}^k \lambda_i y_i\|_{\auxx}\leq 2r\|\sum_{i=1}^k
\lambda_i e_{q_i}\|_{T_{\xi}^1}\leq 16r\|\sum_{i=1}^k \lambda_i
e_{q_i}\|_{T_{\xi}}.\] Finally, applying Lemma 9 we obtain,
\[\|\sum_{i=1}^k \lambda_i y_i\|_{\auxx}\leq 16r\|\sum_{i=1}^k \lambda_i
e_{q_i}\|_{T_{\xi}}\leq 48r \|\sum_{i=1}^k \lambda_i
e_{p_i}\|_{T_{\xi}},\] for all choices of $k\in\nn$ and
$(\lambda_i)_{i=1}^k\in$ \con completing the proof. \hfill

 \begin{notation} We set $W_{\xi}^0=co(\kkk\cup - \kkk)$
 and $W_{\xi}=\overline{W_{\xi}^0}^{\|.\|_{\auxx}}$.
 \end{notation}
 \begin{prop}
 \label{wkkk} The set $\kkk$ is weakly compact in $\auxx$
 \end{prop}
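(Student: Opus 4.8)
The plan is to prove that $\kkk$ is sequentially weakly compact in $\auxx$ and invoke Eberlein--\v{S}mulian. Fix $(w_n)_n\subseteq\kkk$. After a small perturbation I may take each $\spp w_n$ to be a finite segment of $T$, and since $\kkk$ is weakly compact in $X_T$ I may pass to a subsequence with $w_n\to w$ weakly in $X_T$ and $e_t^*(w_n)\to w(t)$ for all $t$. Then $S:=\{t:w(t)\neq 0\}$ is a segment of $T$, $\spp w\subseteq S$, and $w\in\kkk$ because $\kkk$ is weakly closed in $X_T$; put $w_S:=w$. Exactly as in the proof of Proposition \ref{prp2}, after one more subsequence I split $w_n=u_n+v_n$ with $(\spp u_n)_n$ an increasing sequence of initial sub-segments of $S$ exhausting $S$, and $(\spp v_n)_n$ a pairwise incomparable, level-block sequence of segments of $T$ with $\spp u_n<\spp v_n$; both $u_n,v_n\in\kkk$, since restricting a branch-supported element of $\kkk$ to a segment is the same as restricting it to an interval of $T$, hence stays in $\kkk$.

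For the ``vertical'' part: as $\spp u_n\subseteq S$ lies on a single branch, Remark \ref{intact} gives $\|u_n-u_m\|_{\auxx}=\|u_n-u_m\|_{X_T}$; the $u_n$ form an increasing family of prefix-restrictions with $\|u_n\|_{X_T}\leq\sup_k\|w_k\|_{X_T}<\infty$ (segment completeness of $\spp u_n$), so by bounded completeness of the basis of the reflexive space $X_T$ they converge in $X_T$, hence in $\auxx$, necessarily to $w_S$; thus $u_n\to w_S$ in $\auxx$. For the ``spreading'' part I would invoke Proposition \ref{bprop}. From the norm convergence of $(u_n)$ in $X_T$ we get $v_n=w_n-u_n\to 0$ weakly in $X_T$, and --- here is the crux --- this together with $v_n\in\kkk$ and the weak compactness of $\kkk$ in $X_T$ should force $\|v_n\|_{X_T}\to 0$: the $v_n$ are restrictions of the $w_n$ to segments climbing toward the single branch to which the supporting nodes accumulate, and along that branch the prefix vectors converge in norm, so a gliding-hump argument against the weakly compact $\kkk$ makes the $X_T$-norms of the $v_n$ vanish. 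Granting this, $(v_n)_n$ is a bounded level-block sequence in $\auxx$ with $\|v_n\|_{X_T}\to 0$, so by Proposition \ref{bprop} a subsequence satisfies an upper $T_\xi$-estimate $\|\sum_i\lambda_i v_{m_i}\|_{\auxx}\leq C\|\sum_i\lambda_i e_{p_{m_i}}\|_{T_\xi}$; equivalently, $e_{p_{m_i}}\mapsto v_{m_i}$ extends to a bounded operator from a subspace of $T_\xi=T(\tfrac12,\sxi)$ into $\auxx$. Since $T_\xi$ is reflexive ($\xi<\omega_1$), its unit vector basis is weakly null, and weak--weak continuity of that operator gives $v_{m_i}\to 0$ weakly in $\auxx$.

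Combining the two parts, $w_n=u_n+v_n\to w_S$ weakly in $\auxx$ along the final subsequence. Hence every sequence in $\kkk$ has a weakly convergent subsequence; since $\kkk$ is bounded in $\auxx$ (its elements are branch-supported, so by Remark \ref{intact} their $\auxx$- and $X_T$-norms agree) it is relatively weakly compact, and it is weakly closed because $I:\auxx\to X_T$ is weak--weak continuous and $\kkk$ is weakly closed in $X_T$. The step I expect to be genuinely hard is the vanishing $\|v_n\|_{X_T}\to 0$: the remaining ingredients are soft (Eberlein--\v{S}mulian, bounded completeness, reflexivity of $T_\xi$, and Proposition \ref{bprop} used verbatim), whereas excluding a seminormalized, weakly null, incomparably supported sequence inside $\kkk$ really uses the concrete description of $\kkk$ as (a closure of) restrictions of uniformly bounded prefix vectors together with the bounded completeness of $X_T$.
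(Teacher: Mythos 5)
Your scaffolding---the split $w_n=u_n+v_n$, norm convergence of the ``prefix'' part $(u_n)$ via bounded completeness and Remark \ref{intact}, the Eberlein--\v Smulian reduction, the intent to feed the ``tail'' part into Proposition \ref{bprop}---mirrors the paper's proof. The gap is precisely the step you flagged as the crux: the implication ``$v_n\to 0$ weakly in $X_T$, $v_n\in\kkk$ weakly compact $\Rightarrow\|v_n\|_{X_T}\to 0$'' is false, and nothing in the concrete description of $\kkk$ rescues it. $\kkk$ is an infinite-dimensional weakly compact set; like any such set it contains seminormalized weakly null sequences, and the branch-supported structure does not prevent a sequence $(w_n)$ from converging weakly to a $y\in\kkk$ of strictly smaller norm while the tails $(v_n)$ stay bounded away from zero. (For instance, with $X_T$ the $\ell^2$-sum over $T$, start from a fixed branch $b$ with $\sum_k\psi(b|k)^2=1/4$ and at level $n$ branch off along a terminal segment carrying $\ell^2$-mass $1/4$; then $u_n\to y$ with $\|y\|=1/2$ while $\|v_n\|_{X_T}\equiv 1/2$, and each $v_n\in\kkk$.) So Proposition \ref{bprop} cannot be applied to $(v_n)$ itself, and no ``gliding hump against $\kkk$'' will make the $X_T$-norms vanish.

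The paper's route around this is a contradiction argument whose input into Proposition \ref{bprop} is not $(v_n)$ but convex block combinations of it. Suppose $(v_n)$ is not weakly null in $\auxx$: after passing to a subsequence it is level-block, seminormalized, and there is a norm-one $x^*\in\auxx^*$ with $x^*(v_n)>\epsilon$. Remark \ref{l1x} (the first genuine use of the Tsirelson-type closure in $\gox$) shows $(v_n)$ generates an $\ell^1_\xi$ spreading model in $\auxx$. At the same time $(v_n)$ is weakly null in $X_T$, and $\xi$ was chosen so that $X_T$ admits no $\ell^1_\xi$ spreading model; this produces \emph{convex block combinations} $(z_n)$ of $(v_n)$ with $\|z_n\|_{X_T}\to 0$. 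It is these $z_n$ that go into Proposition \ref{bprop}: the resulting upper $T_\xi$-estimate and the reflexivity of $T_\xi$ make $(z_n)$ weakly null in $\auxx$, so further convex combinations tend to $0$ in $\|\cdot\|_{\auxx}$, contradicting $x^*(z_n)>\epsilon$. In short, you should not try to kill $\|v_n\|_{X_T}$ directly; the no-$\ell^1_\xi$-spreading-model hypothesis on $X_T$ is precisely what supplies the convex block combinations with vanishing $X_T$-norm, and those are the correct input to Proposition \ref{bprop}. The remaining ``soft'' steps in your proposal are fine and agree with the paper.
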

 \begin{proof}
 Let $(y_n)_n$ be a sequence in $\kkk$. Clearly we may
 assume that each $y_n$ is finitely supported. First we prove the
 following\\
 \noindent \textbf{Claim} \textit{If $(y_n)_n$ consists of
 incomparably supported vectors then it is weakly null.}\\
 \noindent \textbf{Proof of Claim}\\
 Let $r>0$ be such that $\|y_n\|\geq r>0$ and suppose towards a contradiction that there exist an
 $\epsilon>0$ and a functional $x^* \in (\auxx)^*$ with $\|x^*\|=1$ such that
 $x^*(y_n)> \epsilon$ for all $n$. Now, since $\spp y_n$ are
 incomparable segments of $T$ we may also assume (by passing to a subsequence) that
 $(y_n)_n$ is a level block sequence. Remark \ref{l1x} yields that $(y_n)_n$ is a $\ell^1_{\xi}$ spreading model. As $X_T$ does not contain any $\ell^1_{\xi}$-spreading model, there
exists a sequence $(z_n)_n$ of block convex combinations of
$(y_n)_n$ such that $\|z_n\|_{X_T}\to 0$. By Proposition \ref{bprop}, there exists a subsequence of $(z_n)_n$ (denoted by $(z_n)_n$ again) which satisfies an upper-$T_{\xi}$
estimate. As the space $T_{\xi}$ is reflexive this implies that
$(z_n)_n$ is weakly null. Hence, there are further convex combinations of $(z_n)_n$ that converges norm to zero. This is clearly a contradiction since we have assumed that $x^*(y_n)> \epsilon$
for all $n\in\nn$ and it completes the proof of the claim.\\
Now if $(y_n)_n$ is arbitrary we can assume, by passing to a
subsequence if necessary, that $e^*_t(y_n) \stackrel{n}\to y_t$
for all $t\in T$. Observe that $S=\{t\in\ttk: y_t \neq 0\}$ is a
segment (finite or infinite) of $T$. Thus, we may assume that each
$y_n$ has a decomposition as $y_n= u_n + v_n$ where
\begin{enumerate}
\item[i.] $(u_n)_n$ is a $\subseteq$-increasing sequence of
segments of $S$ \item[ii.] $(v_n)_n$ is a sequence of incomparable
segments of $T$
\end{enumerate}
The previous claim yields that $(v_n)_n$ is weakly null. To finish the
proof set $y= \sum_{t\in S} y_te_t$ and observe that $(u_n)_n$ has a subsequence which converges to $y$ in the norm topology of $\auxx$. Indeed, by the weak compactness of $\kkk$ in $X_T$ there exists a  subsequence (which we denote by $(u_n)_n$ again) such that $u_n\stackrel{(X_T,w)}\to y'\in\kkk$. As
$e^*_t(u_n)\to e^*_t(y)$ we deduce that $y=y'$. It is easy to see that the definition of the set
$\kkk$ implies, in fact, that $u_n\stackrel{\|\cdot\|_{X_T}}\to y$.
Since, $\spp u_n\subset S$ for all $n\in\nn$ we deduce by Remark \ref{intact}
that $u_n\stackrel{\|\cdot\|_{\auxx}}\to y$ which completes the proof.
\end{proof}

\subsection{The set $\kkk$ is thin in $\auxx$.\\}
\label{thinness}

In this subsection use Lemma \ref{lthin1}, Proposition \ref{lthin1} and Proposition \ref{bprop} in order to show that the closed convex hull $W_{\xi}$ of $\kkk$ is thin in $\auxx$. Lemma \ref{lthin1} and Proposition \ref{lthin1} use techniques developed in \cite{AF}, adapted to this setting, which are crucial for the proof. We note that for the sake of simplicity of notation hereby all norms are considered in $\auxx$ unless stated otherwise. We start with the definition of a thin subset of a Banach space.
 \begin{defn}
 \label{dthin1}
   Let $A,\Gamma$ be two subsets of a Banach space $Y$
 \begin{enumerate}
 \item[i.] Let $\epsilon>0$. We say that $\Gamma$
 $\epsilon$-absorbs $A$ if there exists $\lambda>0$ such that
 $A\subseteq \lambda\Gamma + \epsilon B_Y$
 \item[ii.] We say that $\Gamma$ almost absorbs $A$ if $\Gamma$
 $\epsilon$-absorbs $A$ for every $\epsilon>0$
 \item[iii.] We say that a $A$ is thin in $Y$ if $A$ \textit{does
 not} almost absorb the ball of any infinite dimensional closed
 subspace of $Y$
 \end{enumerate}
 \end{defn}

 \begin{defn}
 \label{dthin2}
 Let $A$ be a segment complete subset of $T$ and $\epsilon>0$.
 \begin{enumerate}
 \item[1.] For each $x\in\auxx$ we denote by $Ax$ the natural projection of $x$ onto $A$ and for $s$
 segment of $T$ denote by $x_s=sx_t=\sum_{t\in s} \psi(t)e_t$
 \item[2.] We set $A''=\{t\in A: x_t\in \kkk\}$ and $A^{\epsilon}=\{t\in A'':\|Ax_t\|\geq\epsilon\}$.
 Let also $A'=A\setminus A^{\epsilon}$
 \item[3.] We set $\segm (A)=\{s\text{ segment}: t\in A''\text{ for all } t\in s\}$. Clearly, for all
 $s\in\segm (A)$ it follows that $s\subseteq A$.
 \end{enumerate}
 \end{defn}
 We can readily observe the following:
 \begin{rem}
 \label{E''segment complete}For every $A$ segment complete and $\epsilon>0$ the set $A^{\epsilon}$ is also segment complete.
 \end{rem}
 \begin{proof}
 Let $t_1,t_2\in A^{\epsilon}$ with $t_1\lesseq t_2$. Then we have that
 $\|x_{t_2}\|\leq 1$ thus for every $t\in [t_1,t_2]$ we obtain
 $\|x_t\|\leq \|x_{t_2}\|\leq 1$ and at the same time
 $\|x_{\sigma_t}\|\geq\|x_{\sigma_{t_1}}\|>\epsilon$. These facts
 imply $t\in A^{\epsilon}$.
 \end{proof}

\begin{lem}
 \label{lthin1}
 Let $\epsilon>0$ and $E$ a subset of $T$ of the form
 $E=\{t\in T: m\leq |t|\leq M\}$. Then there exists a
 decomposition of $E$ into two disjoint subsets $E',E''$ such
 that
 \begin{enumerate}
  \item[i.] $\|E'w\|<\epsilon$ for
 every $w\in W_{\xi}^0$ and
 \item[ii.] For $t\in T$ with $|t|\geq M$ we have
 $\|E''x_t\|\geq\epsilon$.
 \end{enumerate}
 \end{lem}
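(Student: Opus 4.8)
The plan is to define the decomposition directly from the sets introduced in Definition \ref{dthin2} and then verify the two estimates separately. Given $\epsilon>0$ and $E=\{t\in T: m\leq |t|\leq M\}$, note that $E$ is an interval of $T$, hence segment complete, so the sets $E''=\{t\in E: x_t\in\kkk\}$ and $E^{\epsilon}=\{t\in E'':\|E x_t\|\geq\epsilon\}$ make sense. A natural first guess is to take the ``small'' piece to be $E'=E\setminus E^{\epsilon}$ and the ``large'' piece to be $E''=E^{\epsilon}$; but since the statement asks for a partition of all of $E$ with the two named properties, I would more carefully set $E''=E^{\epsilon}$ and $E'=E\setminus E^{\epsilon}$, and use Remark \ref{E''segment complete} to know $E''$ is segment complete (so that projecting onto it, via the norm-one projection of Remark \ref{segcom1}/Lemma \ref{lxi2}, is harmless).

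For property (ii), the point is almost immediate: if $t\in T$ with $|t|\geq M$, then for the segment $\hat t$ we have that $\hat t\cap E$ is an initial segment of $\hat t$ reaching level $M$, and the vector $x_t|_E$ equals $x_{t'}$ for $t'$ the node of $\hat t$ at level $M$ (recall $x_t=\sum_{t'\lesseq t}\psi(t')e_{t'}$). Then $E'' x_t = E^{\epsilon} x_{t'}$, and since $t'\in E''$ (because $x_{t'}\in\kkk$ as a restriction of $x_t$ to an initial segment, using the definition of $\kkk=\kkk_\psi$) and the chain $t'\lesseq$ forces $\|E x_{t'}\|\geq\epsilon$ exactly on $E^{\epsilon}$, one gets $\|E'' x_t\|\geq\epsilon$. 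I would just need to be a little careful matching the definition of $E^{\epsilon}$ to the condition $\|E x_t\|\geq\epsilon$ versus $\|E x_{t'}\|\geq\epsilon$; monotonicity of $t\mapsto\|E x_t\|$ along a branch (from $\scu$-unconditionality, since restricting to the segment complete set $\hat t\cap E$) makes the top-level value the largest, so $t'\in E^{\epsilon}$ as soon as $\|E x_t\|\geq\epsilon$, and conversely membership in $E^{\epsilon}$ of any node below $t$ on the branch already gives the bound.

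For property (i), this is where the real work lies. We must show $\|E' w\|<\epsilon$ for every $w\in W_\xi^0=\co(\kkk\cup-\kkk)$. By convexity and the triangle inequality it suffices to treat $w=\pm x_\sigma|_I$ a generator of $\kkk$ (an element $y_t$ restricted to an interval $I$, with $\|y_t\|\leq C$), since $\|E'(\sum\mu_j w_j)\|\leq\sum|\mu_j|\,\|E' w_j\|$. For such a $w$, $\spp w$ lies on a single branch $b$, so $E' w$ is supported on $b\cap E'$; this is a union of segments of $b$ lying in the ``complement of $E^{\epsilon}$'' region, i.e. initial portions of the branch where $\|E x_t\|<\epsilon$. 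Using $\scu$-unconditionality to pass to $E'w$ being dominated by $\|\,\cdot\,\|$ of a node-vector, and the fact that on $b$ the $\auxx$-norm agrees with the $X_T$-norm (Remark \ref{intact}), I would bound $\|E'w\|$ by the $X_T$-norm of a corresponding $x_t$ with $t$ at the boundary level, which is exactly where $\|E x_t\|<\epsilon$. The main obstacle is bookkeeping: $b\cap E'$ need not be an interval of the branch (it is the branch restricted to $E$ minus $E^{\epsilon}$, which could in principle be the union of a bottom piece and nothing else, but one must rule out ``reappearing'' after $E^{\epsilon}$), so I would first establish that along any branch $E^{\epsilon}$ is an \emph{upward-closed within $E$} portion — i.e. once $\|E x_t\|\geq\epsilon$ it stays $\geq\epsilon$ for longer nodes of the branch still inside $E$, by $\scu$-monotonicity — which makes $b\cap E'$ a genuine initial segment of $b\cap E$ and lets the estimate $\|E'w\|\leq\|E x_t\|<\epsilon$ go through with $t$ the last node of $b$ in $E'$. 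Assembling these: partition $E=E'\sqcup E^{\epsilon}$, invoke Remark \ref{E''segment complete}, prove the branchwise monotonicity, then deduce (i) and (ii) as above.
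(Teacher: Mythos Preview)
Your proposal is correct and takes essentially the same approach as the paper: both set $E''=E^{\epsilon}$ and $E'=E\setminus E^{\epsilon}$, and establish (i) by writing $Ew$ as a convex combination $\sum_{s\in L}\lambda_s x_s$ of branch-supported pieces and using that along each such segment $s$ the map $t\mapsto\|Ex_t\|$ is monotone (so $s\cap E'$ is an initial segment of $s$ with $\|x_{s\cap E'}\|<\epsilon$). The paper is terser---it omits your discussion of upward-closedness and treats (ii) as immediate from the definition of $E^{\epsilon}$.
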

 \begin{proof}
Set $E''=E^{\epsilon}$ as in Definition \ref{dthin2} and
$E'=E\setminus E''$. Observe that for every $w\in W_{\xi}^0$, $Ew$
can be written as $Ew=\sum_{s\in L} \lambda_s x_s$ where
$L\subseteq\segm (E)$ and $\sum_{s\in L} |\lambda_s|\leq 1$. It is
clear that for every $s\in L$ the set $s'=s\cap E'$ is either
empty or a segment of $E$ such that $\|x_{s'}\|<\epsilon$.
Therefore $\|E'w\|=\|\sum_{s\in L}\lambda_s x_{s'}\|<\epsilon$.
 \end{proof}
The following Proposition is the key ingredient for proving that the set
$W_{\xi}$ is a thin subset of $\auxx$. It is an adaptation of the
techniques developed in \cite{AF} and for the sake of completeness
we include its proof here.

 \begin{prop}
 \label{pthin1} Let $(w_n)_n$ be a level-block sequence in
 $W_{\xi}^0$, $\epsilon > 0 $ and $(E_n)_n$ a level-block sequence of subsets of $T$ where
 each one is of the form $E_n=\{t\in T: m_n\leq |t|\leq M_n\}$
 such that $\rg y_n\subseteq E_n$. Then there exist a $L\in [\nn]$
 and a sequence $(F_n)_{n\in L}$ with the following properties:

 \begin{enumerate}
  \item[i.] $F_n\subseteq E_n$ for $n\in L$
 \item[ii.] $(F_n)_{n\in L}$ are pairwise incomparable and segment
 complete subsets of $T$
 \item[iii.] $\|E_nw_n-F_nw_n\|<\epsilon$
 \end{enumerate}
\end{prop}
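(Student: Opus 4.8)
The plan is to extract, for each $n$, a segment-complete piece $F_n$ of $E_n$ that captures almost all of the mass of $w_n$ while being incomparable to the pieces chosen for other indices. The natural first move is to invoke Lemma \ref{lthin1}: for the given $\epsilon>0$ (or rather $\epsilon/2$), decompose each $E_n$ as $E_n' \cup E_n''$ with $\|E_n' w\| < \epsilon/2$ for all $w \in W_\xi^0$ and with $E_n'' = E_n^{\epsilon/2}$ segment complete (Remark \ref{E''segment complete}). Since $E_n' w_n$ is small, it suffices to produce, passing to a subsequence, pieces $F_n \subseteq E_n''$ that are pairwise incomparable and segment complete with $\|E_n'' w_n - F_n w_n\|$ small; combined with $\|E_n' w_n\| < \epsilon/2$ this yields (iii) via the triangle inequality. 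Properties (i) and (ii) will then come for free from the construction, once we ensure segment completeness is preserved.

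The second and main step is to arrange incomparability. Each $E_n'' w_n$ can be written (as in the proof of Lemma \ref{lthin1}) as $\sum_{s \in L_n} \lambda_s x_s$ with $L_n \subseteq \segm(E_n'')$ and $\sum_{s\in L_n}|\lambda_s| \le 1$; only finitely many segments $s$ carry mass above any fixed threshold, so after discarding a small-norm tail we may assume each $E_n'' w_n$ is supported on finitely many segments of $E_n''$, hence on a segment-complete subset $G_n \subseteq E_n''$ that is itself a finite union of segments with $\|E_n'' w_n - G_n w_n\| < \epsilon/4$, say. The obstruction to incomparability is that different $G_n$'s, although living in level-disjoint bands $E_n$, may share an initial node lower down in $T$ — that is, a branch of $T$ passing through $G_n$ for infinitely many $n$. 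Here is where I would use the finite branching of $T$ together with a Ramsey/König-type pruning argument: the "bottom nodes" of the segments comprising $G_n$ lie at level $m_n$, and since $T$ is finitely branching there are only finitely many nodes below level $m_1$; by a diagonal/pigeonhole argument on the (finitely many) possible $m_1$-level ancestors, pass to an infinite $L \in [\nn]$ so that for $n \in L$ the segments of $G_n$ avoid all nodes used by $G_m$ for the earlier $m \in L$. More precisely, process the indices in order: having chosen $F_m$ for the earlier chosen indices, the union $\hat{F}_m$ of their downward closures meets only finitely many levels, so for all large $n$ the band $E_n$ lies strictly above those levels; but incomparability requires avoiding the branches through $\hat F_m$, not just the levels, so one truncates each segment in $G_n$ to remove any node lying on such a branch — this removes at most one initial portion per segment, and since there are finitely many such forbidden branches and the removed initial nodes $t$ satisfy $\|x_t\|$ controlled (because we are inside $E_n'' = E_n^{\epsilon/2}$ so the relevant projections are bounded), the resulting $F_n$ still satisfies $\|G_n w_n - F_n w_n\|$ small after passing to a further subsequence along which these truncations stabilize.

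The third step is bookkeeping: one must check that after the truncation the sets $F_n$ are still segment complete (truncating a segment from below yields a segment, and a finite union of segments sharing no comparabilities, suitably closed, is segment complete — alternatively, replace $F_n$ by its segment-complete hull, which by incomparability of its constituent segments adds nothing outside them) and that all the accumulated errors $\|E_n' w_n\| + \|E_n'' w_n - G_n w_n\| + \|G_n w_n - F_n w_n\|$ sum to less than $\epsilon$. The main obstacle, as indicated, is the incomparability requirement (ii): it is not enough that the bands $E_n$ be level-disjoint, since branches of $T$ descend below any band, so one genuinely needs the finite-branching structure to control which low-level nodes can be "shared" and to argue that cutting those shared initial nodes costs little in norm. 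I expect the cleanest way to organize this is to first choose $L$ so that the (finitely many) relevant ancestor nodes stabilize along $L$, then fix the truncation once and for all, rather than truncating adaptively.
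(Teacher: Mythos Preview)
Your first step is right and matches the paper: apply Lemma~\ref{lthin1} to obtain $E_n = E_n' \cup E_n''$ with $\|E_n' w\| < \epsilon/2$ for all $w \in W_\xi^0$, and write $E_n w_n = \sum_{s \in L_n} \lambda_s x_s$ with $\sum_{s\in L_n}|\lambda_s| \le 1$. (Since $w_n \in W_\xi^0$ is already a \emph{finite} convex combination, $L_n$ is finite from the start; the preliminary reduction to a finitely supported $G_n$ is unnecessary.)

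The gap is in the incomparability step. Your truncation idea does not work: if a segment $s \subseteq E_n$ satisfies $\hat s \cap F_m \neq \emptyset$ for some earlier $m$, then \emph{every} node of $s$ has that same ancestor in $F_m$, so removing an initial portion of $s$ does nothing for incomparability. Such a segment must be discarded in its entirety. The question then becomes: why is the total mass $\sum_{s \text{ bad}} |\lambda_s|$ of discarded segments small? Your sketch offers no control here. Finite branching bounds the number of \emph{nodes} in $E_m''$, but each such node can have arbitrarily many descendants among the bottom nodes of segments in $L_n$, and a single bad segment can carry $|\lambda_s|$ close to~$1$. Neither pigeonhole on level-$m_1$ ancestors nor ``letting truncations stabilize'' addresses this.

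Controlling this mass is the heart of the proof, and the paper does it with a device absent from your proposal: a limit measure on the branch space $[T]$. One fixes a strictly positive probability measure $\nu$ on $[T]$, pushes each $\mu_n$ forward to $[T]$ via $B \mapsto \sum_{s\in L_n} |\lambda_s|\,\nu(\mathcal O_s \cap B)/\nu(\mathcal O_s)$, and passes to a subsequential limit~$\mu$. With $B_m = \{b \in [T]: b \cap E_m'' \neq \emptyset\}$ one has $\mu(B_m) = \lim_k \mu_k(A_m^k)$, where $A_m^k$ is precisely the set of bad segments in $L_k$ relative to $E_m''$. The key claim is that $\{m : \mu(B_m) < \delta\}$ is infinite inside every infinite set: otherwise some branch $b$ lies in $B_m$ for infinitely many~$m$, whence property~(ii) of Lemma~\ref{lthin1} forces $\|E_m'' x_b\| \geq \epsilon/2$ for infinitely many level-disjoint bands, contradicting the norm convergence of $x_b$. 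This claim lets one recursively pick $n_1 < n_2 < \cdots$ with $\sum_{i<k} \mu_{n_k}(A_{n_i}^{n_k})$ summably small, after which $F_{n_k} = \{t \in E_{n_k}'': t \perp E_{n_i}'' \text{ for all } i<k\}$ does the job. The passage from the single-branch obstruction (which you implicitly have) to a \emph{mass} bound on bad segments is the missing idea.
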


\begin{proof}
We apply Lemma \ref{lthin1} to find a decomposition of $E_n$ into
two disjoint subsets $E'_n,E''_n$ such that
\begin{enumerate}
\item[i.] $\|E'_nw_n\|_{\auxx}<\frac{\epsilon}{2}$ \item[ii.] If
$s$ is a segment with $s=[t_1,t_2]$ and $|t_1|\leq m_n,|t_2|\geq
M_n$ as well as $s\cap E''_n\neq\emptyset$ we have
$\|E''_nx_s\|\geq\frac{\epsilon}{2}$
\end{enumerate}
Now for every $n\in\nn$ $E_nw_n=\sum_{s\in L_n}\lambda_s x_s$,
where $L_n\subseteq \segm (E_n)$ and $\sum_{s\in L_n}|\lambda_s|\leq
1$. This representation defines a positive measure on $\segm (E_n)$
with $\mu_n(A)=\sum_{s\in A \cap L_n}|\lambda_s|$ for $A\subseteq
\segm (E_n)$. Now let us  consider a probability measure $\nu$ on the
compact metrizable space of the branches $[T]$ of the tree $T$
such that for every segment $s$ and every $\mathcal{O}_s$ basic
clopen neighborhood of $[T]$ that contains $s$ of the form
$\mathcal{O}_s=\{b\in [T]: s\lesseq b\}$ we have that
$\nu(\mathcal{O}_s)>0$. With the help of $\nu$ we define a measure
$\mu$ on $[T]$ as follows. For every clopen $B\subseteq [T]$ we
set
\[\mu(B)=\lim_{n\to\mathcal{U}}\sum_{s\in L_n}
|\lambda_s|\frac{\nu(\mathcal{O}_s\cap B)}{\nu(\mathcal{O}_s)}\]
where the limit is taken with respect to a non-trivial ultrafilter
$\mathcal{U}$ on $\nn$. Using a diagonal argument we may assume
that this is an ordinary limit. Now for every $n<k$ in $\nn$ we
define
\begin{enumerate}
 \item[i.] $B_n=\{b\in [T] : b\cap
E''_n\neq\emptyset\}$ \item[ii.] $A_n^k=\{s\subseteq \segm (E_k):
\hat{s}\cap E''_n\neq\emptyset\}$
 \end{enumerate}
By our definition of $\mu$ we have that
\[\mu(B_n)=\lim_k\mu_k(A_n^k)\]
\noindent\textbf{Claim} For every $M\in [\nn]$ and $\delta>0$ the
set $I_{\delta}=\{n\in M: \mu(B_n)<\delta\}$ is an infinite subset of $M$.\\
\noindent\textit{Proof of claim:} Suppose not. Then there exists
an $M\in [\nn]$ such that for every $n\in M$ it holds
$\mu(B_n)\geq\delta$ and therefore there exists a branch $b\in
[T]$ such that $b\in B_n,\forall n\in M$. This implies $b\cap
E''_n\noe$ for all $n\in M$. Thus the sequence
$(x_{b|n})_{n\in\nn}$ converges norm to
$x_b=\sum_{n\in\nn}\psi({b|n})e_{b|n}$ and
$\|E''_nx_b\|\geq\frac{\epsilon}{2}$ for all $n\in M$ which is a
contradiction and proves the claim. \hfill\\
 Now we define
the following sets. First we set $I_0=\nn$ and inductively for
$k\geq
0$ \begin{eqnarray*} \tilde{I}_{k+1} & = & \{n\in I_k:\mu(B_n)<\frac{\epsilon}{C\cdot 2^{k+2}}\}, n_{k+1}=\min\tilde{I}_{k+1} \\
I_{k+1} & = & \{l\in\tilde{I}_{k+1}: \mu_{n_{k+1}}(A_{n_{k+1}}^l)<\frac{\epsilon}{C\cdot 2^{k+2}}\}\\
F_{n_{k+1}}& = & \{t\in E''_{n_{k+1}}: \hat{t}\cap(\cup_{i=1}^k
E''_{n_i})=\emptyset\}=\{t\in E''_{n_{k+1}}:t\perp E''_{i}\text{
for }i\leq n_k\}
\end{eqnarray*} Where by $s_t$ we denote the unique initial
segment that contains $t$. By the previous claim the sets
$\tilde{I}_{k+1},I_{k+1}$ are infinite and since $n_{k+1}\in I_k$
we have that
$\mu_{n_{k+1}}(A_{n_k}^{n_{k+1}})<\frac{\epsilon}{2^{k+1}}$. The
set $I_{\infty}=\{n_1,n_2,...\}$ is infinite and we observe that
$\{F_n\}_{n\in I_{\infty}}$ are incomparable by definition and are
also segment complete. Recall also that for every $x\in\kkk$ it holds $\|x\|\leq C$.
Now let $k\in\nn$ then it remains to show that
$\|(E_{n_k}\setminus F_{n_k})w_{n_k}\|<\epsilon$. For $k>1$ we set
$r=n_{k+1}$. We consider the set $A_r=A_{n_1}^r\cup...\cup
A_{n_k}^r$. Then
$\mu_r(A_r)\leq\sum_{i=2}^{k+2}\frac{\epsilon}{C\cdot
2^{k+2}}<\frac{\epsilon}{4C}$. Let $s\notin A_r$ then $\forall
t\in s$ and $i=1,...,k$ we have $s_t\cap E''_{n_i}=\emptyset$ and
thus $s\subseteq F_r$ and if $s\in A_r$ then $s\cap
F_r=\emptyset$. So
\[\|(E''_r\setminus F_r) w_r\|\leq\sum_{s\in L_r\cap
A_r}|\lambda_s|\cdot \|(E''_r\setminus
F_r)x_s\|\leq\mu_r(A_r)<\frac{\epsilon}{2}\] Thus
\[\|(E_r\setminus F_r)w_r\|\leq\|E'_rw_r\|+\|(E''_r\setminus
F_r)w_r\|<\frac{\epsilon}{2}+\frac{\epsilon}{2}=\epsilon\]
\end{proof}
\begin{prop}
\label{pthin2} Let $(z_n)_n$ be a normalized level-block sequence
in $\auxx$ such that the unit ball $B_Z$ of the subspace
$Z=\overline{<z_n: n\in\nn>}^{\|.\|}$ is almost absorbed by
$W_{\xi}$. Then every normalized block sequence in $Z$ has a subsequence which is a $\ell^1_{\xi}$ spreading model. Moreover, the identity operator $I:Z\mapsto X_T$ is strictly
singular.
\end{prop}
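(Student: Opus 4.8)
\textit{Plan.} The plan is to derive both conclusions from Proposition~\ref{pthin1} and Remark~\ref{l1x}, the single new ingredient being the hypothesis that $W_{\xi}$ almost absorbs $B_Z$. For the spreading‑model assertion I start from a normalized block sequence $(x_k)_k$ in $Z$. Since $(z_n)_n$ is level‑block in $\auxx$, a block of $(z_n)_n$ is automatically supported on a level interval, so each $x_k$ lives on $E_k:=\rg^l x_k=\{t\in T: m_k\le|t|\le M_k\}$, with $(E_k)_k$ level‑block and $P_{E_k}x_k=x_k$ (Lemma~\ref{lxi2}). First I would fix a small $\epsilon>0$; by almost‑absorption there is a fixed (but $\epsilon$‑dependent) $\lambda>0$ with $B_Z\subseteq\lambda W_{\xi}+\epsilon B_{\auxx}$, and since $W_{\xi}^0$ is dense in $W_{\xi}$ each $x_k$ can be written as $x_k=\lambda w_k+r_k$ with $w_k\in W_{\xi}^0$ and $\|r_k\|\le 2\epsilon$. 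Applying $P_{E_k}$ gives $x_k=\lambda\bar w_k+\bar r_k$ with $\bar w_k:=P_{E_k}w_k$ and $\|\bar r_k\|\le 2\epsilon$. The point is that $P_{E_k}$ maps $W_{\xi}^0$ into itself: $E_k$ is an interval of $T$ (the enumeration $h$ respects heights), the intersection of two intervals of $T$ is again an interval, hence $P_{E_k}(y_t|I)=y_t|(I\cap E_k)$ is again a generator of $\kkk_\psi^2$, and $P_{E_k}$ is linear and continuous. Thus $(\bar w_k)_k$ is a level‑block sequence in $W_{\xi}^0$ with $\rg^l\bar w_k\subseteq E_k$.

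The key step is then to apply Proposition~\ref{pthin1} to $(\bar w_k)_k$, $(E_k)_k$ and, crucially, to the error $\epsilon/\lambda$ rather than a fixed error: this yields $L\in[\nn]$ and pairwise incomparable, segment complete sets $F_k\subseteq E_k$ $(k\in L)$ with $\|\bar w_k-P_{F_k}\bar w_k\|<\epsilon/\lambda$ (using $P_{E_k}\bar w_k=\bar w_k$). Putting $v_k:=\lambda P_{F_k}\bar w_k$ for $k\in L$, the triangle inequality gives $\|x_k-v_k\|\le\|P_{E_k}r_k\|+\lambda\cdot(\epsilon/\lambda)\le 3\epsilon$. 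Now $(v_k)_{k\in L}$ is level‑block with pairwise incomparable supports, $\|v_k\|\ge 1-3\epsilon$, and bounded (since $W_{\xi}^0$ is), so Remark~\ref{l1x} identifies $(v_k)_{k\in L}$ as a $\ell^1_{\xi}$ spreading model; a one‑line perturbation estimate then transfers this to $(x_k)_{k\in L}$, with a constant that is bounded below by a fixed positive number once $\epsilon$ is taken small enough. I expect the bookkeeping of the parameter $\lambda=\lambda(\epsilon)$ to be the main delicate point: $\lambda$ need not be bounded, nor grow more slowly than $1/\epsilon$, as $\epsilon\to 0$, so shrinking $\epsilon$ alone does not make $x_k-v_k$ small — the device is to absorb $\lambda$ into the error fed to Proposition~\ref{pthin1}. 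Apart from this one only has to check the routine facts that a block sequence of $(z_n)_n$ is automatically level‑block in $\auxx$ and that all the sets in play ($E_k$, $I\cap E_k$, $F_k$) are intervals, resp.\ segment complete, so that Lemma~\ref{lxi2}, Proposition~\ref{pthin1} and Remark~\ref{l1x} apply verbatim.

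For the strict singularity of $I:Z\to X_T$ I would argue by contradiction. If $I$ is not strictly singular, there is an infinite‑dimensional closed $Y\subseteq Z$ and $\kappa>0$ with $\|v\|_{X_T}\ge\kappa\|v\|_{\auxx}$ for all $v\in Y$. A standard gliding‑hump argument reduces matters to the case $Y=\overline{<u_k:k\in\nn>}$ for a normalized block sequence $(u_k)_k$ of $(z_n)_n$ on which $I$ is still bounded below (here the fixed lower constant obtained in the first part makes the passage to a genuine block sequence harmless). By the first part a subsequence $(u_k)_{k\in L}$ is a $\ell^1_{\xi}$ spreading model in $\auxx$, say with constant $c>0$. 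But then, for every $F\in\sxi$ and scalars $(a_i)_{i\in F}$, one has $\|\sum_{i\in F}a_iu_{k_i}\|_{X_T}\ge\kappa\|\sum_{i\in F}a_iu_{k_i}\|_{\auxx}\ge\kappa c\sum_{i\in F}|a_i|$, while $\|u_k\|_{X_T}\le\|u_k\|_{\auxx}=1$; hence $(u_k)_{k\in L}$ is a $\ell^1_{\xi}$ spreading model in $X_T$, contradicting the choice of $\xi$ at the beginning of this subsection (that $X_T$ admits no $\ell^1_{\xi}$ spreading model). Therefore $I|_Y$ is not bounded below on any infinite‑dimensional $Y\subseteq Z$, i.e.\ $I:Z\to X_T$ is strictly singular.
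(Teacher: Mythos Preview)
Your proof is correct and follows the same route as the paper's: use almost absorption to approximate each block vector by an element of $\lambda W_{\xi}^0$, apply Proposition~\ref{pthin1} with error $\epsilon/\lambda$ to obtain pairwise incomparable, segment complete pieces, invoke Remark~\ref{l1x}, and then deduce strict singularity from the absence of $\ell_\xi^1$ spreading models in $X_T$. Your write-up is in fact more careful than the paper's---you make explicit the preliminary projection onto the level-interval $E_k$ (and verify it preserves $W_\xi^0$) and the rescaling of the error by $1/\lambda$, both of which the paper leaves implicit.
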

\begin{proof}
Let $(y_n)_n$ be a normalized level-block sequence in $Z$.
By our hypothesis there exists a $\lambda>0$ such that
$B_Z\subseteq\lambda W_{\xi}+\frac{1}{8}B_{\auxx}$. Thus for every
$k\in\nn$ there exists $w_k\in W_{\xi}^0$ such that $\|y_k-\lambda
w_k\|<\frac{1}{8}$. By Proposition \ref{pthin1} there
exists a $M\in[\nn]$ and a sequence $(E_k)_{k\in M}$ of subsets of
$T$ such that the following hold:
\begin{enumerate} \item[i.] $\|w_k-E_kw_k\|<\frac{1}{8\lambda}$
\item[ii.] $(E_k)_k$ are pairwise incomparable and segment
complete subsets of $T$, for all $k\in M$
\end{enumerate}
For the sake of simplicity of notation we assume that $M=\nn$. Now
if we set $w'_k=E_kw_k$ and $w''_k=\lambda w'_k$, we have that
$\|y_k-w''_k\|<\frac{1}{8}+\frac{1}{8}=\frac{1}{4}$. Thus
$\|w''_k\|_{\auxx}\geq\frac{3}{4}$. Remark \ref{l1x} yields that $(w''_k)_k$ is a $\ell^1_{\xi}$ spreading model. It is easy to see that this property is transferred to $(y_n)_n$ as well. In addition, as $X_T$ does not contain $\ell^1_{\xi}$ spreading models, it is immediate that $I:Z\mapsto X_T$ is strictly singular.
\end{proof}

An immediate consequence of the preceding Proposition is the following.
\begin{cor}
\label{cthin1}
Let $(z_n)_n$ be a normalized level-block sequence
in $\auxx$ such that the unit ball $B_Z$ of the subspace
$Z=\overline{<z_n: n\in\nn>}^{\|.\|}$ is almost absorbed by
$W_{\xi}$. Then every normalized block sequence in $Z$ has a further block subsequence which satisfies an upper $(T,\xi)$ estimate.
\end{cor}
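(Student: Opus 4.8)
The plan is to combine Proposition \ref{pthin2} with Proposition \ref{bprop}, which is essentially the only work required. Let $(z_n)_n$ be a normalized level-block sequence in $\auxx$ whose subspace $Z=\overline{\langle z_n:n\in\nn\rangle}$ has its unit ball almost absorbed by $W_\xi$, and let $(u_n)_n$ be an arbitrary normalized block sequence in $Z$. First I would pass, by a standard gliding-hump argument, to a block subsequence $(y_n)_n$ of $(u_n)_n$ which is, up to an arbitrarily small perturbation, a level-block sequence in $\auxx$ (this is harmless since $(z_n)_n$ is itself level-block and any block vector supported on a tail of the $z_n$'s has range contained in a level interval determined by those $z_n$'s). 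So without loss of generality $(y_n)_n$ is a seminormalized level-block sequence in $Z$.

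Next I would invoke Proposition \ref{pthin2}: since $B_Z$ is almost absorbed by $W_\xi$, the identity operator $I:Z\to X_T$ is strictly singular. Therefore, restricting $I$ to the block subspace generated by $(y_n)_n$, there is a further normalized block sequence $(z'_n)_n$ of $(y_n)_n$ with $\|z'_n\|_{X_T}\to 0$; indeed, strict singularity of $I$ means it is not an isomorphism on any infinite-dimensional subspace, and a routine successive-approximation argument on the block subspace produces such a sequence with $\|I z'_n\|_{X_T}=\|z'_n\|_{X_T}\to 0$. Passing once more to a subsequence I may also assume $(z'_n)_n$ is level-block (again a harmless perturbation). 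Now $(z'_n)_n$ is a bounded (in fact seminormalized) level-block sequence in $\auxx$ with $\lim_n\|z'_n\|_{X_T}=0$, so Proposition \ref{bprop} applies and yields a constant $C>0$ and $M=\{m_1<m_2<\cdots\}\in[\nn]$ such that $\|\sum_{i=1}^k\lambda_i z'_{m_i}\|_{\auxx}\le C\|\sum_{i=1}^k\lambda_i e_{p_{m_i}}\|_{T_\xi}$ for all scalars $(\lambda_i)_{i=1}^k$, where $p_n=\min\spp z'_n$. This is precisely an upper $T_\xi$-estimate; since, by Definition \ref{dxi4} and the fact that $(z'_n)_n$ is a normalized block sequence, the map $z'_n\mapsto e_{p_n}$ identifies the relevant estimate with the $(T,\xi)$-admissibility structure, this is the desired upper $(T,\xi)$-estimate for the block subsequence $(z'_{m_i})_i$ of the original sequence $(u_n)_n$.

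The only mildly delicate point is the reduction to level-block sequences at each stage and the extraction of the norm-null (in $X_T$) block subsequence from the strict singularity of $I$; both are standard but must be carried out carefully so that perturbations do not accumulate. Everything else is a direct citation of the two preceding propositions, so I do not expect any genuine obstacle here — the statement is really a corollary in the literal sense.
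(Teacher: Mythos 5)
Your argument is correct and is essentially the paper's proof, which also routes through Proposition~\ref{pthin2} (via the strict singularity of $I:Z\to X_T$) to obtain a block subsequence that is norm-null in $X_T$ and then applies Proposition~\ref{bprop}. You additionally spell out the reduction to a level-block subsequence that Proposition~\ref{bprop} formally requires, and the standard gliding-hump extraction of a norm-null block sequence from strict singularity; both are steps the paper's very terse proof leaves implicit.
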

\begin{proof}
Proposition \ref{pthin2} yields that for every normalized block sequence $(y_n)_n$ in $Z$ there exists a further normalized block subsequence $(x_n)_n$ of $(y_n)_n$ such that $\|x_n\|_{X_T}\to 0$. A direct application of Proposition \ref{bprop} yields the result.
\end{proof}

We are now ready to prove the main result of this section.

\begin{thm}
\label{Tthin} The set $W_{\xi}$ is thin in $\auxx$.
\end{thm}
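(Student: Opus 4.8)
The plan is to argue by contradiction. Suppose $W_\xi$ almost absorbs the unit ball of some infinite dimensional closed subspace $Y$ of $\auxx$. Since $\auxx$ has a Schauder basis $(e_t)_{t\in T}$, a standard gliding hump / small perturbation argument produces a normalized block sequence $(z_n)_n$ in $Y$ whose closed linear span $Z$ still has $B_Z$ almost absorbed by $W_\xi$ (passing to a subspace of $Z$ only makes absorption easier, and $W_\xi$ is convex, symmetric and closed, so the perturbation is harmless). By further refining, I may take $(z_n)_n$ to be a normalized \emph{level-block} sequence, so that Proposition \ref{pthin2} and Corollary \ref{cthin1} apply to $Z$.

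Next I would extract the core tension. On the one hand, Corollary \ref{cthin1} says every normalized block sequence in $Z$ has a further block subsequence satisfying an upper $(T,\xi)$-estimate, i.e. is dominated by the corresponding basis of $T_\xi$; since $T_\xi$ is reflexive, this forces every normalized block sequence in $Z$ to have a weakly null subsequence, so $Z$ is reflexive and in particular contains no $\ell^1$. On the other hand, Proposition \ref{pthin2} says every normalized block sequence in $Z$ has a subsequence which is an $\ell^1_\xi$-spreading model. These two facts together say $Z$ is a reflexive space all of whose normalized block sequences generate $\ell^1_\xi$-spreading models while also being $T_\xi$-dominated — this is exactly the situation one expects to be contradictory, and the job is to turn "expect" into "is."

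The mechanism I would use is an averaging argument inside $T_\xi$ (equivalently, inside $Z$ via the upper estimate). Fix a normalized block sequence $(y_n)_n$ in $Z$ and a subsequence that is simultaneously (a) a $C_1$-$\ell^1_\xi$-spreading model (from Proposition \ref{pthin2}) and (b) $C_2$-dominated by $(e_{p_n})_n$ in $T_\xi$ (from Corollary \ref{cthin1}). Then for every $F\in\sxi$ and scalars $(\lambda_i)_{i\in F}$,
\[
C_1\sum_{i\in F}|\lambda_i|\ \le\ \Big\|\sum_{i\in F}\lambda_i y_i\Big\|_{\auxx}\ \le\ C_2\Big\|\sum_{i\in F}\lambda_i e_{p_i}\Big\|_{T_\xi}.
\]
So it suffices to exhibit, for arbitrarily large $N$, a set $F\in\sxi$ with $|F|=N$ and coefficients $(\lambda_i)_{i\in F}$ with $\sum_{i\in F}|\lambda_i|=1$ but $\|\sum_{i\in F}\lambda_i e_{p_i}\|_{T_\xi}$ as small as we like. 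This is a standard property of the Tsirelson-type space $T_\xi=T(\tfrac12,\sxi)$: the basis of $T_\xi$ admits, along $\sxi$-sets, $\ell^1_\xi$-upper estimates are \emph{tight} — more precisely, for any $\varepsilon>0$ one can choose $F\in\sxi$ large enough and $\lambda_i=1/|F|$ so that the $\ell^1_\xi$-average has $T_\xi$-norm $\le\varepsilon$ (this is the classical "$\ell^1_\xi$-spreading models die under repeated averaging in $T_\xi$" phenomenon; one uses that $\sxi\otimes\sxi$ refines into $\sxi$-controlled pieces and iterates the $\tfrac12$-factor, or one invokes directly that $T_\xi$ contains no $\ell^1_\xi$-spreading model because it is reflexive with a lower $\ell^1_\xi$-bound only up to a single application of the operation). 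Plugging such an $F$ into the displayed inequality gives $C_1\le C_2\varepsilon$, a contradiction for small $\varepsilon$.

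The main obstacle, and the step deserving the most care, is the last one: producing $\ell^1_\xi$-averages of arbitrarily small $T_\xi$-norm and making sure the constants line up uniformly (the $C_1$ from Proposition \ref{pthin2} and the $C_2$ from Corollary \ref{cthin1} must be fixed \emph{before} choosing $\varepsilon$, which is fine since both propositions apply to the fixed subspace $Z$ and yield constants depending only on $Z$ and $\lambda$). One subtlety is that Corollary \ref{cthin1} gives the upper estimate only after passing to a further block subsequence, and Proposition \ref{pthin2}'s spreading-model conclusion also passes to a subsequence; so I would first pass to a single subsequence enjoying both properties, then run the averaging argument on that. Modulo these bookkeeping points the contradiction is immediate, and it shows no infinite dimensional subspace of $\auxx$ has its ball almost absorbed by $W_\xi$, i.e. $W_\xi$ is thin.
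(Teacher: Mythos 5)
Your proposed contradiction does not exist: the averaging claim on which it hinges is false. Since the $p_n$ are strictly increasing natural numbers, $p_n\geq n$, and by the spreading property of $\sxi$, whenever $F\in\sxi$ the set $\{p_i: i\in F\}$ is $\sxi$-admissible. Consequently $\frac{1}{2}\sum_{i\in F}\sgn(\lambda_i)\,e^*_{p_i}$ is a norming functional of $T_\xi$ and
\[
\Big\|\sum_{i\in F}\lambda_i e_{p_i}\Big\|_{T_\xi}\ \geq\ \frac{1}{2}\sum_{i\in F}|\lambda_i|
\]
for every $F\in\sxi$. Thus $\sxi$-averages in $T_\xi$ have norm at least $\tfrac12$ and cannot be made small; the displayed inequality you derived is automatically satisfied with $C_2=2C_1$ and carries no information. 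The ``core tension'' you describe --- a reflexive, $T_\xi$-dominated space in which every block sequence has an $\ell^1_\xi$-spreading-model subsequence --- is no tension at all: $T_\xi$ itself is such a space (its basis is a $\tfrac12$-$\ell^1_\xi$-spreading model, it dominates itself, and it is reflexive). The parenthetical assertion that ``$T_\xi$ contains no $\ell^1_\xi$-spreading model because it is reflexive'' is simply wrong; reflexivity rules out copies of $\ell^1$ but not $\ell^1_\xi$-spreading models.

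The paper's argument is different in kind and uses the geometry of $W_\xi$ beyond the two facts you cite. After obtaining the $T_\xi$-dominated block sequence $(z_n)_n$ from Corollary \ref{cthin1}, it extracts (as in Proposition \ref{pthin2}) functionals $z_n^*\in\gox$ with pairwise incomparable, segment-complete supports in $\rg z_n$ and $z_n^*(z_n)>\tfrac14$, and forms $P(x)=\sum_n z_n^*(x)z_n$. Boundedness of $P$ is proved by a tree analysis transporting a $T_\xi$-norming functional into a $\gox$-norming functional. The decisive geometric step is that a segment of $T$ meets at most one of the pairwise incomparable sets $\spp z_n^*$; since elements of $W_\xi$ are convex combinations of segment-supported vectors, this gives $P(W_\xi)\subseteq\|P\|\overline{\co}[\pm z_n]$, and with the absorption hypothesis $B_Z\subseteq 8\lambda\|P\|\,\overline{\co}[\pm z_n]$. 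Now the weak nullity of the $T_\xi$ basis supplies a convex combination $\sum_i k_i e_{p_i}$ of arbitrarily small $T_\xi$-norm --- crucially these $k_i$ live on supports far outside $\sxi$, which is exactly why the norm can be small --- and the upper estimate makes $z=\sum_i k_i z_i$ arbitrarily small in $\auxx$, so $16\lambda\|P\|\,z\in B_Z$. Since $(z_n)_n$ is basic, any element of $\overline{\co}[\pm z_n]$ has a unique expansion with $\ell^1$-coefficient sum at most one; but $\sum_i k_i=1$ forces the coefficient sum of $16\lambda\|P\|\,z$ to be $16\lambda\|P\|>8\lambda\|P\|$, a contradiction. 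The two ingredients your proposal misses are the incomparable segment-complete support structure of $W_\xi$, which makes $P$ act like a coordinate projection on $W_\xi$, and the need to leave the $\sxi$ window when choosing the contradicting convex combination.
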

\begin{proof} Suppose not. Then there exists a normalized block
sequence $(y_n)_n$ in $\auxx$ such that $B_Y$ is almost absorbed
by $W_{\xi}$, where by $Y$ we denote
$Y=\overline{<y_n:n\in\nn>}^{\|.\|}$. By Corollary \ref{cthin1} we can find a normalized block sequence $(z_n)_n$ in $Y$ such that
 \[\|\sum_{i=1}^k b_i z_{n}\|\leq\
48\|\sum_{i=1}^k b_i e_{p_{n}}\|_{T_{\xi}}\]
for every choice of scalars $(b_i)_{i=1}^k$ and $k\in\nn$. Since by our hypothesis the
unit ball of $Z=\overline{<z_n:n\in\nn>}^{\|.\|}$ is
almost absorbed by $W_{\xi}$ we can apply the same arguments as in
Proposition \ref{pthin2} to obtain a sequence $(z_{m_i}^*)_i$
satisfying the following \begin{enumerate} \item[i.] $\spp
z_{m_i}^*\subseteq \rg z_{m_i}$ for all $i\in\nn$ \item[ii.] $(\spp
z_{m_i}^*)_n$ are pairwise incomparable, segment complete subsets of
$T$ \item[iii.] $z_{m_i}^*(z_{m_i})>\frac{1}{4}$ for all $i\in\nn$
\item[iv.] $z^*_{m_i}\in\gox$ for all $i\in\nn$
\end{enumerate}
Define an operator $P:\auxx\mapsto\overline{<z_n:n\in\nn>}^{\|.\|}$ by $P(x)=\sum_{n=1}^{\infty}z_n^*(x)z_n$. We will show first that $P$ is bounded.

To see this, let $x\in B_{\auxx}$. It is enough to prove that $\|\sum_nz_n^*(x)e_{p_n}\|_{T_{\xi}}\leq 1$. Indeed, Let $f$ be a functional in the norming set of $T_{\xi}$ and $(f_a)_{a\in A}$ a tree analysis of $f$. We can assume without loss of generality that
$\spp f\subseteq\{p_n:n\in\nn\}$. Let $a\in\mathcal{A}$ be a
$\lesseq_{\mathcal{A}}$-maximal node. Then $f_a=\pm e_{p_k}$ for a
$k\in\nn$. Thus $|f_a(\sum_n z_n^*(x)e_{p_n})|=|z_k^*(x)|\leq 1$.
We move on to recursively define for each $a\in\mathcal{A}$ a
functional $g_a\in\gox$ such that $f_a(\sum_n
z_n^*(x)e_{p_n})=g_a(x)$. If $a$ is maximal and $f_a=\pm
e_{p_k}^*$ we set $g_a=z_k^*$ or $g_a=-z_k^*$ respectively. We
observe that if $\{\pm e_{p_{i_1}}^*<...<\pm e_{p_{i_l}}^*\}$ is
an $\sxi$-admissible sequence of functional in $B_{T_{\xi}}^*$
then $\{\pm z_{p_{i_1}}^*<...<\pm z_{p_{i_l}}^*\}$ is an
$(T,\xi)$-admissible family of functionals as well. Now suppose
that $a\in\mathcal{A}$ is not maximal such that for every $\beta\in
S_{a}$ we have defined a functional $g_{\beta}\in\gox$ such that
$(g_{\beta})_{\beta\in S_{a}}$ are successive and their supports
are pairwise incomparable subsets of $T$ and each $g_{\beta}$
satisfies $f_{\beta}(\sum_n z_n^*(x)e_{p_n})=g_{\beta}(x)$ for all
$\beta\in S_{a}$. Then the functional
$g_a=\frac{1}{2}\sum_{\beta\in S_{a}}g_{\beta}$ is an element of
$\gox$ and \[f_a(\sum_n z_n^*(x)e_{p_n})=\frac{1}{2}\sum_{\beta\in
S_{a}}f_{\beta}(\sum_n z_n^*(x)e_{p_n})=\frac{1}{2}\sum
g_{\beta}(x)=g_a(x).\] Thus, by following the structure of the tree
$\mathcal{A}$ we arrive at a functional $g\in\gox$ with
$g(x)=f(\sum_n z_n^*(x)e_{p_n})$ and this gives us
\[\|\sum_n z_n^*(x)e_{p_n}\|_{T_{\xi}}\leq \|x\|\leq 1\] Therefore, $\|P\|\leq
C$.

Suppose now, that $\forall\epsilon>0$, $\exists\lambda>0$ such
that
\[B_Y\subseteq\lambda W_{\xi}+ \epsilon B_{\auxx}\] Then it is
clear that $B_Z\subseteq\lambda W_{\xi}+ \epsilon B_{\auxx}$ where
$Z=\overline{<z_n:n\in\nn>}^{\|.\|}$. Since
$|z_n^*(z_n)|>\frac{1}{4}$ we obtain that
\[\frac{1}{4}B_Z\subseteq P(B_Z)\] and by setting
$\epsilon=\frac{1}{8\|P\|}$ we have \[P(B_Z)\subseteq\lambda
P(W_{\xi})+ \frac{1}{8}B_Z\] Thus $B_Z\subseteq
8\lambda\overline{P(W_{\xi})}^{\|.\|}$. Since the operator
$P$ is defined by the sequence $(z_n^*)_n$ and $(\spp z_n^*)_n$
are pairwise incomparable we have that
$P(W_{\xi})\subseteq\|P\|\overline{co}[(\pm z_n): n\in\nn]$ and
finally \[B_Z\subseteq 8\lambda\|P\|\overline{co}[(\pm
z_n):n\in\nn]\]
 Now since, as is well known, the basis of
$T_{\xi}$ is weakly null we can select a convex combination
$x=\sum_{i=1}^n k_ie_{p_i}$ such that
$\|x\|_{T_{\xi}}<\frac{C}{16\lambda\|P\|}$ and we obtain
$\|\sum_{i=1}^n k_i z_i\|_{\auxx}<\frac{1}{16\lambda\|P\|}$. We
observe that if $z=\sum_{i=1}^n k_i z_i$ then
$(16\lambda\|P\|)z\in B_Z\subseteq 8\lambda\|P\|\overline{co}[(\pm
z_n):n\in\nn]$. We conclude
that $W_{\xi}$ \textit{does not} almost absorb $B_Y$. This is a contradiction which yields the proof of the Theorem.
\end{proof}

\section{Classical Interpolation Spaces}
\label{int1}
 We fix $\tr$,  $X_{\tr}$ with a $\scu$-unconditional basis $(e_t)_{t\in\tr}$ and $\kkk$ as in the previous section.
 We also set $W=\overline{\mathrm{co}}^{\|\cdot\|_{X_{\tr}}}(\kkk\cup-\kkk)$. In this section we use the classical Davies-Figiel-Johnson-Pelczynski iterpolation method \cite{DFJP} for the pair $(X_{\tr},W)$ to produce a new space $\auxb$ in which the structure of the set $\kkk$ is preserved and study the properties of this new space.
 We begin by recalling the (DFJP)-interpolation method:

 \begin{defn}
 \label{dint11} Let $\tr$, $X_{\tr}$ and $W$ be as above. We set $W_n=2^n W+\fr B_{X_{\tr}}$ and define a sequence of equivalent
 norms $(\|\cdot\|_n)_n$ on $X_{\tr}$, each induced by the Minkowski gauge of the respective $W_n$. We consider $(\sum_{n=1}^{\infty}\oplus(X_{\tr},\|\cdot\|_n))_2$ to be the
$\ell_2$-Schauder sum of the spaces $(X_{\tr},\|\cdot\|_n)_n$. Finally, we
set $\auxb$ to be the diagonal space of this $\ell_2$-Schauder
sum. That is, the (closed) subspace consisting of all elements of the form $\tilde{x}=(x,x,...x,...)$, for $x\in X_{\tr}$.
We also denote by $J_1$ the $1-1$ bounded linear operator $J_1:\auxb\to X_{\tr}$ defined as $J_1(\tilde{x})=x$ and by
$\tilde{\kkk}=\{\tilde{x}:x\in\kkk\}$.
\end{defn}
\begin{rem}
\label{rint11} In \cite{DFJP} it was proved that if the set $W$ is weakly compact then the space $\auxb$ is reflexive. In addition, by its construction the space $\auxb$ consists of all elements $\tilde{x}=(x,x,...,x,...)$ such that
$\sum_{n=1}^{\infty}\|x\|_n^2<\infty$ and
$\|\tilde{x}\|_{\auxb}=(\sum_{n=1}^{\infty}\|x\|_n^2)^\frac{1}{2}$. Therefore,
we can observe that for $w\in W$ we have $\|w\|_n\leq \fr$ and
thus $\|\tilde{w}\|_{\auxb}\leq 1$. It follows that $\tilde{\kkk}$ is a closed subset of $\auxb$ and $J_1(\tilde{\kkk})=\kkk$.
\end{rem}
We pass now to show that the space $\auxb$ has a $\scu$-unconditional basis. We start with the following Lemma.
\begin{lem}
\label{lint11}
Let $\tr_{\kkk}=\{t\in\tr:y_s=\sum_{t'\lesseq t}\psi(t')e_{t'}\in\kkk\}$. Then $\tr_{\kkk}$ is a backwards closed subtreee
of $\tr$ and for every $t\in\tr_{\kkk}$ we have that
$\tilde{e}_t=(e_t,e_t,...)\in\auxb$.
\end{lem}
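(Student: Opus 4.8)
The plan is to establish the two assertions separately. First I would show that $\tr_{\kkk}$ is backwards closed. Take $t\in\tr_{\kkk}$, so by definition $y_t=\sum_{t'\lesseq t}\psi(t')e_{t'}\in\kkk$; I must show that every $t_0\lesseq t$ also belongs to $\tr_{\kkk}$, i.e. $y_{t_0}=\sum_{t'\lesseq t_0}\psi(t')e_{t'}\in\kkk$. The key observation is that $\hat t_0=\{t':t'\lesseq t_0\}$ is an initial segment of $T$, hence in particular segment complete, and $y_{t_0}$ is exactly the natural projection $P_{\hat t_0}(y_t)$ (equivalently the restriction $y_t|\hat t_0$). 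Now recall that $\kkk=\kkk_\psi=\overline{\kkk_\psi^2}^{\|\cdot\|}$, where $\kkk_\psi^2$ consists of all $y_s|I$ with $\|y_s\|\le C$ and $I$ an interval of $T$. Since $\hat t_0$ is not necessarily an interval in the $h$-enumeration but is an initial segment, I would instead use that intersecting an interval with an initial segment again yields a set of the form $y_{t'}|I'$; more directly, for $y_t\in\kkk_\psi^1$ (the case $I=T$) we have $y_{t_0}=y_t|\hat t_0$, and since $\|y_{t_0}\|\le\|y_t\|\le C$ by $\scu$-unconditionality (Definition \ref{scuncond}(1), applied to the segment complete set $\hat t_0$), $y_{t_0}\in\kkk_\psi^1\subseteq\kkk$. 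For a general element of $\kkk_\psi^2$, namely $y_t|I$ with $I$ an interval, one checks $I\cap\hat t_0$ is again an interval intersected with an initial segment, and iterating the argument (or passing to the closure) gives $y_{t_0}\in\kkk$. Finally, since $\kkk$ is closed and backwards closedness is preserved under the closure operation in the obvious pointwise sense, $\tr_{\kkk}$ is backwards closed.

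For the second assertion, fix $t\in\tr_{\kkk}$ and set $\tilde e_t=(e_t,e_t,\dots)$. By Remark \ref{rint11}, $\auxb$ consists precisely of the diagonal elements $\tilde x=(x,x,\dots)$ with $\sum_{n=1}^\infty\|x\|_n^2<\infty$, and $\|\tilde x\|_{\auxb}=(\sum_n\|x\|_n^2)^{1/2}$. So it suffices to show $\sum_{n=1}^\infty\|e_t\|_n^2<\infty$, where $\|\cdot\|_n$ is the Minkowski gauge of $W_n=2^nW+\frac{1}{2^n}B_{X_\tr}$. The point is that $e_t$ itself is controlled by $W$: since $t\in\tr_{\kkk}$, we have $y_t\in\kkk\subseteq W$, and since $y_{t_0}\in\kkk\subseteq W$ for the immediate predecessor $t_0$ of $t$ (using backwards closedness just proved, or directly if $t$ is a root), it follows that $\psi(t)e_t=y_t-y_{t_0}\in W-W\subseteq 2W$ (as $W$ is convex and symmetric). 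If $\psi(t)\neq 0$ this gives $e_t\in\frac{2}{|\psi(t)|}W$; handling $\psi(t)=0$ is trivial since then $e_t$ contributes nothing relevant — actually in that degenerate case one should note $y_t=y_{t_0}$ and the statement about $\tilde e_t$ needs the bound on $e_t$ from $W$, so one argues $e_t\in\lambda W$ for some $\lambda>0$ directly whenever needed, or observes the basis vectors with $\psi(t)=0$ still satisfy $\|e_t\|_n\le C'2^{-n}$ crudely. The clean route: there is a constant $\lambda_t>0$ with $e_t\in\lambda_t W$, hence $\|e_t\|_n\le$ (the gauge of $\frac{1}{2^n}W_n'$...) — concretely $e_t\in\lambda_t 2^{-n}W_n$ would give $\|e_t\|_n\le\lambda_t 2^{-n}$, so $\sum_n\|e_t\|_n^2\le\lambda_t^2\sum_n 4^{-n}<\infty$. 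Thus $\tilde e_t\in\auxb$.

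The main obstacle I anticipate is the bookkeeping in the first part: reconciling the definition of $\kkk_\psi$ (built from restrictions $y_s|I$ to \emph{intervals} $I$ in the $h$-enumeration, whose closure is taken) with the requirement that initial segments $\hat t_0$ — which are segment complete but not intervals — preserve membership. The resolution is that $\scu$-unconditionality is precisely designed to give $\|P_A x\|\le\|x\|$ for \emph{segment complete} $A$, so norms are controlled; the subtler issue is whether the projected vector still lies in $\kkk$ rather than merely having small norm. Here one uses that $\hat t_0$ intersected with an interval $I$ decomposes as a union of pieces each of which is again of the form appearing in $\kkk_\psi^2$, or more economically that $y_{t_0}=\lim$ of vectors $y_{t_0}|I_k$ with $I_k\uparrow$ intervals exhausting $\hat t_0$, each lying in $\kkk_\psi^2$, so $y_{t_0}\in\overline{\kkk_\psi^2}=\kkk$. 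Once this identification of restrictions-to-initial-segments with elements of $\kkk$ is in hand, both claims follow routinely.
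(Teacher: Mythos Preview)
Your argument is essentially correct and follows the same overall strategy as the paper, but you take a more roundabout path in both parts and leave one loose end.

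For backwards closedness you eventually land on the right idea---from $y_t\in\kkk$ deduce $\|y_t\|\le C$, then $\|y_{t_0}\|\le\|y_t\|\le C$ by $\mathcal{SC}$-unconditionality, hence $y_{t_0}\in\kkk_\psi^1\subseteq\kkk$---but the surrounding discussion of restrictions to intervals versus initial segments, general elements of $\kkk_\psi^2$, and closure is unnecessary. The point you are missing that would let you cut all of this is that $\kkk\subseteq C\cdot B_{X_T}$ (since intervals are segment complete), so $y_t\in\kkk$ already forces $y_t\in\kkk_\psi^1$; there is no need to treat any other case.

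For the second assertion the paper is more direct: since a singleton $\{t\}$ is an interval of $T$, one has immediately $\psi(t)e_t=y_t|_{\{t\}}\in\kkk_\psi^2\subseteq\kkk\subseteq W$, giving $\|\psi(t)e_t\|_n\le 2^{-n}$ and hence $\tilde e_t\in\auxb$. Your route via $\psi(t)e_t=y_t-y_{t_0}\in W-W=2W$ also works and yields the same conclusion, just with a constant $2$ instead of $1$.

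Your handling of the case $\psi(t)=0$ is a genuine (if minor) gap: neither of your proposed fixes is valid. If $\psi(t)=0$ then $e_t^*$ vanishes on all of $\kkk$ and hence on $W$, so $e_t\notin\lambda W$ for any $\lambda>0$, and in fact $\|e_t\|_n\ge 2^n/\|e_t^*\|\to\infty$, so $\tilde e_t\notin\auxb$. The paper's proof glosses over this too (writing $a_{t_0}$ rather than $\psi(t_0)$), because in the intended application $\psi(t)=a_t\in F_{|t|}$ is always nonzero; you should simply note this hypothesis rather than improvise an argument that does not hold.
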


\begin{proof}
It is easy to check that $\tr_{\kkk}$ is indeed a backwards closed subtree of $\tr$. Let now $s$ be an initial segment of $\tr$ (finite or infinite) such that the vector $y_s=\sum_{t\in s}\psi(t) e_t$ is an
element of $\kkk$. It follows by the definition of $\kkk$ that
$\psi(t)e_t\in\kkk$, $\forall t\in s$. Let $t_0\in\tr_{\kkk}$, then $\sum_{t\lesseq t_0}\psi(t)e_t\in\kkk$ and we obtain  $\psi (t)e_t\in\kkk$ $\forall t\lesseq
t_0$. So $a_{t_0}e_{t_0}\in\kkk$ and thus
$\tilde{e}_{t_0}\in\auxb$
\end{proof}

\begin{rem}
\label{segcint1}
We note that as $\tr_{\kkk}$ is a backwards closed subtree of $\tr$ then for every $A\subset\tr_{\kkk}$ segment complete we have
that $A$ is also segment complete when considered as a subset of $\tr$.
\end{rem}

We fix a bijection $g: \tr_{\kkk}\mapsto\nn$  as in Definition \ref{bijection}
and we pass to show that the sequence $(\tilde{e}_t)_{t\in\tr_{\kkk}}$
enumerated through $g$ defines a bimonotone Schauder basis for
$\auxb$.

\begin{lem}
\label{lint13} Let $A\subseteq\tr$ segment complete. If we denote
by $P_A:{X_{\tr}}\mapsto \overline{<e_t:t\in A>}^{\|.\|_{X_{\tr}}}$
the natural projection induced by $A$ we have that
$\|P_A(x)\|_n\leq \|x\|_n$ for all $x\in X_{\tr}$ and $n\in\nn$.
\end{lem}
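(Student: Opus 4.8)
The plan is to show that each norm $\|\cdot\|_n$ on $X_{\tr}$ is invariant under the projection $P_A$ associated to a segment complete set $A$. Recall that $\|\cdot\|_n$ is the Minkowski gauge of $W_n = 2^n W + \frac{1}{2^n} B_{X_{\tr}}$, so it suffices to prove that $P_A(W_n) \subseteq W_n$; then for any $x$ and any $\lambda > \|x\|_n$ we have $x/\lambda \in W_n$, hence $P_A(x)/\lambda = P_A(x/\lambda) \in W_n$, giving $\|P_A(x)\|_n \le \lambda$, and letting $\lambda \downarrow \|x\|_n$ yields the claim.

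So the core step is: if $A \subseteq \tr$ is segment complete, then $P_A(W) \subseteq W$ and $P_A(B_{X_{\tr}}) \subseteq B_{X_{\tr}}$, from which $P_A(W_n) \subseteq W_n$ follows by linearity of $P_A$. The second inclusion is exactly Remark \ref{segcom1} (or rather its analogue: the hypothesis of the section is that $X_{\tr}$ has a $\scu$-unconditional basis, and by Definition \ref{scuncond}(1) the projection $P_A$ onto a segment complete set has norm at most one). For the first inclusion, recall $W = \overline{\co}^{\|\cdot\|_{X_{\tr}}}(\kkk \cup -\kkk)$. Since $P_A$ is a bounded (norm $\le 1$) linear operator, it maps $\overline{\co}(\kkk \cup -\kkk)$ into $\overline{\co}(P_A(\kkk) \cup -P_A(\kkk))$, so it is enough to check $P_A(\kkk) \subseteq W$, i.e. that $P_A$ sends each generator of $\kkk$ back into $W$. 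Here we use the explicit description of $\kkk = \kkk_\psi$ from Definition \ref{scuncond}(2): its members are (norm limits of) vectors of the form $y_t|I = \sum_{t' \lesseq t,\ t' \in I} \psi(t') e_{t'}$ with $\|y_t\| \le C$ and $I$ an interval. Applying $P_A$ to $y_t|I$ simply restricts the support further to $A$; since $A$ is segment complete and $\{t' : t' \lesseq t\}$ is a segment, the set $\{t' \lesseq t\} \cap I \cap A$ is again an initial segment of a branch intersected with an interval-type set, and the resulting vector is a restriction of $y_t$ to a segment complete set, hence has norm $\le \|y_t\| \le C$ by $\scu$-unconditionality. Thus $P_A(y_t|I) \in \kkk_\psi^2 \subseteq \kkk$, and passing to closures $P_A(\kkk) \subseteq \kkk \subseteq W$.

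Combining, $P_A(W_n) = P_A(2^n W + \tfrac{1}{2^n} B_{X_{\tr}}) \subseteq 2^n P_A(W) + \tfrac{1}{2^n} P_A(B_{X_{\tr}}) \subseteq 2^n W + \tfrac{1}{2^n} B_{X_{\tr}} = W_n$, and the gauge inequality $\|P_A(x)\|_n \le \|x\|_n$ follows as described. The only point requiring a little care — and the step I expect to be the mildest obstacle — is the bookkeeping that $P_A$ applied to a generator of $\kkk_\psi$ lands back among the generators: one must observe that restricting $y_t = \sum_{t' \lesseq t} \psi(t') e_{t'}$ to $A$ amounts to restricting to $\{t' \lesseq t\} \cap A$, and then either this set is still of the form $\{t' \lesseq t''\}$ for some $t'' \lesseq t$ in $A$ together with possibly a gap — but since $A$ is segment complete and $\{t' \lesseq t\}$ is totally ordered, $\{t' \lesseq t\} \cap A$ is in fact again a sub-segment, hence (intersecting with the interval $I$) still a restriction of some $y_{t''}$ to an interval. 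Every such restriction has norm controlled by $\scu$-unconditionality, so it lies in $\kkk_\psi^2$, and we are done.
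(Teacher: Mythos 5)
Your proof is correct and follows essentially the same route as the paper's: reduce via the Minkowski gauge to showing $P_A(W_n)\subseteq W_n$, which in turn follows from $P_A(B_{X_{\tr}})\subseteq B_{X_{\tr}}$ ($\mathcal{SC}$-unconditionality) and $P_A(W)\subseteq W$. The paper asserts $P_A(W)\subseteq W$ without elaboration, whereas you spell out why $P_A$ maps the generators $y_t|I$ of $\kkk_\psi^2$ back into $\kkk_\psi^2$; the only point worth being slightly more explicit about is that membership in $\kkk_\psi^2$ requires $\|y_{t''}\|\leq C$ for the witnessing node $t''$, which holds because $\hat{t}''$ is segment complete and $\|y_{t''}\|=\|P_{\hat{t}''}(y_t)\|\leq\|y_t\|\leq C$.
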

\begin{proof}
As $X_{\tr}$ has a $\scu$-unconditional basis it follows that $P(B_{\auxa})\subseteq B_{\auxa}$. At the same time for every
$w\in W$  we have $\|P_A(w)\|_{X_{\tr}}\leq \|w\|_{X_{\tr}}$ and
$P_A(w)\in W$. Thus, $P_A(W)\subseteq W$. Let now $n\in\nn$ and
$x\in X_{\tr}$. Let also $\lambda>0$ such that $x\in \lambda(2^nW+\fr
B_{X_{\tr}})$. All the above yield $P_A(x)\in \lambda(2^nP_a(W)+\fr
P_A(B_{X_{\tr}})\subseteq \lambda(2^nW+\fr B_{X_{\tr}})$. Thus
$\|P_A(x)\|_n \leq \|x\|_n$ as desired.
\end{proof}

\begin{prop}
\label{basisint1}The sequence $(e_t)_{t\in\tr_{\kkk}}$ is a $\scu$-unconditional Schauder basis for $\auxb$.
\end{prop}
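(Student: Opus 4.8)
The plan is to lift the segment--complete projection estimate of Lemma~\ref{lint13} from $X_{\tr}$ to the diagonal space $\auxb$, using the description of the diagonal norm from Remark~\ref{rint11}, namely $\|\tilde x\|_{\auxb}=\big(\sum_{n}\|x\|_n^2\big)^{1/2}$ for $\tilde x=(x,x,\dots)$. First I would record that, by Lemma~\ref{lint11}, $\tilde e_t\in\auxb$ for every $t\in\tr_\kkk$, and fix the enumeration of $\tr_\kkk$ by a bijection $g$ as in Definition~\ref{bijection}. The crucial combinatorial point is that for any $a<b$ the ``$g$--interval'' $A=\{t\in\tr_\kkk:a<g(t)\le b\}$ is a segment complete subset of $\tr_\kkk$: if $t_1\lesseq t\lesseq t_2$ with $t_1,t_2\in A$, then $t\in\tr_\kkk$ because $\tr_\kkk$ is backwards closed, while condition (i) of Definition~\ref{bijection} (monotonicity of $g$ in the height) gives $a<g(t_1)\le g(t)\le g(t_2)\le b$, so $t\in A$. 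By Remark~\ref{segcint1}, $A$ is then segment complete as a subset of $\tr$ as well.

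Next, for such a set $A$ I would apply Lemma~\ref{lint13}: since $\|P_Ax\|_n\le\|x\|_n$ for all $n$, the map $Q_A(\tilde x)=\widetilde{P_Ax}$ is a well defined norm--one projection on $\auxb$ (its range being the closed span of $\{\tilde e_t:t\in A\}$, which is contained in $\auxb$ because $A\subseteq\tr_\kkk$). Taking $A=\{t\in\tr_\kkk:a<g(t)\le b\}$ shows that all the interval projections of $(\tilde e_t)_{t\in\tr_\kkk}$, in the $g$--order, have norm one, hence this sequence is a bimonotone basic sequence; taking an arbitrary segment complete $A\subseteq\tr_\kkk$ yields the $\scu$--unconditionality inequality $\|\sum_{t\in A}\lambda_t\tilde e_t\|_{\auxb}\le\|\sum_{t\in\tr_\kkk}\lambda_t\tilde e_t\|_{\auxb}$. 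Thus the only remaining point is that the closed span of $\{\tilde e_t:t\in\tr_\kkk\}$ is all of $\auxb$.

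For the spanning I would first check that $\kkk$, and hence $W=\overline{\mathrm{co}}(\kkk\cup-\kkk)$, is supported on $\tr_\kkk$: by the description of $\kkk_\psi$ in Definition~\ref{scuncond}, any $v\in\kkk$ with $v(t)\ne0$ satisfies $v(t)=\psi(t)$ and lies on a branch--segment $\{t':t'\lesseq t''\}$ with $\|y_{t''}\|\le C$, so $\|y_t\|\le C$ by $\scu$--unconditionality, i.e.\ $t\in\tr_\kkk$ (coordinatewise continuity takes care of the closure). Consequently $\tilde W$ lies in the closed span of $\{\tilde e_t:t\in\tr_\kkk\}$. It then remains to show that every $\tilde x=(x,x,\dots)\in\auxb$ has $x$ supported on $\tr_\kkk$, after which $P_{A_m}x\to x$ in each norm $\|\cdot\|_n$ (these being equivalent to $\|\cdot\|_{X_{\tr}}$) and the formula of Remark~\ref{rint11} gives $Q_{A_m}\tilde x\to\tilde x$, finishing the proof. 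This last step is the main obstacle: it is where one must use the interpolation norms of Definition~\ref{dint11}, exploiting that the $\ell_2$--summability $\sum_n\|x\|_n^2<\infty$ together with the weak compactness of $W$ forces $x$ to be absorbed by $W$, and then, applying Lemma~\ref{lint13} to the (segment complete) singletons $\{t\}$ and invoking the structure of $\kkk$, that $\psi(t)e_t\in\kkk$ whenever $e^*_t(x)\ne0$, i.e.\ $\spp x\subseteq\tr_\kkk$.
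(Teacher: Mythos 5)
Your plan has the right architecture and mirrors the paper's proof: establish basicity and $\scu$-unconditionality of $(\tilde e_t)_{t\in\tr_\kkk}$ via Lemma~\ref{lint13} and the segment-completeness of $g$-intervals, check that $W$ lives on $\tr_\kkk$, and then prove that every $\tilde x\in\auxb$ has $x$ supported on $\tr_\kkk$. The first two parts are handled correctly. However, the last two steps are left with genuine gaps.

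First, the passage ``$P_{A_m}x\to x$ in each norm $\|\cdot\|_n$ \dots gives $Q_{A_m}\tilde x\to\tilde x$'' does not follow from termwise convergence alone: you need to control the $\ell_2$ tail. The correct bridge is a dominated-convergence argument: since $\|P_{A_m}x-x\|_n\le 2\|x\|_n$ (using the norm-one projections from Lemma~\ref{lint13}) and $\sum_n\|x\|_n^2<\infty$, one first kills the tail $\sum_{n>n_0}$ and then uses the termwise convergence on $\{1,\dots,n_0\}$, exactly as in the paper's Claim. Second, the sentence ``the $\ell_2$-summability \dots together with the weak compactness of $W$ forces $x$ to be absorbed by $W$'' is a misstep: weak compactness of $W$ plays no role here (it is used only to make $\auxb$ reflexive), and ``$x$ absorbed by $W$'' is not what one proves. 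What the $\ell_2$-summability gives is that $\|x\|_n\to 0$, hence $\|x\|_n<1$ for large $n$, so $x\in 2^nW+2^{-n}B_{X_\tr}$. Since every $w\in W$ vanishes at any $t_0\notin\tr_\kkk$, the $t_0$-coordinate of $x$ is bounded by $2^{-n}\cdot\|e_{t_0}^*\|\to 0$, forcing $\lambda_{t_0}=0$; this is the paper's argument. Your alternative of applying Lemma~\ref{lint13} to singletons $\{t\}$ and then analysing $\|\lambda_t e_t\|_n$ does work and gives the same conclusion, but the proposal neither carries out this computation nor states the conclusion cleanly: what one actually shows is $t\in\tr_\kkk$ (equivalently $y_t\in\kkk$), from which $\psi(t)e_t\in\kkk$ follows via Lemma~\ref{lint11}, not the other way around. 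So the proposal identifies the right step and a viable route, but the argument as written is not a proof; in particular the weak-compactness detour is both unnecessary and misleading.
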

\begin{proof}
By Remark \ref{segcint1} and the previous Lemma it readily follows that $(e_t)_{t\in\tr_{\kkk}}$ is a $\scu$-unconditional Schauder basis for the subspace
$E=\overline{<(\tilde{e}_t)_{t\in\tr_{\kkk}}>}$. We shall show that $E$ actually coincides
with $\auxb$. We need the following Claim:\\

\noindent \textbf{Claim} \textit{ For every element
$\tilde{x}=(\sum_{t\in\tr_{\kkk}} \lambda_t e_t, \sum_{t\in\tr_{\kkk}}
\lambda_t e_t,...)\in\auxb$, we have that $\tilde{x}\in E$}.\\
\noindent \textbf{Proof of claim} Let
$\tilde{x}=(\sum_{t\in\tr_{\kkk}} \lambda_t e_t,
\sum_{t\in\tr_{\kkk}} \lambda_t e_t,...)$ then
$\|\tilde{x}\|_{\auxb}= (\sum_n \|\sum_{t\in\tr_{\kkk}} \lambda_t
e_t \|^2_n)^{\frac{1}{2}}$. Let $\epsilon>0$. There exists a $n_0
\in\nn$ such that $(\sum_{n> n_0} \|\sum_{t\in\tr_{\kkk}}
\lambda_t e_t\|^2_n)^{\frac{1}{2}}\leq \frac{\epsilon}{2}$. By
Lemma \ref{lint13} and the fact that the spaces
$(\auxb,\|\cdot\|_n)_n$ are mutually isomorphic we can choose a
finite interval $I$ of $\tr_{\kkk}$ such that
\[(\sum_{i=1}^{n_0}\|\sum_{t\in\tr_{\kkk}} \lambda_t e_t -
\sum_{t\in I} \lambda_t e_t\|^2_n)^{\frac{1}{2}}
\leq\frac{\epsilon}{2}\] Thus, if we set $\tilde{x}'=
\sum_{t\in\I} \lambda_t \tilde{e}_t \in E$
we obtain $\|\tilde{x}- \tilde{x}'\|_{\auxb} \leq \epsilon$.\\
This
completes the proof of the claim.\\

Let now, $\tilde{x}\in \auxb\setminus E$. We know that $\tilde{x}$
is of the form $\tilde{x}=(\sum_{t\in\ttt} \lambda_t e_t,
\sum_{t\in\ttt} \lambda_t e_t,...)$. Since $x \notin E$ we have
that there exists $t_0 \in\ttt\setminus\tr_{\kkk}$ such that
$\lambda_{t_0}\neq 0$. Let $w \in W$ and $n\in\nn$. We have that
$w$ is of the form $w=\sum_{t\in \tr_{\kkk}} \beta_t e_t$. Hence,

\begin{equation*}
\|\sum_{t\in\ttt} \lambda_t e_t - 2^n w\|_{\auxa}=
\|\sum_{t\in\ttt} \lambda_t e_t - 2^n \sum_{t\in \tr_{\kkk}} \beta_t
e_t\|_{\auxa} \geq |\lambda_{t_0}|> 0
\end{equation*}
Thus, $\|\sum_{t\in \ttt} \lambda_t e_t\|_n \nrightarrow 0$ and
consequently $\tilde{x}\notin \auxb$. This is a contradiction completing the proof.
\end{proof}

The following result is included in \cite{AF} and for the sake of
completeness we outline the main arguments of its proof.
\begin{prop}
\label{j1} If $W$ is a thin subset of $X_{\tr}$ then the operator
$J_1$ is strictly singular and every infinite dimensional closed
subspace $Y$ of $\auxb$ contains an isomorphic copy of $\ell^2$
which is complemented in $\auxb$.
\end{prop}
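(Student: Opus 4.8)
The plan is to reduce both assertions to the DFJP machinery combined with the thinness of $W$. First I would recall the standard DFJP factorization: the interpolation operator $J_1:\auxb\to X_{\tr}$ factors the formal identity $W_{\tr}\hookrightarrow X_{\tr}$ through the reflexive space $\auxb$, and a classical computation (going back to \cite{DFJP}) shows that $J_1$ is \emph{weakly compact} with $J_1(B_{\auxb})\subseteq CW$ for a suitable constant; more precisely $J_1$ maps bounded sets of $\auxb$ into sets that are ``almost absorbed'' by $W$ in $X_{\tr}$. To prove strict singularity, suppose toward a contradiction that $J_1$ restricted to some infinite dimensional closed subspace $Y\subseteq\auxb$ is an isomorphism onto its image $Z=J_1(Y)\subseteq X_{\tr}$. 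Then for every $y\in B_Y$ we have $J_1(y)\in CW$, so $B_Z=J_1(B_Y)\subseteq CW$, and since $J_1|_Y$ is an isomorphism this forces $W$ to absorb (indeed almost absorb, after the usual rescaling) the ball of the infinite dimensional subspace $Z$ of $X_{\tr}$, contradicting the thinness of $W$. The one point needing care here is the passage from ``$J_1$ sends bounded sets into multiples of $W$'' to ``$W$ almost absorbs $B_Z$'': one uses that $J_1(B_{\auxb})\subseteq 2^{-n}\cdot 2^nW + 2^{-n}B_{X_{\tr}}$ for every $n$, i.e. $J_1(B_{\auxb})\subseteq W + 2^{-n}B_{X_{\tr}}$, which gives exactly the $\epsilon$-absorption with $\lambda=1$ for all $\epsilon$; restricting to $Y$ and using the lower isomorphism bound on $J_1|_Y$ transfers this to $B_Z$.

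For the second assertion, the key structural fact is that $\auxb$ is the diagonal of an $\ell_2$-sum, so every normalized block sequence in $\auxb$ with respect to the basis $(\tilde e_t)_{t\in\tr_{\kkk}}$ already has strong $\ell_2$-like behavior. I would argue as follows. Let $Y$ be an infinite dimensional closed subspace of $\auxb$. By strict singularity of $J_1$ (just proved) and a standard gliding-hump argument, $Y$ contains a normalized block sequence $(\tilde u_k)_k$ (with respect to the Schauder basis of Proposition \ref{basisint1}) with $\|J_1(\tilde u_k)\|_{X_{\tr}}=\|u_k\|_{X_{\tr}}\to 0$. Writing $\tilde u_k=(u_k,u_k,\dots)$ and using $\|\tilde u_k\|_{\auxb}^2=\sum_n\|u_k\|_n^2=1$ together with the comparison $\|u_k\|_n\le 2^n\|u_k\|_{X_{\tr}}$ (from the definition of the Minkowski gauge of $W_n=2^nW+2^{-n}B_{X_{\tr}}$) and $\|u_k\|_n\le 2^{n}$ always, one shows that the ``mass'' of $\|\tilde u_k\|_{\auxb}^2$ escapes to infinity along the coordinates $n$; a diagonalization then produces a subsequence that, after a small perturbation, behaves like a block sequence of the natural $\ell_2$-basis of the outer $\ell_2$-sum. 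This yields that a subsequence of $(\tilde u_k)_k$ is equivalent to the unit vector basis of $\ell_2$ and spans a complemented subspace of $\auxb$; the complementation comes from the fact that $\auxb$, being the diagonal of an $\ell_2$-Schauder sum, admits norm-one projections onto such ``tail-block'' subspaces.

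I expect the main obstacle to be the second part: making precise the claim that weak nullity of $J_1(\tilde u_k)$ in $X_{\tr}$ forces the $\auxb$-norm of $\tilde u_k$ to concentrate on far-out coordinates $n$ of the $\ell_2$-sum, and then extracting from this an $\ell_2$-basis that is moreover complemented. The first part is comparatively routine once one has the DFJP absorption estimate $J_1(B_{\auxb})\subseteq W+\epsilon B_{X_{\tr}}$ and the definition of thinness (Definition \ref{dthin1}). Since the statement is explicitly attributed to \cite{AF} and the proof is only to be ``outlined,'' I would keep the $\ell_2$-extraction argument at the level of citing the DFJP structure theory, giving just enough detail to indicate how $\|u_k\|_{X_{\tr}}\to 0$ feeds into it, and spell out in full only the short contradiction with thinness that gives strict singularity of $J_1$.
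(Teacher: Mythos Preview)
Your approach is essentially the same as the paper's, which shows that $J_1(B_{\auxb})$ is almost absorbed by $W$ (hence $J_1$ is a thin operator, so strictly singular) and then uses a sliding hump to perturb a sequence in $Y$ to a horizontally block sequence in the ambient $\ell_2$-sum. One small correction: the DFJP inclusion is $J_1(B_{\auxb})\subseteq 2^nW+2^{-n}B_{X_{\tr}}$, not $W+2^{-n}B_{X_{\tr}}$, so the absorption constant is $\lambda=2^n$ (depending on $\epsilon=2^{-n}$), not $\lambda=1$; this is harmless since ``almost absorbed'' only asks for \emph{some} $\lambda$ for each $\epsilon$, and the rest of your argument goes through unchanged.
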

\begin{proof}
In order to show that the operator $J_1$ is strictly singular we shall in fact prove something stronger,
namely that $J_1(B_{\auxb})$ is a thin subset of $X_{\tr}$. This is a direct consequence of the following: \\
The set $J_1(B_{\auxb})$ is almost absorbed by $W$.
To see this let $\epsilon>0$.
 Fix $n_0\in\nn$ so that $\frac{1}{2^{n_0}}$ and pick an arbitrary $\tilde{x}=(x,x,...)\in B_{\auxb}$.
 Then, $\sum_{n\in\nn}\|x\|_n^2\leq 1$ which implies that $x\in 2^n W+\frac{1}{2^n}B_{X_{\tr}}$ for all $n\in\nn$.
 Simply set $\lambda=2^{n_0}$ and observe that $J_1(\tilde{x})=x\in 2^{n_0}W+\frac{1}{2^{n_0}} B_{X_{\tr}}\subset \lambda W+\epsilon B_{X_{\tr}}$.
 Now as $J_1(B_{\auxb})$ is almost absorbed by a thin subset it is straightforward that this set is also thin in $X_{\tr}$.
 Pick an arbitrary $Y$ closed subspace of $\auxb$.
 Since the operator $J_1$ is strictly singular one can apply a standard sliding hump argument to produce normalized sequences $(\tilde{y}_n)_n$
 in $Y$  and $(\tilde{z}_n)_n$ horizontally block in
 $\big(\sum_{n\in\nn}\oplus(X_{\tr},\|\cdot\|_n)\big)_2$ such
 that $\sum_{n=1}^{\infty} \|\tilde{z}_n-\tilde{y}_n\|<\frac{1}{2}$. As $(\tilde{z}_n)_n$ is
 isometric to the standard $\ell_2$ basis the space
 $Z=\overline{<\tilde{z}_n:n\in\nn>}$ is 1-complemented in $\auxb$, we
 conclude that the space generated by $(\tilde{y}_n)_n$ is isomorphic to
 $\ell_2$ and complemented in $\auxb$.
\end{proof}

\begin{rem} \label{dfjprem} We note that under the obvious
modifications the results presented in this section remain valid
for DFJP $\ell_p$ interpolation.
\end{rem}

\section{Reflexive spaces as quotients of $\ell^p$ saturated spaces.}
\label{quotients}
At this point we are able to use the techniques developed in all the previous sections in order to show that every separable reflexive Banach space $X$ is a quotient of a separable reflexive and $\ell^p$-saturated space,
for every $p\geq 1$ and of a separable $c_0$-saturated space.
This is done by using all of the results obtained above in conjunction with the following well known result of Zippin (\cite{Z}).

\begin{thm}
\label{zippin} Let $X$ be a separable reflexive Banach space. Then there exists a reflexive Banach space $Z_X$ with a Schauder basis $(z_i)_i$ so that $X$ is isomorphic to a subspace of $Z$.
\end{thm}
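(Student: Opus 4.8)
This statement is the classical embedding theorem of M.\ Zippin \cite{Z}, so we only indicate the strategy. The plan is to reduce everything to a single construction: it suffices to produce a reflexive Banach space $Z$ that admits a \emph{finite--dimensional decomposition} (FDD) together with an isomorphic embedding $X\hookrightarrow Z$. Indeed, a classical theorem of Johnson, Rosenthal and Zippin asserts that a space with a shrinking FDD embeds isomorphically into a space with a shrinking basis and, dually, a space with a boundedly complete FDD embeds into a space with a boundedly complete basis; since any FDD of a reflexive space is automatically shrinking and boundedly complete (James), these constructions combine to give a reflexive space $Z_X$ with a Schauder basis into which $Z$, and hence $X$, embeds isomorphically. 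Thus the whole burden of the proof rests on the construction of $Z$.

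To construct $Z$ I would use the $\ell_2$--interpolation scheme of Davis, Figiel, Johnson and Pe\l czy\'nski in exactly the form recalled in Definition \ref{dint11}. By the Banach--Mazur theorem $X$ embeds isometrically into $C[0,1]$, which carries a monotone Schauder basis; denote by $(P_k)_k$ its partial--sum projections and regard $X$ as a closed subspace of $C[0,1]$. Since $X$ is reflexive its unit ball $W=B_X$ is weakly compact, and as the inclusion $X\hookrightarrow C[0,1]$ is weak--to--weak continuous, $W$ is weakly compact in $C[0,1]$ as well. Applying the method of Definition \ref{dint11} to the pair $(C[0,1],W)$ produces the diagonal space $Z$, which is reflexive by \cite{DFJP} (cf.\ Remark \ref{rint11}), together with the natural injection $j\colon Z\to C[0,1]$ satisfying $W\subseteq j(B_Z)$. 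A routine computation with the Minkowski gauges of the sets $W_k=2^kW+2^{-k}B_{C[0,1]}$ shows that on $X$ one has $\|x\|_k\le 2^{-k}\|x\|_X$ for all $k$, while $\|x\|_1$ is bounded below by a fixed positive multiple of $\|x\|_X$; consequently $x\mapsto\widetilde x=(x,x,\dots)$ is an isomorphic embedding of $X$ into $Z$.

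It remains to equip $Z$ with an FDD, and this is the step I expect to be the main obstacle. The difficulty is that the interpolation norms $\|\cdot\|_k$ are \emph{not} unconditional with respect to the basis of $C[0,1]$ --- the set $W=B_X$ is not invariant under the projections $P_k$ --- so one cannot transport that basis to $Z$ directly. The remedy is a blocking argument of Johnson--Zippin type, in which the weak compactness of $W$ plays the essential role: exploiting that $W$ is ``uniformly small at infinity'' along the basis of $C[0,1]$, one selects an increasing sequence $0=n_0<n_1<n_2<\cdots$ so that the block operators $Q_m=P_{n_m}-P_{n_{m-1}}$, acting coordinatewise on the $\ell_2$--sum $\bigl(\sum_k\oplus(C[0,1],\|\cdot\|_k)\bigr)_2$, restrict to a uniformly bounded sequence of projections on $Z$ whose ranges $Z_m=Q_m(Z)$ form an FDD of $Z$. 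Once this blocking is in hand, the reduction of the first paragraph completes the argument. All of this is classical, and for the details we refer to \cite{Z} and the references therein.
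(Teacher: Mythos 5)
The paper does not actually prove Theorem \ref{zippin}; it is stated and used as a black box with the citation \cite{Z}, so there is no in-paper argument to compare against. Your proposal is therefore a genuine attempt to supply the missing proof, and it is correct to note that the reduction from ``FDD'' to ``basis'' is handled by the Johnson--Rosenthal--Zippin theorem. However, the central construction you propose does not do what you claim, and this is a real gap, not a technicality.

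The problem is that applying the DFJP interpolation of Definition \ref{dint11} to the pair $(C[0,1],W)$ with $W=B_X$ simply reproduces $X$ up to isomorphism. Indeed, if $\widetilde{x}=(x,x,\dots)\in Z$ then $\|x\|_k\to 0$, which forces $\mathrm{dist}_{C[0,1]}(x,2^kW)\to 0$ and hence $x\in\overline{X}=X$ (the subspace $X$ is norm closed, being reflexive). Conversely, for $x\in X$ your own estimate $\|x\|_k\le 2^{-k}\|x\|_X$ shows $\sum_k\|x\|_k^2<\infty$. Combined with the lower bound $\|x\|_1\ge \tfrac{2}{5}\|x\|_X$ (isometric embedding into $C[0,1]$ and $W=B_X$), the diagonal map $x\mapsto\widetilde{x}$ is an isomorphism of $X$ \emph{onto} $Z$. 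So $Z\cong X$: the interpolated space has not been enlarged, it carries no more structure than $X$ does, and the asserted blocking of the $C[0,1]$ basis cannot produce an FDD for $Z$ that $X$ did not already possess. In other words, the ``main obstacle'' you identify is not merely a hard step you have deferred --- it is the entire theorem, and DFJP with $W=B_X$ contributes nothing towards it. Zippin's actual proof \cite{Z} proceeds quite differently, building a genuinely larger superspace (via a tree-basis / $w^*$-derivation argument, or in modern treatments via Szlenk-index and tree estimates); it is not a routine blocking of a DFJP diagonal. To salvage an interpolation-flavoured approach you would need to interpolate against a set $W$ that is strictly larger than $B_X$ and chosen so that the resulting $Z$ admits a blocking compatible with the gauges $\|\cdot\|_k$, which is precisely the nontrivial content of the theorem.
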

We pass now to show the main result of this section. We present the arguments only in the case of $p=2$ as for any $p\geq 1$ and $c_0$ the proof follows exactly the same lines. Namely, we have
\begin{thm}
\label{quotientofell2} Let $X$ be a separable reflexive Banach
space. Then for every $p\geq 1$ there exists a separable reflexive
complementably $\ell^p$-saturated Banach space $X_p$ so that $X$
is isomorphic to a quotient space of $X_p$. Also there exists a
separable $c_0$-saturated space $X_0$ so that $X$ is a quotient of
$X_0$.
\end{thm}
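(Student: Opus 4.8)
The strategy is to chain the constructions of Sections 3, 4 and 5, replacing in the last step the DFJP $\ell^{2}$-interpolation by a DFJP $\ell^{p}$- (resp.\ $c_{0}$-) interpolation, which is legitimate by Remark \ref{dfjprem}. Before that one reduces to the case where $X$ has a Schauder basis: by Theorem \ref{zippin} applied to the separable reflexive space $X^{*}$, $X^{*}$ embeds isomorphically into a reflexive space $Z$ with a Schauder basis, hence $X=X^{**}$ is a quotient of $Z^{*}$, which is reflexive and, the basis of $Z$ being shrinking, carries the dual Schauder basis. After a renorming $Z^{*}$ has a bimonotone normalized Schauder basis, and since a composition of quotient maps is again a quotient map it suffices to produce the required space having $Z^{*}$ as a quotient. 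So from now on $X$ is a separable reflexive space with a bimonotone normalized Schauder basis, and all the spaces constructed below will manifestly be separable.

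From Section 3 we obtain the countable tree $\ttt$, the space $\auxa$ with its $\scu$-unconditional Schauder tree basis $(e_{t})_{t\in\ttt}$, the set $\kkk=\kkk_{\psi}$, and the bounded operator $\Phi\colon\auxa\to X$ of norm $\le 1$; by Proposition \ref{dense} the set $\Phi(\kkk)$ is a $\tfrac18$-net of $B_{X}$, and by Proposition \ref{prp2} the set $\kkk$, hence also $W=\overline{\mathrm{co}}(\kkk\cup-\kkk)$, is weakly compact in $\auxa$. Applying to $(\auxa,W)$ the DFJP $\ell^{p}$-interpolation of Section 5 (extended as in Remark \ref{dfjprem}), and using that $W$ is weakly compact, we get a reflexive diagonal space $\auxb$ (Remark \ref{rint11}), which by Proposition \ref{basisint1} carries a $\scu$-unconditional Schauder tree basis $(\tilde e_{t})_{t\in\ttk}$ over the backwards closed subtree $\ttk$, together with the bounded injection $J_{1}\colon\auxb\to\auxa$ satisfying $J_{1}(\tilde\kkk)=\kkk$, $\tilde\kkk\subseteq B_{\auxb}$, and such that, regarded via this tree basis, $\tilde\kkk$ is again a set of the type $\kkk_{\psi}$ of Definition \ref{scuncond}. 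Since $J_{1}$ is weak-to-weak continuous and $\kkk$ is weakly closed in $\auxa$, the bounded set $\tilde\kkk$ is weakly compact in the reflexive space $\auxb$, and $\Phi\circ J_{1}$ carries $\tilde\kkk$ onto the $\tfrac18$-net $\Phi(\kkk)$, so $X$ is a quotient of $\auxb$.

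As $\auxb$ is reflexive it omits $\ell^{1}$, so by Lemma \ref{lxi1} we may fix $\xi<\omega_{1}$ for which $\auxb$ admits no $\ell^{1}_{\xi}$-spreading model, and apply the construction of Section 4 (Theorem \ref{bthinthm}) to $(\auxb,\tilde\kkk)$. This produces the space $\auxx$, a continuous injection $I_{\xi}\colon\auxx\to\auxb$, the set $\kkk_{\xi}=I_{\xi}^{-1}(\tilde\kkk)$, which by Remark \ref{intact} lies in $B_{\auxx}$ and by Proposition \ref{wkkk} is weakly compact, and $W_{\xi}=\overline{\mathrm{co}}(\kkk_{\xi}\cup-\kkk_{\xi})$, which is weakly compact and, by Theorem \ref{Tthin}, thin in $\auxx$. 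A further DFJP $\ell^{p}$-interpolation applied to the (already convex and symmetric) pair $(\auxx,W_{\xi})$ now yields the diagonal space $X_{p}$ and the canonical injection $J_{\xi}\colon X_{p}\to\auxx$ with $J_{\xi}(\widetilde{W_{\xi}})=W_{\xi}$ and $\widetilde{W_{\xi}}\subseteq B_{X_{p}}$; here $X_{p}$ is reflexive because $W_{\xi}$ is weakly compact (Remark \ref{rint11}), and complementably $\ell^{p}$-saturated because $W_{\xi}$ is thin, by Proposition \ref{j1} in its DFJP $\ell^{p}$ form. Setting $Q=\Phi\circ J_{1}\circ I_{\xi}\circ J_{\xi}\colon X_{p}\to X$ and following $J_{\xi}(\widetilde{W_{\xi}})=W_{\xi}\supseteq\kkk_{\xi}$, $I_{\xi}(\kkk_{\xi})=\tilde\kkk$, $J_{1}(\tilde\kkk)=\kkk$ and $\Phi(\kkk)\supseteq\tfrac18$-net of $B_{X}$, one gets that $Q(B_{X_{p}})$ is $\tfrac18$-dense in $B_{X}$, so $Q$ is a quotient map; undoing the Zippin reduction proves the first assertion. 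For the $c_{0}$-assertion one runs the same four steps with the $c_{0}$-interpolation --- the $c_{0}$-Schauder sum of the renormed copies of the respective space together with its diagonal subspace --- in place of DFJP $\ell^{p}$ in the second and last steps; all the cited results have evident $c_{0}$-analogues, and the resulting space $X_{0}$ is $c_{0}$-saturated with $X$ as a quotient, though no longer reflexive since $c_{0}$ does not embed in a reflexive space.

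The only genuinely delicate point is the thinness of $W_{\xi}$ in $\auxx$ --- a space that, being built by a Tsirelson-type extension, has a strong local $\ell^{1}$ structure --- which is precisely Theorem \ref{Tthin}; once thinness is in hand, the final interpolation supplies the saturation automatically through Proposition \ref{j1}. The rest is bookkeeping: one must check that the ``obvious modifications'' for DFJP $\ell^{p}$- and $c_{0}$-interpolation preserve the two facts driving the argument --- reflexivity (resp.\ $c_{0}$-saturation) of the diagonal space, which comes from weak compactness of the interpolated set, and complemented $\ell^{p}$- (resp.\ $c_{0}$-) saturation, which comes from thinness --- and that at each stage the relevant set ($\kkk$, $\tilde\kkk$, $\kkk_{\xi}$, $\widetilde{W_{\xi}}$) stays weakly compact, sits inside the unit ball, and is mapped onto a $\tfrac18$-net of $B_{X}$, so that the composed operator is a genuine quotient map and not merely a dense-range injection.
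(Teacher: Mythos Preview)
Your argument for $1<p<\infty$ is correct and follows the paper's construction. Your Zippin reduction---applying Theorem~\ref{zippin} to $X^{*}$ so that $X$ becomes a quotient of a reflexive space with a basis---differs from the paper's (which embeds $X$ itself and then passes to the inverse-image subspace under the eventual quotient map), but both are valid, and yours is arguably tidier. Using $\ell^{p}$ rather than $\ell^{2}$ in the first interpolation is likewise harmless, since for $1<p<\infty$ the DFJP diagonal is still reflexive.

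There is, however, a genuine gap in the $c_{0}$ case. You propose to replace the DFJP $\ell^{p}$-interpolation by $c_{0}$-interpolation in \emph{both} the second and the last step, claiming that ``all the cited results have evident $c_{0}$-analogues''. But the third step is Theorem~\ref{bthinthm}, whose hypothesis is that the input space $\auxb$ be reflexive. You invoke this reflexivity twice in your own argument: first to conclude that $\auxb$ omits $\ell^{1}$ and hence that a suitable $\xi<\omega_{1}$ exists, and second to deduce that the bounded set $\tilde{\kkk}$ is weakly compact in $\auxb$. The diagonal of a $c_{0}$-interpolation need not be reflexive, so neither conclusion is available as stated, and Section~4 does not apply without further work.

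The fix is easy and is what the paper effectively does: keep the \emph{first} interpolation an $\ell^{2}$-interpolation regardless of the target. Its sole purpose is to pass from $\auxa$---which need not be reflexive even when $X$ is---to a reflexive space with a $\scu$-unconditional tree basis so that Section~4 applies; no saturation property of $\auxb$ is ever used downstream. Only the \emph{final} interpolation, applied to $(\auxx,W_{\xi})$ where $W_{\xi}$ is already known to be thin, should be the $c_{0}$- (or $\ell^{p}$-) interpolation, since that is the step at which Proposition~\ref{j1} delivers the desired saturation of the output space.
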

\begin{proof} Granting Zippin's theorem above we may assume that $X$ has a normalized and bimonotone Schauder basis $(x_i)_i$. Starting with $X$ we consider the space $\auxa$ associated to $X$ as it was presented in section \ref{tree}. We also consider the set $\kkk$ and the map $\Phi:\auxa\to X$ (see Definitions \ref{thesetkkk} and \ref{themapf}). By Proposition \ref{dense} we know that $\Phi(\kkk)$ is a $\frac{1}{8}$-net in the unit ball of $X$ and hence $\Phi$ is onto. By Proposition \ref{prp2} the set $W^0=\overline{\co}^{\|\cdot\|}(\kkk\cup -\kkk)$ is a weakly compact subset of $\auxa$. Therefore, the space $\auxb$ as it was defined in the previous section is a reflexive Banach space with a basis $(\tilde{e}_t)_{t\in \mathcal{T}_{\kkk}}$ (see Proposition \ref{basisint1}. In addition, by Remark \ref{rint11} we have $J_1(\tilde{\kkk})=\kkk$. Hence, the operator $\Phi\circ J_1:\auxb\to X$ is onto. By using the extension technique of section \ref{thin} on the space $\auxb$ we arrive at a space $\auxx$ with the properties that $I:\auxx\to \auxb$ is continuous, $\tilde{\kkk}\subset \auxx$ and $W^{\xi}=\overline{\co}^{\|\cdot\|}(\tilde{\kkk}\cup -\tilde{\kkk})$ is weakly compact and thin. Finally, by applying the DFJP - interpolation to the the space $\auxx$ with the set $W^{\xi}$ we arrive at the space $X_2$. The map
$J_2:X_2\to \auxx$ is continuous and preserves the set
$\tilde{\kkk}$. Therefore, there is a map $\Pi:X_2\to X$ onto. To
complete the proof, we point out that by Propositions
\ref{basisint1} and \ref{j1} the space $X_2$ is separable
reflexive and complementably $\ell^2$-saturated.
\end{proof}

\begin{rem}\label{HIinellp} The above Theorem yields examples of
pairs $(X,X^*)$ of reflexive spaces with divergent structure.
Namely, there exists spaces $X_p$ as above such that the dual
$X^*_p$ contains HI subspaces.
\end{rem}
\section{ Skew HI interpolation }
In this section we present a method for applying HI interpolation to a pair $(X,W)$ in order to achieve the diagonal space to have a Schauder basis. We start with a tree $T$, a reflexive space $X_T$ which has a $\scu$-unconditional basis $(e_t)_{t\in T}$ and a weakly compact convex symmetric subset $W$. We denote by $(X_n)_n$ the sequence of (mutually isomorphic) spaces $(X_T,\|\cdot\|_n)$ where the n-th norm is defined via the Minkowski gauge of the set $2^nW+\frac{1}{2^n}B_{X_T}$  and prove the following:

 \begin{thm} \label{skewthm} Let $(X_n)_n$ be the above sequence. Then there exists a norm $\|\cdot\|_G$ defined on $c_{00}(T\times\nn)$ such that if we denote by $\xfr_G$ the completion of $c_{00}(T\times\nn)$ under this norm the following hold:
 \begin{enumerate}
 \item[1.] The sequence $(X_n)_n$ is a Schauder decomposition of $\xfr_G$.
 \item[2.] Setting $Z_t=\overline{<(e_{(t,k)}:k\in\nn>}^{\|\cdot\|_G}$, the sequence $(Z_t)_{t\in T}$ also defines a Schauder decomposition of $\xfr_G$.
 \item[3.] If for all segment complete $A\subset T$ for the natural projection $P_A$ we have that $P_A(W)\subset W$ then the diagonal subspace $\xfr$ of $\xfr_G$ consisting of all elements of the form $\bar{x}=(x,x,...)$ has a Schauder basis. Moreover, $\xfr$ is reflexive.
     \item[4.] If the set $W$ is thin in $X_T$ then the space $\xfr$ is HI.
     \end{enumerate}
     \end{thm}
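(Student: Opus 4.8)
The plan is to take $\|\cdot\|_G$ to be the norm induced by a norming set $G\subseteq c_{00}(T\times\nn)$ modelled on the HI interpolation of \cite{AF} (traced to \cite{AD}), the new ingredient being a ``skew'' indexing of the interpolation copies over $T\times\nn$: the $n$-th copy $X_n$ will occupy the layer $T\times\{n\}$, the block $Z_t$ will run transversally to the layers, and every Tsirelson-type operation in the definition of $G$ will respect the order on $T\times\nn$ obtained from the bijection $h:T\to\nn$ on $T$ together with the usual order on $\nn$ inside each $Z_t$. Concretely, $G$ is the minimal subset of $c_{00}(T\times\nn)$ that: contains, for every $n$, an isometric image on $T\times\{n\}$ of a norming set of $(X_T,\|\cdot\|_n)$; is stable under restriction of its elements to sets $A\times J$ with $A\subseteq T$ segment complete and $J\subseteq\nn$ an interval; and is closed under the $(m_j,n_j)$-operations and the special (coding) operation of \cite{AF} applied to finite block sequences of functionals of $G$ that are successive for the above order. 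Then $\xfr_G$ is the completion of $c_{00}(T\times\nn)$ under the induced norm, with $(e_{(t,n)})$ a bimonotone basic sequence.

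Parts (1) and (2) follow at once from the stability of $G$ under the restrictions above. Restricting to $T\times[1,k]$ makes the projection onto the first $k$ layers norm one, so $(X_n)_n$ is a Schauder decomposition of $\xfr_G$. And $S_k:=\{t\in T:h(t)\le k\}$ is segment complete: if $t_1\lesseq t\lesseq t_2$ with $h(t_2)\le k$, then either $|t|<|t_2|$ and $h(t)<h(t_2)\le k$, or $|t|=|t_2|$ and $t=t_2$. Hence restricting to $S_k\times\nn$ makes $\sum_{h(t)\le k}P_{Z_t}$ norm one, and, enumerating $T$ via $h$, $(Z_t)_{t\in T}$ is a Schauder decomposition as well; in both cases the partial projections converge strongly to the identity on the dense set $c_{00}(T\times\nn)$.

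For (3), write $\bar{e}_t=(e_t,e_t,\dots)$ and $T_W=\{t\in T:\bar{e}_t\in\xfr_G\}$. The hypothesis $P_A(W)\subseteq W$, together with the $\scu$-unconditionality of $(e_t)_{t\in T}$, yields $\|P_Ax\|_n\le\|x\|_n$ for every segment complete $A$, every $n$ and every $x\in X_T$, exactly as in Lemma \ref{lint13}; in particular $(e_t)$ enumerated via $h$ is, uniformly in $n$, a bimonotone basis of $(X_T,\|\cdot\|_n)$. Arguing as in the Claim inside the proof of Proposition \ref{basisint1}, one then shows that every diagonal vector is a limit of diagonal vectors with coordinate in $c_{00}(T_W)$, so these are dense in the diagonal $\xfr$. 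Since $S_k$ is segment complete, $P_{S_k\times\nn}$ is a norm one projection of $\xfr_G$ commuting with the diagonal, hence restricts to a norm one projection of $\xfr$ onto $\overline{<\bar{e}_t:t\in T_W,\ h(t)\le k>}$; these converge strongly to the identity, so $(\bar{e}_t)_{t\in T_W}$, enumerated via $h$, is a bimonotone Schauder basis of $\xfr$. Reflexivity of $\xfr$ follows from the weak compactness of $W$ exactly as in \cite{DFJP} and \cite{AF}.

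The core of the theorem is (4), which I would obtain by adapting the HI argument of \cite{AF} (ultimately \cite{GM}). The essential input is thinness of $W$: as in Proposition \ref{j1}, it forces the canonical map $\xfr\to X_T$, and more generally the ``$W$-part'' of the norm of $\xfr_G$, to be strictly singular on every block subspace of $\xfr$, so that any normalized block sequence in $\xfr$ has, for each $\ee>0$, convex block combinations whose norm is $\ee$-computed through the pure mixed-Tsirelson part of $G$. On such combinations one runs the standard Gowers--Maurey scheme --- tree analysis of the functionals of $G$, rapidly increasing sequences, $\ell^1_{n_j}$-averages and, via the coding function, $2$-dependent sequences --- to obtain upper estimates of the form $\|\sum_{i=1}^{n_j}x_i\|_{\xfr}\le Cn_j/m_j$ together with the matching lower estimates from the special functionals; these force any two infinite dimensional subspaces of $\xfr$ to contain norm one vectors arbitrarily close to one another, i.e.\ $\xfr$ is HI. I expect this to be the main obstacle: the whole point of the skew construction is precisely to secure (3) while keeping (4) intact, so one must verify that the skew indexing over $T\times\nn$, the restriction of attention to the diagonal, and whatever local $\ell^1$ structure $X_T$ carries do not spoil the RIS and dependent-sequence estimates of \cite{AF}, and that thinness of $W$ is exactly the hypothesis that lets one carry them out on block subspaces of $\xfr$.
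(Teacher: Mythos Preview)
Your overall plan matches the paper's: $G$ is built from ground sets $G_n$ on the layers, closed under rectangle restrictions and under the $(\mathcal{A}_{n_j},1/m_j)$ and special operations, and parts (1)--(3) go essentially as you say. There is, however, a concrete difference in the block direction for the HI operations, and this drives two technical points you do not mention.

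In the paper the even $(\mathcal{A}_{n_{2l}},1/m_{2l})$-operations and the special operations are applied to \emph{$j$-block} sequences only, i.e.\ sequences successive purely in the $\nn$-coordinate; no total order on $T\times\nn$ is used. Closure under restrictions is to rectangles $I\times J$ with $I$ an interval of $T$ (via $h$) and $J$ an interval of $\nn$, which already gives both Schauder decompositions (Remark \ref{remschdec}); your segment-complete $A$ is not needed here, intervals suffice. The $j$-block choice has a consequence you miss: restricting a special sequence to a rectangle $I\times J$ can destroy the tree-like property, because the $T$-interval $I$ may cut different $\phi_i$'s differently. The paper handles this by restricting dependent sequences to \emph{special $j$-block} sequences --- either diagonally block, or supported in a single $Z_t$ --- for which the problem disappears (see the remark preceding Proposition \ref{hp16} and its proof). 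Your proposed total order on $T\times\nn$ would not sidestep this, and would create its own compatibility problems between the block structure used for the operations and the two transverse Schauder decompositions.

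For (4), the route is not the one you sketch (passing to convex combinations that see only the Tsirelson part of the norm). Rather, the paper first shows that every $Z_t$ is HI and every diagonally block subspace is HI (Corollary \ref{hc20}); then a trichotomy (Proposition \ref{hp19}) says that any subspace $Y$ on which all $j_n:Y\to X_n$ and all $\pi_t:Y\to Z_t$ are strictly singular contains a normalized sequence arbitrarily close to a diagonally block one, hence $Y$ is HI. On the diagonal $\xfr$ each $\pi_t$ has one-dimensional range (so is automatically strictly singular), and thinness of $W$ makes $J_{X_T}$, hence each $j_n$, strictly singular; Proposition \ref{hip7} then gives that $\xfr$ is HI. So thinness enters exactly where you expect, but the reduction to the mixed-Tsirelson estimates goes through the $j_n/\pi_t$ trichotomy and the special-$j$-block machinery, not through absorbing the ground norm into convex blocks.
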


Let $(X_T,W)$ be as above and assume also that for every segment complete subset $A$ of $X_T$, $P_A(W)\subset W$. Then by Lemma \ref{lint13} we have that for every $n\in\nn$, the sequence $(e_t)_{t\in T}$ is a $\scu$-unconditional Schauder basis for $X_n$. Thus setting:

\[G_{n}=\{\sum_{t\in A}\lambda_t e_t^*: \lambda_t\in\qq \text{
and } \|\sum_{t\in T}\lambda_t e_t^*\|_{X^*_n}\leq 1 \text{
and } A\subseteq T \text{ segment complete}\},\]
we can readily verify that $G_n$ is a norming set for the space $(X_n)$  for all $\nn$.

\begin{notation} We define the following \begin{enumerate}
\item[i.] We set $\pi:T\times\nn\mapsto T$ by $\pi((t,k))=t$
and $j:T\times\nn\mapsto\nn$ by $j((t,k))=k$ \item[ii.] For
$x\in c_{oo}(T\times\nn)$ we let $\rg x$ denote the minimal
rectangle $I\times J$ that contains the support of $x$. Where by a
rectangle $I\times J$ we mean the product of an interval $I$ of
$T$ and an interval of $\nn$. \item[iii.] Let
$A,B\subseteq T\times\nn$ we write  $A\lesss_{\pi}B$ if
$\pi(A)<\pi(B)$ and $A\lesss_{j}B$ if $j(A)<j(B)$ and
$A\lesss_{(\pi,j)}B$ if $\pi(A)<\pi(B)$ and $j(A)<j(B)$. With
$A\lesss^l_{\pi}B$ we denote the property $\pi(A)<^l\pi(B)$
\item[iv.] For $x,y\in c_{00}(T\times\nn)$ we write
$x\lesss_{\pi}y$ whenever $\spp x\lesss_{\pi} \spp y$. The
notations $x\lesss_j y$ and $x\lesss_{(\pi,j)} y$ have analogous
meanings.
\end{enumerate}\end{notation}
\begin{defn}
\label{dhi2} A sequence $(x_n)_n$ in $c_{00}(T\times\nn)$ is
said to be $j$-block ($\pi$-block or level-$\pi$-block) if
$x_n\lesss_j x_{n+1}$ ($x_n\lesss_{\pi} x_{n+1}$ or
$x_n\lesss^l_{\pi} x_{n+1}$ respectively). The sequence $(x_n)_n$
is called diagonally block if $x_n\lesss_{(\pi,j)} x_{n+1}$
\end{defn}
We fix two sequences of natural numbers $(m_l)_{l\in\nn}$ and
$(n_l)_{l\in\nn}$ which are both recursively defined as follows.
We set $m_1=2,m_{l+1}=m_l^5$ and $n_1=4,n_{l+1}=(5n_l)^{s_l}$
where $s_l=\log_2 m_{l+1}$.
\begin{defn}
\label{thesetg} We consider a subset $G$ of
$c_{00}(T\times\nn)$ that is the minimal set such that the
following hold.
\begin{enumerate}
\item[i.] $\bigcup_n \gxn\subseteq G$ and $G$ is closed in the
restriction on rectangles of the form $I\times J$ where $I,J$ are
intervals of $T$ and  $\nn$ respectively.(i.e. for $f\in G$ and
$I,J$ intervals of, we have that $(I\times J)\cdot f=\chi_{I\times
J} \cdot f\in G$). \item[ii.] For every $l\in\nn$, $G$ is closed
in the $\big(\aaa_{n_{2l}},\frac{1}{m_{2l}}\big)$-operations on
$j$-block sequences. That is, if $f_1\prec_j f_2 \prec_j ...
\prec_j f_{n_{2l}}$, then $\frac{1}{m_{2l}} \sum_{i=1}^{n_{2l}}
f_i\in G$. \item[iii.] For every $l\in\nn$, $G$ is closed in the
$\big(\aaa_{n_{2l-1}},\frac{1}{m_{2l-1}}\big)$-operation on
$(n_{2l-1})$-special sequences. \item[iv.] $G$ is rationally
convex.
\end{enumerate}
\end{defn}
It remains to define the $(n_{2l-1})$-special sequences, defined
through a coding $\sg$. For every $l\in\nn$ if $f\in G$ is the
result of the $\big(\aaa_{n_l},\frac{1}{m_l}\big)$-operation, then
we let the weight $w(f)$ of $f$ to be $m_l$. Notice that $w(f)$ is
not uniquely defined.\\

\bigskip

\noindent \textbf{The coding function $\sg$.} First we consider
the subset of $c_{00}(T\times\nn)$ defined by
\begin{eqnarray*}
\mathcal{S}=\big\{ (\phi_1,\phi_2,...,\phi_d) & : & \phi_1\prec_j
\phi_2\prec_j ... \prec_j \phi_d \text{ and }
\phi_i(t,k)\in\mathbb{Q} \text{ for every }\\
& & (t,k)\in T\times \nn \text{ and every } i\in\{1,...,d\}
\big\}.
\end{eqnarray*}
We fix a pair $\Omega_1, \Omega_2$ of disjoint infinite subsets of
$\nn$. As $\mathcal{S}$ is countable, we are able to define an
injection $\sg:\mathcal{S}\to \{2l:l\in\Omega_2\}$ such that
\[ m_{\sg(\phi_1,...,\phi_d)}>\max\Big\{\frac{1}{|\phi_i(e_{(t,k)})|}:
(t,k) \in\spp \phi_i \text{ and } i=1,...,d \Big\}\cdot \max\{k:
(t,k)\in\spp\phi_d\}\]
A finite sequence
$(f_i)_{i=1}^{n_{2l-1}}$ is said to be a $(n_{2l-1})$-special
sequence, provided that
\begin{enumerate}
\item[(a)] $(f_1,...,f_{n_{2l-1}})\in\mathcal{S}$ and $f_i\in G$
for every $i=1,...,n_{2l-1}$, \item[(b)] $w(f_1)=m_{2k}$ with
$k\in \Omega_1$, $m_{2k}^{1/2}>n_{2l-1}$ and
$w(f_{i+1})=m_{\sg(f_1,...,f_i)}$ for every $1\leq i<n_{2l-1}$.
\end{enumerate}
\begin{rem}
As we mentioned above, the weight $w(f)$ of a functional $f$, when
it is defined, is not in general uniquely determined. However, if
$f_1,...,f_{n_{2l-1}}$ is a $(n_{2l-1})$-special sequence, then
for all $i\geq 2$ by $w(f_i)$ we shall put
$m_{\sg(f_1,...,f_{i-1})}$.
\end{rem}
Having defined the set $G$, we define \begin{enumerate} \item[i.]
$\|x\|_G=\sup\{f(x): f\in G\}$, for all $x\in
c_{00}(T\times\nn)$ \item[ii.]
$\xfr_G=\overline{<c_{00}(T\times\nn),\|\cdot\|_G>}$
\end{enumerate}
\begin{rem}\label{remschdec}
The following are easily established.\\
(1) For every $n\in\nn$, the space
$\overline{<(x_{t,n})_{t\in T}>}^{\|\cdot\|_G}$ is isometric to
$X_n$.\\
(2) For every $I,J$ intervals (finite or infinite) of $T$ and
$\nn$ respectively, the projection
\[ P_{I\times J}:\xfr_G\to \xfr_{I\times J}=\overline{
<(x_{t,k})_{t\in I, k\in J}>}^{\|\cdot\|_G} \] has norm one. Consequently we have,
\begin{enumerate}
\item[(a)] The sequence $(X_n)_n$ defines a Schauder
decomposition of $\xfr_G$. \item[(b)] Setting $Z_t=\overline{<
(x_{t,k})_{k\in\nn}>}^{\|\cdot\|_G}$, the sequence $(Z_t)_t$ also
defines a Schauder decomposition of $\xfr_G$.
\end{enumerate}
\noindent (3) Every $j$-block sequence and every $\pi$-block
sequence is a bi-monotone Schauder basic sequence. Hence every
diagonally block sequence is also a bi-monotone basic sequence.
\end{rem}
Next we shall present the basic ingredients for the proof that
certain block sequences in $\xfr_G$ generate HI spaces.
\begin{defn}
Let $x\in c_{00}(T\times\nn)$ and $C>1$. We say that $x$ is a
$C-\ell^1_k$ average if there exists a $j$-block sequence
$x_1\prec_j x_2\prec_j...\prec_j x_k$ such that
$x=\frac{x_1+...+x_k}{k}$, $\|x_i\|_G\leq C$ for $i=1,...,k$ and
$\|x\|_G=1$.
\end{defn}

\begin{defn} \textbf{(RIS)} A $j$-block sequence
$(x_q)_q$ in $\xfr_G$ is said to be a $(C,\epsilon)$ rapidly
increasing sequence, if $\|x_q\|_G \leq C$, and there exists a
strictly increasing sequence $(l_q)_q$ of natural numbers such
that
\begin{enumerate} \item [i.] $\frac{1}{m_{l_{q+1}}}|\spp x_q|<
\epsilon$.\item[ii.] For every $q=1,2,...$ and every $f\in G$ with
$w(f)=m_i,$ $i<l_q$ we have that $|f(x_q)|\leq\frac{C}{m_i}$.
\end{enumerate}
\end{defn}

\noindent \textbf{Notation} \textit{We denote by $D_{Au}$ the
minimal subset of \con satisfying the following properties
\begin{enumerate}
\item[i.] $\pm e_n\in Au$, for all $n\in\nn$. \item[ii.] For every
block sequence $f_1<f_2<...<f_{5n_l}$ in $Au$ we have that
$\frac{1}{m_l}\sum_{i=1}^{5n_l}f_i \in Au$. \item[iii.] $Au$ is
closed under restrictions of its elements on intervals.
\end{enumerate}
We also denote by $Au$ the completion of \con under the norm
induced by the norming set $D_{Au}$}.

We state here a Lemma concerning the behavior of certain averages
of the basis of $Au$. For the proof we refer to Lemma II.9 in
\cite{AT}.

\begin{lem}
\label{auxbasis} Let $l_0\in\nn$ and $h\in D_{Au}$. Then for every
$k_1<...<k_{n_{l_0}}$ we have that \begin{enumerate} \item[i.]
$|h(\frac{1}{n_{l_0}}\sum_{j=1}^{n_{l_0}}e_{k_j})|\leq\frac{2}{m_i\cdot
m_{l_0}}$, if $w(h)=m_i<m_{l_0}$. \item[ii.]
$|h(\frac{1}{n_{l_0}}\sum_{j=1}^{n_{l_0}}e_{k_j})|\leq\frac{1}{m_i}$,
if $w(h)=m_i\geq m_{l_0}$.
\end{enumerate}
If we additionally assume that the functional $h$ admits a tree
analysis $(h_a)_{a\in A}$ such that $w(h_a)\neq m_{l_0}$ for all
$a\in A$, then we have that \begin{enumerate}
\item[i.]$|h(\frac{1}{n_{l_0}}\sum_{j=1}^{n_{l_0}}e_{k_j})|\leq\frac{2}{m_i\cdot
m_{l_0}^2}$, if $w(h)=m_i<m_{l_0}$. \item[ii.]
$|h(\frac{1}{n_{l_0}}\sum_{j=1}^{n_{l_0}}e_{k_j})|\leq\frac{1}{m_i}$,
if $w(h)=m_i\geq m_{l_0}$.
\end{enumerate}
\end{lem}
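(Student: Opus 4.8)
The plan is to prove this by reducing everything to one combinatorial estimate and an induction on the complexity of $h$. Write $x=\frac{1}{n_{l_0}}\sum_{j=1}^{n_{l_0}}e_{k_j}$, so $\|x\|_{\ell^1}=1$. Part (ii) is immediate and does not use the spread of the $k_j$'s: if $w(h)=m_i$, write $h=\frac{1}{m_i}\sum_{b=1}^{d}h_b$ as the result of an operation of weight $m_i$, so the $h_b$ are successive with coordinates of modulus $\le 1$; then $\sum_b|h_b(x)|\le\|x\|_{\ell^1}=1$, whence $|h(x)|\le\frac{1}{m_i}$. The same estimate gives $|f(x)|\le\frac1{w(f)}$ for \emph{every} $f\in D_{Au}$ which is the result of an operation, a fact I would use freely below.

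For part (i), where $w(h)=m_i<m_{l_0}$ and hence $i<l_0$, I would prove by induction on the length of a tree analysis of $h$ the following statement for an \emph{arbitrary} $\ell^1_n$-average $x=\frac1n\sum_{j=1}^n e_{k_j}$: for every index $l'$ with $n_{l'}\le n$ and $m_{l'}>w(h)$ one has $|h(x)|\le\frac{2}{w(h)\,m_{l'}}$; specializing to $n=n_{l_0}$, $l'=l_0$ recovers (i). The base case is $h=\frac1{m_i}\sum_{b=1}^d\pm e_{n_b}$, $d\le 5n_i$: here $|h(x)|\le\frac1{m_i}\cdot\frac{\min(5n_i,n)}{n}\le\frac1{m_i}\cdot\frac{5n_i}{n_{l'}}$, and since $i\le l'-1$ and $2^{s_{l'-1}}=m_{l'}$ (as $s_{l'-1}=\log_2 m_{l'}$), the growth law gives $n_{l'}=(5n_{l'-1})^{s_{l'-1}}\ge(5n_{l'-1})\,2^{s_{l'-1}-1}\ge\frac12(5n_i)m_{l'}$, so $\frac{5n_i}{n_{l'}}\le\frac2{m_{l'}}$ and $|h(x)|\le\frac{2}{m_i m_{l'}}$.

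For the inductive step write $h=\frac1{m_l}\sum_{b=1}^{d}h_b$ with $w(h)=m_l$ and $d\le 5n_l$; one may assume $m_l<m_{l'}$, otherwise the bound $\frac1{m_l}$ of part (ii) already works. Put $p_b=|\{\,j:k_j\in\spp h_b\,\}|$, so $\sum_b p_b\le n$, and note $|h_b(x)|=\frac{p_b}{n}\,|h_b(y_b)|$ where $y_b=\frac1{p_b}\sum_{k_j\in\spp h_b}e_{k_j}$ is an $\ell^1_{p_b}$-average. Applying the induction hypothesis to $h_b$ on $y_b$ — or, when $w(h_b)\ge m_{l'_b}$, the trivial bound $|h_b(y_b)|\le\frac1{w(h_b)}$, and directly when $h_b=\pm e_{n_b}$ — one gets in every case $|h_b(y_b)|\le\frac1{m_{l'_b}}$, where $l'_b$ is the largest index with $n_{l'_b}\le p_b$ (with $m_0:=1$, $l'_b:=0$ if $p_b<n_1$). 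Hence $|h(x)|\le\frac1{m_l\, n}\sum_{b=1}^{d}\frac{p_b}{m_{l'_b}}$, and the whole matter reduces to the numerical inequality
\[ \sum_{b=1}^{d}\frac{p_b}{m_{l'_b}}\ \le\ \frac{2n}{m_{l'}},\qquad\text{given}\quad d\le 5n_l,\ \ p_1+\dots+p_d\le n,\ \ l<l'. \]
This is the heart of the matter and the genuinely delicate point: the ratio $p/m_{l'_p}$ is most costly for small $p$ (small $m_{l'_p}$), the constraint $d\le 5n_l$ bounds how much of the total mass $n$ can lie on such small blocks, and one closes the estimate from $m_{l+1}=m_l^5$, $n_{l+1}=(5n_l)^{s_l}$; this is precisely Lemma II.9 of \cite{AT}. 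Granting it, $|h(x)|\le\frac{2}{m_l m_{l'}}$, which completes the induction.

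For the refinement — $h$ admits a tree analysis $(h_a)_{a\in A}$ with $w(h_a)\ne m_{l_0}$ for all $a$ — I would rerun the same induction carrying the extra bookkeeping that no node has the \emph{resonant} weight $m_{l_0}$ (the weight at which the numerical inequality would be used with no gain); excluding it replaces $m_{l'}$ by $m_{l'}^2$ at the relevant levels and yields $|h(x)|\le\frac{2}{m_i m_{l_0}^2}$, the case $w(h)\ge m_{l_0}$ being unchanged. The main obstacle throughout is the displayed combinatorial inequality; once that is in hand, everything else is routine manipulation of tree analyses.
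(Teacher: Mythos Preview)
The paper does not actually give a proof of this lemma: it simply writes ``For the proof we refer to Lemma II.9 in \cite{AT}.'' Your sketch is essentially a reconstruction of that standard argument---induction on the tree analysis of $h$, reduction to $\ell^1$-averages of varying lengths $p_b$ on the children, and the core numerical inequality $\sum_b p_b/m_{l'_b}\le 2n/m_{l'}$---and you yourself invoke Lemma II.9 of \cite{AT} precisely at that combinatorial step. So there is nothing to compare: your approach is the one the paper cites, and your outline (including the handling of part (ii) via the trivial $\ell^1$-bound and the refinement when no node carries weight $m_{l_0}$) is correct.
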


\begin{prop}
\label{bin} \textbf{(The basic inequality)} Let $(x_q)_q$ be a
$j$-block $(C,\epsilon)$ RIS and let also $(\lambda_q)_q$ be a
sequence of scalars. Then for every $f\in\ G$ we can find $g_1$
such that either $g_1=h_1$ or $g_1=e_k^*+h_1$ with $k\notin \spp
h_1$ where $h_1\in D_{Au}$, $w(f)=w(h_1)$ ,$g_2\in$ $c_{00}(\tr)$
with $\|g_2\|_{\infty}\leq\epsilon$ and $g_1,g_2$ having
nonnegative coordinates such that, \begin{center}
$|f(\sum\lambda_q x_q)|\leq C(g_1+g_2)(\sum |\lambda_q|e_q)$.
\end{center} If we additionally assume that there exists a
$l_0\in\nn$ such that for every $\phi\in G$ with $w(\phi)=m_{l_0}$
and every interval $E$ of the natural numbers,
\begin{center} $|\phi(\sum_{q\in E} \lambda_q x_q)|\leq C(\max_{k\in
E} |\lambda_q|+ \epsilon\sum|\lambda_q|)$, \end{center}
then we can choose $h_1$ to have a tree
analysis $(h_a)_{a\in A}$ such that $w(h_a)\neq m_{l_0}$, for all
$a\in A$.
\end{prop}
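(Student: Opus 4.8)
The plan is to prove the basic inequality by a tree-analysis induction on a functional $f\in G$, following the familiar pattern from mixed Tsirelson/HI constructions but carefully tracking the $j$-block structure of the RIS $(x_q)_q$ and the fact that the $x_q$ live in $c_{00}(T\times\nn)$. First I would fix a tree analysis $(f_a)_{a\in A}$ of $f$ and, for each node $a$, consider the set $E_a$ of indices $q$ such that $x_q$ is ``caught'' at $a$ in the sense that $\rg f_a$ meets $\rg x_q$ but no proper successor of $a$ does, together with the set $D_a$ of $q$ with $\rg f_a\cap\rg x_q\neq\emptyset$. The goal is to build, simultaneously with the recursion, auxiliary functionals $h_a\in D_{Au}$ (on $c_{00}(\nn)$, indexed by the RIS-indices $q$) with $w(h_a)=w(f_a)$ whenever $f_a$ has a weight, together with an error functional of sup-norm at most $\epsilon$, so that $|f_a(\sum_q\lambda_q x_q)|\le C(h_a+\text{error})(\sum|\lambda_q|e_q)$.

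The key distinction to handle at each non-maximal node $a$, where $f_a=\frac{1}{m_l}\sum_{b\in S_a}f_b$, is between the indices $q$ for which some single $x_q$ is spread across several of the children $f_b$ (there are at most $|\spp x_q|$ such, contributing to the error term via property (i) of the RIS, $\frac{1}{m_{l_{q+1}}}|\spp x_q|<\epsilon$, once $l>l_q$), the indices caught inside a single child (handled by the inductive hypothesis applied to that child, giving a $(\theta,\mathcal{A}_{n_l})$-type combination, i.e.\ the $Au$-operation with constants $m_l, 5n_l$ — the factor $5$ being the usual slack absorbing the boundary terms), and the indices $q$ with $l_q\le l$, for which RIS property (ii) gives $|f_a(x_q)|\le C/m_i$ directly and these are incorporated as an $e_k^*$-type term. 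Assembling these three contributions and checking that the resulting combination of the $h_b$'s is admissible for $D_{Au}$ (which is why $D_{Au}$ is defined with $5n_l$ rather than $n_l$) yields $h_a$ with the correct weight; the restriction-closure of both $G$ and $D_{Au}$ on intervals is used to pass between $\rg f_a$ and the actual supports.

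For the additional ``moreover'' clause, I would run the same induction but argue that whenever a node $a$ has $w(f_a)=m_{l_0}$, the extra hypothesis — $|\phi(\sum_{q\in E}\lambda_q x_q)|\le C(\max_{k\in E}|\lambda_q|+\epsilon\sum|\lambda_q|)$ for every $\phi$ of weight $m_{l_0}$ and every interval $E$ — lets us estimate $f_a(\sum\lambda_q x_q)$ \emph{without} creating a node of weight $m_{l_0}$ in $h_a$: we simply bound it by a single $e_k^*$-term plus an $\epsilon$-error, i.e.\ absorb the whole subtree below $a$ into $g_1$'s ``$e_k^*$'' summand and the error functional $g_2$. This requires being slightly careful that doing so for \emph{all} weight-$m_{l_0}$ nodes still produces a legitimate $D_{Au}$ tree analysis above them (the $e_k^*$ leaves are allowed), and that the accumulated errors still have $\ell_\infty$-norm $\le\epsilon$; the geometric decay of the $\frac{1}{m_{l_q}}$'s together with $\sum_q$ over disjoint index-blocks keeps this under control.

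The main obstacle I expect is the bookkeeping at a non-maximal node where the children of $a$ form a $(n_{2l})$-block (or special) family but the RIS indices $q$ do not align with the children: one must split each problematic $x_q$ according to $\rg f_b$, pay $|\spp x_q|/m_{l_{q+1}}$ into the error, and then verify that the \emph{remaining} whole-$x_q$ pieces, grouped by child, still form an admissible family for the $Au$-operation after discarding the at most two boundary children per $q$ — this is exactly where the constant $5$ in ``$5n_l$'' in the definition of $D_{Au}$ is consumed, and getting the counting right (so that $h_a$ genuinely lies in $D_{Au}$ with $w(h_a)=m_l$) is the delicate point. Everything else — the maximal-node base case, the handling of the $e_k^*$ term coming from a single child's $\gxn$-functional, the rational-convexity and interval-restriction closure of $G$ — is routine once this alignment step is set up correctly.
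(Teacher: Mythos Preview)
Your sketch is essentially correct and follows the standard tree-analysis induction used to prove basic inequalities in mixed Tsirelson/HI constructions. Note, however, that the paper does not actually give a proof of this proposition: it simply states ``We refer the reader to Lemma II.14 of \cite{AT} for a proof of the above proposition.'' Your outline is precisely the argument carried out in that reference (adapted to the $j$-block setting on $c_{00}(T\times\nn)$), so you are in agreement with the paper's intended proof by citation. The one place where your bookkeeping is slightly loose is the role of RIS property~(i): the error contribution from an index $q$ with $l_{q+1}\le l$ (where $m_l=w(f_a)$) comes from bounding $|f_a(x_q)|$ by $\frac{1}{m_l}\cdot|\spp x_q|\cdot C\le\frac{|\spp x_q|}{m_{l_{q+1}}}\cdot C<C\epsilon$, not from counting how many children a given $x_q$ straddles; but this is a matter of phrasing rather than a gap.
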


We refer the reader to Lemma II.14 of \cite{AT} for a proof of the
above proposition. A direct consequence of the basic inequality
and Lemma \ref{auxbasis} is the following.

\begin{lem}
\label{estimations} Let $(x_q)_{q=1}^{n_{l_0}}$ be a
 $j$-block $(C,\epsilon)$ RIS with $\epsilon\leq \frac{2}{m_{l_0}^2}$.
Then \begin{enumerate} \item[1.] For every $f\in G$ with
$w(f)=m_i$ we have, \begin{enumerate} \item[i.]
$|h(\frac{1}{n_{l_0}}\sum_{j=1}^{n_{l_0}}x_j)|\leq\frac{3C}{m_i\cdot
m_{l_0}}$, if $w(h)=m_i<m_{l_0}$. \item[ii.]
$|h(\frac{1}{n_{l_0}}\sum_{j=1}^{n_{l_0}})|\leq \frac{C}{n_{l_0}}+
\frac{C}{m_i}+ C\epsilon$, if $w(h)=m_i\geq m_{l_0}$.
\end{enumerate}
In particular, $\|\frac{1}{n_{l_0}}\sum_{j=1}^{n_{l_0}}x_j\|\leq
\frac{2C}{m_{l_0}}$. \item[2.] If $(b_q)_{q=1}^{n_{l_0}}$ are
scalars with $|b_q|\leq 1$ for all $q$ such that for every
$\phi\in G$ with $w(\phi)=m_{l_0}$ and every interval $E$ of the
natural numbers we have that,
\begin{center} $|\phi(\sum_{q\in E} \lambda_q x_q)|\leq C(\max_{k\in
E} |\lambda_q|+ \epsilon\sum|\lambda_q|)$.
\end{center}
Then, $\|\frac{1}{n_{l_0}}\sum_{j=1}^{n_{l_0}} b_j x_j\|\leq
\frac{4C}{m_{l_0}^2}$.
\end{enumerate}
\end{lem}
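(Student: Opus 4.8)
The plan is to derive Lemma~\ref{estimations} directly from the basic inequality (Proposition~\ref{bin}) together with the norm estimates for averages of the auxiliary basis in Lemma~\ref{auxbasis}. Fix the $j$-block $(C,\epsilon)$ RIS $(x_q)_{q=1}^{n_{l_0}}$ with $\epsilon\leq\frac{2}{m_{l_0}^2}$ and let $x=\frac{1}{n_{l_0}}\sum_{j=1}^{n_{l_0}}x_j$, so in the language of the basic inequality the scalars are $\lambda_q=\frac{1}{n_{l_0}}$ for $q\leq n_{l_0}$ and $0$ otherwise.

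For part~(1), take any $f\in G$ and apply Proposition~\ref{bin} to get $g_1,g_2$ with $g_1=h_1$ or $g_1=e_k^*+h_1$ for some $h_1\in D_{Au}$ with $w(f)=w(h_1)$, $\|g_2\|_\infty\leq\epsilon$, and $|f(x)|\leq C(g_1+g_2)(\frac{1}{n_{l_0}}\sum_{q=1}^{n_{l_0}}e_q)$. The contribution of $g_2$ is bounded by $C\epsilon$; the contribution of the single coordinate $e_k^*$ (if present) is at most $\frac{C}{n_{l_0}}$; and the contribution of $h_1$ is estimated by Lemma~\ref{auxbasis} applied to $\frac{1}{n_{l_0}}\sum_{q=1}^{n_{l_0}}e_q$: if $w(f)=m_i<m_{l_0}$ then $|h_1(\cdots)|\leq\frac{2}{m_i m_{l_0}}$, and if $w(f)=m_i\geq m_{l_0}$ then $|h_1(\cdots)|\leq\frac{1}{m_i}$. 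Adding up, in the case $m_i<m_{l_0}$ we get $|f(x)|\leq C(\frac{2}{m_i m_{l_0}}+\frac{1}{n_{l_0}}+\epsilon)\leq\frac{3C}{m_i m_{l_0}}$ using $\epsilon\leq\frac{2}{m_{l_0}^2}$ and the size of $n_{l_0}$ relative to $m_{l_0}$; in the case $m_i\geq m_{l_0}$ we get $|f(x)|\leq\frac{C}{n_{l_0}}+\frac{C}{m_i}+C\epsilon$. To deduce $\|x\|\leq\frac{2C}{m_{l_0}}$, split over $f\in G$ according to whether $w(f)<m_{l_0}$, $w(f)\geq m_{l_0}$, or $f\in\bigcup_n G_n$ has no weight: in the first case the bound $\frac{3C}{m_2 m_{l_0}}\leq\frac{3C}{2m_{l_0}}$ suffices since $m_i\geq m_2=m_1^5$ may be loose—one should instead observe $w(f)=m_i$ with $i<l_0$ gives $m_i\leq m_{l_0-1}$ hence $\frac{3C}{m_i m_{l_0}}$ is tiny—and in the remaining cases $\frac{C}{n_{l_0}}+\frac{C}{m_{l_0}}+C\epsilon\leq\frac{2C}{m_{l_0}}$ by the rapid growth of $(m_l),(n_l)$ and $\epsilon\leq\frac{2}{m_{l_0}^2}$. (For the weightless functionals in $G_n$ one has $|f(x)|\leq\|x\|_{X_n}$-type bounds controlled by $\frac{C}{n_{l_0}}$ since the $x_q$ are successive in the $j$-coordinate; this is absorbed by the same estimate.)

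For part~(2), the extra hypothesis on functionals $\phi$ with $w(\phi)=m_{l_0}$ is exactly the side condition in the second half of Proposition~\ref{bin}, so we may choose $h_1$ to admit a tree analysis $(h_a)_{a\in A}$ with $w(h_a)\neq m_{l_0}$ for all $a\in A$. Then the sharper part of Lemma~\ref{auxbasis} applies: for such $h_1$ one has $|h_1(\frac{1}{n_{l_0}}\sum_{q=1}^{n_{l_0}}b_q e_q)|\leq\frac{2}{m_i m_{l_0}^2}$ when $w(h_1)=m_i<m_{l_0}$ and $\leq\frac{1}{m_i}$ when $w(h_1)=m_i\geq m_{l_0}$, using $|b_q|\leq1$. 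Combining as before with the $\frac{C}{n_{l_0}}$ and $C\epsilon\leq\frac{2C}{m_{l_0}^2}$ terms, every $f\in G$ satisfies $|f(\frac{1}{n_{l_0}}\sum b_q x_q)|\leq\frac{4C}{m_{l_0}^2}$, whence $\|\frac{1}{n_{l_0}}\sum_{j=1}^{n_{l_0}}b_j x_j\|\leq\frac{4C}{m_{l_0}^2}$.

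The main obstacle is bookkeeping the constants so that the three error terms—the auxiliary-basis average bound from Lemma~\ref{auxbasis}, the $\frac{1}{n_{l_0}}$ from the possible stray $e_k^*$ in $g_1$, and the $C\epsilon$ from $g_2$—each land below the stated thresholds; this relies entirely on the recursive growth $m_{l+1}=m_l^5$, $n_{l+1}=(5n_l)^{s_l}$ and the assumption $\epsilon\leq\frac{2}{m_{l_0}^2}$, so the argument is a careful but routine arithmetic check once the basic inequality and Lemma~\ref{auxbasis} are in hand. A secondary point requiring care is verifying that the side hypothesis in part~(2) is precisely the form demanded by Proposition~\ref{bin}, so that one is genuinely entitled to the $w(h_a)\neq m_{l_0}$ tree analysis; this is immediate by inspection of the statement.
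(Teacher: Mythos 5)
Your proposal takes exactly the route the paper intends: the text introduces Lemma~\ref{estimations} with the sentence that it is ``a direct consequence of the basic inequality (Proposition~\ref{bin}) and Lemma~\ref{auxbasis},'' and gives no further proof, so filling in precisely the combination of those two results is the right thing to do. The decomposition of the error into three pieces (the $D_{Au}$-average bound from Lemma~\ref{auxbasis}, the stray $\frac{1}{n_{l_0}}$ from a possible $e_k^*$, and the $C\epsilon$ from $g_2$), the observation that $g_1+g_2$ has nonnegative coordinates so that $|b_q|\le1$ may be replaced by $1$, and the treatment of weightless $f\in\bigcup_n G_n$ via $j$-blockness are all correct and are the intended bookkeeping.

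One point in part (2) deserves to be made explicit rather than folded into ``combining as before.'' For $f\in G$ with $w(f)=m_{l_0}$ the second half of Proposition~\ref{bin} cannot supply an $h_1$ with a tree analysis avoiding weight $m_{l_0}$, since $w(h_1)=w(f)=m_{l_0}$ forces the root of any tree analysis of $h_1$ to have weight $m_{l_0}$; so for this one weight the basic inequality together with Lemma~\ref{auxbasis} only yields the useless bound $C/m_{l_0}$. This case must be disposed of directly by the side hypothesis: taking $E=\{1,\dots,n_{l_0}\}$ and $\lambda_q=b_q/n_{l_0}$ in the assumed estimate gives $|f(\frac{1}{n_{l_0}}\sum b_q x_q)|\le C(\frac{1}{n_{l_0}}+\epsilon)\le \frac{4C}{m_{l_0}^2}$. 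The remaining weights $m_i>m_{l_0}$ are then genuinely small ($m_i\ge m_{l_0}^5$), and $m_i<m_{l_0}$ is covered by the sharpened estimate $\frac{2}{m_i m_{l_0}^2}$ as you say. With that one sentence added, the argument is complete; the rest of the arithmetic you defer to the growth of $(m_l)$ and $(n_l)$ is indeed routine.
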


\begin{defn}
Let $x\in c_{00}(T\times\nn)$ and $C>1$. We say that $x$ is a
$C-\ell^1_k$ average if there exists a $j$-block sequence
$x_1\prec_j x_2\prec_j...\prec_j x_k$ such that
$x=\frac{x_1+...+x_k}{k}$, $\|x_i\|_G\leq C$ for $i=1,...,k$ and
$\|x\|_G=1$.
\end{defn}

\begin{lem}
\label{l1toris} Let $x$ be a $C-\ell^1_{n_q}$ average. Then for
every $f\in G$ with $w(f)=m_k<m_q$ we have that
$|f(x)|\leq\frac{1}{m_k} C(1+
\frac{2n_{l-1}}{n_l})\leq\frac{3C}{2}\frac{1}{m_k}$.
\end{lem}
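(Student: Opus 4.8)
The statement is the classical fact that a $C$-$\ell^1_{n_q}$ average behaves like an $\ell^1_{n_q}$ average with respect to functionals of small weight, and the standard proof proceeds by examining a tree analysis of $f$ and doing a careful counting argument, exactly as in Lemma II.22 (or the corresponding statement) of \cite{AT}. I would fix a $C$-$\ell^1_{n_q}$ average $x=\frac{1}{n_q}\sum_{i=1}^{n_q}x_i$ with $x_1\prec_j\cdots\prec_j x_{n_q}$, $\|x_i\|_G\le C$, $\|x\|_G=1$, and a functional $f\in G$ with $w(f)=m_k<m_q$. Let $(f_a)_{a\in A}$ be a tree analysis of $f$.

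First I would split the estimate of $f(x)$ according to how the $x_i$ interact with the node where $f$ acts. For each $i$, let $E_i$ be the set of indices corresponding to the "initial" portion of the tree analysis: say, the set of $i$ for which $x_i$ meets the support of at most one of the immediate successors at each splitting of weight strictly below $m_q$; the remaining $x_i$ (those ``cut'' by a node of high weight) number at most $n_{l-1}$ of the $n_q$-many summands, by the $j$-block structure and the definition of the $\aaa_{n}$ operations — this is the source of the $\frac{2n_{l-1}}{n_l}$ term (here $l=q$, so it is $\frac{2n_{q-1}}{n_q}$, matching the statement since in the paper's notation $n_l$ denotes the relevant level; I will write $n_l,n_{l-1}$ following the convention of \cite{AT}). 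For the summands not cut, $f$ restricted to their ranges is a single branch functional of weight $w(f)=m_k$, so we get a contribution bounded by $\frac{1}{m_k}$ times the $\ell^1$-norm-type estimate coming from the basic inequality, Proposition \ref{bin}, giving $\frac{C}{m_k}$ after renormalizing by $n_q$. For the cut summands, we use the trivial bound $\|x_i\|_G\le C$ on each of the at most $n_{l-1}$ of them, divided by $n_q=n_l$, producing the $\frac{2n_{l-1}}{n_l}\cdot\frac{C}{m_k}$ correction (the factor $2$ absorbing left/right endpoint effects).

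Combining, $|f(x)|\le\frac{C}{m_k}\bigl(1+\frac{2n_{l-1}}{n_l}\bigr)$. The final inequality $\le\frac{3C}{2m_k}$ then follows from the recursive growth $n_l=(5n_{l-1})^{s_{l-1}}$ which forces $\frac{n_{l-1}}{n_l}\le\frac14$ (indeed $n_l\ge 5n_{l-1}\ge 4n_{l-1}$ trivially since $s_{l-1}\ge 1$ and $5n_{l-1}\ge 1$), so $1+\frac{2n_{l-1}}{n_l}\le 1+\frac12=\frac32$.

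**Main obstacle.** The delicate point is the bookkeeping that shows at most $n_{l-1}$ of the $n_l$ summands $x_i$ are ``cut'' by nodes of high weight, and that on the uncut part the basic inequality genuinely yields the $\frac{C}{m_k}$ bound rather than something weaker — this requires invoking Proposition \ref{bin} with the right choice of $g_1,g_2$ and Lemma \ref{auxbasis} to control $h_1\in D_{Au}$ of weight $m_k$ acting on an average of basis vectors, using $\|g_2\|_\infty\le\epsilon$ and the RIS-type hypothesis implicit in the average structure. Since the averaging is over a $j$-block sequence, the indices group cleanly, and this is where one must be careful not to double-count. Everything else is a routine invocation of the machinery already assembled (Proposition \ref{bin}, Lemma \ref{auxbasis}, Lemma \ref{estimations}), so I would keep that part terse and cite \cite{AT} for the detailed combinatorial estimate.
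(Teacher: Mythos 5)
Your skeleton — split the summands $x_i$ of the average into those wholly contained in a component of $f$'s top-level decomposition and those cut by a boundary, and count — is the right idea and is indeed what the paper does. However, the machinery you invoke to carry it out is both unnecessary and, more importantly, inapplicable.

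The paper's argument is entirely elementary and never touches Proposition~\ref{bin} or Lemma~\ref{auxbasis}. One writes $f=\frac{1}{m_k}\sum_{i=1}^{d}f_i$ with $d\le n_k$, a $j$-block sequence (this is just the top-level operation, not a full tree analysis, and weights of deeper nodes play no role). Setting $E_l=j(\rg f_l)$, one lets $I_l$ (resp.\ $J_l$) be the set of $i$ with $j(\spp x_i)\subseteq E_l$ (resp.\ intersecting $E_l$). Then $|J_l\setminus I_l|\le 2$ because the $x_i$ are $j$-block and the $E_l$ are successive intervals, $\sum_l|I_l|\le n_q$ because the $I_l$ are disjoint subsets of $\{1,\dots,n_q\}$, and $\|E_l x\|\le\frac{1}{n_q}\sum_{i\in J_l}\|E_l x_i\|\le\frac{C}{n_q}(|I_l|+2)$ using only $\|x_i\|_G\le C$. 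Summing and multiplying by $\frac{1}{m_k}$ gives $|f(x)|\le\frac{C}{m_k}\bigl(1+\frac{2n_k}{n_q}\bigr)$, and $n_q\ge 5n_k$ finishes. Note in particular that the quantity controlling the ``cut'' term is $2n_k$, counting \emph{pairs} $(i,l)$ — two straddling summands per top-level component $f_l$ — not ``at most $n_{l-1}$ cut summands'' as you write; the two happen to be comparable because $k\le l-1$, but your accounting is not quite the right one.

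The real problem is your appeal to Proposition~\ref{bin}. The basic inequality is stated for a $(C,\epsilon)$ rapidly increasing sequence, and a single $C$-$\ell^1_{n_q}$ average is not a RIS, nor is the block sequence $(x_i)_{i=1}^{n_q}$ underlying it required to be one. Worse, Lemma~\ref{l1toris} is precisely the ingredient used two steps later (Lemma~\ref{l1isris}) to prove that a sequence of $\ell^1$ averages \emph{is} a RIS — so invoking the basic inequality here would put the cart before the horse. Were the basic inequality applicable, it would in fact give a far stronger bound (of order $\frac{C}{m_k m_q}$, as in Lemma~\ref{estimations}), which is exactly the gain one earns \emph{after} upgrading from a single average to a RIS. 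That the bound here is only $\frac{3C}{2m_k}$ is the signal that the heavy machinery is not available at this stage, and an elementary counting argument is all that is needed and all that works.
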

\begin{proof}
Let $x=\frac{1}{n_q}\sum_{i=1}^{n_q} x_i$ be a $C-\ell^1_{n_l}$
average. Let also $f=\frac{1}{m_k}\sum_{i=1}^{n_k}f_i$ with
$(f_i)_{i=1}^{n_k}$ a $j$-block sequence of functionals and
$n_k<n_q$. If we set $E_i=j(\rg f_i)$ and for $l=1,...,n_k$ let
$I_l$ ($J_l$ resp.) be the set of all $i$ such that $j(\spp x_i)$
is contained (resp. intersects) $E_l$. Clearly
$\sum_{l=1}^{n_k}|I_l|\leq n_q$, while for each $l$ we have $\|E_l
x\|\leq\frac{1}{n_q}\sum_{i\in J_l} \|E_lx_i\|\leq\frac{1}{n_q}
C(|I_l|+2)$. Therefore $\sum_{l=1}^{n_k}\|E_lx\|\leq C
\frac{1}{n_q}(\sum_{l=1}^{n_k}|I_l|+2n_k)\leq C(1+
\frac{2n_k}{n_q})$ and the conclusion follows.
\end{proof}

\begin{lem}
\label{l1isris} Let $(x_q)_q$ be a $j$-block sequence in $\xfr_G$
such that each $x_q$ is a $C-\ell^1_{k_q}$ average, where $C>1$
and $k_q$ increasing to infinity and $\epsilon>0$. Then there
exists a subsequence of $(x_q)_q$ which is a
$(\frac{3C}{2},\epsilon)$ RIS
\end{lem}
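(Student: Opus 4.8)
The plan is to build the required subsequence recursively, together with a strictly increasing sequence $(l_q)_q$ of natural numbers witnessing the RIS condition. The two engines of the construction are Lemma \ref{l1toris}, which bounds the action of a functional of small weight on an average, and the fact that both $(m_l)_l$ and $(k_q)_q$ increase to infinity: the former lets us make supports negligible against $m_{l_{q+1}}$ (condition (i) of the RIS definition), while the latter lets us pass to a subsequence whose averages have enough constituent blocks for Lemma \ref{l1toris} to apply.

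Two preliminary remarks come first. Since each $x_q$ is a $C-\ell^1_{k_q}$ average, $\|x_q\|_G=1\leq\frac{3C}{2}$ as $C>1$, so the norm bound in the definition of a $(\frac{3C}{2},\epsilon)$ RIS is automatic; also $x_q\in c_{00}(T\times\nn)$, so $|\spp x_q|<\infty$. Secondly, I would observe that the estimate of Lemma \ref{l1toris} depends only on the number of blocks forming the average, not on this number being a power $n_q$: reading its proof, if $x$ is a $C-\ell^1_K$ average and $f\in G$ has $w(f)=m_k$ with $4n_k\leq K$, then writing $f=\frac{1}{m_k}\sum_{i=1}^{n_k}f_i$ and estimating $\sum_l\|E_lx\|$ over the intervals $E_l=j(\rg f_l)$ verbatim as there gives $|f(x)|\leq\frac{C}{m_k}(1+\frac{2n_k}{K})\leq\frac{3C}{2m_k}$. (One could instead group the $K$ blocks into $n_k$ successive bunches to present $x$ as a $2C-\ell^1_{n_k}$ average and quote Lemma \ref{l1toris} as stated, but this worsens the constant, so I prefer the direct inspection.)

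Now the recursion. Pick $q_1$ with $k_{q_1}\geq 4n_1$ (possible since $k_q\to\infty$) and set $l_1=1$. Assume $q_1<\dots<q_{j-1}$ and $l_1<\dots<l_{j-1}$ have been chosen. Using $m_l\to\infty$, pick $l_j>l_{j-1}$ with $m_{l_j}>\frac{1}{\epsilon}\,|\spp x_{q_{j-1}}|$; then, using $k_q\to\infty$, pick $q_j>q_{j-1}$ with $k_{q_j}\geq 4n_{l_j}$. This produces a subsequence $(x_{q_j})_j$ of $(x_q)_q$ and naturals $l_1<l_2<\cdots$.

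Finally one checks that $(x_{q_j})_j$ is a $(\frac{3C}{2},\epsilon)$ RIS with witnessing sequence $(l_j)_j$. It is $j$-block as a subsequence of a $j$-block sequence, and $\|x_{q_j}\|_G=1\leq\frac{3C}{2}$. Condition (i) holds since $\frac{1}{m_{l_{j+1}}}|\spp x_{q_j}|<\epsilon$ by the choice of $l_{j+1}$. For condition (ii), let $f\in G$ with $w(f)=m_i$ and $i<l_j$; then $n_i\leq n_{l_j-1}<n_{l_j}\leq\frac14 k_{q_j}$, so $4n_i\leq k_{q_j}$ and the extended form of Lemma \ref{l1toris} gives $|f(x_{q_j})|\leq\frac{3C}{2m_i}$, as required. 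The only step needing genuine care is the second preliminary remark — that Lemma \ref{l1toris} survives the passage from averages of index $n_q$ to averages of arbitrary index $k_q$; everything else is bookkeeping with the fast-growing sequences $(m_l)_l$ and $(k_q)_q$.
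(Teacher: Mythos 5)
Your proposal is correct and follows the same route as the paper: define $l_q$ according to how many blocks make up $x_q$, pass to a subsequence so that $(l_q)_q$ is strictly increasing and $m_{l_{q+1}}$ dominates $|\spp x_{q}|/\epsilon$, and invoke Lemma \ref{l1toris} for condition (ii) of the RIS definition. Your preliminary observation — that the proof of Lemma \ref{l1toris} applies verbatim to a $C$-$\ell^1_K$ average once $4n_k\leq K$, not only when $K$ equals some $n_l$ — is exactly the point the paper glosses over (there the indices $n_q$ and $k_q$ are conflated), so if anything your write-up is the more careful of the two; the alternative you mention of regrouping into $n_k$ bunches at the cost of a factor $2$ in the constant is fine too but, as you say, unnecessary.
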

\begin{proof}
For each $q$ we set $l_q=\max \{l: n_l\leq n_q\}$. There exists a
subsequence of $(x_q)_q$ (we denote this subsequence by $(x_q)_q$
again) such that $(l_q)_q$ is a strictly increasing sequence and
$m_{l_{q+1}}>\frac{1}{\epsilon}|\spp x_q|$ for all $q$. From Lemma
\ref{l1toris} we also get that for each $f\in G$ with
$w(f)=m_k,k<l_q$ we have that $|f(x_q)|\leq
\frac{3C}{2}\frac{1}{m_k}$. Therefore this subsequence is a
$(\frac{3C}{2},\epsilon)$ RIS
\end{proof}

\begin{lem}
\label{hl12} Let $(x_q)_q$ be a $j$-block sequence with each $x_q$
a $C-\ell^1_{k_q}$ average, where $C>1$ and $k_q$ increasing to
infinity. Then for every $l\in\nn$ there exists
$q_1<q_2<...<q_{n_{2l}}$ such that,
\[ \Big\| \frac{x_{q_1}+x_{q_2}+...+x_{q_{n_{2l}}}}{n_{2l}} \Big\|
\leq \frac{3C}{m_{2l}}. \]
\end{lem}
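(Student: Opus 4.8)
The plan is to derive the estimate by chaining together Lemma \ref{l1isris} and the \emph{in particular} clause of part 1 of Lemma \ref{estimations}. Fix $l\in\nn$, set $l_0=2l$, and choose $\epsilon=\frac{2}{m_{2l}^2}$.

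First I would apply Lemma \ref{l1isris} to the given $j$-block sequence $(x_q)_q$ of $C$-$\ell^1_{k_q}$ averages with this value of $\epsilon$; this produces a subsequence $(x_q)_{q\in M}$, with $M\in[\nn]$, which is a $(\frac{3C}{2},\epsilon)$ RIS. Let $q_1<q_2<\cdots<q_{n_{2l}}$ be the first $n_{2l}$ members of $M$. Since any initial segment of a RIS is again a RIS with the same constants, $(x_{q_j})_{j=1}^{n_{2l}}$ is a $j$-block $(\frac{3C}{2},\epsilon)$ RIS of length $n_{2l}$; moreover by the choice of $\epsilon$ we have $\epsilon\le \frac{2}{m_{l_0}^2}$, so the hypotheses of Lemma \ref{estimations} are satisfied with $l_0=2l$.

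Next I would invoke the \emph{in particular} assertion of part 1 of Lemma \ref{estimations}, applied with $l_0=2l$ and with the RIS constant $\frac{3C}{2}$ in the role of the constant $C$ appearing there. This yields
\[
\Big\|\frac{1}{n_{2l}}\sum_{j=1}^{n_{2l}} x_{q_j}\Big\|
\;\le\; \frac{2\cdot\frac{3C}{2}}{m_{2l}}
\;=\;\frac{3C}{m_{2l}},
\]
which is precisely the claimed inequality.

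The argument is essentially a bookkeeping exercise rather than a genuine obstacle: the only point requiring care is to track the constants so that the RIS parameter stays at $\frac{3C}{2}$ (so that the factor $2$ coming out of Lemma \ref{estimations} produces exactly $3C$), and to choose $\epsilon$ small enough to meet the requirement $\epsilon\le 2/m_{l_0}^2$ of Lemma \ref{estimations} — both are guaranteed by the freedom in the choice of $\epsilon$ afforded by Lemma \ref{l1isris}.
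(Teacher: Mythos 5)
Your proof is correct and takes essentially the same route as the paper, which simply asserts that the lemma ``is a direct consequence of the basic inequality (Proposition \ref{bin})'' and leaves all the bookkeeping to the reader. You have made the chain explicit: Lemma \ref{l1isris} produces a $(\tfrac{3C}{2},\epsilon)$ RIS subsequence from the $C$-$\ell^1_{k_q}$ averages, and the \emph{in particular} clause of Lemma \ref{estimations} (itself a packaging of Proposition \ref{bin} together with Lemma \ref{auxbasis}) then gives the bound $\tfrac{2\cdot\frac{3C}{2}}{m_{2l}}=\tfrac{3C}{m_{2l}}$; the choice $\epsilon=\tfrac{2}{m_{2l}^2}$ meets the hypothesis $\epsilon\le\tfrac{2}{m_{l_0}^2}$, and the constants line up exactly.
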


This is a direct consequence of the basic inequality (Proposition
\ref{bin}).

The following holds.

\begin{lem}
\label{hl11} For every $j$-block sequence $(y_n)_n$ and every
$k\in\nn$, there exists a $2-\ell^1_k$ average in $<(y_n)_n>$.
\end{lem}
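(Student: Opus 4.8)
The plan is to argue by contradiction, following the classical construction of $\ell^1$-averages in mixed Tsirelson type spaces (cf. \cite{AT}); only the $j$-directional structure of $\xfr_G$, i.e. the even operations of Definition \ref{thesetg}(ii), will be used. We may assume $k\ge 2$ (the case $k=1$ is trivial) and, after normalising, that $\|y_n\|_G=1$ for all $n$. Suppose, towards a contradiction, that $Y=\overline{<y_n:n\in\nn>}$ contains no $2$-$\ell^1_k$ average, and set $f(N)=\sup\{\,\|\sum_{i\in I}y_i\|_G: I\ \text{an interval of}\ \nn,\ |I|=N\,\}$.

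The first step is an upper estimate for $f$. Given an interval $I$ of length $N>k$, split it into $k$ successive subintervals $I_1<\dots<I_k$ of length at most $\lceil N/k\rceil$, set $v=\sum_{i\in I}y_i$, $v_j'=\sum_{i\in I_j}y_i$ and $v_j=\frac{k}{\|v\|_G}v_j'$; then $\hat v=v/\|v\|_G=\frac1k\sum_{j=1}^k v_j$ is a normalised vector of $Y$ and $(v_j)_{j=1}^k$ is a $j$-block sequence. Since by hypothesis $\hat v$ is not a $2$-$\ell^1_k$ average, $\|v_j\|_G>2$ for some $j$, whence $\|v\|_G<\tfrac k2\|v_j'\|_G\le\tfrac k2 f(\lceil N/k\rceil)$. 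Combining the recursion $f(N)\le\tfrac k2 f(\lceil N/k\rceil)$ with the trivial base estimate $f(N)\le N$ (triangle inequality) gives, by a routine induction, $f(N)\le C_k\,N^{1-\log_k 2}$ for some constant $C_k$ depending only on $k$.

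The second step is a lower estimate obtained by iterating the $(\aaa_{n_{2l}},\tfrac1{m_{2l}})$-operation. For each $i$ pick $f_i\in G$ with $f_i(y_i)\ge\tfrac12$; since restricting $f_i$ to the rectangle $\rg y_i$ does not change $f_i(y_i)$ and keeps $f_i$ in $G$ by Definition \ref{thesetg}(i), we may assume $\spp f_i\subseteq\rg y_i$. As $(y_i)_i$ is $j$-block, the rectangles $\rg y_i$ are pairwise disjoint in the $j$-coordinate, so the $f_i$ form a $j$-block sequence and contribute no cross terms when evaluated on $\sum y_{i'}$. Fixing $l\in\nn$ and $N_r=(n_{2l})^r$, one may therefore apply the operation of Definition \ref{thesetg}(ii) $r$ times over $\{1,\dots,N_r\}$, producing $g^{(r)}=\frac1{(m_{2l})^r}\sum_{i=1}^{N_r}f_i\in G$; hence $\|\sum_{i=1}^{N_r}y_i\|_G\ge g^{(r)}(\sum_{i=1}^{N_r}y_i)=\frac1{(m_{2l})^r}\sum_{i=1}^{N_r}f_i(y_i)\ge\tfrac12\,(n_{2l}/m_{2l})^r=\tfrac12\,N_r^{1-\log_{n_{2l}}m_{2l}}$.

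Finally I would compare the two exponents. From $m_{2l}=m_{2l-1}^5$ and $n_{2l}=(5n_{2l-1})^{\log_2 m_{2l}}$ one computes $\log_{n_{2l}}m_{2l}=1/\log_2(5n_{2l-1})$, which tends to $0$ as $l\to\infty$ because the $n_l$ grow far faster than the $m_l$. Fix $l$ with $\log_{n_{2l}}m_{2l}<\log_k 2$ (equivalently $5n_{2l-1}>k$). Then, taking $N=N_r$, the two estimates give $\tfrac12\,N_r^{1-\log_{n_{2l}}m_{2l}}\le f(N_r)\le C_k\,N_r^{1-\log_k 2}$, i.e. $N_r^{\log_k 2-\log_{n_{2l}}m_{2l}}\le 2C_k$, which fails once $r$ is large. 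This contradiction produces a $2$-$\ell^1_k$ average in $<(y_n)_n>$. I expect the only genuinely delicate point to be the bookkeeping in the second step: verifying that after restriction the $f_i$ remain a $j$-block sequence eligible for the even operation and carry no cross terms, and checking that the iterated operation is indeed formed from admissible $j$-block sequences at every level. Once that is in place, the whole proof rests on the elementary asymptotics $\log_{n_{2l}}m_{2l}\to 0$, which makes the lower estimate eventually dominate the upper one.
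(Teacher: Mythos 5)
Your proof is correct, and it reproduces the classical two-sided growth argument that the paper itself invokes only by reference (``For the proof we refer to \cite{AT}, Lemma II.22''); the splitting of an interval into $k$ pieces to get the upper bound $f(N)\le C_k N^{1-\log_k 2}$, and the iterated $(\aaa_{n_{2l}},\tfrac1{m_{2l}})$-operation on restricted norming functionals to get the lower bound $\tfrac12 N_r^{1-\log_{n_{2l}}m_{2l}}$, are exactly the standard ingredients. The only cosmetic remark is that the ``routine induction'' at the end of the first step is really an iteration of the recursion $f(N)\le\tfrac k2 f(\lceil N/k\rceil)$ down to $N\le k$ (a direct strong induction does not quite close because of the ceiling), and that bi-monotonicity of $j$-block sequences (Remark \ref{remschdec}(3)) is the fact that makes $f$ nondecreasing, which you use implicitly when bounding $\|v_j'\|_G\le f(\lceil N/k\rceil)$.
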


For the proof we refer to \cite{AT}, Lemma II.22.
Combining Lemma \ref{hl11} and \ref{l1isris} we arrive
at the following.

\begin{lem}
\label{existris} For every $j$-block sequence $(y_n)_n$ in
$\xfr_G$ and for every $\epsilon>0$  there exists a $(3,\epsilon)$
RIS in $<(y_n)_n>$.
\end{lem}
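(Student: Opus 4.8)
The plan is to combine Lemma \ref{hl11} with Lemma \ref{l1isris}. Fix a $j$-block sequence $(y_n)_n$ in $\xfr_G$ and $\epsilon>0$, and fix once and for all a strictly increasing sequence $(k_q)_q$ of natural numbers with $k_q\to\infty$. First I would build recursively a $j$-block sequence $(x_q)_q$ in $<(y_n)_n>$ in which each $x_q$ is a $2$-$\ell^1_{k_q}$ average. Put $N_0=0$ and suppose $x_1\prec_j\cdots\prec_j x_{q-1}$ have already been chosen with $x_1,\dots,x_{q-1}\in<y_1,\dots,y_{N_{q-1}}>$. Since $(y_n)_{n>N_{q-1}}$ is again a $j$-block sequence in $\xfr_G$, Lemma \ref{hl11} applied to this tail with $k=k_q$ produces a $2$-$\ell^1_{k_q}$ average $x_q\in<y_n:n>N_{q-1}>$; as $x_q$ is a finite linear combination of the $y_n$ with indices $>N_{q-1}$, one picks $N_q>N_{q-1}$ with $x_q\in<y_{N_{q-1}+1},\dots,y_{N_q}>$, so that $x_{q-1}\prec_j x_q$. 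This yields the desired $j$-block sequence $(x_q)_q\subseteq<(y_n)_n>$ of $2$-$\ell^1_{k_q}$ averages with $k_q\uparrow\infty$.

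Then I would apply Lemma \ref{l1isris} to $(x_q)_q$ with $C=2$ and the given $\epsilon$: since $C>1$ and $k_q\to\infty$, its hypotheses are met, and it furnishes a subsequence $(x_{q_j})_j$ that is a $\bigl(\tfrac{3C}{2},\epsilon\bigr)=(3,\epsilon)$ RIS. Because $(x_{q_j})_j$ is a $j$-block sequence contained in $<(x_q)_q>\subseteq<(y_n)_n>$, it is exactly a $(3,\epsilon)$ RIS in $<(y_n)_n>$, which completes the proof.

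I do not expect a genuine obstacle here; the argument is purely organizational. The only two points that require a little attention are that the $\ell^1$-averages supplied by Lemma \ref{hl11} can be chosen successively so as to occupy consecutive blocks of $(y_n)_n$ — which is why one applies Lemma \ref{hl11} to the tails $(y_n)_{n>N_{q-1}}$ rather than to the whole sequence — and that the orders $k_q$ of these averages increase to infinity, which is arranged at the outset precisely so that the hypothesis of Lemma \ref{l1isris} is satisfied.
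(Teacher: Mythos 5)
Your proof is correct and is exactly the argument the paper has in mind: the paper states only ``Combining Lemma \ref{hl11} and \ref{l1isris} we arrive at the following,'' and your write-up simply makes the recursive construction of the $j$-block sequence of $2$-$\ell^1_{k_q}$ averages explicit before invoking Lemma \ref{l1isris} with $C=2$. The two small bookkeeping points you flag — applying Lemma \ref{hl11} to successive tails so the averages come out $j$-block, and arranging $k_q\uparrow\infty$ in advance — are exactly the details implicitly suppressed by the paper's one-line justification.
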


\begin{defn}[\textbf{exact pair}]
A pair $(x,\phi)$ with $x\in c_{00}(T\times\nn)$ and $\phi\in
G$ is said to be a $(C,l)$ exact pair if the following conditions
are satisfied.
\begin{enumerate}
\item[(1)] $1\leq \|x\|_G\leq C$ and for every $f\in G$ with
$w(f)=m_q$ and $q\neq l$ we have that $|f(x)|\leq \frac{3C}{m_q}$
if $q<l$ while $|f(x)|\leq \frac{C}{m^2_l}$ if $q>l$. \item[(2)]
$\phi$ is the result of the $\big(
\aaa_{n_l},\frac{1}{m_l}\big)$-operation and so $w(\phi)=m_l$.
\item[(3)] $\phi(x)=1$ and $\rg x=\rg\phi)$ (we recall that for
$c_{00}(T\times\nn)$, the range of $x$ is the minimal rectangle
generated by intervals that contains $\spp x$).
\end{enumerate}
\end{defn}

The following proposition is a direct consequence of Lemmas
\ref{hl11} and \ref{hl12}.

\begin{prop} If $(x_q)_q$ is a $j$-block sequence, then for every
$l\in\nn$ there exists an $(6,2l)$ exact pair $(x,\phi)$ with
$x\in <(x_q)_q>$ and $\phi\in G$.
\end{prop}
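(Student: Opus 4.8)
The plan is to obtain $x$ as a rescaled average of $n_{2l}$ consecutive $\ell^1$-averages taken from $\langle(x_q)_q\rangle$, and $\phi$ as the $\big(\aaa_{n_{2l}},\tfrac1{m_{2l}}\big)$-operation applied to functionals norming these averages. Since the weight in play is the even-indexed $m_{2l}$, $\phi$ will be an ordinary such average and no coding or special structure enters; this is why the statement concerns exact pairs of even weight only.

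First I would assemble the ingredients. By Lemma \ref{hl11}, $\langle(x_q)_q\rangle$ contains a $2$-$\ell^1_k$ average for every $k$, so after passing to a $j$-block sequence I obtain a $j$-block sequence of $2$-$\ell^1_{k_j}$ averages with $k_j\uparrow\infty$; by Lemma \ref{l1isris} (equivalently Lemma \ref{existris}) a subsequence of it is a $(3,\epsilon)$ RIS, where $\epsilon>0$ is fixed in advance to be very small relative to $m_{2l}$, say $\epsilon\le m_{2l}^{-4}$ --- permissible since a RIS with arbitrarily small $\epsilon$ is available. Relabelling, let $u_1\prec_j\cdots\prec_j u_{n_{2l}}$ be the first $n_{2l}$ terms; each is an $\ell^1$-average, so $\|u_j\|_G=1$.

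Next I would build $\phi$ and $x$. For each $j$ choose $\phi_j\in G$ with $\rg\phi_j=\rg u_j$ and $\phi_j(u_j)=1$: one restricts a norming functional of $u_j$ to the rectangle $\rg u_j$ (legitimate by the closure of $G$ under restriction to rectangles), the exact attainment of the norm and the exact matching of ranges being the usual routine bookkeeping. Then $(\phi_j)_{j=1}^{n_{2l}}$ is $j$-block, so by Definition \ref{thesetg}(ii) the functional $\phi:=\tfrac1{m_{2l}}\sum_{j=1}^{n_{2l}}\phi_j$ belongs to $G$ and is a result of the $\big(\aaa_{n_{2l}},\tfrac1{m_{2l}}\big)$-operation, hence $w(\phi)=m_{2l}$; this settles condition $(2)$. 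Put
\[ x=\frac{m_{2l}}{n_{2l}}\sum_{j=1}^{n_{2l}}u_j. \]
Since the $u_j$ are $j$-block, the rectangles $\rg u_j$ are pairwise disjoint, so $\phi_i(u_j)=0$ for $i\ne j$; hence $\phi(x)=\tfrac1{n_{2l}}\sum_j\phi_j(u_j)=1$, and $\rg x$ and $\rg\phi$ are both the minimal rectangle containing $\bigcup_j\rg u_j$. This settles condition $(3)$.

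Finally I would check condition $(1)$, which carries the real content. Here $x=m_{2l}\cdot\big(\tfrac1{n_{2l}}\sum_j u_j\big)$ is $m_{2l}$ times the average of the $(3,\epsilon)$ RIS $(u_j)_{j=1}^{n_{2l}}$, so Lemma \ref{estimations}$(1)$, applied with $l_0=2l$ and $C=3$, gives $\|\tfrac1{n_{2l}}\sum_j u_j\|_G\le 6/m_{2l}$ (also a consequence of Lemma \ref{hl12} with $C=2$), hence $\|x\|_G\le 6$, while $\|x\|_G\ge|\phi(x)|=1$. For $f\in G$ with $w(f)=m_q$ and $q<2l$, Lemma \ref{estimations}$(1)$(i) gives $|f(x)|\le m_{2l}\cdot\frac{3\cdot 3}{m_q m_{2l}}=\frac{9}{m_q}\le\frac{18}{m_q}$, and for $q>2l$, Lemma \ref{estimations}$(1)$(ii) gives $|f(x)|\le m_{2l}\big(\tfrac{3}{n_{2l}}+\tfrac{3}{m_q}+3\epsilon\big)$, each of whose three terms is at most $2/m_{2l}^2$: the first because $n_{2l}$ is, by construction, much larger than $m_{2l}^3$, the second because $q>2l$ forces $m_q\ge m_{2l}^5$, and the third by the choice $\epsilon\le m_{2l}^{-4}$; thus $|f(x)|\le 6/m_{2l}^2$. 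These are exactly the bounds defining a $(6,2l)$ exact pair. The main obstacle is precisely this last verification --- choosing $\epsilon$ small enough and exploiting how much faster $(n_l)_l$ grows than $(m_l)_l$ so that every parasitic error term stays below $m_{2l}^{-2}$; by contrast, the norm-attainment and range-matching used in constructing the $\phi_j$ are routine.
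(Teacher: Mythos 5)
Your proof is correct and takes essentially the same route as the paper: obtain a $(3,\epsilon)$ RIS of $\ell^1$-averages inside $\langle(x_q)_q\rangle$ (the paper packages Lemmas \ref{hl11} and \ref{l1isris} into Lemma \ref{existris}), pick norming functionals with matching ranges, form $x=\frac{m_{2l}}{n_{2l}}\sum u_j$ and $\phi=\frac{1}{m_{2l}}\sum\phi_j$, and verify the exact-pair estimates via the basic inequality. You spell out the $\epsilon$-bookkeeping through Lemma \ref{estimations} and a slightly smaller $\epsilon\le m_{2l}^{-4}$, where the paper takes $\epsilon\le m_{2l}^{-3}$ and defers directly to Proposition \ref{bin}, but these are cosmetic differences.
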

\begin{proof}
From Lemma \ref{existris} we have that there exists
$(y_q)_{q=1}^{n_2l}$ a $(3,\epsilon)$ RIS in $<(x_q)_q>$ with
$\epsilon\leq \frac{1}{m_{2l}^3}$. Choose for each
$q=1,...,n_{2l}$ a $y_q^* \in G$ with $y_q^*(y_q)=1$ and $\rg
 y_q^*\subseteq \rg y_q$. Then the functionals $(y_q^*)_q$ form a
$j$-block sequence and the functional $y^*= \frac{1}{m_{2l}}\sum_q
y_q^*$ is an element of $G$ and if we set
$y=\frac{m_{2l}}{n_{2l}}\sum_{q=1}^{n_{2l}}y_q$ by Proposition
\ref{bin} we get that $(y,y^*)$ is the desired pair.
\end{proof}
\begin{prop}
\label{treelike}\textbf{(The tree like property of special
sequences).} Let $(\phi_i)_{i=1}^{n_{2l-1}}$,
$(\psi_i)_{i=1}^{n_{2l-1}}$ be two distinct special sequences in
$G$. Then \begin{enumerate}\item[i.] For $1\leq i<j \leq n_{2l-1}$
we have that $w(\phi_i)\neq w(\psi_j)$. \item[ii.] There exists $k$
such that $\phi_i=\psi_i$ for all $i<k$ and $w(\phi_i)\neq
w(\psi_i)$ for $i>k$.
\end{enumerate}
\end{prop}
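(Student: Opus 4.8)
The plan is to prove both parts by unwinding the recursive rule that assigns weights along a special sequence, using only three facts: the coding $\sg$ is injective, the sequence $(m_l)_l$ is strictly increasing (so the weight $m_l$ of a functional determines the index $l$ uniquely), and $\Omega_1\cap\Omega_2=\emptyset$. Recall that for an $(n_{2l-1})$-special sequence $(f_i)_{i=1}^{n_{2l-1}}$ one has $w(f_1)=m_{2a}$ with $a\in\Omega_1$, whereas for $i\geq 2$ the weight is, by definition, $w(f_i)=m_{\sg(f_1,\dots,f_{i-1})}$, and $\sg$ takes its values in $\{2b:b\in\Omega_2\}$.

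For part (i), I would fix $1\le i<j\le n_{2l-1}$ and argue by contradiction, assuming $w(\phi_i)=w(\psi_j)$; note that $j\ge 2$, so $w(\psi_j)=m_{\sg(\psi_1,\dots,\psi_{j-1})}$ with exponent in the $\Omega_2$-layer. Split into two cases. If $i=1$, then $w(\phi_1)=m_{2a}$ with $a\in\Omega_1$, and strict monotonicity of $(m_l)$ would force $2a$ to lie in $\{2b:b\in\Omega_2\}$, contradicting the disjointness of $\Omega_1$ and $\Omega_2$. If $i\ge 2$, then both weights have the form $m_{\sg(\cdot)}$, strict monotonicity of $(m_l)$ gives $\sg(\phi_1,\dots,\phi_{i-1})=\sg(\psi_1,\dots,\psi_{j-1})$, and injectivity of $\sg$ yields $(\phi_1,\dots,\phi_{i-1})=(\psi_1,\dots,\psi_{j-1})$ --- impossible, since these tuples have lengths $i-1<j-1$.

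For part (ii), since $(\phi_i)_i$ and $(\psi_i)_i$ are distinct and both indexed by $\{1,\dots,n_{2l-1}\}$, let $k$ be the least index with $\phi_k\neq\psi_k$; then $\phi_i=\psi_i$ for all $i<k$ by minimality. For $i>k$ (so $i\ge 2$ and $i-1\ge k$), the initial tuples $(\phi_1,\dots,\phi_{i-1})$ and $(\psi_1,\dots,\psi_{i-1})$ coincide on their first $k-1$ entries but differ at entry $k$, hence are distinct; injectivity of $\sg$ then gives $\sg(\phi_1,\dots,\phi_{i-1})\neq\sg(\psi_1,\dots,\psi_{i-1})$, and strict monotonicity of $(m_l)$ upgrades this to $w(\phi_i)=m_{\sg(\phi_1,\dots,\phi_{i-1})}\neq m_{\sg(\psi_1,\dots,\psi_{i-1})}=w(\psi_i)$, as required.

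I do not expect a genuine obstacle here: this is a bookkeeping argument once the definition of $w(f_i)$ along a special sequence is made explicit. The two places that require care are, first, keeping the $\Omega_1$-layer (the weight of the leading functional) separate from the $\Omega_2$-layer (all subsequent weights), which is precisely what prevents $w(\phi_1)$ from equalling any later $w(\psi_j)$; and second, correctly tracking the lengths of the initial segments fed to $\sg$, so that its injectivity is invoked on tuples that genuinely cannot coincide.
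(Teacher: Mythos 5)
Your proof is correct and is precisely the bookkeeping the paper has in mind when it says the proposition ``can be readily deduced from the definition of special sequences'': injectivity of $\sigma$, strict monotonicity of $(m_l)_l$, and the disjointness $\Omega_1\cap\Omega_2=\emptyset$ are exactly the three facts needed. You also correctly observe the subtlety that the conclusion in (ii) must be stated for $i>k$ rather than $i\ge k$, since for $k\ge 2$ the weights at position $k$ are forced to agree.
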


The proof can be readily deduced from the definition of special
sequences. For what follows we restrict ourselves to a specific form of $j$-block sequences. Namely,
\begin{defn}We say that a
$j$-block sequence $(x_n)_n$ is \textit{special $j$-block} if
either $(x_n)_n$ is diagonally block or there exists some
$t\in T$ such that $\spp x_n \subseteq \{t\}\times\nn$ for every
$n\in\nn$.
\end{defn}
\begin{defn}[\textbf{dependent sequences}]
A double sequence $(x_k,\phi_k)_{k=1}^{n_{2l-1}}$ where
$(x_k)_{k=1}^{n_{2l-1}}$ is a special $j$-block sequence and
$\phi_k\in G$ for every $k=1,...,n_{2l-1}$, is said to be a
$(C,2l-1)$ dependent sequence if there exists a sequence
$(2l_k)_{k=1}^{n_{2l-1}}$ of even integers such that the following
conditions are fulfilled.
\begin{enumerate}
\item[(i)] $(\phi_k)_{k=1}^{n_{2l-1}}$ is a $(n_{2l-1})$-special
sequence with $w(\phi_k)=m_{2l_k}$ for all $k=1,...,n_{2l-1}$.
\item[(ii)] Each $(x_k,\phi_k)$ is a $(C,2l_k)$ exact pair.
\end{enumerate}
\end{defn}
\begin{rem}
It is clear that the existence of dependent sequences in certain
subspaces of  $\xfr_G$ is the main tool for proving the HI
property of these subspaces. In the sequel we shall present the
precise statement. Here we want to comment the use of the special
$j$-block sequences in the definition of dependent sequences. A
key ingredient for showing the second inequality in the following
Proposition is the tree-like property satisfied by the
$(n_{2l-1})$-special sequences (Proposition \ref{treelike}).
Nevertheless, when we deal with norms on $c_{00}(\nn)$ then the
tree-like property is also satisfied by all restrictions of the
special sequences on intervals of $\nn$ (see \cite{AT},
Proposition 3.3). However this is not valid when we deal with
$c_{00}(T\times\nn)$ and we consider restrictions on rectangles
generated by intervals of $T$ and $\nn$. Notice that this
problem disappears if we consider special $j$-block sequences and
this is the reason why we introduced this concept.
\end{rem}
\begin{prop}
\label{hp16} Let $(x_k,x^*_k)_{k=1}^{n_{2l-1}}$ be a $(C,2l-1)$
dependent sequence. Then
\begin{equation}
\label{hp16e1} \Big\| \frac{1}{n_{2l-1}} \sum_{k=1}^{n_{2l-1}} x_k
\Big\| \geq \frac{1}{m_{2l-1}}
\end{equation}
\begin{equation}
\label{hp16e2} \Big\| \frac{1}{n_{2l-1}} \sum_{k=1}^{n_{2l-1}}
(-1)^k x_k \Big\| \leq \frac{8C}{m^2_{2l-1}}.
\end{equation}
\end{prop}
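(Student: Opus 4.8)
The statement to prove is Proposition \ref{hp16}, the standard lower and upper estimates for a dependent sequence in the skew HI construction.

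\textbf{Proof strategy.}
The two inequalities are proved by quite different arguments, so I would handle them separately.

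For \eqref{hp16e1}, the plan is to evaluate the single functional $\phi = \frac{1}{m_{2l-1}}\sum_{k=1}^{n_{2l-1}} x_k^*$ on $\frac{1}{n_{2l-1}}\sum_{k=1}^{n_{2l-1}} x_k$. First I would check that $(x_k^*)_k$ is a $j$-block sequence (this is part of the definition of a dependent sequence, since the $x_k^*$ form an $(n_{2l-1})$-special sequence) and hence that $\phi$ is the result of an $(\aaa_{n_{2l-1}},\frac{1}{m_{2l-1}})$-operation on a $j$-block sequence, so $\phi\in G$. Then using condition (ii) of the definition of a dependent sequence, each $(x_k,x_k^*)$ is an exact pair, so $x_k^*(x_k)=1$, while for $j\neq k$ we have $x_j^*(x_k)=0$ because $\rg x_j^* = \rg x_j$ (exact pair condition (3)) and the $x_k$ are $j$-block, hence $\rg x_j \cap \rg x_k$ does not intersect in the relevant coordinate. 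Thus $\phi\bigl(\frac{1}{n_{2l-1}}\sum_k x_k\bigr) = \frac{1}{m_{2l-1} n_{2l-1}}\sum_k x_k^*(x_k) = \frac{1}{m_{2l-1}}$, giving the lower bound. (A small point: $\spp x_j^*$ could meet $\spp x_k$ only if the ranges overlap, which they cannot for $j\neq k$ since the $x_k$ are successive in $j$; I would spell this out.)

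For \eqref{hp16e2}, this is the genuinely hard part. The idea is to apply the basic inequality (Proposition \ref{bin}) to the RIS-like sequence $(x_k)_k$ with signs $\lambda_k=(-1)^k$. The special $j$-block structure of $(x_k)_k$ together with the fact that each $(x_k, x_k^*)$ is a $(C,2l_k)$ exact pair means $(x_k)_k$ behaves like a $(C',\epsilon)$ RIS for suitable parameters (one extracts this from exact pair condition (1) controlling $|f(x_k)|$ for weights $\neq m_{2l_k}$, using that $w(x_k^*)=m_{2l_k}$ with the $l_k$ increasing fast). For an arbitrary $f\in G$ one then must bound $|f(\frac{1}{n_{2l-1}}\sum_k (-1)^k x_k)|$. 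The crucial case is when $f$ has weight $m_{2l-1}$ and its tree analysis involves an $(n_{2l-1})$-special sequence $(\psi_i)_i$; here the tree-like property (Proposition \ref{treelike}) forces at most one index $k_0$ where $w(\psi_{k_0})=w(x_{k_0}^*)=m_{2l_{k_0}}$ can produce a large contribution — for all other $i$ the weights differ, so the corresponding estimates from Lemma \ref{auxbasis} (the improved bounds, available precisely because $h$ can be chosen with $w(h_a)\neq m_{l_0}$) give an extra factor $1/m_{2l-1}$. Combining the single bad term (contributing $O(C/n_{2l-1})$) with the sum of good terms (contributing $O(C/m_{2l-1}^2)$) and the terms coming from functionals of weight $\neq m_{2l-1}$ (handled directly via the basic inequality and Lemma \ref{estimations}), one obtains the bound $\frac{8C}{m_{2l-1}^2}$.

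\textbf{Main obstacle.} The delicate point is the bookkeeping in \eqref{hp16e2}: verifying that $(x_k)_k$ satisfies the hypotheses needed to invoke the basic inequality with the option of a tree analysis avoiding weight $m_{2l-1}$, and then carefully separating, for a general $f\in G$ with $w(f)=m_{2l-1}$, the at-most-one index where the special sequence of $f$ shares a weight with the $x_k^*$'s from all the others. This is where the alternating signs $(-1)^k$ are essential — they ensure that without this coincidence of weights the contributions telescope/cancel down to the $1/m_{2l-1}^2$ scale. I would first establish the ``RIS after passing to the conclusion of Lemma \ref{existris}'' reduction, then run the weight-case analysis, citing Lemma \ref{auxbasis}, Lemma \ref{estimations} and Proposition \ref{treelike} at the appropriate points. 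The argument is entirely parallel to the corresponding estimate in \cite{AT}, the only new subtlety being that ranges now live in $T\times\nn$; the special $j$-block restriction was introduced precisely so that the tree-like property survives restrictions to the relevant rectangles, so no genuinely new difficulty arises there.
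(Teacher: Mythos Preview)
Your approach is the paper's: for \eqref{hp16e1} evaluate the special functional $\phi=\frac{1}{m_{2l-1}}\sum_k x_k^*\in G$, and for \eqref{hp16e2} check that $(x_k)_k$ is a $(2C,\frac{1}{n_{2l-1}^2})$ RIS, establish the auxiliary bound
\[
\Big|f\Big(\sum_{k\in E}(-1)^{k+1}x_k\Big)\Big|\leq 2C\Big(1+\frac{2}{m_{2l-1}^2}|E|\Big)
\]
for every $f\in G$ with $w(f)=m_{2l-1}$ and every interval $E$, and then invoke Lemma \ref{estimations}(2).

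However, your account of the tree-like step is inaccurate and, executed as written, would not close. You say Proposition \ref{treelike} forces ``at most one index $k_0$'' where the weights coincide, with all other weights differing. In fact the tree-like property gives a branching index $r$ such that $\psi_i=x_i^*$ (the \emph{functionals}, not merely the weights) for all $i\leq r$, the weights still agree at $i=r+1$ (since $w(\psi_{r+1})=m_{\sigma(x_1^*,\dots,x_r^*)}=w(x_{r+1}^*)$), and the weights differ only for $i>r+1$. On the initial block $k\in[t,r]$ (allowing for the rectangle cut at $t-1$) one has $f(x_k)=\frac{1}{m_{2l-1}}x_k^*(x_k)=\frac{1}{m_{2l-1}}$ exactly, and it is \emph{here} that the alternating signs do their work: $\bigl|\sum_{k\in E\cap[t,r]}(-1)^{k+1}\frac{1}{m_{2l-1}}\bigr|\leq\frac{1}{m_{2l-1}}$. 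The boundary term at $t-1$ and the single index $r+1$ each contribute $O(C/m_{2l-1})$, while for $k>r+1$ the exact-pair estimates against functionals of the wrong weight give $O(C/m_{2l-1}^2)$ per term. Summing these four pieces yields the auxiliary bound. Your phrasing ``for all other $i$ the weights differ'' misses the entire block of agreement $[1,r]$; without the cancellation there you would be left with a sum of order $r/m_{2l-1}$, which can be as large as $n_{2l-1}/m_{2l-1}$ and is far too big.
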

\begin{proof}

(1) It can be readily seen that the special functional
$f=\frac{1}{m_{2l-1}} \sum_{k=1}^{n_{2l-1}} x^*_k$ belongs to $G$
thus $f(x_k)\geq \frac{1}{m_{2l-1}}$.\\
(2) First of all it is easy to check that the sequence $(x_k)_{k=1}^{n_{2l-1}}$ is a
$(2C, \frac{1}{n^2_{2l-1}})$ RIS. The inequality follows from
Proposition \ref{bin} after showing that for every $f\in G$ with
$w(f)=m_{2l-1}$ and every interval $E$ we have that,
\begin{eqnarray*}
|f(\sum_{k\in E}(-1)^{k+1}x_k| & \leq & 2C(1+
\frac{2}{m_{2l-1}^2}|E|).
\end{eqnarray*}
To see this choose $f\in G$ with $w(f)=m_{2l-1}$ and observe that such an $f$ must have the following form: $f=\frac{1}{m_{2l-1}}(Fx^*_{t-1}+x^*_t+...+f_{r+1}+...+f_d)$, for
some special sequence
$(x^*_1,x^*_2,...,x^*_r,f_{r+1},...,f_{n_{2l-1}})$ of length
$n_{2l-1}$ with $x^*_{r+1} \neq f_{r+1}$, $w(x^*_{r+1})=w(f_{r+1})$ and $F$ an interval
of the form $[m,\max\spp x^*_{t-1}]$. This representation is a direct consequence of the the tree-like property discussed thoroughly above. We estimate the quantity $f(x_k)$ for each $k$ as follows.

\begin{itemize}
    \item [1.] If $k<t-1$ then $f(x_k)=0$.
    \item [2.] If $k=t-1$ we get $|f(x_{t-1}|=
    \frac{1}{m_{2l-1}}|Fx^*_{t-1}(x_{t-1})|\leq
    \frac{1}{m_{2l-1}}\|x_{t-1}\| \leq \frac{C}{m_{2l-1}}$.
    \item [3.] If $k>r+1$ Proposition \ref{treelike} yields that
    $w(f_i)\neq m_{2l_k}$, for all $i>r$. Using the fact that
    $(x_k,x^*_k)$ is an exact pair and taking into account that
    $n^2_{2l-1}< m_{2l_1}\leq m_{2l_k}$ we proceed in the following manner: \begin{eqnarray*}
    |f(x_k)| & = & \frac{1}{m_{2l-1}}|(f_r+...+f_d)(x_k)|\\
    & \leq & \frac{1}{m_{2l-1}} \big{(}
    \sum_{w(f_i)<m_{2l_k}}|f_i(x_k)|+
    \sum_{w(f_i)>m_{2l_k}}|f_i(x_k)|+ \sum_{2r+2 \leq 2i \leq
    d}|f_{2i}(x_{2k-1})|\big{)}\\
    & \leq & \frac{1}{m_{2l-1}} \big{(}\sum_{2l-1<j<2l_k}
    \frac{3C}{m_j}+ n_{2l-1}\frac{C}{m^2_{2l_k}} \big{)} \leq
    \frac{C}{m^2_{2l-1}}.
    \end{eqnarray*}
    \item [4.] For $k=r+1$ the same argument as in the previous
    case yields that $|f(x_{r+1})| \leq \frac{C}{m_{2l-1}}+
    \frac{1}{m^2_{2l-1}}< \frac{C+1}{m_{2l-1}}$.
\end{itemize}
Let $E$ be an interval. From the above estimates we obtain,
\begin{eqnarray*} |f(\sum_{k\in E}(-1)^{k+1}x_k)| & \leq &
|f(x_{t-1})|+ |\sum_{k\in E\cap [t,r]} \frac{1}{m_{2l-1}}
(-1)^{k+1}|\\
& & +|f(x_{r+1})|+ |\sum_{k\in E\cap [r+2,n_{2l-1}]}f(x_k)|\\
& \leq & \frac{C}{m_{2l-1}}+ \frac{1}{m_{2l-1}}+
\frac{C+1}{m_{2l-1}}+ \frac{C}{m^2_{2l-1}}|E|< 2C(1+
\frac{2}{m^2_{2l-1}}|E|),
\end{eqnarray*}
completing the proof.
\end{proof}
The following is an easy consequence of the previous results.
\begin{prop}
\label{hp18} Let $(x_n)_n$, $(y_n)_n$ be two diagonally block
sequences. Then for every $n\in\nn$ there exists a $(6,2l-1)$
dependent sequence $(z_k,\phi_k)_{k=1}^{n_{2l-1}}$ such that
$z_{2k-1}\in <(x_n)_n>$ and $z_{2k}\in <(y_n)_n>$. Similar results
hold if $(x_n)_n$ and $(y_n)_n$ are $j$-block sequences in the
space $Z_t$ for some $t\in T$.
\end{prop}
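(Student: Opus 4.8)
The plan is to construct the dependent sequence by a finite induction of length $n_{2l-1}$: at each stage I produce an exact pair living in a tail of one of the two given sequences, with its weight prescribed by the coding function $\sg$, alternating between $(x_n)_n$ and $(y_n)_n$, and arranging the ranges so that the resulting block sequence is special $j$-block.

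First I would fix $l\in\nn$ and pick $k_1\in\Omega_1$ with $m_{2k_1}^{1/2}>n_{2l-1}$. Since every diagonally block sequence is in particular $j$-block, the existence proposition for $(6,2l)$ exact pairs stated just after Lemma~\ref{existris}, applied to $(x_n)_n$, yields a $(6,2k_1)$ exact pair $(z_1,\phi_1)$ with $z_1\in <(x_n)_n>$ and $\phi_1\in G$; after perturbing slightly the $G$-functionals used in its construction I may take $\phi_1$ with rational coordinates, so that $(\phi_1)\in\mathcal{S}$. Inductively, given $(z_i,\phi_i)_{i=1}^{k}$ with $k<n_{2l-1}$, I set $2l_{k+1}=\sg(\phi_1,\dots,\phi_k)$ and apply the same exact pair proposition to a sufficiently far tail of $(y_n)_n$ when $k$ is odd and of $(x_n)_n$ when $k$ is even, obtaining a $(6,2l_{k+1})$ exact pair $(z_{k+1},\phi_{k+1})$; again up to a small perturbation $\phi_{k+1}$ has rational coordinates, and because the two sequences are diagonally block their supports run off to infinity in both the $\pi$- and the $j$-order, so the tail can be chosen with $z_k\prec_{(\pi,j)}z_{k+1}$.

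By construction $z_{2k-1}\in <(x_n)_n>$ and $z_{2k}\in <(y_n)_n>$, the sequence $(z_k)_{k=1}^{n_{2l-1}}$ is diagonally block and hence special $j$-block, and $(\phi_k)_{k=1}^{n_{2l-1}}\in\mathcal{S}$ satisfies $w(\phi_1)=m_{2k_1}$ with $k_1\in\Omega_1$ and $m_{2k_1}^{1/2}>n_{2l-1}$, and $w(\phi_{k+1})=m_{\sg(\phi_1,\dots,\phi_k)}$ for $1\le k<n_{2l-1}$; thus $(\phi_k)_k$ is an $(n_{2l-1})$-special sequence. As each $(z_k,\phi_k)$ is a $(6,2l_k)$ exact pair, $(z_k,\phi_k)_{k=1}^{n_{2l-1}}$ is a $(6,2l-1)$ dependent sequence, which is exactly what is required. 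For the final assertion I would run the identical induction when $(x_n)_n,(y_n)_n$ are $j$-block sequences inside some $Z_t$: every $z_k$ then has $\spp z_k\subseteq\{t\}\times\nn$, so $(z_k)_k$ is special $j$-block by the second clause of that definition, and nothing else changes.

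The step I expect to be the main obstacle is the coding bookkeeping: $\sg$ must be evaluated on the already-chosen rational functionals $(\phi_1,\dots,\phi_k)$ before the next exact pair is selected, so one has to check that the exact pair proposition can be invoked with exactly the even weight $m_{2l_{k+1}}$ that $\sg$ returns (it can, since it produces pairs of every even weight) and that the perturbation needed to land the $\phi_i$ in the domain $\mathcal{S}$ of $\sg$ does not spoil the exact pair estimates — a routine stability check, since the constants defining an exact pair are insensitive to small perturbations, and the interleaving that keeps $(z_k)_k$ diagonally block is immediate from the supports of $(x_n)_n$ and $(y_n)_n$ escaping to infinity in both coordinates.
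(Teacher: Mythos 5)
Your proposal is correct and is exactly what the paper intends when it asserts this is an ``easy consequence of the previous results'' (the paper gives no explicit proof). The finite induction---evaluate $\sg$ on the functionals already produced, invoke the $(6,2l')$ exact pair proposition on a sufficiently far tail of the appropriate sequence to obtain the next pair with the $\sg$-prescribed weight, and interleave so that $(z_k)_k$ stays diagonally block (or, in the $Z_t$ case, $j$-block with all supports in $\{t\}\times\nn$)---is the standard mechanism for building dependent sequences in this framework, and you handle all the bookkeeping (initial weight from $\Omega_1$ with $m_{2k_1}^{1/2}>n_{2l-1}$, subsequent weights of the form $m_{\sg(\phi_1,\dots,\phi_k)}$, ranges $\rg\phi_k=\rg z_k$ forcing $\phi_k\prec_j\phi_{k+1}$). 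One small remark: the perturbation you invoke to land $(\phi_1,\dots,\phi_k)$ in the domain $\mathcal{S}$ of $\sg$ is not actually needed, since every element of $G$ already has rational coordinates---the generating set $\bigcup_n G_n$ is defined with $\lambda_t\in\qq$, and restrictions to rectangles, the $(\aaa_{n_l},1/m_l)$-operations, and rational convex combinations all preserve rationality. The only place an approximation genuinely enters is in arranging $\phi_k(z_k)=1$ exactly, as required by condition (3) of the exact pair definition, and as you say this is a routine stability matter.
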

We need the following.
\begin{prop}
\label{hp19} Let $Y$ be a subspace of $\xfr_G$. Then one of the
following hold.
\begin{enumerate}
\item[(a)] There exists $n\in\nn$ such that $j_n:Y\to X_n$ is
not strictly singular. \item[(b)] There exists $t\in T$ such
that $\pi_t:Y\to Z_t$ is not strictly singular. \item[(c)] For
every $r>0$, there exists a normalized sequence $(y_n)_n$ in $Y$
and a diagonally block sequence $(w_n)_n$ such that
$\sum_{n\in\nn} \|y_n-w_n\|<r$.
\end{enumerate}
\end{prop}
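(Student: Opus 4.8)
The plan is to prove the trichotomy by contraposition: assuming that neither (a) nor (b) holds, I will produce the ``almost diagonally block'' sequence required in (c). So suppose that $j_n:Y\to X_n$ is strictly singular for every $n\in\nn$ and that $\pi_t:Y\to Z_t$ is strictly singular for every $t\in T$, and fix $r>0$.

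The first step is to propagate both strict--singularity hypotheses to finite sub-decompositions. For $k\in\nn$ set $I_k=\{t\in T:h(t)\le k\}$, which is an interval of $T$ with $h(I_k)=\{1,\dots,k\}$; by Remark \ref{remschdec}(2) the rectangle projections $P_{T\times[1,k]}$, $P_{I_m\times\nn}$ and $P_{I_m\times[1,k]}$ all have norm one. Writing $\iota_n:X_n\hookrightarrow\xfr_G$ and $\iota_t:Z_t\hookrightarrow\xfr_G$ for the inclusions, we have $P_{T\times[1,k]}|_Y=\sum_{n\le k}\iota_n\circ j_n$ and $P_{I_m\times\nn}|_Y=\sum_{t\in I_m}\iota_t\circ\pi_t$, each a finite sum of strictly singular operators post-composed with bounded ones, hence strictly singular; and $P_{I_m\times[1,k]}|_Y=P_{T\times[1,k]}\circ P_{I_m\times\nn}|_Y$ is strictly singular as well. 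Consequently, for all $m,k\in\nn$ the ``$L$--shaped'' operator
\[
S_{m,k}=P_{T\times[1,k]}+P_{I_m\times\nn}-P_{I_m\times[1,k]}
\]
is a projection (a sum of three commuting norm--one projections whose pairwise products are $P_{I_m\times[1,k]}$, so $S_{m,k}^2=S_{m,k}$), it has norm at most $3$, its range is the closed span of $\{x_{t,j}:h(t)\le m\text{ or }j\le k\}$, and $S_{m,k}|_Y$ is strictly singular. Moreover $P_{T\times[1,k]}=P_{T\times[1,k]}S_{m,k}$ and $P_{I_m\times\nn}=P_{I_m\times\nn}S_{m,k}$. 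Hence, by strict singularity of $S_{m,k}|_Y$, for every $\delta>0$ there is a normalized $y\in Y$ with $\|S_{m,k}y\|<\delta$, and therefore also $\|P_{T\times[1,k]}y\|<\delta$ and $\|P_{I_m\times\nn}y\|<\delta$.

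With this tool the construction is a routine sliding hump. Build inductively normalized vectors $(y_N)_N$ in $Y$ and vectors $(w_N)_N$ in $c_{00}(T\times\nn)$, starting with $w_0:=0$ and $m_0:=k_0:=0$. Having chosen $y_1,\dots,y_{N-1}$ and $w_1,\dots,w_{N-1}$, put $m_{N-1}=\max\{h(t):t\in\pi(\spp w_{N-1})\}$ and $k_{N-1}=\max\{j\in\nn:(t,j)\in\spp w_{N-1}\text{ for some }t\}$. Apply the previous paragraph with $m=m_{N-1}$, $k=k_{N-1}$ and a small $\delta$ to obtain a normalized $y_N\in Y$ with $\|S_{m_{N-1},k_{N-1}}y_N\|<\delta$; set $y_N'=(\mathrm{Id}-S_{m_{N-1},k_{N-1}})y_N$, so $\|y_N-y_N'\|<\delta$ and $y_N'$ lies in the closed span of $\{x_{t,j}:h(t)>m_{N-1},\ j>k_{N-1}\}$. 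Pick $v\in c_{00}(T\times\nn)$ with $\|v-y_N'\|$ small and set $w_N=(\mathrm{Id}-S_{m_{N-1},k_{N-1}})v$; this is a finitely supported vector with $\spp w_N\subseteq\{(t,j):h(t)>m_{N-1},\ j>k_{N-1}\}$, and choosing $\delta$ and $\|v-y_N'\|$ small enough we arrange $\|y_N-w_N\|<r/2^{N+1}$.

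By construction $h(\pi(\spp w_N))>m_{N-1}=\max\{h(t):t\in\pi(\spp w_{N-1})\}$ and $j(\spp w_N)>k_{N-1}=\max\{j:(t,j)\in\spp w_{N-1}\}$, so $w_{N-1}\lesss_{(\pi,j)}w_N$; thus $(w_N)_N$ is a diagonally block sequence, $(y_N)_N$ is a normalized sequence in $Y$, and $\sum_N\|y_N-w_N\|<r/2<r$, which is exactly alternative (c). The only genuinely delicate point is the second paragraph — combining the two separate strict--singularity assumptions into a single strictly singular bounded projection — and the key there is simply that $P_{T\times[1,k]}$ and $P_{I_m\times\nn}$ are commuting norm--one rectangle projections whose composition is again the rectangle projection $P_{I_m\times[1,k]}$; everything after that is standard perturbation bookkeeping.
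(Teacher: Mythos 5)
Your proof is correct and follows essentially the same route as the paper: contraposition, propagation of strict singularity from the individual components $j_n$, $\pi_t$ to finite blocks of coordinates, and then a standard sliding-hump perturbation. The only difference is stylistic: the paper passes to a nested subspace of $Y$ on which both finite-coordinate projections have small norm, whereas you package the two hypotheses into a single ``L-shaped'' projection $S_{m,k}=P_{T\times[1,k]}+P_{I_m\times\nn}-P_{I_m\times[1,k]}$ whose restriction to $Y$ is strictly singular, which gives the same smallness estimates from a single vector.
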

\begin{proof}
Assume that neither (a) nor (b) hold. Then for every $n\in\nn$,
there exists a subspace $Y'$ of $Y$ such that the map
$j_{\{1,...,n\}}:Y' \to \sum_{i=1}^n \oplus X_n$ is also
strictly singular. The same also holds for the projections
$\pi_{\{t_1,...,t_m\}}$. Hence for every $\ee>0$ and every
$(t,m)\in T \times\nn$ there exists a subspace $Y'$ of $Y$ such
that $\big\| j_{\{1,...,n\}}|_{Y'} \big\|<\ee$ and $\big\|
\pi_{\{t_1,...,t_m\}}|_{Y'} \big\|<\ee$. Using this and a standard
sliding hump argument, we can verify that the third alternative is
satisfied.
\end{proof}
Propositions \ref{hp18} and \ref{hp19} yield the next result.
\begin{cor}
\label{hc20} The following are satisfied.
\begin{enumerate}
\item[(a)] For every $t\in T$ the space $Z_t$ is HI. \item[(b)]
For each diagonally block sequence $(y_n)_n$ the space
$Y=\overline{<(y_n)_n>}$ is HI. \item[(c)] If $Y$ is a subspace of
$\xfr_G$ such that $j_n:Y\to X_n$ and $\pi_t:Y\to Z_t$ are
strictly singular for $(t,k)\in T\times\nn$, then $Y$ is HI.
\end{enumerate}
\end{cor}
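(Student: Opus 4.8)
The plan is to verify, in each of the three cases, the standard characterisation of HI spaces: a Banach space $W$ is HI if and only if for every pair $Y_1,Y_2$ of infinite dimensional closed subspaces of $W$ and every $\ee>0$ there exist $y_1\in Y_1$, $y_2\in Y_2$ with $\|y_1\|=\|y_2\|=1$ and $\|y_1-y_2\|<\ee$. The common engine in all three parts is a dependent sequence. Concretely, suppose that for arbitrarily large $l\in\nn$ we can produce a $(6,2l-1)$ dependent sequence $(z_k,\phi_k)_{k=1}^{n_{2l-1}}$ whose odd-indexed vectors lie (up to a perturbation summing to less than $1/m_{2l-1}^2$) in $Y_1$ and whose even-indexed vectors lie likewise in $Y_2$. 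Setting $a=\frac{1}{n_{2l-1}}\sum_{k\ \mathrm{odd}}z_k$ and $b=\frac{1}{n_{2l-1}}\sum_{k\ \mathrm{even}}z_k$, we have $a+b=\frac{1}{n_{2l-1}}\sum_k z_k$ and $b-a=\frac{1}{n_{2l-1}}\sum_k(-1)^k z_k$, so Proposition \ref{hp16} gives $\|a+b\|\ge 1/m_{2l-1}$ and $\|a-b\|\le 48/m_{2l-1}^2$. Since $2\|a\|\ge\|a+b\|-\|a-b\|$ and symmetrically for $b$, both $\|a\|,\|b\|\ge 1/(4m_{2l-1})$ for $l$ large, whence the unit vectors $a/\|a\|$ and $b/\|b\|$ are at distance at most $2\|a-b\|/\|a\|=O(1/m_{2l-1})$; letting $l\to\infty$ yields the criterion. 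Thus the whole argument reduces to manufacturing these dependent sequences.

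For part (a) I would fix $t\in T$ and subspaces $Y_1,Y_2$ of $Z_t$; since $(e_{(t,k)})_k$ is a bi-monotone basis of $Z_t$ (Remark \ref{remschdec}(3)), the Bessaga--Pe\l czy\'nski selection principle produces in each $Y_i$ a normalised sequence arbitrarily close to a normalised $j$-block sequence of this basis, and such sequences lie in $Z_t$, so the $Z_t$-clause of Proposition \ref{hp18} supplies the required dependent sequences. For part (b), with $(y_n)_n$ diagonally block and $Y=\overline{<(y_n)_n>}$, I would use that $(y_n)_n$ is a basis of $Y$ and that every block sequence of a diagonally block sequence is again diagonally block; hence any two subspaces of $Y$ contain perturbations of diagonally block sequences and Proposition \ref{hp18} applies verbatim. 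For part (c), restrictions of the strictly singular maps $j_n$ and $\pi_t$ to any subspace $Y_i$ of $Y$ are still strictly singular, so alternatives (a) and (b) of Proposition \ref{hp19} fail for $Y_i$ and alternative (c) must hold; this provides normalised sequences in $Y_1$ and $Y_2$ that are summably close to diagonally block sequences, and Proposition \ref{hp18} finishes the job.

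The step I expect to cost the most care is the perturbation bookkeeping: the vectors $z_k$ delivered by Proposition \ref{hp18} lie in the linear span of block sequences that are only perturbations of genuine sequences in $Y_i$, so I must check that a summably small perturbation of those block sequences induces a summably small perturbation of the dependent sequence, and hence of $a$ and $b$. This is routine, since the vectors building an exact pair and a dependent sequence admit representations with coefficients controlled over the underlying $j$-block sequence, so choosing the perturbations to decay quickly enough keeps the correction to $a$ and $b$ of order $1/m_{2l-1}^2$, negligible against $\|a+b\|\ge 1/m_{2l-1}$. No new obstacle arises beyond this: the genuine analytic work is already packaged in Propositions \ref{hp16}, \ref{hp18} and \ref{hp19}, and the corollary is simply their combination.
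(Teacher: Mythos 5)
Your proposal is correct and follows the same route as the paper: parts (a) and (b) are obtained by combining Propositions \ref{hp16} and \ref{hp18} to produce dependent sequences split between any two given subspaces, and part (c) reduces to this via the dichotomy in Proposition \ref{hp19}. The paper simply states (a) and (b) as "direct consequences of Proposition \ref{hp18}" and for (c) perturbs to a diagonally block sequence and invokes (b), whereas you unfold the numerical estimates from Proposition \ref{hp16} explicitly; the substance is identical.
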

\begin{proof}
Parts (a) and (b) are direct consequences of Proposition
\ref{hp18}. To see (c), let $Y$ be a subspace of $\xfr_G$ such
that $j_n:Y\to X_n$ and $\pi_t:Y\to Z_t$ are strictly singular for
every $(t,k)\in T\times\nn$. Let $Y_1$ and $Y_2$ be subspaces of $Y$ and
$\ee>0$. By Proposition \ref{hp19}, there exist normalized block
sequences $(y^1_n)_n$, $(y^2_n)_n$ and a diagonally block sequence
$(w_n)_n$ such that the following are satisfied.
\begin{enumerate}
\item[(1)] For every $n\in\nn$, $y^1_n\in Y_1$ and $y^2_n\in Y_2$.
\item[(2)] $\sum_{n\in\nn} \|w_{2n-1}-y^1_n\|<\ee$ and
$\sum_{n\in\nn} \|w_{2n}-y^2_n\|<\ee$.
\end{enumerate}
The space $W=\overline{<(w_n)_n>}$ is HI by part (b). As $\ee$ can
be chosen arbitrarily small, this shows that
$d(S_{Y_1},S_{Y_2})=0$. As $Y_1$ and $Y_2$ are arbitrary subspaces
of $Y$ we get that $Y$ is HI.
\end{proof}
\begin{prop}
\label{hp22} We have that $\xfr_G^*=\overline{ <\bigcup_{n\in\nn}
X_n^* >}^{\|\cdot\|}$.
\end{prop}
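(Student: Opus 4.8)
The plan is to show that $\xfr_G^* = \overline{\langle \bigcup_{n\in\nn} X_n^* \rangle}^{\|\cdot\|}$ by establishing that the span of the biorthogonal functionals associated with the Schauder decomposition $(X_n)_n$ is norm-dense in $\xfr_G^*$; equivalently, that the decomposition $(X_n)_n$ of $\xfr_G$ is shrinking. Since we already know from Remark \ref{remschdec} that $(X_n)_n$ is a Schauder decomposition of $\xfr_G$, it suffices to prove that no normalized block sequence with respect to this decomposition spans a subspace on which some nonzero functional in $\xfr_G^*$ fails to vanish in the limit — in other words, that every normalized block sequence (with respect to the $(X_n)_n$ decomposition) is weakly null. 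Here "block with respect to $(X_n)_n$" means $j$-block in the terminology of Definition \ref{dhi2}.

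First I would reduce to $j$-block sequences. By the standard gliding-hump argument, if $x^* \in \xfr_G^*$ does not lie in the closed span of $\bigcup_n X_n^*$, then there is $\delta>0$, a normalized $j$-block sequence $(y_k)_k$ in $\xfr_G$, and $x^*$ with $x^*(y_k) \geq \delta$ for all $k$; so it is enough to show every $j$-block sequence $(y_k)_k$ is weakly null. Second, for a $j$-block sequence I would pass, using Lemma \ref{hl11} and Lemma \ref{l1isris}, to a block sequence of $C$-$\ell^1_{k_q}$ averages, hence (Lemma \ref{l1isris}) to a $(3,\epsilon)$ RIS $(x_q)_q$. The key estimate is then Lemma \ref{estimations}(1): for a $j$-block $(3,\epsilon)$ RIS $(x_q)_{q=1}^{n_{l_0}}$ with $\epsilon$ small enough one has $\|\frac{1}{n_{l_0}}\sum_{q=1}^{n_{l_0}} x_q\| \leq \frac{2C}{m_{l_0}}$, which forces arbitrarily small-norm convex combinations of any subsequence of $(x_q)_q$. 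Third, combining this with the assumption that $x^*(x_q)$ stays bounded below in absolute value along a subsequence (which would be inherited from $x^*(y_k)\geq\delta$ after the averaging, since convex combinations preserve the lower bound on $|x^*(\cdot)|$ up to passing through signs) gives a contradiction: a convex combination $z$ of the $x_q$ with $\|z\|$ as small as we like but $|x^*(z)|\geq\delta'$. This contradicts $\|x^*\|<\infty$.

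The main obstacle I anticipate is handling the $c_{00}(T\times\nn)$ structure carefully: the averaging and RIS machinery in Lemmas \ref{hl11}, \ref{l1isris}, \ref{estimations} is stated for $j$-block sequences, and one must make sure that the block sequence extracted from an arbitrary weakly-non-null sequence in $\xfr_G$ is genuinely $j$-block (not merely diagonally block or supported in a single $Z_t$) — but since we are working relative to the decomposition $(X_n)_n$, blocking in the $j$-coordinate is exactly what the gliding hump produces, so this is automatic. A secondary point is that one should first dispose of vectors whose "$T$-support" is eventually concentrated, but these are absorbed into finitely many $X_n$ and contribute to the span of $\bigcup_n X_n^*$ on the dual side, hence do not obstruct density. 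Once these bookkeeping matters are settled, the inequality $\|\frac{1}{n_{l_0}}\sum x_q\|\leq \frac{2C}{m_{l_0}}\to 0$ does all the work, and the proposition follows.
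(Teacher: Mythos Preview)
Your approach is essentially the same as the paper's: both reduce to a $j$-block sequence on which some functional stays bounded below, then exploit the $\ell^1_k$-average/RIS machinery (Lemmas \ref{l1isris}, \ref{hl12}, \ref{estimations}) to produce convex combinations of arbitrarily small norm, contradicting the lower bound. The paper phrases the reduction through the bidual (an $x^{**}$ annihilating $\bigcup_n X_n^*$, a weak$^*$-approximating net, Mazur plus sliding hump), whereas you invoke directly the standard characterization that a Schauder decomposition is shrinking iff every bounded block sequence is weakly null; these are equivalent starting points.

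One small correction: you should not route through Lemma \ref{hl11}. That lemma produces a $2$-$\ell^1_k$ average in the \emph{linear span} of $(y_n)_n$, and such an average need not be a convex combination of the $y_n$'s, so the lower bound $x^*(\cdot)\geq\delta$ is not a priori inherited. The right move (and what the paper does) is to observe that the lower bound itself already gives the $\ell^1_k$ averages for free: from $x^*(y_k)\geq\delta$ and $\|y_k\|\leq 1$ one gets $\big\|\frac{1}{k}\sum_{i\in F} y_i\big\|\geq\delta$ for every finite $F$, so the $y_k$'s themselves yield $\tfrac{1}{\delta}$-$\ell^1_k$ averages directly, and then Lemmas \ref{l1isris} and \ref{hl12} apply. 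With that adjustment your argument goes through and matches the paper's. Your ``secondary point'' about $T$-support concentration is unnecessary: the gliding hump here is purely in the $j$-direction, and the $T$-coordinate plays no role in this proposition.
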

\begin{proof}
Assume not. Then there exist $x^{**}\in \xfr_{G}^{**}$ and $x^*\in
B_{\xfr_G^*}$ such that $\|x^{**}\|=1$, $x^{**}(x^*)>1/2$ and
$\bigcup_n X^*_n\subseteq \mathrm{ker}x^{**}$. Choose a net
$(x_i)_{i\in I}$ in $B_{\xfr_G}$ with
$x_i\stackrel{w^*}{\to}x^{**}$. Clearly we may assume that
\begin{equation}
\label{hp22e3} x^*(x_i)>\frac{1}{2} \text{ for every } i\in I.
\end{equation}
Observe that $j_{\{1,...,n\}}(x_i)\stackrel{w}{\to}0$. Hence
applying Mazur's Theorem and a sliding hump argument, we may
select two sequences $(y_n)_n$ and $(z_n)_n$ such that the
following are satisfied.
\begin{enumerate}
\item[(i)] For every $n\in\nn$, $y_n\in \mathrm{conv}\{x_i:i\in
I\}$. \item[(ii)] $(z_n)_n$ is a $j$-block sequence. \item[(iii)]
$\sum_{n} \|y_n-z_n\|<\frac{1}{8}$.
\end{enumerate}
Notice that for every $n_1<n_2<...<n_k$ we have
\begin{equation}
\label{hp22e7} \Big\| \frac{z_{n_1}+z_{n_2}+...+z_{n_k}}{k}
\Big\|\geq \frac{1}{4}.
\end{equation}
Indeed, by (i) and (\ref{hp22e3}) above we have that
$x^*(y_n)>1/2$ for every $n\in\nn$. Hence by (iii) we get that
$x^*(z_n)>1/4$ for every $n\in\nn$, which clearly implies
(\ref{hp22e7}). Hence we may select a $j$-block sequence $(w_k)_k$
with $w_k=\frac{1}{k}\sum_{n\in F_k} z_k$ where
$F_1<F_2<...<F_k<...$ and each $F_k$ is a finite interval of
$\nn$. As the sequence $(w_k)_k$ is a $j$-block sequence of
$4-\ell^1_k$ averages, Lemma \ref{hl12} yields that for every
$l\in\nn$ there exists $k_1<k_2<...<k_{n_{2l}}$ with
\begin{equation}
\label{hp22e8} \Big\| \frac{1}{n_{2l}} \sum_{i=1}^{n_{2l}} w_{k_i}
\Big\| \leq \frac{12}{m_{2l}}.
\end{equation}
Let $v_l=\frac{1}{n_{2l}} \sum_{i=1}^{n_{2l}} w_{k_i}$. Then $v_l$
is a convex combination of $z_k$'s. Let $v'_l$ be the
corresponding convex combination of $y_n$'s. Then by (i) and
(\ref{hp22e3}) we have $\|v'_l\|>1/2$. By (iii), we get that
$\|v_l-v'_l\|<1/8$. On the other hand, as $m_l\to\infty$ as
$l\to\infty$, by (\ref{hp22e8}) we see that $\|v_l\|\to 0$ and
this leads to a contradiction. The proof is completed.
\end{proof}
\begin{defn}
The HI interpolation space $\hiint$ is the (closed)
subspace of $\xfr_G$ which contains all elements of $\xfr_G$ of
the form $(x,x,...)$.
\end{defn}
\begin{rem}
This definition is an adaptation of the corresponding definition
in \cite{AF}, which in turn follows the scheme of the classical
Davis-Fiegel-Johnson-Pelczynski interpolation method \cite{DFJP}.
\end{rem}

\begin{prop}
\label{hip5} For every $t\in T$ we set
$\bar{e}_t=(e_t,e_t,...)\in\hiint$. Then
$(\bar{e}_t)_t$ becomes a bi-monotone Schauder basis of
$\hiint$.
\end{prop}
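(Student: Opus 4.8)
The plan is to show that the required basis is $(\bar e_t)_{t\in T}$, enumerated through the bijection $h$ of Definition \ref{bijection}; this parallels the proof of Proposition \ref{basisint1}, with Remark \ref{remschdec} playing the role that Lemma \ref{lint13} played there. The first and only delicate point is to check that $\bar e_t=\sum_n e_{(t,n)}$ really defines an element of $\xfr_G$, and hence, being diagonal, of $\hiint$. By Remark \ref{remschdec}(1) the inclusion of $X_n$ into $\xfr_G$ as the $n$-th slot is isometric, so $\|e_{(t,n)}\|_G=\|e_t\|_n$; combining this with the triangle inequality gives $\|\sum_{n=M}^{N}e_{(t,n)}\|_G\leq\sum_{n=M}^{N}\|e_t\|_n$. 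Now each $\|\cdot\|_n$ is the Minkowski gauge of $2^n W+\frac{1}{2^n}B_{X_T}$, and by the structure of $W$ the vector $e_t$ is a nonzero scalar multiple of an element of $W$ (exactly as in the argument of Lemma \ref{lint11}), so $\|e_t\|_n\leq c_t^{-1}2^{-n}$ for some $c_t>0$; hence the series is Cauchy, $\bar e_t\in\hiint$, and in fact $\|\bar e_t\|_G\leq c_t^{-1}$.

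Next I would record that, since $h$ is a bijection onto $\nn$, every set $I=\{t\in T:a\leq h(t)\leq b\}$ is a finite interval of $T$ in the sense of Definition \ref{gd4}. For such an $I$ and a finitely supported diagonal vector $\bar y=\sum_{t\in F}y_t\bar e_t$, the rectangle projection of Remark \ref{remschdec}(2) maps $\bar y$ to $\sum_{t\in F\cap I}y_t\bar e_t=\overline{P_I y}$, again a finite combination of the $\bar e_t$'s, and $\|P_{I\times\nn}\|\leq 1$ on $\xfr_G$. Thus $\|\sum_{t\in F\cap I}y_t\bar e_t\|_G\leq\|\bar y\|_G$ for every interval $I$ of $T$, which says precisely that $(\bar e_t)_t$, enumerated through $h$, is a basic sequence with bimonotone basis constant $1$ inside $\overline{\langle\bar e_t:t\in T\rangle}$.

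It then remains to identify $\overline{\langle\bar e_t:t\in T\rangle}$ with $\hiint$. Here I would invoke Remark \ref{remschdec}(2)(b): the sequence $(Z_t)_{t\in T}$, taken in the order induced by $h$, is a Schauder decomposition of $\xfr_G$, and its partial-sum projections are exactly the operators $P_{\{t:h(t)\leq k\}\times\nn}$, which therefore converge strongly to the identity on $\xfr_G$. Applying this to $\bar x=(x,x,\dots)\in\hiint$ and using the computation of the previous paragraph (note $\{t:h(t)\leq k\}$ is finite), $\sum_{h(t)\leq k}x_t\bar e_t=P_{\{t:h(t)\leq k\}\times\nn}(\bar x)\longrightarrow\bar x$ in $\|\cdot\|_G$, so $\bar x\in\overline{\langle\bar e_t:t\in T\rangle}$. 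Since $\hiint$ is by definition the closed span of its diagonal vectors, this yields $\hiint\subseteq\overline{\langle\bar e_t:t\in T\rangle}$; the reverse inclusion is clear, and together with the second paragraph we conclude that $(\bar e_t)_t$, enumerated through $h$, is a bimonotone Schauder basis of $\hiint$.

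I expect the main obstacle to be exactly the first step — the convergence of $\sum_n e_{(t,n)}$ in $\xfr_G$ — since it is here that one must use that the column norms $\|e_t\|_n$ are summable; this is not automatic for an arbitrary weakly compact $W$, but is guaranteed in the present setting because the $e_t$ are, up to scalars, members of $W$, so the interpolation norms decay geometrically. Everything afterwards is bookkeeping with the norm-one rectangle projections of Remark \ref{remschdec}, together with the two observations that (i) the enumeration $h$ of Definition \ref{bijection} was chosen so that its segments are finite intervals of $T$, which is what makes those projections applicable, and (ii) the $(Z_t)$-decomposition must be read off in the $h$-order so that its partial sums coincide with the projections $P_{\{t:h(t)\leq k\}\times\nn}$.
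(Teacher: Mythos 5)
Your proof is correct and follows the same overall structure as the paper's, but it is appreciably more direct in the density step, so let me compare the two. For the first step — that $\bar e_t=(e_t,e_t,\dots)$ lies in $\hiint$ — you and the paper argue identically, using that $e_t$ is a nonzero scalar multiple of a point of $W$ (the paper writes $\|a_t e_t\|_n\le 2^{-n}$), so that the column norms decay geometrically; you are right to flag that this property is not formally among the hypotheses of Theorem \ref{skewthm} but is satisfied in all the paper's applications, and the paper's own proof uses it implicitly via the notation $a_t$. Where you diverge is in showing $\overline{\langle\bar e_t\rangle}=\hiint$. You invoke Remark \ref{remschdec}(2)(b) directly: the $(Z_t)_{t\in T}$ form a Schauder decomposition in the $h$-order, so the partial-sum projections $P_{\{t:h(t)\le k\}\times\nn}$ converge strongly to the identity, and since $P_{\{t:h(t)\le k\}\times\nn}(\bar x)=\sum_{h(t)\le k}x_t\bar e_t$ is a finite combination of the $\bar e_t$'s, norm convergence gives $\bar x\in\overline{\langle\bar e_t\rangle}$ at once. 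The paper instead establishes $\pi_t(\bar x)=b_t\bar e_t$ by hand, observes only that the partial sums are bounded by $\|\bar x\|$, and proves \emph{weak} convergence by testing against $\bigcup_n B_{X_n^*}$, then invokes Proposition \ref{hp22} (norm density of $\langle\bigcup_n X_n^*\rangle$ in $\xfr_G^*$) to conclude, finishing with weak closedness of $\overline{\langle\bar e_t\rangle}$. Your route is shorter and, by not passing through Proposition \ref{hp22}, makes the logical dependence lighter; the paper's route has the advantage of not presupposing that "Schauder decomposition" in Remark \ref{remschdec}(2)(b) is taken with the full strong-convergence content (i.e.\ it re-derives that content from scratch in the special case at hand). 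Both are valid; yours is the cleaner deduction given that Remark \ref{remschdec} is already on the books.
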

\begin{proof}
First we notice that for every $n\in\nn$ we have that $\|a_t
e_t\|_n\leq \frac{1}{2^n}$, for all $t\in T$ Hence $\bar{e}_t\in
\hiint$ for every $t\in T$. Now let $\bar{x}=(x,x,...)\in\hiint$
with $x=\sum_t b_t e_t$. We consider the projection
$\pi_t:\hiint\to Z_t$. We shall show that $\pi_t(\bar{x})=b_t
\bar{e}_t$. Indeed observe that $\{e_{t,k}:k\in\nn\}$ is a
Schauder basis for $Z_t$ (not normalized) and $e_{t,k}^*\big(
\pi_t(\bar{x})\big)= e_{t,k}^*(\bar{x})=b_t$ for every $k\in\nn$.
Hence $\pi_t(\bar{x})=\sum_{k\in\nn} b_t e_{t,k}= b_t\bar{e}_t$.
This easily yields that for every finite interval $I$ of $T$ we
have $\pi_I\big( \hiint \big)= <\{ \bar{e}_t:t\in\I\}>$ and so
$\pi_I(\bar{x})=\sum_{t\in I} b_t \bar{e}_t$. The above argument
and the fact that $\|\pi_I\|=1$ yield that $(\bar{e}_t)_t$ is a
bi-monotone Schauder basis for the space
$Y=\overline{<(\bar{e})_t>}$. It remains to show that $Y$
coincides with $\hiint$. Indeed, let $(I_n)_n$ be the intervals of
$\nn$ such that $I_n=g^{-1}\big(\{1,...n\}\big)$ where $g$ is
defined after Remark \ref{segcint1}. Let also $\bar{x}=(x,x,...)$
with $x=\sum_t b_t e_t$. We claim that the partial sums
$\sum_{t\in I_n} b_t \bar{e}_t$ weakly converge to $\bar{x}$,
which immediately implies the desired result. First we observe
that $\sum_{t\in I_n} b_t \bar{e}_t = \pi_{I_n}(\bar{x})$ and so
$\big\|\sum_{t\in I_n} b_t \bar{e}_t \big\| \leq \|\bar{x}\|$.
Furthermore, for every $x^*\in \bigcup_{n\in\nn} B_{X_n}$ we
have that $x^* \big( \sum_{t\in I_n} b_t \bar{e}_t \big) \to
x^*(\bar{x})$. Proposition \ref{hp22} yields that
$<\bigcup_{n\in\nn} B_{X^*_n}>$ is norm dense in $\xfr^*_G$ and
this proves the claim and the entire proof is completed.
\end{proof}
\begin{notation}\label{maps3}
In the sequel we shall denote by
$J_{X_T}:\hiint\to X_T$ the 1-1, bounded linear
map defined by $J_{X_T}(\bar{x})=x$, where $\bar{x}=(x,x,...)$.
\end{notation}
\begin{prop}
\label{hip7} Let $\hiint$ be the HI interpolation space. Then the following hold:
\begin{enumerate}
\item[(a)] If $Y$ is a closed subspace of $\hiint$
such that $J_{X_T}:Y\to X_T$ is strictly singular, then $Y$ is a
HI space. \item[(b)] If $Y, Z$ are closed subspaces of
$\hiint$ such that $J_{X_T}|_Y$ and $J_{X_T}|_Z$ are
strictly singular, then $d(S_Y,S_Z)=0$.
\end{enumerate}
\end{prop}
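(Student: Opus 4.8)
The plan is to derive both parts from Corollary~\ref{hc20}, which already packages all the hard HI-combinatorics, by showing that the hypothesis ``$J_{X_T}|_V$ strictly singular'' transfers to the coordinate operators $j_n|_V\colon V\to X_n$ and $\pi_t|_V\colon V\to Z_t$. First I would record the following observation, valid for any closed subspace $V$ of $\hiint$. The operators $\pi_t|_{\hiint}$ are strictly singular unconditionally: from the computation in the proof of Proposition~\ref{hip5} one has $\pi_t(\bar{x})=b_t\bar{e}_t$ whenever $\bar{x}=(x,x,\dots)$ with $x=\sum_s b_s e_s$, so $\pi_t|_{\hiint}$ has one-dimensional range, hence is compact, hence strictly singular; a fortiori $\pi_t|_V$ is strictly singular for every $t\in T$. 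For the $j_n$'s I would use that, since $W$ is weakly compact (hence bounded), the norms $\|\cdot\|_n$ and $\|\cdot\|_{X_T}$ on $X_T$ are equivalent with constants depending on $n$, so the identity map $\iota_n\colon X_n\to X_T$ is a Banach-space isomorphism; and on $\hiint$ one has the factorization $J_{X_T}=\iota_n\circ j_n$. Consequently $j_n|_V=\iota_n^{-1}\circ\bigl(J_{X_T}|_V\bigr)$ is a composition of a strictly singular operator with a bounded one, hence strictly singular, whenever $J_{X_T}|_V$ is.

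Granting this, part (a) is immediate: applying the observation with $V=Y$ shows $j_n|_Y$ and $\pi_t|_Y$ are strictly singular for all $n\in\nn$ and $t\in T$, so Corollary~\ref{hc20}(c) yields that $Y$ is HI.

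For part (b) I would apply the observation to $V=Y$ and to $V=Z$ separately, so that $j_n,\pi_t$ are strictly singular on each of $Y$ and $Z$. Fix $\epsilon>0$. Exactly as in the proofs of Proposition~\ref{hp19} and Corollary~\ref{hc20}(c), the strict singularity of the finite coordinate projections $j_{\{1,\dots,N\}}$ and $\pi_{\{t_1,\dots,t_m\}}$ on $Y$ and on $Z$ (each being a finite sum of strictly singular operators) permits a simultaneous sliding-hump construction producing normalized block sequences $(y^1_n)_n$ in $Y$, $(y^2_n)_n$ in $Z$, and a diagonally block sequence $(w_n)_n$ in $\xfr_G$ with $\sum_n\|w_{2n-1}-y^1_n\|+\sum_n\|w_{2n}-y^2_n\|<\epsilon$. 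By Corollary~\ref{hc20}(b) the space $W=\overline{<(w_n)_n>}$ is HI, so its infinite-dimensional subspaces $\overline{<(w_{2n-1})_n>}$ and $\overline{<(w_{2n})_n>}$ are at distance zero; choosing unit vectors in these at distance $<\epsilon$ and transporting them back along the small perturbation (principle of small perturbations of basic sequences, followed by renormalization) gives unit vectors of $Y$ and of $Z$ at distance $O(\epsilon)$. Since $\epsilon$ is arbitrary, $d(S_Y,S_Z)=0$, which is (b). Alternatively, once (b) is available, (a) also follows by applying (b) to every pair of infinite-dimensional closed subspaces of $Y$ together with the standard characterization of HI via $d(S_U,S_V)=0$; I would keep the direct route through Corollary~\ref{hc20}(c) as it is shorter.

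The only genuine obstacle is the simultaneous sliding-hump step in (b): one must push elements of $Y$ and of $Z$ out to infinity in the $\pi$-direction and in the $j$-direction at the same time, so that the interleaved sequence $(w_n)_n$ is honestly diagonally block in the sense of Definition~\ref{dhi2}, while keeping track of which $w_n$ lies near $Y$ and which near $Z$. This works because inside any subspace of $Y$ (resp.\ $Z$) one can find a further subspace on which both $j_{\{1,\dots,N\}}$ and $\pi_{\{t_1,\dots,t_m\}}$ are simultaneously small, so the construction proceeds node by node; everything else --- equivalence of the perturbed basic sequences, the renormalization estimates, and the use of Remark~\ref{remschdec}(3) to know $(w_n)_n$ is basic --- is routine.
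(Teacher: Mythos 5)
Your proposal is correct and takes essentially the same route as the paper: both parts reduce the hypothesis on $J_{X_T}$ to strict singularity of all $\pi_t$ (via the one-dimensional-range observation from Proposition~\ref{hip5}) and of all $j_n$ (via the identification $j_n(\bar{x})=J_{X_T}(\bar{x})$ up to the isomorphism $X_n\cong X_T$), and then invoke Corollary~\ref{hc20}(c) for (a) and the simultaneous sliding-hump argument of Corollary~\ref{hc20}(c) together with Corollary~\ref{hc20}(b) for (b). The only cosmetic difference is that you make the factorization $J_{X_T}=\iota_n\circ j_n$ explicit, whereas the paper states it more tersely.
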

\begin{proof}
(a) We observe, following the notation of the previous section,
that for every $t\in T$ the map $\pi_t:\hiint\to
Z_t$ has dimension 1,since as shown in the proof of Proposition
\ref{hip5} $\pi_t\big( \hiint \big)= <\bar{e}_t>$
and so $\pi_t$ is strictly singular. Notice also that for every
$\bar{x}\in \hiint$ and every $n\in\nn$ we have that
$j_n(\bar{x})=J_{X_T}(\bar{x})$. As every $X_n$ is isomorphic
to $X_T$, we get that $j_n|_Y$ is also strictly singular.
Corollary
\ref{hc20}(c) yields the result.\\
(b) We notice that, as in part (a), for every $t\in T$ the maps
$\pi_t|_Y$ and $\pi_t|_Z$ are strictly singular. Moreover, by our
assumptions, for every $n\in\nn$ the maps $j_n|_Y$ and $j_n|_Z$
are also strictly singular. Let $\ee>0$ arbitrary. Arguing as in
Corollary \ref{hc20}(c) we are able to construct two normalized
sequences $(y_n)_n$ and $(z_n)_n$ and a diagonally block sequence
$(w_n)_n$ such that the following are satisfied.
\begin{enumerate}
\item[(i)] For every $n\in \nn$, $y_n\in Y$ and $z_n\in Z$.
\item[(ii] $\sum_n \|w_{2n-1}-y_n\|<\ee$ and $\sum_n
\|w_{2n}-z_n\|<\ee$.
\end{enumerate}
The space $W=\overline{ <(w_n)_n>}$ is HI by Corollary
\ref{hc20}(c). Hence, if we set $W_1=\overline{<(w_{2n-1})_n>}$
and $W_2=\overline{<(w_{2n})_n>}$ we see that
$d(S_{W_1},S_{W_2})=0$. As $\ee$ can be chosen arbitrarily small,
by (ii) above, we conclude that $d(S_Y,S_Z)=0$, as desired.
\end{proof}
\begin{prop}
\label{hip12} The space $\hiint$ is reflexive.
\end{prop}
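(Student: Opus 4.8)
The plan is to invoke James' characterisation of reflexivity: by Proposition~\ref{hip5} the sequence $(\bar e_t)_{t\in T}$ is a bimonotone Schauder basis of $\hiint$, so it suffices to show that this basis is simultaneously \emph{shrinking} and \emph{boundedly complete}. Throughout, $I_n$ denotes the initial segment of the basis consisting of its first $n$ vectors (as in the proof of Proposition~\ref{hip5}).

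For the shrinking property I would prove $\hiint^*=\overline{\langle \bar e_t^*:t\in T\rangle}$. Given $\Phi\in\hiint^*$, extend it by Hahn--Banach to $\tilde\Phi\in\xfr_G^*$. By Proposition~\ref{hp22} one has $\xfr_G^*=\overline{\langle\bigcup_{n}X_n^*\rangle}$, so $\tilde\Phi$ is a norm limit of functionals of the form $\sum_{n\in F}\phi_n$ with $F\subset\nn$ finite and $\phi_n\in X_n^*$. Since $\|\cdot\|_n$ is equivalent to $\|\cdot\|_{X_T}$ we have $X_n^*=X_T^*$ as a set; and as $X_T$ is reflexive its basis $(e_t)$ is shrinking, i.e. $X_T^*=\overline{\langle e_t^*:t\in T\rangle}$, so each $\phi_n$ can be approximated in $X_T^*$ by a finite linear combination of the $e_t^*$. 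Restricting such a combination to the diagonal subspace $\hiint$ turns it into a finite linear combination of the $\bar e_t^*$ (because $\bar e_t^*(x,x,\dots)=e_t^*(x)$), the resulting error in $\hiint^*$ being bounded by $\|J_{X_T}\|$ times the $X_T^*$-error; passing to the limit (restriction to a subspace does not increase norms) gives $\Phi\in\overline{\langle\bar e_t^*\rangle}$.

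For bounded completeness I would follow the pattern of the classical DFJP reflexivity argument, this being the place where the weak compactness of $W$ --- equivalently, the reflexivity of $X_T$ --- enters. Starting from scalars $(a_t)$ with $C:=\sup_n\|\sum_{t\in I_n}a_t\bar e_t\|<\infty$, apply the bounded map $J_{X_T}$ to obtain $\sup_n\|\sum_{t\in I_n}a_te_t\|_{X_T}<\infty$, whence $x:=\sum_t a_te_t$ converges in the reflexive space $X_T$. Then one checks that the diagonal vector $\bar x=(x,x,\dots)$ has finite $\xfr_G$-norm: every norming functional $f\in G$ is finitely supported in $T\times\nn$, so $f(\bar x)$ coincides with $f(\sum_{t\in I_n}a_t\bar e_t)$ as soon as $I_n$ exhausts the (finite) $T$-trace of $\spp f$, giving $f(\bar x)\le C$ and hence $\|\bar x\|_{\xfr_G}\le C$. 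Therefore $\bar x\in\hiint$, and since $(\bar e_t)$ is a Schauder basis the partial sums $\sum_{t\in I_n}a_t\bar e_t$ converge to it, which is exactly bounded completeness. Combining the two properties, James' theorem yields that $\hiint$ is reflexive.

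I expect the delicate point to be the bounded completeness step: one must make sure the formal diagonal limit genuinely lies in $\xfr_G$ (and not merely in $\xfr_G^{**}$), which is what forces the careful use of the reflexivity of $X_T$ together with the fact that each $X_n$ is reflexive and sits isometrically inside $\xfr_G$ as the $n$-th component of its Schauder decomposition (Remark~\ref{remschdec}); the finite-support property of the norming functionals in $G$ is the technical lever that makes this estimate go through. The shrinking part, by contrast, is essentially bookkeeping once Proposition~\ref{hp22} is in hand, and in particular it does not require $W$ to be thin.
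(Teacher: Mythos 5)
Your proposal takes a genuinely different route from the paper's: you invoke James' criterion (shrinking plus boundedly complete basis), whereas the paper avoids the basis altogether and argues that $B_{\hiint}$ is relatively weakly compact by combining the weak compactness of $W$, Grothendieck's criterion for relative weak compactness, and the fact that $J_{X_T}$ is a Tauberian operator. Your shrinking argument is plausible and is in fact the reasonable way to extract it from Proposition~\ref{hp22}: restrict functionals from $\xfr_G^*=\overline{\langle\bigcup_n X_n^*\rangle}$ to the diagonal and use that $(e_t)$ is shrinking in each $X_n\cong X_T$.

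The bounded completeness step, however, has a genuine gap, and it is precisely the step you flag as delicate. From $\sup_n\|\sum_{t\in I_n}a_t\bar e_t\|_{\xfr_G}\le C$ you correctly obtain $x=\sum_t a_t e_t\in X_T$ (this uses reflexivity of $X_T$), and then you compute that $|f(\bar x)|\le C$ for every $f\in G$ because $f$ is finitely supported. But the inference \emph{``hence $\|\bar x\|_{\xfr_G}\le C$, therefore $\bar x\in\hiint$''} is not valid: the set $\{z:\sup_{f\in G}|f(z)|<\infty\}$ contains $\xfr_G$ but is in general strictly larger, since $\xfr_G$ is defined as the \emph{completion} of $c_{00}(T\times\nn)$, not as the set of formal series with bounded evaluations against $G$. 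The summing basis of $c_0$ illustrates the failure mode: the coordinate functionals are finitely supported and $\sup_n\|\sum_{i\le n}e_i\|_{c_0}=1$, yet $(1,1,\dots)\notin c_0$. So finite support of $G$ is not the technical lever you need; it gives you boundedness of the partial sums but not their norm-convergence, which is the very content of bounded completeness. To close the gap you would have to import exactly the compactness ingredient the paper uses: show that the partial sums are relatively weakly compact (e.g.\ because $J_{X_T}$ maps $B_{\hiint}$ into a set almost absorbed by the weakly compact set $W$, whence Grothendieck's criterion applies), and then use the Tauberian property of $J_{X_T}$ to pull weak compactness back to $\hiint$ --- at which point James' criterion becomes an unnecessary detour and one lands on the paper's own proof.
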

\begin{proof}
We recall the following well-known facts. First if $T:X\to Y$ is a
Tauberian operator, then $W\subseteq X$ is relatively weakly
compact if and only if $T(W)$ is (see \cite{N2}). Moreover, by a
classical result of A. Grothendieck \cite{Gr}, we have that a set
$K\subseteq X$ is relatively weakly compact if for every $\ee>0$
there exists a weakly compact set $K_\ee\subseteq X$ such that
$K\subseteq K_\ee +\ee B_X$.
 As we have assumed the set $W$ is weakly
compact in $X_T$. It is easy to see that $W$ almost
absorbs $J_{X_T}(B_{\hiint})$, i.e. for every $\ee>0$
there exists $\lambda>0$ such that
$J_{X_T}(B_{\hiint})\subseteq \lambda W + \ee
B_{X_T}$. Hence, by Grothendieck's criterion,
$J_{X_T}(B_{\hiint})$ is a relatively weakly compact
subset of $X_T$. It is a well known fact that $J_{X_T}$ is a
Tauberian operator (c.f \cite{AF}). Hence $B_{\hiint}$ is
also a relatively weakly compact subset of $\hiint$ and
the proof is completed.
\end{proof}

The last step in this section is to prove that if $W$ is a thin subset of $X_T$ then the space $\xfr$ is HI. Let us recall the notion of thin operators.

\begin{defn} \label{thinoper} Let $X,Y$ be Banach spaces and $T:X\to Y$ be a bounded linear operator. $T$ is called a thin operator if $T(B_X)$ is a thin subset of $Y$.
\end{defn}
\begin{rem} It can be readily seen that if $T$ is a thin operator, then it is also strictly singular.(c.f. \cite{AF}).
\end{rem}

Proposition \ref{hip7} immediately yields that if $W$ is a thin subset of $X_T$ then the space $\xfr$ is HI. Combining this with Propositions \ref{hip12}, \ref{hip5} and Remark \ref{remschdec}} we obtain the proof of Theorem \ref{skewthm} stated in the beginning of this section.

\section{ The final results }

\begin{thm}
\label{final1} Every separable reflexive Banach space $X$ is a
quotient of a reflexive  HI Banach space $\hiint(X)$ with a
bimonotone Schauder basis.
\end{thm}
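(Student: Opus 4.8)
The plan is to assemble Theorem~\ref{final1} as the composition of the three construction steps built in the previous sections, exactly along the roadmap sketched in the introduction. First I would invoke Zippin's theorem (Theorem~\ref{zippin}) to reduce to the case where $X$ carries a normalized bimonotone Schauder basis $(x_i)_i$: a quotient of $X$ under this reduction is still a quotient of the original space, since a subspace of a reflexive space is complemented-by-nothing but quotients compose with the inclusion appropriately — more precisely, if $X$ embeds in $Z_X$ and $Z_X$ is a quotient of some $\hiint$, then restricting the quotient map to the preimage of $X$ exhibits $X$ as a quotient of a (closed, hence HI-hereditary) subspace of $\hiint$; and a subspace of an HI space is HI. So it suffices to produce, for a reflexive $X$ with a bimonotone basis, a reflexive HI space with a bimonotone Schauder basis having $X$ as a quotient.

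Next I would run the chain of spaces: starting from $X$, form $\auxa$ with its $\scu$-unconditional tree basis $(e_t)_{t\in\ttt}$, the set $\kkk$, and the norm-one map $\Phi:\auxa\to X$ (Definitions~\ref{scuncond}--\ref{themapf}); by Proposition~\ref{dense}, $\Phi(\kkk)$ is $\frac{1}{8}$-dense in $B_X$, and by Proposition~\ref{prp2} the set $W=\overline{\co}(\kkk\cup-\kkk)$ is weakly compact in $\auxa$. Apply the DFJP $\ell_2$-interpolation of Section~\ref{int1} to the pair $(\auxa,W)$ to obtain the reflexive space $\auxb$, which by Proposition~\ref{basisint1} has a $\scu$-unconditional Schauder basis $(\tilde e_t)_{t\in\ttt_\kkk}$; by Remark~\ref{rint11}, $J_1(\tilde\kkk)=\kkk$, so $\Phi\circ J_1:\auxb\to X$ is still onto. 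Since $\auxb$ is separable reflexive it contains no $\ell^1$, so Lemma~\ref{lxi1} furnishes a countable ordinal $\xi$ for which $\auxb$ has no $\ell^1_\xi$-spreading model; feeding $(\auxb,\kkk)$ into Theorem~\ref{bthinthm} (equivalently Theorem~II), we obtain $\auxx$ with continuous identity $I:\auxx\to\auxb$, with $\kkk\subset I(\auxx)$, and with $W_\xi=\overline{\co}(I^{-1}(\kkk)\cup-I^{-1}(\kkk))$ weakly compact \emph{and thin} in $\auxx$. One checks, using Remark~\ref{intact}, that the identification of $\kkk$ with $I^{-1}(\kkk)$ is legitimate, so $\Phi\circ J_1\circ I:\auxx\to X$ remains a surjection onto a $\frac18$-dense subset of $B_X$.

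Finally I would apply the skew HI interpolation of Theorem~\ref{skewthm} to the pair $(\auxx,W_\xi)$. For this I must verify the hypotheses: $\auxx$ is reflexive with a $\scu$-unconditional basis, $W_\xi$ is weakly compact, convex, symmetric, and — by Lemma~\ref{lxi2} (norm-one segment-complete projections) — satisfies $P_A(W_\xi)\subseteq W_\xi$ for every segment-complete $A$; and $W_\xi$ is thin in $\auxx$. Theorem~\ref{skewthm} then produces the diagonal space $\hiint=\hiint(X)$, which by part~(3) has a bimonotone Schauder basis $(\bar e_t)_{t\in\ttt_\kkk}$ (Proposition~\ref{hip5}) and is reflexive (Proposition~\ref{hip12}), and by part~(4), since $W_\xi$ is thin, $\hiint$ is HI. It remains to see that $X$ is a quotient of $\hiint$: the natural map $J_{\xfr_\xi}:\hiint\to\auxx$ (Notation~\ref{maps3}) is bounded and $1$-$1$, and by the usual DFJP property $J_{\xfr_\xi}(\bar w)=w$ recovers each element of $W_\xi$, so in particular $J_{\xfr_\xi}$ maps the diagonal copy of $I^{-1}(\kkk)$ onto $\kkk$. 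Hence $Q=\Phi\circ J_1\circ I\circ J_{\xfr_\xi}:\hiint(X)\to X$ has $Q$ of the diagonal copy of $\kkk$ equal to $\Phi(\kkk)$, which is $\frac18$-dense in $B_X$; since $Q$ is bounded and its image contains a $\frac18$-net of $B_X$, an open mapping / successive-approximation argument shows $Q$ is a surjection, i.e. a quotient map. The main obstacle is purely bookkeeping: one must keep straight the several identifications (of $\kkk$ with its preimages under $I$ and under the various diagonal embeddings) and confirm at each stage that the relevant set of functionals restricts correctly to segment-complete sets so that all the "projection has norm one" and "range is preserved" claims genuinely apply; once those identifications are pinned down, Theorem~\ref{final1} is an immediate corollary of Theorems~\ref{bthinthm} and~\ref{skewthm} together with Propositions~\ref{dense} and~\ref{prp2}.
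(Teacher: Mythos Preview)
Your proposal is correct and follows essentially the same route as the paper's own proof: Zippin's reduction, the $\auxa\to\auxb\to\auxx\to\hiint$ chain, and the composition $Q=\Phi\circ J_1\circ I\circ J_{\auxx}$ to exhibit the quotient map. You are in fact more explicit than the paper about verifying the hypotheses of Theorem~\ref{skewthm} (in particular the stability $P_A(W_\xi)\subseteq W_\xi$ via Lemma~\ref{lxi2}) and about the Zippin step (passing to the preimage $Q^{-1}(X)$, which is HI as a subspace of an HI space); one small caveat is that this preimage subspace need not itself carry a bimonotone Schauder basis, so the ``with a bimonotone Schauder basis'' clause strictly holds for $X_Z$ rather than for an arbitrary $X$---a point the paper's proof leaves equally implicit.
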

\begin{proof} Let $X$ be a separable reflexive Banach space. By Zippin's Theorem (Theorem \ref{zippin}) we obtain that
 $X$ can be isomorphically embedded into a reflexive Banach space with a bimonotone
Schauder basis $(x_n)_n$. We denote this space $X_Z$. Starting now
with $X_Z$ we pass to the space $\auxa$. The map
$\Phi$ defined in Definition \ref{themapf} yields that $\Phi(\mathrm{K})$ is
$\frac{1}{8}$-dense in the ball of $X_Z$. By passing to the space
$\auxb$ we have by Remark \ref{rint11} that
$J_1(\tilde{\mathrm{K}})=\mathrm{K}$ and thus the operator $\Phi_1=\Phi\circ J_1$
maps $(\tilde{\mathrm{K}})$ onto a  $\frac{1}{8}$-dense in the ball of $X_Z$. We construct the space $\auxx$ starting with $\auxb$ and $\tilde{\mathrm{K}}$. By Theorem \ref{bthinthm} the identity operator $I_{\xi}:\auxx\mapsto
\auxb$ is continuous and maps $\tilde{\mathrm{K}}$
onto itself. Finally, the natural injection (denoted as $J_{X_T}$ in the general case) $J_{\xi}:\hiint\to\auxx$ preserves $\tilde{\mathrm{K}}$ as does $J_1$. Thus,
by taking the composition $Q=\Phi_1 \circ I \circ J_{\xi}$ we can see that
it is an onto map from $\hiint$ to $X_Z$. Thus $X_Z$ is a quotient of $\xfr$. As the set $W_{\xi}$ is thin in $\auxx$ (by Theorem \ref{bthinthm}) Theorem \ref{skewthm} yields that $\xfr$ is the desired reflexive HI space for $X_Z$. A subspace of $X_Z$ will have $X$ as a quotient and this completes the proof.
\end{proof}

Starting with a reflexive $X$ and following the steps of the above proof for the its dual $X^*$, we have the following cofinal property of Indecomposable reflexive Banach spaces within the class of separable reflexive Banach spaces.

\begin{thm}
\label{final2} Every separable reflexive Banach space can be
embedded into an Indecomposable reflexive Banach space.
\end{thm}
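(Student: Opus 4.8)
The plan is to deduce Theorem \ref{final2} from Theorem \ref{final1} by a standard duality argument, exactly as indicated in the remark preceding the statement. Let $X$ be a separable reflexive Banach space. Then $X^{*}$ is again separable and reflexive, so by Theorem \ref{final1} there is a reflexive HI space $Y=\hiint(X^{*})$ with a bimonotone Schauder basis together with a quotient map $Q\colon Y\to X^{*}$.

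Next I would pass to adjoints. Since $Q$ is a surjection between Banach spaces, the open mapping theorem provides $\delta>0$ with $\delta B_{X^{*}}\subseteq Q(B_{Y})$, whence
\[
\|Q^{*}\phi\|_{Y^{*}}=\sup_{y\in B_{Y}}|\phi(Qy)|\ \geq\ \sup_{z\in\delta B_{X^{*}}}|\phi(z)|=\delta\,\|\phi\|,\qquad \phi\in X^{**}.
\]
Thus $Q^{*}\colon X^{**}\to Y^{*}$ is bounded below, hence an isomorphism onto a (necessarily norm-closed) subspace of $Y^{*}$. Composing with the canonical isomorphism $j_{X}\colon X\to X^{**}$, which is onto because $X$ is reflexive, yields an isomorphic embedding $Q^{*}\circ j_{X}\colon X\hookrightarrow Y^{*}$.

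It then remains to verify that $Y^{*}$ is reflexive and Indecomposable. Reflexivity of $Y^{*}$ is immediate since $Y$ is reflexive, and $Y^{*}$ is moreover separable because $Y$ has a Schauder basis. For indecomposability, suppose toward a contradiction that $P\colon Y^{*}\to Y^{*}$ is a bounded projection with $\operatorname{ran}P$ and $\ker P$ both infinite dimensional. Then $P^{*}$ is a bounded projection on $Y^{**}$, which under the identification $Y^{**}=Y$ becomes a projection on $Y$ with range $(\ker P)^{\perp}\cong(Y^{*}/\ker P)^{*}\cong(\operatorname{ran}P)^{*}$ and kernel $(\operatorname{ran}P)^{\perp}\cong(\ker P)^{*}$, both infinite dimensional. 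Hence $Y$ would be the topological direct sum of two infinite dimensional subspaces, contradicting that $Y$ is HI (a fortiori indecomposable). Therefore $Y^{*}$ is Indecomposable, and $X$ embeds into the separable reflexive Indecomposable space $Y^{*}=\hiint(X^{*})^{*}$.

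I do not anticipate any genuine obstacle here: the entire construction is already carried out in the proof of Theorem \ref{final1}, and the remaining ingredients — that the adjoint of a quotient map is an isomorphic embedding, and that the dual of a reflexive HI space is Indecomposable — are the two routine duality facts recorded in the introduction and reproved above. If anything, the only point deserving care is the identification of the range and kernel of $P^{*}$ through annihilators, which is precisely where the HI property of $Y$ (through its reflexivity and indecomposability) is actually invoked.
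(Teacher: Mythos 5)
Your proof is correct and takes essentially the same route as the paper: apply Theorem \ref{final1} to $X^{*}$ and then dualize, using the facts that the adjoint of a quotient map is an isomorphic embedding and that the dual of a reflexive HI space is indecomposable — precisely the duality observation recorded in the paper's introduction after the statement of Theorem I.
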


\end{document}